\newtheorem{mainthm}{Theorem}
\newtheorem{theorem}{Theorem}[section]
\newtheorem*{theorem*}{Theorem}
\newtheorem{lemma}[theorem]{Lemma}
\newtheorem{proposition}[theorem]{Proposition}
\newtheorem*{proposition*}{Proposition}
\newtheorem*{conjecture*}{Conjecture}
\theoremstyle{definition}
\newtheorem{definition}[theorem]{Definition}
\newtheorem{remark}[theorem]{Remark}
\numberwithin{equation}{section}
\def\bN {\mathbb{N}}
\def\bR {\mathbb{R}}
\def\cE {\mathcal{E}}
\def\cF {\mathcal{F}}
\def\scrL{\mathscr{L}}
\def\grad {{\nabla}}
\def\la {\langle}
\def\ra {\rangle}
\newcommand{\tx}[1]{\mathrm{#1}}
\newcommand{\wto}{\rightharpoonup}
\newcommand{\wt}[1]{\widetilde{#1}}
\newcommand{\bs}[1]{\boldsymbol{#1}}
\newcommand{\conj}[1]{\overline{#1}}
\newcommand{\sh}[1]{#1^\sharp}
\newcommand{\shvec}[1]{{\vec #1}\,^\sharp}
\newcommand{\eee}{\mathrm e}
\newcommand{\ud}{\mathrm{\,d}}
\newcommand{\vd}{\mathrm{d}}
\newcommand{\vD}{\mathrm{D}}
\newcommand{\dd}[1]{{\frac{\vd}{\vd{#1}}}}
\newcommand{\qedno}{
\begin{flushright}
$\boxslash$
\end{flushright}
}
\newcommand{\lin}{\scalebox{0.6}{\textrm{\,L}}}
\title[Dynamics of kink clusters for scalar fields in dimension 1+1]
{Dynamics of kink clusters \\ for scalar fields in dimension 1+1}
\author{Jacek Jendrej}
\author{Andrew Lawrie}
\keywords{kink; multi-soliton; nonlinear wave}
\subjclass[2010]{35L71 (primary), 35B40, 37K40}
\thanks{J. Jendrej is supported by  ANR-18-CE40-0028 project ESSED.  A. Lawrie is supported by NSF grant DMS-1954455 and the Solomon Buchsbaum Research Fund.}
\begin{document}

\begin{abstract}
We consider a real scalar field equation in dimension $1+1$
with an even positive self-interaction potential having two
non-degenerate zeros (vacua) $1$ and $-1$.
It is known that such a model
admits non-trivial static solutions called kinks and antikinks.
A kink cluster is a solution approaching, for large positive times, a superposition of alternating
kinks and antikinks whose velocities converge to $0$.
They can be equivalently characterised as the solutions of minimal possible energy
containing a given number of transitions between the vacua,
or as the solutions whose kinetic energy decays to $0$ for large time.

Our main result is a determination of the main-order asymptotic behaviour of any kink cluster.
Moreover, we construct a kink cluster for any prescribed initial positions of the
kinks and antikinks, provided that their mutual distances are sufficiently large.
Finally, we show that kink clusters are universal profiles for the formation/collapse of multi-kink configurations.
The proofs rely on a reduction,
using appropriately chosen modulation parameters,
to an $n$-body problem
with attractive exponential interactions.
\end{abstract}

\maketitle
\section{Introduction}
\label{sec:intro}
\subsection{Setting of the problem}
\label{ssec:setting}
We study scalar field equations in dimension $1+1$, which are associated to the Lagrangian action
\begin{equation}
\label{eq:lagrange}
\scrL(\phi) = \int_{-\infty}^{\infty}\int_{-\infty}^{\infty} \Big(\frac 12(\partial_t \phi)^2 - \frac 12 (\partial_x\phi)^2 - U(\phi)\Big)\, \ud  x \ud t,
\end{equation}
where the self-interaction potential $U: \bR \to [0, +\infty)$ is a given smooth function.
The unknown field $\phi = \phi(t, x)$ is assumed to be real-valued.
The resulting Euler-Lagrange equation is
\begin{equation}
\label{eq:csf-2nd}
\partial_t^2 \phi(t, x) - \partial_x^2 \phi(t, x) + U'(\phi(t, x)) = 0, \qquad (t, x) \in \bR\times \bR,\ \phi(t, x) \in \bR.
\end{equation}
We assume that
\begin{itemize}
\item
$U$ is an even function,
\item $U(\phi) > 0$ for all $\phi \in (-1, 1)$,
\item  $U(-1) = U(1) = 0$ and $U''(1) = U''(-1) = 1$.
\end{itemize}
The~zeros of $U$ are called the \emph{vacua}.
Linearisation of \eqref{eq:csf-2nd} around each of the vacua
$1$ and $-1$, $\phi = \pm 1 + g$, yields the free linear Klein-Gordon equation of mass $1$:
\begin{equation}
\label{eq:free-kg}
\partial_t^2 g_{\lin}(t, x) - \partial_x^2 g_{\lin}(t, x) + g_{\lin}(t, x) = 0.
\end{equation}

Two well-known examples of~\eqref{eq:csf-2nd} satisfying the hypotheses above are the \emph{sine-Gordon equation}
\begin{equation}
\label{eq:sg} 
\partial_t^2 \phi(t, x) - \partial_x^2 \phi(t, x) -\frac{1}{\pi}\sin(\pi\phi(t, x)) = 0,
\end{equation}
where we have taken $U(\phi) = \frac{1}{\pi^2}\big(1+ \cos(\pi\phi)\big)$, and the \emph{$\phi^4$ model}
\begin{equation}
\label{eq:phi4-m} 
\partial_t^2 \phi(t, x) - \partial_x^2 \phi(t, x) - \frac 12\phi(t, x) + \frac 12\phi(t, x)^3 = 0,
\end{equation}
for which $U(\phi) = \frac{1}{8}(1- \phi^2)^2$.

The equation \eqref{eq:csf-2nd} can be rewritten as a system of first order in $t$:
\begin{equation}
\label{eq:csf-1st}
\partial_t \begin{pmatrix} \phi(t, x) \\ \dot\phi(t, x)\end{pmatrix} = \begin{pmatrix} \dot\phi(t, x) \\ \partial_x^2 \phi(t, x) - U'(\phi(t, x))\end{pmatrix}.
\end{equation}
We denote $\bs \phi_0 = (\phi_0, \dot \phi_0)^\tx T$ an element of the phase space
(in the sequel, we omit the transpose in the notation).
The potential energy $E_p$, the kinetic energy $E_k$ and the total energy $E$
of a state are given by
\begin{align}
E_p(\phi_0) &= \int_{-\infty}^{+\infty}\Big(\frac 12 (\partial_x\phi_0(x))^2 + U(\phi_0(x))\Big)\ud x, \\
E_k(\dot \phi_0) &= \int_{-\infty}^{+\infty}\frac 12( \dot\phi_0(x))^2 \ud x, \\
E(\bs\phi_0) &= \int_{-\infty}^{+\infty}\Big(\frac 12(\dot \phi_0(x))^2+\frac 12 (\partial_x\phi_0(x))^2 + U(\phi_0(x))\Big)\ud x.
\end{align}
Denoting $\bs \phi(t, x) := (\phi(t, x), \dot \phi(t, x))$, the system \eqref{eq:csf-1st} can be reformulated in the Hamiltonian form as
\begin{equation}
\label{eq:csf}
\partial_t \bs \phi(t) = \bs J\vD E(\bs \phi(t)),
\end{equation}
where $\bs J := \begin{pmatrix}0 & 1 \\ {-1} & 0 \end{pmatrix}$ is the standard symplectic form
and $\vD$ is the Fr\'echet derivative for the $L^2\times L^2$ inner product.
In particular, $E$ is a conserved quantity, and we denote $E(\bs \phi)$ the energy of a solution $\bs \phi$ of \eqref{eq:csf}.

The set of finite energy states $\bs \phi_0 = (\phi_0, \dot \phi_0)$ contains the following affine spaces:
\begin{equation}
\begin{aligned}
\cE_{1, 1} &:= \{(\phi_0, \dot \phi_0): E(\phi_0, \dot \phi_0) < \infty\ \text{and}\ \lim_{x \to -\infty}\phi_0(x) = 1, \lim_{x \to \infty}\phi_0(x) = 1\}, \\
\cE_{-1, -1} &:= \{(\phi_0, \dot \phi_0): E(\phi_0, \dot \phi_0) < \infty\ \text{and}\ \lim_{x \to -\infty}\phi_0(x) = -1, \lim_{x \to \infty}\phi_0(x) = -1\}, \\
\cE_{1, -1} &:= \{(\phi_0, \dot \phi_0): E(\phi_0, \dot \phi_0) < \infty\ \text{and}\ \lim_{x \to -\infty}\phi_0(x) = 1, \lim_{x \to \infty}\phi_0(x) = -1\}, \\
\cE_{-1, 1} &:= \{(\phi_0, \dot \phi_0): E(\phi_0, \dot \phi_0) < \infty\ \text{and}\ \lim_{x \to -\infty}\phi_0(x) = -1, \lim_{x \to \infty}\phi_0(x) = 1\}.
\end{aligned}
\end{equation}
In the case of the $\phi^4$ model, these are all the finite-energy states,
but in general there can be other states of finite energy,
for example if $U$ has other vacua than $1$ and $-1$.
Here, the states which we consider will always belong to one of the four affine spaces listed above.

Equation \eqref{eq:csf} admits static solutions. They are the critical points
of the potential energy. The trivial ones are the vacuum fields $\phi(t, x) = \pm 1$.
The solution $\phi(t, x) = 1$ (resp. $\phi(t, x) = -1$) has zero energy and is the ground state in $\cE_{1, 1}$
(resp. $\cE_{-1, -1}$).

There are also non-constant static solutions $\phi(t, x)$
connecting the two vacua, that is
\begin{equation}
\label{eq:static-nn'}
\lim_{x \to -\infty}\phi(t, x) = \mp 1, \quad \lim_{x \to \infty}\phi(t, x) = \pm 1.
\end{equation}
One can describe all these solutions.
There exists a unique increasing odd function $H: \bR \to (-1, 1)$ such that
all the solutions of \eqref{eq:static-nn'} are given by $\phi(t, x) = \pm H(x - a)$ for some $a \in \bR$.
The basic properties of $H$ are given in Section~\ref{sec:static}.
Its translates are called the \emph{kinks} and are the ground states in $\cE_{-1, 1}$.
The translates of the function $-H$ are called the \emph{antikinks} and are the ground states in $\cE_{1, -1}$.



The condition~\eqref{eq:static-nn'} defines a \emph{topological class}, since for any continuous path of finite energy states, either none or all of them satisfy~\eqref{eq:static-nn'}. In general, minimizers of the energy in a topological class that does not contain vacua are called topological solitons. Topological solitons were introduced in the physics literature by Skyrme as candidates for particles in classical field theories; see~\cite{Skyrme, MS}. Kinks and antikinks are one dimensional examples, and in higher dimensions examples include vortices, harmonic maps,  monopoles, Skyrmions, and instantons. In this context, it is natural to investigate to what extent multikinks (defined below) can be effectively described as a system of interacting point particles. In other words, can their dynamical behavior be captured by means of a small number of parameters, for instance their positions and momenta. 


\subsection{Main results}
\label{ssec:results}
By the variational characterisation of $H$ and its translates as the ground states in $\cE_{-1, 1}$,
one can informally view them as the transitions between the two vacua $-1$ and $1$
having the minimal possible energy $E = E_p(H)$.
Given a natural number $n$, we are interested in solutions of \eqref{eq:csf-2nd} containing,
asymptotically as $t \to \infty$, $n$ such transitions.
Since energy $E_p(H)$ is needed for each transition, we necessarily have $E(\phi, \partial_t\phi) \geq nE_p(H)$. We call \emph{kink clusters} the solutions for which equality holds.

\begin{definition}[Kink $n$-cluster]
\label{def:cluster}
Let $n \in \{0, 1, \ldots\}$.
We say that a solution $\bs\phi$ of \eqref{eq:csf} is a \emph{kink $n$-cluster} if
there exist real-valued functions
$x_0(t) \leq x_1(t) \leq \ldots \leq x_n(t)$ such that
\begin{itemize}
\item  $\lim_{t\to\infty}\phi(t, x_k(t)) = (-1)^k$ for $k \in \{0, 1, \ldots, n\}$,
\item $E(\bs\phi) \leq nE_p(H)$.
\end{itemize}
\end{definition}
Note that the kink $0$-clusters are the constant solutions $\phi \equiv 1$ and the kink $1$-clusters are the antikinks.
We say that $\phi$ is a kink cluster if it is a kink $n$-cluster for some $n \in \{0, 1, \ldots\}$.

From the heuristic discussion above, the shape of each transition
in a kink cluster has to be close to optimal,
that is close to a kink or an antikink.
Before we give a precise statement of this fact,
we introduce the so-called \emph{multi-kink configurations}.
For $\vec a = (a_1, \ldots, a_n)$ such that $a_1 \leq \ldots \leq a_n$, we denote
\begin{equation}
H(\vec a) := 1 + \sum_{k=1}^{n} (-1)^k\big(H(\cdot - a_k) + 1\big)
\end{equation}
(we chose the ``additive ansatz'', see \cite[Section 1.7]{vachaspati}
for a comparison with a different ``product ansatz'', which we could also use
without introducing any changes in the statement of our results below).
\begin{proposition}
\label{prop:close-to-H}
A solution $\bs\phi$ of \eqref{eq:csf} is a kink $n$-cluster if and only if there exist continuous functions $a_1, \ldots, a_n: \bR \to \bR$
such that
\begin{equation}
\label{eq:close-to-H}
\begin{aligned}
\lim_{t \to \infty}\Big(\big\|\partial_t \phi(t)\big\|_{L^2}^2 + \big\|\phi(t) - H(\vec a(t))\big\|_{H^1}^2 + \sum_{k=1}^{n-1}\eee^{-(a_{k+1}(t) - a_k(t))}\Big) = 0.
\end{aligned}
\end{equation}
\end{proposition}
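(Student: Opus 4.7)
The plan is to prove the two implications separately. I would start with the easier "if" direction. Assume continuous $a_1, \ldots, a_n$ verifying \eqref{eq:close-to-H} are given; the exponential term forces $a_{k+1}(t) - a_k(t) \to \infty$. Choosing $x_k(t) := \tfrac{1}{2}(a_k(t)+a_{k+1}(t))$ for $1 \leq k \leq n-1$ and placing $x_0, x_n$ suitably far to the left of $a_1$ and right of $a_n$, the asymptotics $H(\pm \infty) = \pm 1$ give $H(\vec a(t))(x_k(t)) \to (-1)^k$. The Sobolev embedding $H^1(\bR) \hookrightarrow L^\infty(\bR)$ upgrades the $H^1$ convergence of $\phi(t) - H(\vec a(t))$ to $L^\infty$, hence $\phi(t, x_k(t)) \to (-1)^k$. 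Energy conservation, continuity of $E_p$ on $H^1$, and the well-separated limit $E_p(H(\vec a(t))) \to n E_p(H)$ together yield $E(\bs\phi) = n E_p(H)$, which satisfies the energy bound in Definition~\ref{def:cluster}.

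For the substantive "only if" direction I would proceed in two stages. First, define the Bogomolny antiderivative $F(u) := \int_0^u \sqrt{2U(v)}\,\ud v$; since $U$ is even, $F$ is odd and $F(1) = \tfrac{1}{2}E_p(H)$ by the standard Bogomolny identity for $H$. The pointwise inequality $\tfrac{1}{2}(\partial_x \phi)^2 + U(\phi) \geq |\partial_x\phi|\sqrt{2U(\phi)}$ integrated over $[x_{k-1}(t), x_k(t)]$ gives $E_p(\phi(t); [x_{k-1}, x_k]) \geq |F(\phi(t, x_k(t))) - F(\phi(t, x_{k-1}(t)))|$; summing and using $\phi(t, x_k(t)) \to (-1)^k$ produces $E_p(\phi(t)) \geq n E_p(H) - o(1)$. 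Together with $E = E_k + E_p \leq n E_p(H)$, this forces $E_k(\dot\phi(t)) \to 0$ and $E_p(\phi(t)) \to n E_p(H)$. In the second stage I would invoke a static modulation lemma, proved by a concentration-compactness argument that exploits the equality case of the Bogomolny bound together with the spectral simplicity of the linearisation of $E_p$ at $H$: any finite-energy configuration with $E_p$ close to $nE_p(H)$ and $n$ transitions valued near $\pm 1$ is $H^1$-close to a multi-kink $H(\vec a)$ with widely separated parameters. On such a neighborhood, imposing the $n$ orthogonality conditions $\la \phi - H(\vec a), \partial_{a_k}H(\vec a)\ra_{L^2} = 0$ pins down $\vec a(t)$ uniquely and continuously via the implicit function theorem. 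The same energy deficit controls $\sum_k e^{-(a_{k+1}(t)-a_k(t))}$, because this exponential sum is precisely the leading asymptotic correction to $E_p(H(\vec a)) - n E_p(H)$ when the $a_k$ are far apart.

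The main obstacle will be making the modulation step work \emph{uniformly} as the separations $a_{k+1}-a_k$ diverge, and extracting the sharp quantitative link between the energy deficit, the $H^1$-norm of the modulation error, and the exponential interactions. The Jacobian of the modulation map is, to leading order, the Gram matrix of the $\partial_{a_k}H(\vec a)$, close to diagonal with only exponentially small off-diagonal perturbations; its uniform invertibility across all large separations is what makes the implicit function theorem applicable throughout the regime. This quantitative control of the exponential interaction by the energy deficit is precisely the structure that the paper will later exploit to reduce the full PDE dynamics to an $n$-body problem with attractive exponential interactions, as announced in the abstract.
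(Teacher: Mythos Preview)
Your proposal is correct and follows essentially the same strategy as the paper: Bogomolny lower bounds on each subinterval force energy saturation and $\|\partial_t\phi\|_{L^2}\to 0$, a compactness argument then gives $H^1$-closeness to a multi-kink, and finally the implicit function theorem yields continuous modulation parameters.

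One implementation difference worth noting: rather than a global concentration-compactness modulation lemma, the paper works interval-by-interval. It first picks $a_k(t)$ as a zero of $\phi(t,\cdot)$ in $(x_{k-1}(t),x_k(t))$, and then proves a self-contained minimising-sequence lemma (Lemma~\ref{lem:min-conv}) stating that if $\phi_m(0)=0$, $\phi_m(x_m)\to -1$, $\phi_m(x_m')\to 1$, and $E_p(\phi_m;x_m,x_m')\to M$, then $x_m\to -\infty$, $x_m'\to\infty$, and $\|\phi_m-H\|_{H^1(x_m,x_m')}\to 0$. The proof of this lemma uses the Bogomolny identity directly: the energy saturation forces $\partial_x\phi_m-\sqrt{2U(\phi_m)}\to 0$ in $L^2$, which identifies the local uniform limit as $H$ and upgrades the convergence to $H^1$. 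This is somewhat more elementary than a general concentration-compactness scheme and automatically delivers the separation $a_{k+1}(t)-a_k(t)\to\infty$ (hence the exponential sum tends to zero) without appealing to the energy expansion of $E_p(H(\vec a))$. Only after $\delta(\bs\phi(t))\to 0$ is established does the paper invoke the orthogonality-based modulation (Lemma~\ref{lem:basic-mod}) to produce \emph{continuous} $\vec a(t)$. Your route---going straight to the modulation lemma and reading off the exponential decay from the energy deficit via Lemma~\ref{lem:interactions}---also works, but requires you to first establish that $\phi(t)$ lies in the small-$\delta$ regime where that lemma applies, which is precisely what the interval-by-interval argument delivers.
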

In other words, kink clusters can be equivalently defined as solutions approaching,
as $t \to \infty$, a superposition of a finite number of alternating kinks and antikinks,
whose mutual distances tend to $\infty$ and which travel with speeds converging to $0$.
In contrast to multikink solutions consisting of Lorentz-boosted kinks (travelling
with asymptotically non-zero speed) constructed in \cite{CJ2},
the dynamics of kink clusters are driven solely by interactions between the kinks and antikinks. Employing the term introduced by Martel and Rapha\"el in \cite{MaRa18},
we are dealing with multi-kinks in the regime of \emph{strong interaction}.
Proposition~\ref{prop:close-to-H} is proved in Section~\ref{ssec:close-to-H}.
It implies in particular that the energy of a kink $n$-cluster equals $nE_p(H)$.

In Section~\ref{ssec:asym-stat}, we provide another characterisation of kink clusters, namely as \emph{asymptotically
static solutions}, by which we mean solutions whose kinetic energy
converges to $0$ as $t \to \infty$.
For simplicity, we restrict our attention
to the $\phi^4$ self-interaction potential $U(\phi) := \frac 18(1-\phi^2)^2$.
\begin{proposition}
\label{prop:asym-stat}
Let $U(\phi) := \frac 18(1-\phi^2)^2$.
A solution $\bs\phi$ of \eqref{eq:csf} satisfies $\lim_{t\to \infty}\|\partial_t \phi(t)\|_{L^2}^2 = 0$ if and only if $\bs \phi$ or
$-\bs\phi$ is a kink cluster.
\end{proposition}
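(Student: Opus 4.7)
The direction $(\Leftarrow)$ will be immediate from Proposition~\ref{prop:close-to-H}, together with the identity $\|\partial_t(-\phi)\|_{L^2}=\|\partial_t\phi\|_{L^2}$. The plan for the converse is to assume $\|\partial_t\phi(t)\|_{L^2}^2\to 0$, use conservation of energy to get $E_p(\phi(t))\to E(\bs\phi)=:E_*$, and then prove that $E_*=n_*E_p(H)$ for some non-negative integer $n_*$ while showing $\phi(t)$ converges in $H^1$, up to a sign, to an $n_*$-kink configuration. Once that is achieved, Proposition~\ref{prop:close-to-H} will identify $\bs\phi$ or $-\bs\phi$ as a kink $n_*$-cluster. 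The key tool, specific to the $\phi^4$ potential, is the Bogomolny factorisation: setting $W(\phi):=\int_0^\phi\sqrt{2U(s)}\,\ud s$, I would use the pointwise identity $\tfrac12(\partial_x\phi)^2+U(\phi)=\tfrac12(\partial_x\phi-\sigma\sqrt{2U(\phi)})^2+\sigma\sqrt{2U(\phi)}\,\partial_x\phi$, with $\sigma=\sign(\partial_x\phi)$, integrated over each maximal monotone interval of $\phi(t,\cdot)$ and summed, to get
\begin{equation}
E_p(\phi(t))=\tfrac12\!\int_\bR\!\big(\partial_x\phi(t)-\sigma(t,\cdot)\sqrt{2U(\phi(t))}\,\big)^2\ud x+\mathrm{TV}(W\circ\phi(t)).
\end{equation}

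Because $W$ is strictly monotone on all of $\bR$ for the $\phi^4$ potential and $W(1)-W(-1)=E_p(H)$, the total variation $\mathrm{TV}(W\circ\phi(t))$ honestly counts the vacuum-to-vacuum transitions of $\phi(t,\cdot)$ weighted by $E_p(H)$, plus a non-negative contribution from any excursion past $\pm 1$. I would then show that the number $n(t)$ of such transitions is uniformly bounded for $t$ large (since $\mathrm{TV}(W\circ \phi(t))\le E_p(\phi(t))$ is bounded) and that $E_p(\phi(t))\ge n(t)E_p(H)$. A discreteness argument combined with the continuity of the flow and the parity forced by the topological class of $\bs\phi$ should show that $n(t)$ is eventually equal to a fixed integer $n_*$ with $n_*E_p(H)\le E_*$. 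Passing to the limit in the Bogomolny identity then forces both $E_*=n_*E_p(H)$ and the vanishing of the Bogomolny defect $\int(\partial_x\phi(t)-\sigma(t,\cdot)\sqrt{2U(\phi(t))})^2\ud x\to 0$.

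With the defect vanishing, on each of its $n_*$ monotone pieces $\phi(t,\cdot)$ approximately solves the kink ODE $\partial_x\phi=\pm\sqrt{2U(\phi)}$, whose non-trivial solutions are translates of $\pm H$. A concentration-compactness argument along any sequence $t_k\to\infty$ should yield parameters $a_1(t_k)<\cdots<a_{n_*}(t_k)$ with $a_{j+1}(t_k)-a_j(t_k)\to\infty$ and a sign $\epsilon\in\{\pm 1\}$ (determined by, say, the first monotone piece) such that $\phi(t_k)\to\epsilon H(\vec a(t_k))$ in $H^1$; a standard modulation argument then upgrades this to the full limit $\|\phi(t)-\epsilon H(\vec a(t))\|_{H^1}\to 0$, which, together with the vanishing of the kinetic energy, is exactly~\eqref{eq:close-to-H} for $\epsilon\bs\phi$, so that Proposition~\ref{prop:close-to-H} concludes. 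The hard step will be the concentration-compactness: one must rule out kinks escaping to $\pm\infty$ (which would reduce the transition count and contradict $\mathrm{TV}\to n_*E_p(H)$) as well as collisions (prevented by the strict inequality in the Bogomolny bound for configurations with fewer monotone pieces). The restriction to the $\phi^4$ potential seems essential here, because the strict monotonicity of $W$ throughout $\bR$ is what forces $E_*$ to be quantised at an integer multiple of $E_p(H)$.
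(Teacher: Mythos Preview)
Your Bogomolny identity is correct and the total-variation bookkeeping is a nice device, but the step ``passing to the limit in the Bogomolny identity then forces both $E_*=n_*E_p(H)$ and the vanishing of the Bogomolny defect'' is a genuine gap. From
\[
E_p(\phi(t))=\tfrac12\!\int_\bR\!\big(|\partial_x\phi(t)|-\sqrt{2U(\phi(t))}\,\big)^2\ud x+\mathrm{TV}(W\circ\phi(t))
\]
and $\mathrm{TV}(W\circ\phi(t))\ge n(t)\,M$ you only get $E_*\ge n_*M$, not equality: nothing in your argument excludes that the defect converges to a positive constant $E_*-n_*M>0$, or that $\mathrm{TV}(W\circ\phi(t))$ stays strictly above $n_*M$ because of persistent partial excursions of $\phi(t,\cdot)$. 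The quantisation $E_*\in M\bN$ is a genuinely dynamical statement; it cannot be extracted from conservation of energy, the Bogomolny bound, and continuity of the flow alone. (Your remark that strict monotonicity of $W$ is what forces quantisation is not right: monotonicity of $W$ only makes $\mathrm{TV}(W\circ\phi)$ a faithful transition counter, it does not pin $E_*$ to the lattice $M\bN$.)

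The paper supplies exactly this missing dynamical input, in two pieces. First (Lemma~\ref{lem:kink-or-antikink}), any weak limit of $\bs\phi(t_m,\cdot+a_m)$ along $t_m\to\infty$ must be a static state, because weak continuity of the flow propagates the vanishing of $\partial_t\phi$ to nearby times; since for $\phi^4$ the only static states are $\pm\bs 1$ and $\pm\bs H$, a profile-extraction argument (Lemma~\ref{lem:asym-stat-Linf-conv}) yields $L^\infty$ convergence of $\phi(t_m)$ to a multi-kink $\iota H(\vec a_m)$. Second, the upgrade from $L^\infty$ to $\cE$ convergence (and hence the exclusion of $E_*\notin M\bN$) is obtained not by Bogomolny but by a linear Klein--Gordon argument (Lemma~\ref{lem:mkink-stab}): writing $\bs g_m=\bs\phi(t_m+\cdot)-\bs H(\vec a_m)$, one shows $\bs g_m$ is close on $[-1,1]$ to a free Klein--Gordon evolution $\bs g_{m,\lin}$, and then the elementary identity
\[
\int_{-1}^1\!\!\int_\bR\!\big((\partial_t g_{m,\lin})^2-(\partial_x g_{m,\lin})^2-g_{m,\lin}^2\big)\ud x\,\ud t
=\big[\la\partial_t g_{m,\lin},g_{m,\lin}\ra\big]_{-1}^{1}
\]
contradicts $\|\partial_t g_{m,\lin}(s)\|_{L^2}\to 0$ together with $\|g_{m,\lin}(s)\|_{H^1}\gtrsim 1$. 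This is the mechanism that rules out a nonzero ``radiative'' remainder and forces $E_*=nM$; your proposal has no analogue of it.
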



Our main result is the determination
of the asymptotic behaviour of any kink cluster.
\begin{mainthm}
\label{thm:asymptotics}
Let $\kappa > 0$ be given by Proposition~\ref{prop:prop-H}
and $M := E_p(H)$.
If $\bs\phi$ is a kink $n$-cluster, then there exist
continuously differentiable functions $a_1, \ldots, a_n: \bR \to \bR$
such that $g(t) := \phi(t) - H(\vec a(t))$ satisfies
\begin{equation}
\begin{aligned}
\lim_{t\to\infty}\bigg(&\max_{1\leq k < n}\bigg|\big(a_{k+1}(t) - a_k(t)\big) - \Big(2\log(\kappa t) - \log\frac{Mk(n-k)}{2}\Big)\bigg| \\
+ &\max_{1\leq k\leq n}|ta_k'(t) + (n+1 - 2k)|
+ t\|\partial_t g(t)\|_{L^2} + t\|g(t)\|_{H^1}\bigg) = 0.
\end{aligned}
\end{equation}
\end{mainthm}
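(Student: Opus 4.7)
The starting point is Proposition~\ref{prop:close-to-H}, which supplies continuous modulation parameters with separating kinks and $\partial_t\phi(t) \to 0$ in $L^2$. My first step would be to upgrade them to smooth modulation parameters via the standard procedure of imposing the orthogonality conditions
\begin{equation*}
\la g(t), \partial_x H(\cdot - a_k(t))\ra_{L^2} = 0 \quad (k = 1, \ldots, n)
\end{equation*}
on the remainder $g(t) := \phi(t) - H(\vec a(t))$. Since the kinks are becoming well-separated and $\partial_x H$ has exponential decay, the associated Jacobian is asymptotically diagonal with nonzero entries, so the implicit function theorem produces $C^2$ modulation parameters; the new choice differs from the one in Proposition~\ref{prop:close-to-H} by an exponentially small shift, so all asymptotic conclusions of that proposition carry over.

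Next, I would derive the effective $n$-body dynamics. Substituting $\phi = H(\vec a) + g$ into \eqref{eq:csf} and projecting the $g$-equation onto the directions $\partial_x H(\cdot - a_k)$ produces the modulation equations
\begin{equation*}
M a_k''(t) = -\partial_{a_k} \cW(\vec a(t)) + \cR_k(t),
\end{equation*}
where the interaction potential $\cW(\vec a) := E_p(H(\vec a)) - n E_p(H)$ has the leading expansion $\cW(\vec a) = -2\kappa^2 \sum_{k=1}^{n-1} \eee^{-(a_{k+1}-a_k)}$ up to higher-order exponentials, with the coefficient $2\kappa^2$ coming from the asymptotic $H(x) = 1 - \kappa\eee^{-x} + O(\eee^{-2x})$ supplied by Proposition~\ref{prop:prop-H}, and where $\cR_k(t)$ is quadratic in $(g, \partial_t g)$ or of subleading exponential order. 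In parallel, energy conservation $E(\bs\phi) = n E_p(H)$ combined with coercivity of $\vD^2 E_p(H(\vec a))$ modulo the $n$-dimensional subspace spanned by the translation modes yields
\begin{equation*}
\|g(t)\|_{H^1}^2 + \|\partial_t g(t)\|_{L^2}^2 \lesssim |\cW(\vec a(t))| + \text{lower order},
\end{equation*}
so that $\bs\phi$ is approximated by a conservative $n$-body system with nearest-neighbour attractive exponential interactions.

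The third step is an asymptotic analysis of this reduced system. Substituting the self-similar ansatz $a_k(t) = (2k-n-1)\log t + d_k + o(1)$ into the leading-order equations produces the first-order recursion
\begin{equation*}
2\kappa^2(\beta_k - \beta_{k-1}) = M(n+1-2k), \qquad \beta_k := \eee^{-(d_{k+1}-d_k)},
\end{equation*}
together with the boundary conditions at $k=1$ and $k=n$ coming from the one-sided forces on the outermost particles. The unique solution is $\beta_k = Mk(n-k)/(2\kappa^2)$, whence $a_{k+1}(t) - a_k(t) = 2\log(\kappa t) - \log(Mk(n-k)/2) + o(1)$, matching the combinatorial prefactor in the theorem. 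That the actual trajectory selects this parabolic orbit, rather than diverging along unstable exponential modes, is forced by the definition of a kink cluster: the condition $E(\bs\phi) = n E_p(H)$ together with $a_k'(t) \to 0$ singles out the unique orbit along which the mechanical energy of the reduced system tends to zero.

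I expect the most delicate step to be obtaining the sharp $o(1/t)$ rate on $g$, as opposed to the cruder $O(1/t)$ bound that already follows from coercivity once $|\cW(\vec a(t))| \lesssim 1/t^2$ is known. This will likely require absorbing the leading source in the $g$-equation into an enlarged modulation that also adjusts momentum variables $b_k := a_k'$, combined with a virial or localised-energy identity tailored to the logarithmic spreading $a_{k+1}-a_k \sim 2\log t$. Once such a refined estimate is in hand, the three decay rates in the theorem follow simultaneously from a bootstrap on a Lyapunov functional combining the mechanical energy of the reduced $n$-body system with the quadratic form in $(g, \partial_t g)$.
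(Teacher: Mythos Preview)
Your overall framework is right, but there are two concrete gaps that would prevent the argument from closing.

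First, in your step~2, projecting the $g$-equation directly onto $\partial_x H(\cdot-a_k)$ to obtain $Ma_k'' = -\partial_{a_k}\cW + \cR_k$ with $\cR_k$ ``quadratic in $(g,\partial_t g)$'' does not give usable error terms. Coercivity only yields $\|g\|_{H^1}^2 + \|\partial_t g\|_{L^2}^2 \lesssim \rho(t) := \sum_k \eee^{-(a_{k+1}-a_k)}$, so a quadratic error is $O(\rho)$, the \emph{same} order as the force $\partial_{a_k}\cW \sim \rho$. With $O(\rho)$ errors the ODE analysis cannot separate the genuine interaction from the remainder, and the precise constants in the asymptotics cannot be extracted. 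The paper resolves this by introducing \emph{localised momenta} $p_k := \la -(-1)^k\partial_x H_k + \chi_k\partial_x g, \dot g\ra$, where $\chi_k$ is a cutoff of width $\sim y_{\min}$ centred at $a_k$ (Definition~\ref{def:p}). When one differentiates $p_k$ in time, the dangerous quadratic terms in $g$ integrate by parts against $\partial_x\chi_k$, gaining a factor $\|\partial_x\chi_k\|_{L^\infty} \sim y_{\min}^{-1} \sim (-\log\rho)^{-1}$ and yielding $p_k' = F_k(\vec a) + O(\rho/(-\log\rho))$, which \emph{is} $o(\rho)$ (Lemma~\ref{lem:ref-mod}). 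Your intuition about a ``localised-energy identity'' is thus on the right track, but it enters at the ODE-derivation stage, not at the $g$-estimate stage.

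Second, your step~4 misidentifies the mechanism for the sharp $o(1/t)$ rate on $g$. No refined modulation or virial argument is needed. The paper proves an improved coercivity bound (Lemma~\ref{lem:impr-coer})
\begin{equation*}
\|\partial_t g\|_{L^2}^2 + \tfrac{\nu}{2}\|g\|_{H^1}^2 + M|\vec a\,'|^2 \leq 4\kappa^2\rho + C\rho^{3/2},
\end{equation*}
obtained simply by decomposing $\dot g = \partial_t g - \sum_k(-1)^k a_k'\partial_x H_k$ inside the energy identity. Once the ODE analysis delivers the precise values $a_{k+1}'-a_k' = 2/t + o(1/t)$ and $4\kappa^2\rho(t) = Mt^{-2}\sum_k(n+1-2k)^2 + o(t^{-2})$, substituting into the inequality above forces $\|\partial_t g\|_{L^2}^2 + \|g\|_{H^1}^2 = o(t^{-2})$ directly, together with $o(1/t)$ control on the centre-of-mass velocity. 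So the logical order is: localised momenta $\Rightarrow$ ODE analysis $\Rightarrow$ sharp $g$-estimate, not the bootstrap you describe.

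Finally, your step~3 is too optimistic. Verifying that the self-similar profile solves the leading-order system is easy; showing that the actual trajectory \emph{converges} to it (rather than merely approaching it along a subsequence, or oscillating among several parabolic motions) is the bulk of Section~\ref{sec:n-body}. The paper first gets two-sided bounds $y_{\min}(t) = 2\log t + O(1)$ via a monotone quantity, then finds a sequence of good times by a second-order Taylor expansion of $t^2\rho(t)$, and finally proves a no-return lemma in the variables transverse to the scaling direction $\vec 1$. The energy level-set argument you sketch (``$a_k'\to 0$ singles out the unique orbit'') only says the motion is parabolic; it does not by itself pin down the asymptotic shape.
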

The decomposition $\phi(t) = H(\vec a(t)) + g(t)$ used in the statement above
is clearly not unique. We will use a specific choice of $\vec a(t)$ determined
by the \emph{orthogonality conditions}
\begin{equation}
\label{eq:g-orth-intro}
\int_{-\infty}^\infty\partial_x H(x - a_k(t)) g(t, x)\ud x = 0, \qquad\text{for all }k \in \{1, \ldots, n\}.
\end{equation}
This way, whenever $\phi(t)$ is close to a multi-kink configuration,
the uniquely determined number $a_k(t)$ indicates the ``position'' of the $k$-th kink.

Our next result concerns the problem of existence of kink $n$-clusters.
We prove that, for any choice of $n$ points on the line sufficiently distant from each other,
there exists a kink $n$-cluster such that the initial positions of the (anti)kinks
are given by the $n$ chosen points.
The result is inspired by the work
of Maderna and Venturelli~\cite{MaVe09} on the Newtonian $n$-body problem.
Before we give the precise statement, we introduce the following notion.
\begin{definition}[Distance to a multi-kink configuration]
\label{def:d-def}
For all $\bs \phi_0 \in \cE_{1, (-1)^n}$, the distance from $\bs \phi_0 = (\phi_0, \dot\phi_0)$
to the set of multi-kink configurations is defined by
\begin{equation}
\label{eq:d-def}
\delta(\bs \phi_0) := \inf_{\vec b \in \bR^n}\Big(\|\dot \phi_0\|_{L^2}^2 + \|\phi_0 - H(\vec b)\|_{H^1}^2 + \sum_{k=1}^{n-1} \eee^{-(b_{k+1} - b_k)}\Big).
\end{equation}
\end{definition}
Note that, by Proposition~\ref{prop:close-to-H}, if $\bs\phi$ is a kink $n$-cluster,
then $\lim_{t\to\infty}\delta(\bs\phi(t)) = 0$.
We stress that the position parameters determined by the orthogonality
conditions do not necessarily achieve the infimum above,
but they do achieve it up to a constant, see Lemma~\ref{lem:static-mod}.
\begin{mainthm}
\label{thm:any-position}
There exist $C_0, L_0 > 0$ such that the following is true.
If $L \geq L_0$ and $\vec a_0 \in \bR^n$ satisfies $a_{0, k+1} - a_{0, k} \geq L$ for all
$k \in \{1, \ldots, n-1\}$, then there exists $\bs g_0 = (g_0, \dot g_0) \in \cE$ satisfying $\|\bs g_0\|_{\cE}^2 \leq C_0 \eee^{-L}$ and the orthogonality conditions
\begin{equation}
\label{eq:g0-orth}
\int_{-\infty}^\infty \partial_x H(x - a_{0, k})g_0(x)\ud x = 0 \qquad\text{for all }k \in \{1, \ldots, n\}
\end{equation}
such that the solution of \eqref{eq:csf} corresponding to the initial data
\begin{equation}
\bs\phi(0) = \bs \phi_0 := \big(H(\vec a_0) + g_0, \dot g_0\big)
\end{equation}
is a kink cluster and satisfies $\delta(\bs \phi(t)) \leq C_0/(e^L + t^2)$ for all $t \geq 0$.
\end{mainthm}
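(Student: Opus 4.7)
The strategy follows the ``backward from infinity'' paradigm for multi-soliton constructions, combined with a reduction to the $n$-body ODE governing the modulation parameters and a final shooting argument at $t=0$ to match the prescribed positions.

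\emph{ODE step.} The modulation analysis reduces a kink cluster to the $n$-body system with attractive exponential nearest-neighbour interactions
\begin{equation}
M\ddot a_k = \kappa^2\big(e^{-(a_{k+1}-a_k)}-e^{-(a_k-a_{k-1})}\big),
\end{equation}
(boundary terms vanishing), for which the profile of Main Theorem~\ref{thm:asymptotics} is at leading order an exact solution. A contraction argument in a weighted $C^1$ space on $[0,\infty)$ with weights $(e^{L/2}+t)^{-1}$ on velocities and $(e^{L/2}+t)$ on position differences from the profile produces, for each $\vec a_0$ with separations $\geq L \geq L_0$, a unique trajectory $\vec a(t)$ with $\vec a(0)=\vec a_0$ asymptotic to the profile; the corresponding initial velocities $\dot{\vec a}(0)$ are of size $\lesssim e^{-L/2}$.

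\emph{PDE step and bootstrap.} Pick $T_n\to\infty$ and solve \eqref{eq:csf} backward from $t=T_n$ starting from the approximate profile
\begin{equation}
\bs\phi_n(T_n) := \Big(H(\vec a(T_n)),\,-\sum_{k=1}^n(-1)^k\dot a_k(T_n)\,\partial_x H(\cdot-a_k(T_n))\Big).
\end{equation}
Applying the implicit function theorem to the orthogonality conditions \eqref{eq:g-orth-intro} decomposes the flow as $\phi_n(t)=H(\vec a_n(t))+g_n(t)$ with $\vec a_n(T_n)=\vec a(T_n)$. The central estimate is the backward bootstrap
\begin{equation}
\|\bs g_n(t)\|_\cE^2 + \sum_{k=1}^{n-1}e^{-(a_{n,k+1}(t)-a_{n,k}(t))} \lesssim (e^L+t^2)^{-1}, \qquad |\vec a_n(t)-\vec a(t)|\lesssim e^{-L/4},
\end{equation}
on $[0,T_n]$, proved via a modulated energy functional of the form $\cI_n = \langle\cH(\vec a_n)\bs g_n,\bs g_n\rangle + (\text{virial correction})$ whose time derivative is integrable thanks to the fact that $\vec a(t)$ corresponds to the marginal zero-energy level of the reduced $n$-body problem. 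A standard compactness argument on the sequence $\{\bs\phi_n(0)\}$ then produces, along a subsequence, a limit kink cluster $\bs\phi$ with initial modulation parameters $\vec a_\infty$ within $e^{-L/2}$ of $\vec a_0$.

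\emph{Shooting and main obstacle.} To arrange $\vec a_\infty=\vec a_0$ exactly, I would repeat the entire construction with $\vec a_0$ replaced by $\vec b$ in a small ball and invert the resulting continuous, near-identity map $\vec b\mapsto\vec a_\infty(\vec b)$ via Brouwer's fixed-point theorem (or a direct contraction). The resulting initial data $(H(\vec a_0)+g_0,\dot g_0)$ satisfies the orthogonality conditions \eqref{eq:g0-orth} by construction, $\|\bs g_0\|_\cE^2\lesssim e^{-L}$, and the solution inherits $\delta(\bs\phi(t))\lesssim (e^L+t^2)^{-1}$. The technical heart, and the main expected obstacle, is closing the bootstrap: attractive interactions typically destabilize the multi-kink manifold, and the sharp backward rate $(e^L+t^2)^{-1}$ is only available because $\vec a(t)$ is the marginal zero-energy trajectory. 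Implementing the required cancellation demands a carefully chosen virial correction together with sharp coercivity for $\cH(\vec a_n)$ on the orthogonality complement, with constants controlled by the large separations between adjacent kinks.
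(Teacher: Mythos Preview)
Your overall architecture (backward-in-time construction, compactness, topological matching of positions) is the same as the paper's, but the \emph{order} of the topological argument and the passage to the limit is reversed, and this creates a genuine gap.

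The paper applies the Poincar\'e--Miranda theorem at each \emph{finite} horizon $T$: it varies the positions $\vec y_T$ at time $T$ (with velocities $\vec v_T$ chosen so that $E=nM$), defines an exit time $T_1$, and shows the continuous map $\vec y_T \mapsto \vec y(T_1)$ satisfies the Miranda boundary conditions; hence for each $T$ one gets data with $\vec a(0)=\vec a_0$ \emph{exactly}. Only then does one let $T_m\to\infty$ and extract a weak limit. Continuity here is immediate from local well-posedness, and the uniform decay $\rho(t)\lesssim (e^L+t^2)^{-1}$ comes not from tracking a specific ODE trajectory but from the differential inequality $\rho'\le -c_0\rho^{3/2}$ (the ``ejection'' monotonicity via the function $\beta$ of Section~\ref{sec:n-body}), which is robust and avoids any bootstrap against a reference trajectory.

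In your scheme, by contrast, you first pass to the weak limit along a subsequence to obtain $\vec a_\infty(\vec b)$ and only then try to invert $\vec b\mapsto \vec a_\infty(\vec b)$ by Brouwer. The problem is that this map is defined through weak limits along subsequences that may depend on $\vec b$; without uniqueness of the limiting cluster (which the paper explicitly flags as open for $n>2$), you have no mechanism to make the map single-valued, let alone continuous. Your alternative ``direct contraction'' would likewise presuppose exactly the uniqueness that is not available. A secondary issue is your bootstrap $|\vec a_n(t)-\vec a(t)|\lesssim e^{-L/4}$ on all of $[0,T_n]$: since the attractive system is unstable and the PDE modulation equations differ from the exact ODE by errors of order $\rho/(-\log\rho)$, controlling the deviation from a fixed ODE trajectory over arbitrarily long backward intervals is delicate; the paper sidesteps this entirely by working only with the scalar inequality for $\rho$.
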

\begin{remark}
We expect that for a given choice of $\vec a_0$ there is actually a \emph{unique}
$\bs g_0$ in a~small ball of $\cE$ leading to a kink $n$-cluster.
This is clearly true for $n = 1$. In the case $n = 2$,
uniqueness of $\bs g_0$ can be obtained as
a consequence of our work with Kowalczyk~\cite{JKL1}.
Partial uniqueness results for $n > 2$ will be proved in our future work.
\end{remark}
\begin{remark}
In the case of equation \eqref{eq:sg}, which is completely integrable,
it is in principle possible to obtain explicit kink clusters.
For $n \in \{2, 3\}$, such examples of kink clusters where given in \cite{Tomasz}. Nevertheless, Theorems~\ref{thm:asymptotics}
and \ref{thm:any-position} are new even for the sine-Gordon equation.
\end{remark}

Finally, Section~\ref{sec:profiles} is devoted to the 
role of the kink clusters
as universal profiles for the formation/collapse of a multi-kink configuration.
\begin{mainthm}
\label{thm:unstable}
Let $\eta > 0$ be sufficiently small and let $\bs\phi_m$ be a sequence of solutions of \eqref{eq:csf} defined on time intervals $[0, T_m]$
satisfying the following assumptions:
\begin{enumerate}[(i)]
\item $\lim_{m\to \infty}\delta(\bs\phi_m(T_m)) = 0$,
\item $\delta(\bs \phi_m(t)) \leq \eta$ for all $t \in [0, T_m]$,
\item $\delta(\bs \phi_m(0)) = \eta$.
\end{enumerate}
Then, after extraction of a subsequence, there exist $0 = n^{(0)} < n^{(1)} < \ldots < n^{(\ell)} = n$,
finite energy states $\bs P_0^{(1)}, \ldots, \bs P_0^{(\ell)}$ and sequences of real numbers $(X_m^{(1)})_m, \ldots, (X_m^{(\ell)})_m$ such that
\begin{enumerate}[(i)]
\item for all $j \in \{1, \ldots, \ell\}$, the solution $\bs P^{(j)}$ of \eqref{eq:csf}
for the initial data $\bs P^{(j)}(0) = \bs P_0^{(j)}$ is a cluster of $n^{(j)} - n^{(j-1)}$ kinks,
\item for all $j\in \{1, \ldots, \ell-1\}$, $\lim_{m\to \infty}\big( X_m^{(j+1)} - X_m^{(j)}\big) = \infty$,
\item $
\lim_{m\to \infty} \Big\|\bs \phi_m(0) - \Big(\bs 1 + \sum_{j = 1}^\ell(-1)^{n^{(j-1)}} \big(\bs P_0^{(j)}(\cdot - X_m^{(j)}) - \bs 1\big)\Big)\Big\|_\cE = 0.
$
\end{enumerate}
\end{mainthm}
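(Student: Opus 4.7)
The plan is to use modulation parameters, partition the kinks into groups according to whether their mutual distances at $t=0$ remain bounded as $m\to\infty$, and extract each group as an independent kink cluster in the limit. Hypothesis (ii) combined with Proposition~\ref{prop:close-to-H} and standard modulation theory based on \eqref{eq:g-orth-intro} produces, for each $m$ and every $t \in [0, T_m]$, continuously differentiable positions $a_{m,1}(t) < \cdots < a_{m,n}(t)$ and a remainder $\bs g_m(t) = \bs\phi_m(t) - (H(\vec a_m(t)), 0)$ satisfying
\begin{equation}
\|\bs g_m(t)\|_\cE^2 + \sum_{k=1}^{n-1} \eee^{-(a_{m,k+1}(t) - a_{m,k}(t))} \lesssim \delta(\bs\phi_m(t)) \leq \eta.
\end{equation}
After extracting a subsequence, each initial gap $a_{m,k+1}(0) - a_{m,k}(0)$ either converges in $\bR$ or tends to $+\infty$; set $n^{(0)} := 0$, $n^{(\ell)} := n$, and let $n^{(1)} < \cdots < n^{(\ell-1)}$ enumerate the indices of the divergent gaps. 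This partitions the $n$ kinks into $\ell$ groups, the $j$-th consisting of $N_j := n^{(j)} - n^{(j-1)}$ consecutive kinks which stay at bounded mutual distance.

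Next, set $X_m^{(j)} := a_{m,n^{(j-1)}+1}(0)$ and extract further so that $\tilde a_{m,k}^{(j)} := a_{m,k}(0) - X_m^{(j)}$ converges for every $k \in \{n^{(j-1)}+1, \ldots, n^{(j)}\}$. Let $\chi_m^{(j)}$ be a smooth partition of unity on $\bR$ subordinate to intervals separating the groups (cut at midpoints of the divergent gaps), and define
\begin{equation}
\bs P_m^{(j)}(0, x) := \bigl(H(\vec{\tilde a}_m^{(j)})(x), 0\bigr) + (-1)^{n^{(j-1)}} \chi_m^{(j)}(x + X_m^{(j)}) \bs g_m(0, x + X_m^{(j)}),
\end{equation}
where $H(\vec{\tilde a}_m^{(j)})$ denotes the $N_j$-kink configuration at the positions $\tilde a_{m,k}^{(j)}$. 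A direct algebraic check using $\sum_j \chi_m^{(j)} \equiv 1$ and the telescoping structure of the multi-kink ansatz verifies item (iii) identically, so matters reduce to showing that $\bs P_m^{(j)}(0) \to \bs P_0^{(j)}$ strongly in $\cE$. Weak compactness gives the existence of limits along a subsequence, and strong convergence follows from a concentration-compactness / energy-splitting argument: the energy $E(\bs\phi_m(0))$ decomposes asymptotically into the sum $\sum_j E(\bs P_0^{(j)})$ (the inter-group interaction terms being exponentially small in the divergent gaps), which forces convergence of norms and hence strong convergence in $\cE$.

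The crux is to verify that each solution $\bs P^{(j)}$ of \eqref{eq:csf} with initial datum $\bs P_0^{(j)}$ is a kink $N_j$-cluster. One first checks that $T_m \to \infty$, since a bounded limit would yield (after extraction) a solution with $\delta = 0$ at finite time, impossible because vanishing $\delta$ forces infinite mutual separation of kinks together with vanishing kinetic energy. Finite speed of propagation for \eqref{eq:csf} then guarantees that on any fixed interval $[0, T]$, the evolution of $\bs P_m^{(j)}(0)$ coincides with the translate of $\bs\phi_m$ centred at $X_m^{(j)}$ up to an $\cE$-error vanishing with $m$ on expanding windows; hence $\bs P_m^{(j)}(t) \to \bs P^{(j)}(t)$ locally uniformly in $t$, and passing to the limit in hypothesis (ii) gives $\delta(\bs P^{(j)}(t)) \leq \eta$ for all $t \geq 0$, ensuring global existence. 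Energy conservation applied to the $t=0$ decomposition combined with the Bogomolny-type lower bound on each topological sector yields $E(\bs P^{(j)}) = N_j E_p(H)$. The main obstacle is to establish $\lim_{t \to \infty} \delta(\bs P^{(j)}(t)) = 0$: I would run a diagonal argument, selecting $\tau_m \nearrow \infty$ with $\tau_m \leq T_m$ and using hypothesis (i) together with finite speed of propagation to transfer the vanishing of $\delta$ at $T_m$ into vanishing of the windowed $\delta$ at time $\tau_m$. Once this is achieved, the existence of tracking functions $x_0^{(j)}(t) \leq \ldots \leq x_{N_j}^{(j)}(t)$ at which $\phi$ asymptotes to alternating vacua follows immediately from the modulation decomposition, and Definition~\ref{def:cluster} is verified.
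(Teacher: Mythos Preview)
Your overall architecture (modulation, grouping kinks by bounded versus divergent initial gaps, spatial localisation via a partition of unity, finite speed of propagation, weak compactness, and a Fatou/energy-splitting argument to upgrade weak to strong convergence) is correct and matches the paper's strategy. The strong convergence step in particular is fine: once you know $E(\bs\phi_m)\to nM$ and each weak limit $\bs P_0^{(j)}$ lies in the correct topological sector (which follows because the kink positions within a group converge), Bogomolny gives $E(\bs P_0^{(j)})\geq N_jM$, summing forces equality, and the coercivity of $E$ near a multi-kink yields strong convergence.

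The genuine gap is your last paragraph: showing $\lim_{t\to\infty}\delta(\bs P^{(j)}(t))=0$. Your proposed diagonal argument does not work. Hypothesis~(i) controls $\bs\phi_m$ only at the single time $T_m$; finite speed of propagation is a spatial localisation tool and cannot ``transfer'' the smallness of $\delta$ from time $T_m$ to an earlier time $\tau_m$, since on the interval $[\tau_m,T_m]$ the intra-group distances could in principle collapse and re-expand. Knowing $E(\bs P^{(j)})=N_jM$ and $\delta(\bs P^{(j)}(t))\leq\eta$ for all $t$ is not enough to rule out such oscillation.

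What the paper does instead is establish a \emph{uniform-in-$m$ quantitative decay}
\[
\limsup_{m\to\infty}\rho_m(t)\leq\frac{C}{\eta^{-1}+t^2}\qquad\text{for every fixed }t\geq 0,
\]
using the monotonicity of the function $\beta(t)=\vec q(t)\cdot\eee^{-\vec y(t)}-c_0\eee^{-\frac32 y_{\min}(t)}$ (this is the ``ejection property'' from Lemma~\ref{lem:ymin-asym}). Since $\beta_m$ is decreasing on $[0,T_m]$ and $\beta_m(T_m)\to 0$ (from hypothesis~(i)), one obtains $\beta_m(t)\geq -\epsilon_m$ with $\epsilon_m\to 0$, and integrating the resulting differential inequality for $\rho_m$ gives the decay. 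This bound survives the weak limit and directly yields $\delta(\bs P^{(j)}(t))\lesssim(\eta^{-1}+t^2)^{-1}$, hence the cluster property. This is the missing dynamical ingredient in your sketch; without it the proof does not close.
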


Theorem~\ref{thm:unstable} can be understood to mean that kink clusters have properties similar to the stable/unstable manifolds of a hyperbolic stationary state. This analogy is most easily understood in the case $n =2$, which we explain here.

If we artificially extended the phase space by a state $\bs H^\infty$ corresponding
to the limit of $(H(a_1, a_2), 0)$ as $a_2 - a_1 \to \infty$,
then the function $\delta$ gives a distance to $\bs H^\infty$ and the $2$-kink clusters satisfy $ \lim_{t \to \infty} \delta( \bs \phi(t)) = 0$, in other words they form the \emph{stable manifold} of $\bs H^\infty$.

In this language, Theorem~\ref{thm:unstable} characterizes the trajectories in the phase space that \emph{enter} (or in reverse time, \emph{exit}), a small neighbourhood of the ``critical point'' $\bs H^\infty$, by affirming that a such a trajectory,
while still far away from the critical point,
must be close to its (un)stable manifold.
For hyperbolic critical points, this property is a consequence
of the Hartman-Grobman theorem.
In our case, the soliton interactions play an analogous role
as exponential (in)stability in the hyperbolic case.

The analogy described above carries over to $n>2$, but is slightly more complicated, since at the ``exit'' time $t =0$ the solution $\bs\phi_m(0)$ is close to a superposition of well-separated kink clusters, rather than to a single one. Intuitively, for $n > 2$ it can happen
that only some of the neighbouring kinks ``collapse'', while
the distances between other neighbouring kinks remain large.

\subsection{Structure of the paper and main ideas}
In Section~\ref{sec:static}, we recall the basic properties of the stationary solutions
and compute the first non-trivial term in the asymptotic expansion
of the potential energy of a multi-kink configuration, $E_p(H(\vec a))$,
as the distances between the kinks tend to infinity.

Section~\ref{sec:cauchy} is devoted to a brief presentation of the well-posedness theory
of the equation \eqref{eq:csf}.

In Section~\ref{sec:mod}, we implement the \emph{modulation method},
also called the ``method of collective coordinates'' in the physics literature.
The idea is to rewrite \eqref{eq:csf} as a coupled system of equations for
the \emph{modulation parameters} $\vec a(t)$ and the \emph{remainder} $\bs g(t)$.

If we consider an isolated system of $n$ points on the real line, located at $a_1, a_2, \ldots, a_n$, whose masses are equal to $M > 0$ and the total potential energy of the system is given by $E_p(H(a_1, \ldots, a_n))$,
then the principles of Newtonian mechanics assert that the positions of the masses evolve according to the system of differential equations
\begin{equation}
\label{eq:newton-ode}
p_k(t) = M a_k'(t), \qquad p_k'(t) = F_k(a_1(t), \ldots, a_n(t)) = -\partial_{a_k}E_p(H(a_1, \ldots, a_n)).
\end{equation}
The main conclusion of Section~\ref{sec:mod} is Lemma~\ref{lem:ref-mod},
which states that the modulation parameters of a kink cluster
\emph{approximately} satisfy this system of ODEs.
This step relies on appropriately defined \emph{localised momenta},
a method first used in \cite{J-18p-gkdv} in a similar context,
and inspired by \cite[Proposition 4.3]{RaSz11}.

After replacing $E_p(H(\vec a))$ by its leading term computed in Section~\ref{sec:static},
the system \eqref{eq:newton-ode} becomes an $n$-body problem with \emph{attractive exponential
nearest-neighbour interactions}
\begin{equation}
\label{eq:attractive-toda}
p_k(t) = Ma_k'(t), \qquad p_k'(t) = 2\kappa^2\big(e^{-(a_{k+1}(t) - a_k(t))} - e^{-(a_k(t) - a_{k-1}(t))}\big),
\end{equation}
where by convention $a_0(t) := -\infty$ and $a_{n+1}(t) := \infty$.
In the case of repulsive interactions, we would have the well-known \emph{Toda system} introduced in \cite{Toda70}, so we shall call \eqref{eq:attractive-toda} the ``attractive Toda system''.
Section~\ref{sec:n-body} is devoted to the study of the long-time behaviour of
solutions of this $n$-body problem. The solutions corresponding to kink clusters
are the ones for which the distances between the masses tend to $\infty$
and their momenta tend to $0$ as $t \to \infty$.
Such solutions are referred to as \emph{parabolic motions}, see \cite{Saari2, MaVe09}.
One can check that \eqref{eq:attractive-toda} has an exact solution
(defined up to translation, the mass centre being chosen arbitrarily)
\begin{equation}
\label{eq:explicit-parabolic}
a_{k+1}(t) - a_k(t) = 2\log(\kappa t) - \log\frac{Mk(n-k)}{2}, \qquad p_k(t) = M\frac{n+1-2k}{t}.
\end{equation}
In Section~\ref{sec:n-body}, we prove that the leading order
of the asymptotic behaviour of $a_{k+1}(t) - a_k(t)$ and $p_k(t)$ for any parabolic motion
coincides with \eqref{eq:explicit-parabolic}.
Our analysis is sufficiently robust to be valid also in the presence of
the error terms obtained from Lemma~\ref{lem:ref-mod},
which leads to a proof of Theorem~\ref{thm:asymptotics}.

Let us mention that H\'enon \cite{Henon74} found $n$ independent conserved quantities
for the Toda system (both in the repulsive and in the attractive case).
For parabolic motions, all these quantities are equal to $0$,
which allows to reduce the problem to a system of $n$ equations of 1st order.
Probably, this approach could lead to some simplifications in determining
the asymptotic behaviour of the parabolic motions of the attractive Toda system,
and perhaps also of the approximate system satisfied by the modulation parameters.
Our arguments do not explicitly rely on the conservation laws related to the complete
integrability of the Toda system, and we expect that part of the analysis
will be applicable also in the cases where the modulation equations are not related
to any completely integrable system of ODEs.

Theorem~\ref{thm:any-position} is proved in Section~\ref{sec:any-position}.
The overall proof scheme is taken from Martel~\cite{Martel05},
see also the earlier work of Merle~\cite{Merle90}, and contains two steps:
\begin{itemize}
\item for any $T > 0$, prove existence of a solution $\bs \phi$ satisfying
the conclusions of Theorem~\ref{thm:any-position}, but only on the finite time interval $t \in [0, T]$,
\item take a sequence $T_m \to \infty$ and consider a weak limit
of the solutions $\bs \phi_m$ obtained in the first step with $T = T_m$.
\end{itemize}

The first step relies on a novel application of the Poincar\'e-Miranda theorem,
which is essentially a version of Brouwer's fixed point theorem.
We choose data close to a multi-kink configuration at time $t = T$ and control
how it evolves backwards in time.
It could happen that the multi-kink collapses before reaching the time $t = 0$.
For this reason, we introduce an appropriately defined ``exit time'' $T_1$.
The mapping which assigns the positions of the (anti)kinks at time $T_1$
to their positions at time $T$ turns out to be continuous and, for topological reasons,
surjective in the sense required by Theorem~\ref{thm:any-position}.

In the second step, it is crucial to dispose of some \emph{uniform} estimate on the sequence $\bs\phi_m$. In our case, the relevant inequality is $\delta(\bs\phi_m(t)) \lesssim (\eee^L + t^2)^{-1}$ with a universal constant.
The existence of such a uniform bound is related to what we would call the ``ejection property''
of the system. Intuitively, once $\delta(\bs\phi(t))$ starts to grow,
it has to continue growing at a definite rate until the multi-kink configuration collapses.

In Section~\ref{sec:profiles}, we give a proof of Theorem~\ref{thm:unstable}.
The identification of the clusters presents no difficulty: the positions of any two consecutive (anti)kinks at time $t = 0$, after taking a subsequence in $m$, either remain at a bounded distance
or separate with their distance growing to infinity as $m\to\infty$.
This dichotomy determines whether
they fall into the same cluster or to distinct ones.
The next step is to again make use of the ejection property in order to obtain
bounds on $\delta(\bs\phi(t))$ independent of $m$, for any $t \geq 0$.
By standard localisation techniques involving the finite speed of propagation,
these bounds are inherited by each of the clusters.
We mention that the proof of strong convergence in Theorem~\ref{thm:unstable} (iii)
is based on a novel application of the well-known principle from the Calculus of Variations
affirming that, for a strictly convex functional $\cF$, if $\bs g_m \wto \bs g$
and $\cF(\bs g_m) \to \cF(\bs g)$, then $\bs g_m \to \bs g$.

\subsection{Other related results}
The first construction of a two-soliton solution with trajectories having asymptotically vanishing velocities was obtained by Krieger, Martel and Rapha\"el \cite{KrMaRa09},
see also \cite{MaRa18, Vinh17} for other constructions
and \cite{WadOhk} for related computations in the completely integrable setting.
Existence of strongly interacting multi-solitons with an arbitrary number of solitons
was obtained by Lan and Wang \cite{LaWa22p} for the generalized Benjamin-Ono equation.

The particle-like character of solitons is a well-known phenomenon, see \cite[Chapter 1]{MS}
for a historical account.
The question of justification that the positions of solitons satisfy
an approximate $n$-body law of motion was considered for instance in \cite{Stuart,GuSi06,DuMa,OvSi}.

In their work on blow-up for nonlinear waves, Merle and Zaag~\cite{MeZa12-AJM}
obtained a system of ODEs with exponential terms like in \eqref{eq:attractive-toda},
but which was a \emph{gradient flow} and not an $n$-body problem.
The dynamical behaviour of solutions of this system was described by C\^ote and Zaag \cite{CoteZaag}.

In relation with our proof of Theorem~\ref{thm:any-position},
we note that Brouwer's theorem was previously used in constructions of multi-solitons,
but for a rather different purpose, namely in order to avoid the growth of linear unstable modes, see \cite{CMM11, CoteMunoz}.

Determining universal profiles of soliton collapse played an important
role in several works on dispersive equations related to the problem of Soliton Resolution.
We mention the study of centre-stable manifolds of ground states for various nonlinear wave equations, see for instance \cite{NaSc11-1,NaSc11-2,KrNaSc15},
as well as the earlier work \cite{DM08}.

Let us stress again that the main object of our study are
solutions approaching multi-soliton configurations in the strong energy norm, in other words we address the question of interaction of solitons \emph{in the absence of radiation}.
Allowing for a radiation term seems to be currently out of reach,
the question of the asymptotic stability of the kink being
still unresolved, see for example~\cite{DelortMasmoudi, KMM, GermainPusateri, HN2, CLL, LuhrmannSchlag} for recent results
on this and related problems.

Finally, we emphasize that our definition of kink clusters
concerns \emph{only one time direction}, and our study
does not address the question of the behaviour of kink clusters as $t \to {-}\infty$, which goes by the name of the \emph{kink collision problem}.
We refer to
\cite{KevrekidisEtc}
for an overview,
and to \cite{Abdon22p2, Abdon22p3} for recent rigorous results
in the case of the $\phi^6$ model.



\subsection{Notation}
\label{ssec:notation}
Even if $v(x)$ is a function of one variable $x$, we often write $\partial_x v(x)$
instead of $v'(x)$ to denote the derivative. The prime notation is only used
for the time derivative of a function of one variable $t$
and for the derivative of the potential $U$.

If $\vec a, \vec b \in \bR^n$, then $\vec a\cdot \vec b := \sum_{k=1}^n a_k b_k$.
If $u$ and $v$ are (real-valued) functions, then $\la u, v\ra := \int_{-\infty}^\infty u(x)v(x)\ud x$.

Boldface is used for pairs of values
(which will usually be a pair of functions
forming an element of the phase space).
A small arrow above a letter indicates a vector
with any finite number of components
(which will usually be the number of kinks
or the number of kinks diminished by 1).
If $\bs u = (u, \dot u) \in L^2(\bR)\times L^2(\bR)$ and $\bs v = (v, \dot v) \in L^2(\bR)\times L^2(\bR)$, we write
$\la \bs u, \bs v\ra := \la u, v\ra + \la \dot u, \dot v\ra$.

We will have to manipulate finite sequences and sums. In order to make the formulas reasonably compact,
we need to introduce appropriate notation, some of which is not completely standard.
If $\vec w = (w_j)_j$ is a vector and $f: \bR \to \bR$ is a function, we denote $f(\vec w)$ the vector with components $f(w_j)$,
for example $\eee^{\vec w}$ will denote the vector $(\eee^{w_j})_j$, $\vec w\,^2$ the vector $(w_j^2)_j$ and $\log\vec w$ the vector $(\log w_j)_j$.
If $\vec v = (v_j)_j$ is another vector, we denote $\vec w\vec v := (w_jv_j)_j$.
We also denote $w_{\min} := \min_j w_j$.

When we write $\simeq$, $\lesssim$ or $\gtrsim$, it should be understood that the constant is allowed to depend only on $n$.
Our use of the symbol $\sh f \sim f$ is non-standard and indicates that $\sh f - f$ is a negligible
quantity (the meaning of ``negligible'' will be specified in each case),
without requiring that $\sh f / f$ be close to $1$.

The energy space is denoted $\cE := H^1(\bR) \times L^2(\bR)$.
We also use local energies and energy norms defined as follows.
For $-\infty \leq x_0 < x_0' \leq \infty$ and $\phi_0: [x_0, x_0']\to \bR$, we denote
\begin{align}
E_p(\phi_0; x_0, x_0') &:= \int_{x_0}^{x_0'}\Big(\frac 12(\partial_x \phi_0(x))^2 + U(\phi_0)\Big)\ud x, \\
E(\bs \phi_0; x_0, x_0') &:= \int_{x_0}^{x_0'}\Big(\frac 12 (\dot \phi_0(x))^2 + \frac 12(\partial_x \phi_0(x))^2 + U(\phi_0)\Big)\ud x, \\
\|\phi_0\|_{H^1(x_0, x_0')}^2 &:= \int_{x_0}^{x_0'}\big((\partial_x \phi_0(x))^2 + \phi_0(x)^2\big)\ud x, \\
\|\bs \phi_0\|_{\cE(x_0, x_0')}^2 &:= \int_{x_0}^{x_0'}\big((\dot \phi_0(x))^2 + (\partial_x \phi_0(x))^2 + \phi_0(x)^2\big)\ud x.
\end{align}

The open ball of center $c$ and radius $r$ in a normed space $A$ is denoted $B_A(c, r)$.
We denote $\vD$ and $\vD^2$ the first and second Fr\'echet derivatives of a functional.

We write $x_+ := \max(0, x)$.

We take $\chi: \bR \to [0, 1]$ to be a decreasing $C^\infty$ function
such that $\chi(x) = 1$ for $x \leq \frac 13$
and $\chi(x) = 0$ for $x \geq \frac 23$.

Proofs end with the sign $\boxvoid$. Statements given without proof end with the sign $\boxslash$.

\section{Kinks and interactions between them}
\label{sec:static}
\subsection{Stationary solutions}
A stationary field $\phi(t, x) = \psi(x)$ is a solution of \eqref{eq:csf} if and only if
\begin{equation}
\label{eq:psi4}
\partial_x^2\psi(x) = U'(\psi(x)),\qquad\text{for all }x\in \bR.
\end{equation}
We seek solutions of \eqref{eq:psi4} having finite potential energy $E_p(\psi)$.
Since $U(\psi) \geq 0$ for $\psi \in \bR$, the condition $E_p(\psi) < \infty$ implies
\begin{align}
\label{eq:psi4-H1}
&\int_{-\infty}^{+\infty}\frac 12 (\partial_x \psi(x))^2 \ud x < \infty, \\
\label{eq:psi4-U}
&\int_{-\infty}^{+\infty}U(\psi(x)) \ud x < \infty.
\end{align}
From \eqref{eq:psi4-H1} we have $\psi \in C(\bR)$,
so \eqref{eq:psi4} and $U \in C^\infty(\bR)$ yield $\psi \in C^\infty(\bR)$.
Multiplying \eqref{eq:psi4} by $\partial_x \psi$ we get
\begin{equation}
\partial_x\Big(\frac 12 (\partial_x \psi)^2 - U(\psi)\Big)
= \partial_x \psi\big(\partial_x^2 \psi - U'(\psi)\big) = 0,
\end{equation}
so $\frac 12 (\partial_x \psi(x))^2 - U(\psi(x)) = k$ is a constant.
But then \eqref{eq:psi4-H1} and \eqref{eq:psi4-U} imply $k = 0$.
We obtain first-order autonomous equations, called the Bogomolny equations,
\begin{equation}
\label{eq:bogom-eq}
\partial_x\psi(x) = \sqrt{2U(\psi(x))}\quad\text{or}\quad \partial_x\psi(x) = -\sqrt{2U(\psi(x))},\quad\text{for all }x \in \bR,
\end{equation}
which can be integrated in the standard way, see for instance \cite[Section 2]{JKL1}.
We conclude that:
\begin{itemize}
\item the only stationary solution of \eqref{eq:csf} belonging to $\cE_{1, 1}$ is the vacuum
state $\bs 1 := (1, 0)$ and the only stationary solution of \eqref{eq:csf} belonging to $\cE_{-1, -1}$ is the vacuum
state $-\bs 1 := (-1, 0)$,
\item the only stationary solutions of \eqref{eq:csf} belonging to $\cE_{-1, 1}$
are the translates of $\bs H := (H, 0)$, where the function $H$ is defined by
\begin{equation}
\label{eq:H-def}
H(x) = G^{-1}(x), \qquad\text{with}\ \ G(\psi) := \int_{0}^\psi \frac{\ud y}{\sqrt{2U(y)}}\ \ \text{for all }\psi \in ({-}1, 1),
\end{equation}
and the solutions of \eqref{eq:psi4} belonging to $\cE_{1, -1}$
are the translates of $-\bs H := (-H, 0)$.

\end{itemize}

The asymptotic behaviour of $H(x)$ for $|x|$ large is essential for our analysis.
We have the following result, see \cite[Proposition 2.1]{JKL1}.
\begin{proposition}
\label{prop:prop-H}
The function $H(x)$ defined by \eqref{eq:H-def} is odd, of class $C^\infty(\bR)$
and there exist constants $\kappa > 0$ and $C > 0$ such that for all $x \geq 0$
\begin{equation}
\label{eq:H-asym-p}
\begin{aligned}
\big|H(x)-1+ \kappa \eee^{- x}\big| 
+ \big|\partial_x H(x) -  \kappa \eee^{- x}\big| 
+ \big|\partial_x^2 H(x) + \kappa \eee^{- x}\big| \leq C\eee^{-2 x},
\end{aligned}
\end{equation}
and for all $x \leq 0$
\begin{gather}
\label{eq:H-asym-m}
\big|H(x)+1- \kappa \eee^{ x}\big| + \big|\partial_x H(x) -  \kappa \eee^{ x}\big| + \big|\partial_x^2 H(x) -  \kappa \eee^{ x}\big| \leq C\eee^{2 x}.
\end{gather}
\vspace{-1cm}\qedno
\end{proposition}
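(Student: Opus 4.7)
The proof has three parts: oddness/smoothness of $H$, the asymptotics at $+\infty$, and the transfer to $-\infty$ by symmetry.

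\textbf{Step 1: Oddness and smoothness.} Since $U$ is even, $U'$ is odd, so the map $\psi \mapsto \sqrt{2U(\psi)}$ is even on $(-1,1)$. If $H$ is the solution of the Bogomolny equation $\partial_x \psi = \sqrt{2U(\psi)}$ with $H(0) = 0$, then $\wt H(x) := -H(-x)$ satisfies $\partial_x \wt H(x) = \sqrt{2U(-\wt H(x))} = \sqrt{2U(\wt H(x))}$ with $\wt H(0) = 0$, so $\wt H = H$ by uniqueness in $(-1,1)$ away from the endpoints where $2U$ vanishes. The $C^\infty$ regularity follows by bootstrapping \eqref{eq:psi4} using $U \in C^\infty$.

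\textbf{Step 2: Asymptotics at $+\infty$.} Set $h(x) := 1 - H(x)$, so $h(x) \to 0^+$ as $x \to \infty$. The hypotheses $U(1) = U'(1) = 0$ and $U''(1) = 1$ together with $U \in C^\infty$ yield, for $|\psi - 1|$ small,
\begin{equation}
2U(\psi) = (\psi - 1)^2 \big(1 + r(\psi - 1)\big), \qquad r \in C^\infty,\ r(0) = \tfrac{1}{3}U'''(1)\ (\text{or similar}).
\end{equation}
Consequently, for $x$ large, $\partial_x h = -\sqrt{2U(1 - h)} = -h\big(1 + \tfrac{1}{2}r(-h) + O(h^2)\big)$; in particular $\partial_x h = -h + O(h^2)$. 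Introduce $v(x) := \eee^x h(x)$. Then
\begin{equation}
\partial_x v = \eee^x\big(h + \partial_x h\big) = \eee^x \cdot O(h^2) = O(\eee^{-x} v^2).
\end{equation}
Since $h$ is bounded and tends to $0$, a standard barrier argument gives $h(x) \lesssim \eee^{-x}$, so $v$ is bounded and $\partial_x v = O(\eee^{-x})$ is integrable. Hence $v(x) \to \kappa$ for some $\kappa \geq 0$ as $x \to \infty$, and integrating from $x$ to $\infty$ yields $|v(x) - \kappa| \lesssim \eee^{-x}$, that is $|h(x) - \kappa \eee^{-x}| \lesssim \eee^{-2x}$. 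Strict positivity of $\kappa$ follows because $h > 0$ and $h$ cannot decay faster than $\eee^{-x}$ (otherwise $v \to 0$ would force $\partial_x h = -h + O(h^2)$ to drive $h$ negative by a Gr\"onwall-type comparison, or one argues directly from the Bogomolny ODE).

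\textbf{Step 3: Transfer to derivatives, and to $x \leq 0$.} From $\partial_x H = \sqrt{2U(1-h)} = h + O(h^2)$ and the bound on $h - \kappa\eee^{-x}$, one immediately gets $|\partial_x H(x) - \kappa\eee^{-x}| \lesssim \eee^{-2x}$. Differentiating once more, $\partial_x^2 H = U'(H) = U'(1 - h) = -h + O(h^2)$ (using $U''(1) = 1$), which gives $|\partial_x^2 H(x) + \kappa \eee^{-x}| \lesssim \eee^{-2x}$. The bounds \eqref{eq:H-asym-m} for $x \leq 0$ follow from Step 1: $H(x) = -H(-x)$ gives $H(x) + 1 = 1 - H(-x)$ and translates each of the three estimates on $[0, \infty)$ into the corresponding estimate on $(-\infty, 0]$ with the same constant $\kappa$ and with $\eee^{-x}$ replaced by $\eee^{x}$.

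\textbf{Main obstacle.} The only nontrivial point is the existence and positivity of the limit $\kappa$, which requires controlling the nonlinear remainder $O(h^2)$ in the ODE for $v = \eee^x h$ well enough to show that $v$ converges and does not vanish. This is handled by first proving the crude a priori bound $h(x) \lesssim \eee^{-x}$ via a super/sub-solution argument for $\partial_x h = -h + O(h^2)$, and then using that bound to turn $\partial_x v = O(\eee^{-x})$ into an integrable quantity on $[0, \infty)$. Everything else is a routine Taylor expansion of $U$ at $\psi = 1$.
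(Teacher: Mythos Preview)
Your proof is correct and self-contained. The paper itself does not prove this proposition: it is stated with the no-proof marker and attributed to \cite[Proposition 2.1]{JKL1}, so there is no in-paper argument to compare against. Your approach---write $h = 1 - H$, extract the linear ODE $h' = -h + O(h^2)$ from the Bogomolny relation and the Taylor expansion $2U(1-h) = h^2(1 + O(h))$, then study $v = \eee^x h$---is the standard route and matches what one finds in the cited reference.

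One cosmetic remark: your justification that $\kappa > 0$ is a bit roundabout. It is cleaner to note that $(\log h)' = -1 + O(h)$; since $h$ is positive, decreasing and already known to decay (say at rate $\eee^{-x/2}$ from $h' \leq -h/2$), the $O(h)$ term is integrable on $[x_0,\infty)$, so $\log h(x) + x$ converges to a finite limit, which is $\log \kappa$. This gives $\kappa > 0$ directly without a separate comparison argument.
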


We denote
\begin{align}
M := \|\partial_x H\|_{L^2}^2 = 2\int_0^1\sqrt{2U(y)}\ud y
= 2\int_{-\infty}^\infty U(H(x))\ud x = E_p(H),
\label{eq:M-def}
\end{align}
all these equalities following from \eqref{eq:bogom-eq} and the change of variable $y = H(x)$.
In the context of Special Relativity,
one can think of $M$ as the (rest) mass of the kink.

We will also use the fact, checked in \cite[Section 2.1]{JKL1}, that
\begin{align}
\label{eq:reduced-force}
\int_{-\infty}^\infty \partial_x H(x)\left(U''(H(x)) - 1\right)\eee^{x} \ud x = -2 \kappa.
\end{align}

Finally, we recall the following \emph{Bogomolny trick} from \cite{Bogom76}.
If $\phi_0(x_0) \leq \phi_0(x_0')$ (understood as limits if $x_0$ or $x_0'$ is infinite), then
\begin{equation}
\label{eq:bogom}
\begin{aligned}
E_p(\phi_0; x_0, x_0') &= \frac 12\int_{x_0}^{x_0'}\Big(\big(\partial_x \phi_0 - \sqrt{2U(\phi_0)}\big)^2 + 2\phi_0'\sqrt{2U(\phi_0)}\Big)\ud x \\
&= \int_{\phi_0(x_0)}^{\phi_0(x_0')}\sqrt{2U(y)}\ud y + \frac 12\int_{x_0}^{x_0'}\big(\partial_x \phi_0 - \sqrt{2U(\phi_0)}\big)^2\ud x.
\end{aligned}
\end{equation}
Analogously, if $\phi_0(x_0) \geq \phi_0(x_0')$, then
\begin{equation}
\label{eq:bogom-2}
E_p(\phi_0; x_0, x_0') = \int_{\phi_0(x_0')}^{\phi_0(x_0)}\sqrt{2U(y)}\ud y + \frac 12\int_{x_0}^{x_0'}\big(\partial_x \phi_0 + \sqrt{2U(\phi_0)}\big)^2\ud x.
\end{equation}
Hence, restrictions of kinks and antikinks to (bounded or unbounded) intervals are minimisers of the potential
energy among all the functions connecting two given values in $(-1, 1)$.

\subsection{Interaction of the kinks}
Our next goal is to compute the potential energy and the interaction forces
of a given chain of transitions between vacua $1$ and $-1$.
Since we consider only two vacua, such a chain is composed of alternating kinks and antikinks.
Without loss of generality, we assume that the leftmost transition is an antikink.

Let $\vec a := (a_1, \ldots, a_n) \in \bR^n$ be the positions of the transitions (following \cite[Chapter 5]{MS}, we use the letter
$a$ for the translation parameter; it should not be confused
with ``acceleration'' which will be given no symbol in this paper).
We always assume $a_1 \leq a_2 \leq \ldots \leq a_n$.
It will be convenient to abbreviate $H_k(x) := H(x - a_k)$ for $k \in \{1, \ldots, n\}$. We also denote
\begin{align}
\label{eq:Ha-def}
H(\vec a; x) := 1 + \sum_{k=1}^{n} (-1)^k(H_k(x) + 1).
\end{align}
For example, if $n = 0$, then $\vec a$ is an empty vector and $H(\vec a)$ is the vacuum $1$.
If $n = 1$ and $\vec a = (a_1)$, then $H(\vec a) = -H(\cdot - a_1)$ is an antikink.
If $n = 2$ and $\vec a = (a_1, a_2)$ with $a_2 - a_1 \gg 1$, then
$H(\vec a) = 1 - H(\cdot - a_1) + H(\cdot + a_2)$ has the shape of an antikink near $x = a_1$,
and of a kink near $x = a_2$. These are the \emph{kink-antikink pairs}, which we studied with Kowalczyk in \cite{JKL1}.

We define $\Phi \in C^\infty(\bR^n)$ by
\begin{equation}
\Phi(w_1, \ldots, w_n) := U'\Big( 1 + \sum_{k=1}^n(-1)^k(w_k + 1) \Big) - \sum_{k=1}^n (-1)^k U'(w_k),
\end{equation}
so that
\begin{equation}
\label{eq:Phi-def}
\begin{aligned}
\vD E_p(H(\vec a)) &= -\partial_x^2 H(\vec a) + U'(H(\vec a))
= {-}\sum_{k=1}^n (-1)^k U'(H_k(x)) + U'(H(\vec a)) \\
&= \Phi(H_1, \ldots, H_n).
\end{aligned}
\end{equation}
If we treat $\vec a = (a_1, \ldots, a_n)$ as point masses,
then the force acting on $a_k$ should be given by
\begin{equation}
\label{eq:Fk-def}
F_k(\vec a) := -\partial_{a_k} E_p(H(\vec a)) = (-1)^k\la \partial_x H_k, \Phi(H_1, \ldots, H_n)\ra.
\end{equation}
Observe that the translation invariance of $E_p$ implies $\sum_{k=1}^n F_k(\vec a) = 0$, as one should expect in view of Newton's third law.

Before we begin the computation of $F_k(\vec a)$ and $E_p(H(\vec a))$,
we state the Lemma 2.5 from \cite{JKL1}, which will be frequently used below.
\begin{lemma}
\label{lem:exp-cross-term}
For any $a_1 < a_2$ and $\alpha_1, \alpha_2 > 0$ with $\alpha_1 \neq \alpha_2$ the following bound holds:
\begin{equation}
\int_{-\infty}^\infty\eee^{-\alpha_1(x - a_1)_+}\eee^{-\alpha_2(a_2 - x)_+}\ud x \lesssim_{\alpha_1, \alpha_2} \eee^{-\min(\alpha_1, \alpha_2)(a_2 - a_1)}.
\end{equation}
For any $\alpha > 0$, the following bound holds:
\begin{equation}
\int_{-\infty}^\infty\eee^{-\alpha(x - a_1)_+}\eee^{-\alpha(a_2 - x)_+}\ud x \lesssim_{\alpha} (1 + a_2 - a_1)\eee^{-\alpha(a_2 - a_1)}.
\end{equation}\vspace{-1cm}\qedno
\end{lemma}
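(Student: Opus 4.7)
The plan is to split the real line into the three regions $(-\infty, a_1)$, $[a_1, a_2]$, $(a_2, \infty)$ dictated by the positive-part function, evaluate the integrand explicitly on each, and then combine. On $(-\infty, a_1)$ the first factor is constantly $1$ and the second is $\eee^{-\alpha_2(a_2-x)}$, so the integral is $\alpha_2^{-1}\eee^{-\alpha_2(a_2-a_1)}$. Symmetrically on $(a_2, \infty)$ we get $\alpha_1^{-1}\eee^{-\alpha_1(a_2-a_1)}$. Both exterior pieces are already bounded by a constant times $\eee^{-\min(\alpha_1,\alpha_2)(a_2-a_1)}$, so the whole game is in the middle interval.

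On $[a_1, a_2]$ the integrand equals $\eee^{-\alpha_1(x-a_1)-\alpha_2(a_2-x)}$. After factoring out $\eee^{-\alpha_2(a_2-a_1)}$ and substituting $u=x-a_1$, the integral becomes
\begin{equation*}
\eee^{-\alpha_2(a_2-a_1)}\int_0^{a_2-a_1}\eee^{-(\alpha_1-\alpha_2)u}\ud u
= \frac{\eee^{-\alpha_2(a_2-a_1)}-\eee^{-\alpha_1(a_2-a_1)}}{\alpha_1-\alpha_2}
\end{equation*}
when $\alpha_1\neq\alpha_2$. Bounding the numerator by $\eee^{-\min(\alpha_1,\alpha_2)(a_2-a_1)}$ gives the first inequality, with an implicit constant depending on $|\alpha_1-\alpha_2|^{-1}$ as well as on $\alpha_1^{-1},\alpha_2^{-1}$. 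When $\alpha_1=\alpha_2=\alpha$, the middle integral collapses to $(a_2-a_1)\eee^{-\alpha(a_2-a_1)}$; adding the two exterior contributions $\frac{2}{\alpha}\eee^{-\alpha(a_2-a_1)}$ yields the second inequality.

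There is no real obstacle here: everything reduces to exact one-dimensional exponential integrals. The only small point to take care of is that the constant in the first inequality blows up as $\alpha_1\to\alpha_2$, which is precisely why the statement separates the two cases. Conceptually, the proof just records the elementary fact that the convolution (on the half-line) of two decaying exponentials is itself an exponential with the smaller rate, up to a logarithmic correction when the two rates coincide.
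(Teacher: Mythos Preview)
Your proof is correct; the paper does not actually prove this lemma but merely quotes it from \cite[Lemma~2.5]{JKL1}, so there is nothing to compare against. The three-region split and the explicit evaluation of the exponential integrals is exactly the natural argument, and your observation that the implicit constant degenerates like $|\alpha_1-\alpha_2|^{-1}$ correctly explains the separate treatment of the equal-exponent case.
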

\vspace{0.5cm}
\begin{lemma}
There exists $C$ such that for all $\vec w \in [-2, 2]^n$ and $k \in \{1, \ldots, n\}$
\begin{equation}
\label{eq:Phi-taylor}
\begin{aligned}
&\big|\Phi(\vec w) + (-1)^{k}(U''(w_k) - 1)((1+w_{k+1}) - (1 - w_{k-1}))\big| \leq \\
&\qquad C\big(\max_{j < k-1}|1-w_j| + \max_{j > k+1}|1+w_j| + \\
&\qquad\qquad+ |1-w_k||1+w_{k+1}|^2 +
|1+w_k||1-w_{k-1}|^2 + |1+w_{k+1}||1-w_{k-1}|\big),
\end{aligned}
\end{equation}
where by convention $w_0 := 1$, $w_{n+1} := -1$ and $\max_{j < 0}|1-w_j| = \max_{j > n+1}|1+w_j|= 0$.
\end{lemma}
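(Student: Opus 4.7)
The plan is to split $\Phi(\vec w)$ into the contribution from the three indices $k-1, k, k+1$ and a far-field remainder, handle the far part by elementary Lipschitz estimates, and treat the nearest-neighbor contribution by a careful expansion that exploits the evenness of $U$. Set $u := 1 - w_{k-1}$ and $v := 1 + w_{k+1}$. A direct calculation rewrites the argument of $U'$ inside $\Phi$ as
\[
S = 1 + \sum_{j=1}^n(-1)^j(1+w_j) = (-1)^k(w_k + u - v) + A,
\]
where $A$ collects the contributions of the indices $j \notin \{k-1, k, k+1\}$ and satisfies $|A| \lesssim \max_{j < k-1}|1-w_j| + \max_{j > k+1}|1+w_j|$. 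Since $\vec w \in [-2,2]^n$ keeps $S$ in a bounded interval and $U'$ is Lipschitz there, replacing $U'(S)$ by $(-1)^k U'(w_k + u - v)$ (using that $U'$ is odd) costs at most $C|A|$. An analogous Lipschitz replacement of each $U'(w_j)$ with $|j-k|>1$ by $U'(\pm 1) = 0$ absorbs the remaining far indices into the same $\max$-type bound.

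After this reduction, the claim follows from the estimate
\[
|G(w_k, u, v)| \leq C\big(|1+w_k|\,u^2 + |1-w_k|\,v^2 + |uv|\big),
\]
where, using $U'(-1+v) = -U'(1-v)$,
\[
G(w_k, u, v) := U'(w_k + u - v) - U'(w_k) + U'(1-u) - U'(1-v) + (U''(w_k)-1)(v - u).
\]
The crucial observation, which uses $U''(\pm 1) = 1$ together with the evenness of $U$, is the pair of identities $G(1, 0, v) \equiv 0$ and $G(-1, u, 0) \equiv 0$: the first is immediate by cancellation, and the second reduces to $U'(-1+u) + U'(1-u) = 0$, which follows from $U'$ being odd. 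These vanishings are precisely what will force the weights $|1 \mp w_k|$ into the quadratic error.

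To finish, I would decompose $G(w_k, u, v) = G(w_k, 0, v) + G(w_k, u, 0) + H(u, v)$, where $H(u, v) := G(w_k, u, v) - G(w_k, 0, v) - G(w_k, u, 0)$ satisfies $H(0, v) = H(u, 0) = 0$ since $G(w_k, 0, 0) = 0$. For the first summand, $G(1, 0, v) = 0$ together with the identity $\partial_w G(w, 0, 0) = U''(w) - U''(w) = 0$ allow the use of the fundamental theorem of calculus,
\[
G(w_k, 0, v) = (w_k - 1)\int_0^1 \partial_w G\big(1 + s(w_k - 1), 0, v\big)\,\ud s,
\]
while a one-variable Taylor expansion $\partial_w G(w, 0, v) = U''(w - v) - U''(w) + U'''(w)v = \tfrac{1}{2}U''''(w)v^2 + O(|v|^3)$ gives $|G(w_k, 0, v)| \leq C|1-w_k|v^2$ after absorbing the cubic remainder via $|v| \leq 3$; the symmetric argument based at $w_k = -1$ yields $|G(w_k, u, 0)| \leq C|1+w_k|u^2$. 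For the cross term, $H(0, v) = H(u, 0) = 0$ combined with the uniformly bounded mixed derivative $\partial_u\partial_v G(w_k, u, v) = -U'''(w_k + u - v)$ gives $|H(u, v)| \leq C|uv|$ via a double-integral representation. The main technical point throughout is extracting the $|1 \pm w_k|$ weights: a naive Taylor expansion around any single basepoint produces only $Cu^2 + Cv^2$, and the two structural vanishing identities above are what allow this to be upgraded.
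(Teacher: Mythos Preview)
Your argument is correct. The reduction to the three-variable quantity $G(w_k,u,v)$ is sound, the two structural vanishing identities $G(1,0,v)=0$ and $G(-1,u,0)=0$ are exactly what is needed to factor out the weights $|1\mp w_k|$, and the mixed term is handled cleanly by the bounded cross derivative $\partial_u\partial_v G=-U'''(w_k+u-v)$.

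The paper proceeds differently. It performs a multivariate Taylor expansion of $\Phi$ about the point $\vec v=(1,\dots,1,w_k,-1,\dots,-1)$: the constant term vanishes because $U'$ is odd, the linear term produces the main contribution $-(-1)^k(U''(w_k)-1)(v-u)$, and the work goes into the integral-form quadratic remainder. There the diagonal entries $(k\pm1,k\pm1)$ are the only nontrivial cases, and they are bounded by computing $\partial_{k-1}^2\Phi$ explicitly and observing that it is $O\big(\sum_{\ell<k-1}|1-u_\ell|+\sum_{\ell\geq k}|1+u_\ell|\big)$; multiplying by $(1-w_{k-1})^2$ then recovers the weight $|1+w_k|$ from the $\ell=k$ summand. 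Your approach trades this second-order multivariate calculus for two applications of the fundamental theorem of calculus in $w_k$ together with a one-variable Taylor bound on $\partial_w G$, which makes the origin of the weights $|1\mp w_k|$ more transparent. The paper's route is slightly more systematic and requires only $U\in C^3$ with $U'''$ Lipschitz, whereas your expansion $\partial_w G(w,0,v)=\tfrac12 U''''(w)v^2+O(|v|^3)$ as written invokes $U''''$; this is of course harmless here since $U$ is assumed smooth, and in any case the bound $|\partial_w G(w,0,v)|\lesssim v^2$ already follows from $U'''$ Lipschitz without the explicit $U''''$ term.
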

\begin{proof}
Let $\vec v \in \bR^n$ be given by $v_j := 1$ for $j < k$, $v_j := -1$ for $j > k$ and $v_k := w_k$.
The Taylor formula yields
\begin{equation}
\label{eq:Phi-taylor-1}
\begin{aligned}
\Phi(\vec w) &= \Phi(\vec v) + (\vec w - \vec v)\cdot\grad \Phi(\vec v) \\
&+ \sum_{i, j = 1}^n (w_i - v_i)(w_j - v_j)\int_0^1 (1-t)\partial_{i}\partial_{j}\Phi((1-t)\vec v + t\vec w)\ud t.
\end{aligned}
\end{equation}
We compute and estimate all the terms, calling a quantity ``negligible''
if its absolute value is smaller than the right hand side of \eqref{eq:Phi-taylor}.
For all $i, j \in \{1, \ldots, n\}$ and $\vec u \in \bR^n$, we have
\begin{align}
\partial_{j}\Phi(\vec u) &= (-1)^j\Big(U''\Big(1 + \sum_{\ell=1}^n (-1)^\ell(u_\ell + 1)\Big) - U''(u_j)\Big), \\
\partial_{i}\partial_{j}\Phi(\vec u) &= (-1)^{i+j}U'''\Big(1 + \sum_{\ell=1}^n (-1)^\ell(u_\ell + 1)\Big), \qquad\text{if }i \neq j, \\
\partial_{j}^2 \Phi(\vec u) &= U'''\Big(1 + \sum_{\ell=1}^n (-1)^\ell(u_\ell + 1)\Big) - (-1)^jU'''(u_j).
\label{eq:dj2Phi}
\end{align}
Observe that
\begin{equation}
1 + \sum_{\ell=1}^n (-1)^\ell (v_\ell + 1) = 1 + 2\sum_{\ell=1}^{k-1}(-1)^\ell + (-1)^k(w_k + 1) = (-1)^k w_k,
\end{equation}
in particular, since $U''$ is even and $U''(1) = 1$, for all $j \neq k$ we have $\partial_{j}\Phi(\vec v) = (-1)^j(U''(w_k) - 1)$.
We thus obtain
\begin{equation}
\begin{aligned}
(\vec w - \vec v)\cdot\grad \Phi(\vec v) &\sim (w_{k-1}-1)\partial_{{k-1}}\Phi(\vec v) + (w_{k+1}+1)\partial_{{k+1}}\Phi(\vec v) \\
&= {-}(-1)^k(U''(w_k) - 1)((1+w_{k+1}) - (1 - w_{k-1})).
\end{aligned}
\end{equation}

Remains the second line of \eqref{eq:Phi-taylor-1}. The terms with $i \notin \{k-1, k+1\}$
or $j \notin \{k-1, k+1\}$ are clearly negligible,
as are the terms $(i, j) \in \{(k-1, k+1), (k+1, k-1)\}$,
hence it suffices to consider the cases $(i, j) \in \{(k-1, k-1), (k+1, k+1)\}$.
Since the latter is analogous to the former,
we only consider $i = j = k-1$,
in particular we assume $k \geq 2$.

Using \eqref{eq:dj2Phi}, the fact that $U'''$ is locally Lipschitz, and writing
\begin{equation}
1 + \sum_{\ell=1}^n(-1)^\ell(u_\ell + 1)
= \sum_{\ell=1}^{k-2}(-1)^\ell(u_\ell-1) + (-1)^{k-1}u_{k-1}
+ \sum_{\ell = k}^{n}(-1)^\ell(u_\ell + 1),
\end{equation}
we obtain
\begin{equation}
|\partial_{k-1}^2 \Phi(\vec u)| \lesssim \sum_{\ell = 1}^{k-2}|1-u_\ell| + \sum_{\ell = k}^n |1 + u_\ell|.
\end{equation}
Setting $\vec u := (1-t)\vec v + t\vec w$
and multiplying by $(w_{k-1} - v_{k-1})^2 = (1-w_{k-1})^2$,
we obtain a negligible term as claimed.
\end{proof}

We denote
\begin{equation}
\label{eq:y-first-def}
\begin{gathered}
\vec y = (y_1, \ldots, y_{n-1}), \quad y_k := a_{k+1} - a_k, \\
y_0 := +\infty, \quad y_n := +\infty,\quad y_{\min} := \min_{1 \leq k \leq n-1}y_k.
\end{gathered}
\end{equation}
\begin{lemma}
\label{lem:Fz}
There exists $C$ such that for every increasing $n$-tuple $\vec a$ and $k \in \{1, \ldots, n\}$
\begin{align}
\label{eq:Fz}
\big|F_k(\vec a) - 2\kappa^2 (\eee^{-y_k} - \eee^{-y_{k-1}})\big| \leq Cy_{\min}\eee^{-2 y_{\min}}, 
\end{align}
where $\kappa$ is defined in Proposition~\ref{prop:prop-H}.
\end{lemma}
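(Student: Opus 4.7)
The plan is to substitute the Taylor expansion from the preceding lemma (evaluated at $\vec w = (H_1(x), \ldots, H_n(x))$) into the definition $F_k(\vec a) = (-1)^k \la \partial_x H_k, \Phi(H_1, \ldots, H_n)\ra$ from \eqref{eq:Fk-def}, identify the leading-order contribution using Proposition~\ref{prop:prop-H} and the identity \eqref{eq:reduced-force}, and control every remainder via Lemma~\ref{lem:exp-cross-term}. Writing the Taylor lemma as
\begin{equation*}
\Phi(H_1, \ldots, H_n)(x) = -(-1)^k(U''(H_k(x)) - 1)\bigl((1+H_{k+1}(x))-(1-H_{k-1}(x))\bigr) + R(x),
\end{equation*}
with $|R(x)|$ bounded by the five explicit terms on the right-hand side of \eqref{eq:Phi-taylor}, leads to
\begin{equation*}
F_k(\vec a) = -\la \partial_x H_k,\,(U''(H_k)-1)\bigl((1+H_{k+1})-(1-H_{k-1})\bigr)\ra + (-1)^k\la \partial_x H_k, R\ra.
\end{equation*}

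For the main integral I would change variables by $z = x - a_k$. Using the estimate $|1+H(z-y_k) - \kappa\eee^{z-y_k}| \le C\eee^{2(z-y_k)}$ from \eqref{eq:H-asym-m} (valid for $z \le y_k$), and observing that $|\partial_x H(z)(U''(H(z))-1)| \lesssim \eee^{-2|z|}$ gives an acceptable contribution on $\{z > y_k\}$, the part involving $(1+H_{k+1})$ reduces to $\kappa\eee^{-y_k}\int_\bR \partial_x H(z)(U''(H(z))-1)\eee^z\, \ud z$ up to an error of order $y_k \eee^{-2y_k}$. The analogous manipulation of $(1-H_{k-1})$ produces $\kappa\eee^{-y_{k-1}}\int_\bR \partial_x H(z)(U''(H(z))-1)\eee^{-z}\, \ud z$ with error $y_{k-1}\eee^{-2y_{k-1}}$. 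The first of these two integrals equals $-2\kappa$ by \eqref{eq:reduced-force}; the second equals the first after the substitution $z \mapsto -z$, since $\partial_x H$ and $U''\circ H$ are both even ($H$ is odd and $U$ even). Hence the main term contributes precisely $2\kappa^2(\eee^{-y_k}-\eee^{-y_{k-1}})$, with remainder $\lesssim y_{\min}\eee^{-2y_{\min}}$ after using the monotonicity of $t \mapsto t\eee^{-t}$ on $[1/2, \infty)$.

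The main obstacle is the bookkeeping for $\la \partial_x H_k, R\ra$. For each of the five summands bounding $|R(x)|$, the pointwise estimates $|1 - H_j(x)| \lesssim \eee^{-(x-a_j)_+}$ and $|1+H_j(x)| \lesssim \eee^{-(a_j-x)_+}$, combined with $|\partial_x H_k(x)| \lesssim \eee^{-|x-a_k|}$, reduce the contribution to an integral of a product of shifted exponentials controlled by Lemma~\ref{lem:exp-cross-term}. The polynomial prefactor $y_{\min}$ in the target bound originates from the two terms $\max_{j<k-1}|1-H_j|$ and $\max_{j>k+1}|1+H_j|$, attained at $j = k-2$ and $j = k+2$ respectively: in both cases, two exponentials of equal rate $1$ meet on an interval of length $y_{k-2}+y_{k-1}$ or $y_k+y_{k+1}$, triggering the second part of Lemma~\ref{lem:exp-cross-term} and producing a bound $(y_{k-2}+y_{k-1})\eee^{-(y_{k-2}+y_{k-1})} \le 2y_{\min}\eee^{-2y_{\min}}$. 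The three remaining products inside $R$ involve exponentials of distinct rates and are easily bounded by $\eee^{-2y_{\min}}$, $(1+y_k)\eee^{-2y_k}$ and $(1+y_{k-1})\eee^{-2y_{k-1}}$, all dominated by $C y_{\min}\eee^{-2 y_{\min}}$.
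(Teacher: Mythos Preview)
Your proposal is correct and follows essentially the same route as the paper: plug the Taylor estimate \eqref{eq:Phi-taylor} into \eqref{eq:Fk-def}, replace $1\pm H_{k\pm 1}$ by the exponential tails from Proposition~\ref{prop:prop-H}, invoke \eqref{eq:reduced-force} (together with the parity argument you give) for the leading term, and control all remainders via Lemma~\ref{lem:exp-cross-term}. The only slips are cosmetic---$t\mapsto t\eee^{-t}$ is decreasing on $[1,\infty)$, not $[1/2,\infty)$, and the phrase ``distinct rates'' does not literally apply to the terms yielding $(1+y_{k\pm 1})\eee^{-2y_{k\pm 1}}$---but the stated bounds are right and the argument goes through.
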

\begin{proof}
By Proposition~\ref{prop:prop-H} and Lemma~\ref{lem:exp-cross-term}, we have
\begin{equation}
\label{eq:F-cross-est}
\begin{aligned}
\int_{-\infty}^{\infty} &|\partial_x H_k|\big(\max_{j < k-1}|1-H_j| + \max_{j > k+1}|1+H_j| + |1-H_k||1+H_{k+1}|^2 +\\
&\qquad +|1+H_k||1-H_{k-1}|^2 + |1+H_{k+1}||1-H_{k-1}|\big)\ud x \lesssim y_{\min}\eee^{-2y_{\min}}.
\end{aligned}
\end{equation}
Thus, \eqref{eq:Fk-def} and \eqref{eq:Phi-taylor} yield
\begin{equation}
\label{eq:F-cross-est-1}
\Big|F_k(\vec a) + \int_{-\infty}^{\infty}\partial_x H_k(U''(H_k) - 1)((1 + H_{k+1}) - (1 - H_{k-1}))\ud x \Big| \lesssim y_{\min}\eee^{-2y_{\min}}.
\end{equation}
Applying again Proposition~\ref{prop:prop-H}, we have
\begin{equation}
|1 + H_{k+1} - \kappa \eee^{x - a_{k+1}}| \lesssim
\begin{cases}
\eee^{-2(a_{k+1} - x)}\qquad&\text{if }x \leq a_{k+1} \\
\eee^{x - a_{k+1}}\qquad&\text{if }x \geq a_{k+1},
\end{cases}
\end{equation}
thus, taking into account that $|\partial_x H_k(x)| + |U''(H_k(x)) - 1| \lesssim \eee^{-|x - a_k|}$,
\begin{equation}
\begin{aligned}
&\bigg|\int_{-\infty}^\infty \partial_x H_k(U''(H_k) - 1)(1+H_{k+1})\ud x - 
\kappa\int_{-\infty}^\infty \partial_x H_k(U''(H_k) - 1)\eee^{x - a_{k+1}}\ud x\bigg| \lesssim \\
&\qquad\lesssim \int_{-\infty}^{a_{k+1}} \eee^{-2|x - a_k|}\eee^{-2(a_{k+1} - x)}\ud x + \int_{a_{k+1}}^\infty \eee^{-2(x-a_k)}\eee^{x - a_{k+1}}\ud x.
\end{aligned}
\end{equation}
The first integral on the right hand side is $\lesssim y_{\min}\eee^{-2y_{\min}}$ by Lemma~\ref{lem:exp-cross-term},
and the second equals $\eee^{-2(a_{k+1} - a_k)}$.
Hence, \eqref{eq:reduced-force} yields
\begin{equation}
\bigg|\int_{-\infty}^\infty \partial_x H_k(U''(H_k) - 1)(1+H_{k+1})\ud x + 2\kappa^2 \eee^{-(a_{k+1} - a_k)}\bigg| \lesssim y_{\min}\eee^{-2y_{\min}}.
\end{equation}
Similarly, 
\begin{equation}
\bigg|\int_{-\infty}^\infty \partial_x H_k(U''(H_k) - 1)(1-H_{k-1})\ud x + 2\kappa^2 \eee^{-(a_{k} - a_{k-1})}\bigg| \lesssim y_{\min}\eee^{-2y_{\min}}.
\end{equation}
These two bounds, together with \eqref{eq:F-cross-est-1}, yield \eqref{eq:Fz}.
\end{proof}
\begin{remark}
We see from Lemma~\ref{lem:Fz} that the interaction between consecutive kink and antikink is attractive.
\end{remark}

\begin{lemma}
\label{lem:U-pyth}
There exists $C$ such that for all $\vec w \in [-2, 2]^n$
\begin{gather}
\label{eq:U-pyth}
\Big|U\Big( 1 + \sum_{k=1}^n(-1)^k(w_k + 1) \Big) - \sum_{k=1}^n U(w_k) \Big| \leq C \max_{1 \leq i < j \leq n}|1 - w_i||1 + w_j|, \\
\label{eq:U'-pyth}
\Big|U'\Big( 1 + \sum_{k=1}^n(-1)^k(w_k + 1) \Big) - \sum_{k=1}^n (-1)^k U'(w_k) \Big| \leq C \max_{1 \leq i < j \leq n}|1 - w_i||1 + w_j|.
\end{gather}
\end{lemma}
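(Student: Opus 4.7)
Both inequalities follow from the same telescoping argument. Set $B_0 := 1$ and $B_k := B_{k-1} + (-1)^k(w_k+1)$ for $k \geq 1$, so that $B_n$ equals the argument appearing inside $U$ (and inside $U'$) on the left-hand sides of~\eqref{eq:U-pyth} and~\eqref{eq:U'-pyth}. Since $U(1) = U'(1) = 0$, the identity
\begin{equation}
U(B_n) - \sum_{k=1}^n U(w_k) = \sum_{k=1}^n\Big(U(B_k) - U(B_{k-1}) - U(w_k)\Big),
\end{equation}
together with its analogue for $U'$ (with $(-1)^kU'(w_k)$ in place of $U(w_k)$), reduces the problem to estimating a single summand.

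To estimate the $k$-th summand, I first write
\begin{equation}
U(B_k) - U(B_{k-1}) = (-1)^k(w_k+1)\int_0^1 U'\big(B_{k-1} + t(-1)^k(w_k+1)\big)\ud t,
\end{equation}
and $U(w_k) = (w_k+1)\int_0^1 U'\big({-}1 + t(w_k+1)\big)\ud t$. Using that $U'$ is odd (as $U$ is even), one checks that $U(B_k) - U(B_{k-1}) - U(w_k)$ equals $\pm(w_k+1)$ times the integral of $U'\big(B_{k-1} \mp t(w_k+1)\big) - U'\big((-1)^{k-1} \mp t(w_k+1)\big)$, with signs depending on the parity of $k$. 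Since $U'$ is Lipschitz on $[-2,2]$, this yields
\begin{equation}
\big|U(B_k) - U(B_{k-1}) - U(w_k)\big| \lesssim |1+w_k|\cdot\big|B_{k-1} - (-1)^{k-1}\big|.
\end{equation}

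The final step is a clean algebraic identity: substituting $w_j+1 = 2 - (1-w_j)$ into the definition of $B_{k-1}$ and using that $\sum_{j=1}^{k-1}(-1)^j\in\{0,-1\}$ depending on the parity of $k-1$ gives
\begin{equation}
B_{k-1} - (-1)^{k-1} = \sum_{j=1}^{k-1}(-1)^{j+1}(1-w_j),
\end{equation}
so that $|B_{k-1} - (-1)^{k-1}| \leq \sum_{j<k}|1-w_j|$. Summing the per-$k$ bound produces $\sum_{1\leq j<k\leq n}|1-w_j||1+w_k| \lesssim \max_{1 \leq i<j\leq n}|1-w_i||1+w_j|$, which is~\eqref{eq:U-pyth}. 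The estimate~\eqref{eq:U'-pyth} is obtained by the identical argument after replacing $U$ by $U'$ throughout and invoking the Lipschitz continuity of $U''$ in place of that of $U'$; the factor $(-1)^k$ appearing on the left-hand side of~\eqref{eq:U'-pyth} is precisely the one produced by the telescoping step, and the evenness of $U''$ replaces the oddness of $U'$. No real obstacle arises; the only point that demands attention is the bookkeeping of the two parity cases, which is absorbed uniformly by the symmetries $U'(-s) = -U'(s)$ and $U''(-s) = U''(s)$.
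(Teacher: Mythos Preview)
Your telescoping argument is correct and is essentially the paper's induction on $n$ written out explicitly: the paper's induction step bounds $|U(v+(-1)^n(w_n+1))-U(w_n)-U(v)|$ by $C|1+w_n|\sum_{k<n}|1-w_k|$ via exactly the same mean-value computation you perform for each summand, and the algebraic identity $(-1)^n+v=\sum_{k<n}(-1)^k(w_k-1)$ is the same one you use. One small slip: you invoke Lipschitz continuity of $U'$ ``on $[-2,2]$'', but the partial sums $B_{k-1}+t(-1)^k(w_k+1)$ can leave $[-2,2]$; this is harmless since they stay in a fixed interval depending only on $n$, and $C$ is allowed to depend on $n$.
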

\begin{proof}
We proceed by induction with respect to $n$. For $n = 1$, both sides of both estimates equal $0$.

Let $n > 1$ and set
\begin{equation}
v := 1 + \sum_{k=1}^{n-1}(-1)^k(w_k + 1) = \sum_{k=1}^{n-1}(w_k - 1) - (-1)^n.
\end{equation}
Integrating the bound $|U'(w + (-1)^n(w_n + 1)) - U'(w)| \lesssim |w_n +1|$
for $w$ between $-(-1)^n$ and $v$, and using $U({-}(-1)^n) = 0$ as well as $U((-1)^n w_n) = U(w_n)$, we get
\begin{equation}
\begin{aligned}
\label{eq:U-pyth-step}
|U(v + (-1)^n(w_n + 1)) - U(w_n) - U(v)| &\leq C|(-1)^n+v||1+w_n| \\
&\leq C|1+w_n|\sum_{k=1}^{n-1}|1-w_k|,
\end{aligned}
\end{equation}
which finishes the induction step for \eqref{eq:U-pyth}.

Integrating the bound $|U''(w + (-1)^n(w_n + 1)) - U''(w)| \lesssim |w_n +1|$
for $w$ between $-(-1)^n$ and $v$, and using $U'({-}(-1)^n) = 0$ as well as $U'((-1)^n w_n) = (-1)^nU'(w_n)$, we get
\begin{equation}
\begin{aligned}
\label{eq:U-pyth-step-bis}
|U'(v + (-1)^n(w_n + 1)) - (-1)^nU'(w_n) - U'(v)| &\leq C|(-1)^n+v||1+w_n| \\
&\leq C|1+w_n|\sum_{k=1}^{n-1}|1-w_k|,
\end{aligned}
\end{equation}
which finishes the induction step for \eqref{eq:U'-pyth}. 
\end{proof}
\begin{lemma}
\label{lem:interactions}
There exists $C > 0$ such that for every increasing $n$-tupple $\vec a$
\begin{align}
\bigg|E_p(H(\vec a)) - nM + 2\kappa^2\sum_{k=1}^{n-1}\eee^{-y_k}\bigg| &\leq C y_{\min}\eee^{-2y_{\min}}.
\label{eq:EpHX-a}
\end{align}
\end{lemma}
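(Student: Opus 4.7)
My strategy is to deduce the energy expansion from the force expansion of Lemma~\ref{lem:Fz} by integrating along a path in configuration space. I would consider the one-parameter family
$$\vec a(s) := (a_1,\, a_2+s,\, a_3+2s,\, \ldots,\, a_n + (n-1)s),\qquad s \in [0,\infty),$$
for which every consecutive gap grows uniformly as $y_k(s) = y_k + s$ (and hence $y_{\min}(s) = y_{\min}+s$). By the chain rule and the identity $\partial_{a_k}E_p(H(\vec a)) = -F_k(\vec a)$ (a direct consequence of \eqref{eq:Phi-def} and \eqref{eq:Fk-def}),
$$\frac{d}{ds}E_p(H(\vec a(s))) = -\sum_{k=1}^n (k-1)\,F_k(\vec a(s)).$$

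Substituting the approximation from Lemma~\ref{lem:Fz} and performing an Abel-type summation (the double sum $\sum_{k=1}^n(k-1)(\eee^{-y_k}-\eee^{-y_{k-1}})$ collapses to $-\sum_{k=1}^{n-1}\eee^{-y_k}$ under the convention $\eee^{-y_0}=\eee^{-y_n}=0$), one obtains
$$\frac{d}{ds}E_p(H(\vec a(s))) = 2\kappa^2\sum_{k=1}^{n-1}\eee^{-y_k(s)} + r(s), \qquad |r(s)|\lesssim y_{\min}(s)\,\eee^{-2 y_{\min}(s)}.$$

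The next step is to identify $\lim_{s\to\infty}E_p(H(\vec a(s))) = nM$. Using the decomposition $(\partial_x H(\vec a))^2 = \sum_k (\partial_x H_k)^2 + 2\sum_{i<j}(-1)^{i+j}\partial_x H_i\,\partial_x H_j$ together with the Bogomolny identity $(\partial_x H_k)^2 = 2U(H_k)$, I would write
$$E_p(H(\vec a)) = nM + \sum_{i<j}(-1)^{i+j}\!\int \partial_x H_i\,\partial_x H_j\,\ud x + \int\Bigl(U(H(\vec a)) - \sum_k U(H_k)\Bigr)\ud x,$$
and observe that both correction terms tend to $0$ as every gap tends to infinity, by Proposition~\ref{prop:prop-H} and Lemma~\ref{lem:exp-cross-term} for the first, and by the pointwise bound of Lemma~\ref{lem:U-pyth} together with Lemma~\ref{lem:exp-cross-term} for the second.

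Then the fundamental theorem of calculus gives
$$nM - E_p(H(\vec a)) = \int_0^\infty \frac{d}{ds}E_p(H(\vec a(s)))\,\ud s = 2\kappa^2\sum_{k=1}^{n-1}\eee^{-y_k} + \int_0^\infty r(s)\,\ud s,$$
since $\int_0^\infty \eee^{-(y_k+s)}\ud s = \eee^{-y_k}$, and via the substitution $u=y_{\min}+s$ the remainder integral satisfies $\int_0^\infty r(s)\,\ud s \lesssim \int_{y_{\min}}^\infty u\,\eee^{-2u}\,\ud u \lesssim y_{\min}\,\eee^{-2 y_{\min}}$ (for $y_{\min}$ bounded below by a fixed constant). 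Rearranging yields \eqref{eq:EpHX-a}. The conceptual point is that Lemma~\ref{lem:Fz} combined with the gradient structure $F_k = -\partial_{a_k}E_p$ automatically transfers the force expansion into the energy expansion, so no further book-keeping of cross terms is needed; the only slightly delicate piece is the decoupling limit $nM$, and I do not expect any serious obstacle since all the hard work was already carried out in the proof of Lemma~\ref{lem:Fz}.
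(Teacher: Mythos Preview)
Your proof is correct and follows essentially the same route as the paper: the paper also integrates the force expansion of Lemma~\ref{lem:Fz} along a path where every gap grows linearly (it uses $\vec a(s)=(a_1+s,\dots,a_n+ns)$, differing from yours only by a global translation), and identifies the endpoint $nM$ via the same combination of Lemma~\ref{lem:U-pyth} and Lemma~\ref{lem:exp-cross-term}. Your presentation is slightly more explicit about the Abel summation and the decoupling limit, but the argument is the same.
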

\begin{proof}
From \eqref{eq:U-pyth} and Lemma~\ref{lem:exp-cross-term}, we have
\begin{equation}
\int_{-\infty}^\infty \Big| U(H(\vec a)) - \sum_{k=1}^n U(H_k)\Big|\ud x \lesssim y_{\min}\eee^{-y_{\min}}.
\end{equation}
Invoking again Lemma~\ref{lem:exp-cross-term}, we also have
\begin{equation}
\label{eq:EpHX-a-1}
\int_{-\infty}^\infty \Big| \frac 12(\partial_x H(\vec a))^2 - \frac 12\sum_{k=1}^n (\partial_x H_k)^2\Big|\ud x \leq \int_{-\infty}^\infty\sum_{1 \leq i < j \leq n}|\partial_x H_i||\partial_x H_j|\ud x \lesssim y_{\min}\eee^{-y_{\min}}.
\end{equation}
Combining these two bounds, we obtain
\begin{equation}
\big|E_p(H(\vec a)) - n M\big| \lesssim y_{\min}\eee^{-y_{\min}}.
\end{equation}
For all $s \geq 0$, set $\vec a(s) := (a_1 + s, a_2 + 2s, \ldots, a_n + ns)$.
Applying the last estimate with $\vec a(s)$ instead of $\vec a$, we get
\begin{equation}
\lim_{s \to \infty}E_p(H(\vec a(s))) = nM.
\end{equation}
By the Chain Rule, \eqref{eq:Fk-def} and \eqref{eq:Fz}, we have
\begin{equation}
\dd s E_p(H(\vec a(s))) = {-}\sum_{k=1}^n k F_k(\vec a(s)) = 2\kappa^2 \sum_{k=1}^{n-1} \eee^{-y_k-s} + O\big((y_{\min}+s)\eee^{-2(y_{\min}+s)}\big).
\end{equation}
An integration in $s$ yields \eqref{eq:EpHX-a}.
\end{proof}
\begin{lemma}
\label{lem:sizeDEp}
There exists $C > 0$ such that for every increasing $n$-tuple $\vec a$
\begin{align}
\|\vD E_p(H(\vec a))\|_{L^2} \leq C\sqrt{y_\tx{min}}\eee^{-y_\tx{min}},
\label{eq:sizeDEp} \\
\max_{1 \leq k \leq n} \big\|\partial_x H_k\big(U''(H(\vec a)) - U''(H_k)\big)\big\|_{L^2} \leq C\sqrt{y_\tx{min}}\eee^{-y_\tx{min}}.
\label{eq:size-diff-pot}
\end{align}
\end{lemma}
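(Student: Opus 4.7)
For \eqref{eq:sizeDEp}, I would start from the identity $\vD E_p(H(\vec a)) = \Phi(H_1, \ldots, H_n)$ provided by \eqref{eq:Phi-def} and apply the pointwise bound \eqref{eq:U'-pyth} with $w_k = H_k(x)$. Using the one-sided decays $|1 - H_i(x)| \lesssim \eee^{-(x - a_i)_+}$ and $|1 + H_j(x)| \lesssim \eee^{-(a_j - x)_+}$ from Proposition~\ref{prop:prop-H}, this gives
\begin{equation}
|\Phi(H_1(x), \ldots, H_n(x))| \lesssim \sum_{1 \leq i < j \leq n}\eee^{-(x - a_i)_+ - (a_j - x)_+}.
\end{equation}
Squaring and integrating termwise, the equal-exponents case of Lemma~\ref{lem:exp-cross-term} yields $\int \eee^{-2(x - a_i)_+ - 2(a_j - x)_+}\ud x \lesssim (1 + a_j - a_i)\eee^{-2(a_j - a_i)}$, and summing over the finitely many pairs $i<j$ the dominant contribution comes from adjacent pairs realising $y_{\min}$, so $\|\vD E_p(H(\vec a))\|_{L^2}^2 \lesssim y_{\min}\eee^{-2y_{\min}}$.

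For \eqref{eq:size-diff-pot}, the key observation is that $U''$ is even, so $U''((-1)^k H_k) = U''(H_k)$. Combined with the local Lipschitz continuity of $U''$ on the bounded range of $H(\vec a)$, this yields $|U''(H(\vec a)) - U''(H_k)| \lesssim |H(\vec a) - (-1)^k H_k|$. An elementary algebraic computation starting from \eqref{eq:Ha-def} then gives the identity
\begin{equation}
H(\vec a) - (-1)^k H_k = \sum_{j < k}(-1)^j(H_j - 1) + \sum_{j > k}(-1)^j(H_j + 1),
\end{equation}
hence $|H(\vec a) - (-1)^k H_k| \lesssim \sum_{j < k}\eee^{-(x - a_j)_+} + \sum_{j > k}\eee^{-(a_j - x)_+}$. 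Multiplying by $|\partial_x H_k(x)| \lesssim \eee^{-|x - a_k|}$, squaring, and applying Lemma~\ref{lem:exp-cross-term} termwise (after splitting $|x - a_k| = (x - a_k)_+ + (a_k - x)_+$), each cross contribution is controlled by $(1 + |a_j - a_k|)\eee^{-2|a_j - a_k|}$, again dominated by $y_{\min}\eee^{-2y_{\min}}$.

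There is no real obstacle here beyond careful bookkeeping of the one-sided exponential tails; in particular, no partition of the real line is needed if one uses Lemma~\ref{lem:U-pyth} directly in place of the sharper Taylor expansion \eqref{eq:Phi-taylor}. The $\sqrt{y_{\min}}$ pre-factor in both bounds is genuine rather than wasteful: it originates precisely in the equal-exponents case of Lemma~\ref{lem:exp-cross-term}, where two neighbouring kinks of the same characteristic decay rate produce an additional linear factor in front of the exponential.
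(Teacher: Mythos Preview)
Your proposal is correct and follows essentially the same approach as the paper. For \eqref{eq:sizeDEp} both you and the paper invoke \eqref{eq:U'-pyth} and then Lemma~\ref{lem:exp-cross-term}; for \eqref{eq:size-diff-pot} the paper writes the same algebraic identity for $H(\vec a)-(-1)^kH_k$ and uses the equivalent bound $|\partial_x H_k|\lesssim \min(|1-H_k|,|1+H_k|)$ in place of your $|\partial_x H_k|\lesssim \eee^{-|x-a_k|}$, which amounts to the same one-sided pairing before applying Lemma~\ref{lem:exp-cross-term}.
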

\begin{proof}
Since $\partial_x^2 H = U'(H)$, \eqref{eq:U'-pyth} yields
\begin{equation}
\Big| \partial_x^2 H(\vec a) - U'(H(\vec a))\Big| = \Big| \sum_{k=1}^n (-1)^kU(H_k) - U'(H(\vec a))\Big| \lesssim \max_{1 \leq i < j \leq n}
|1 - H_i||1 + H_j|.
\end{equation}
After taking the square, integrating over $\bR$ and applying Lemma~\ref{lem:exp-cross-term}, we obtain \eqref{eq:sizeDEp}.

In order to prove \eqref{eq:size-diff-pot}, we write
\begin{equation}
H(\vec a) = \sum_{\ell=1}^{k-1}(-1)^\ell(H_\ell-1) + (-1)^{k}H_k
+ \sum_{\ell = k+1}^{n}(-1)^\ell(H_\ell + 1).
\end{equation}
Since $U''$ is locally Lipschitz and $|\partial_x H| \lesssim \min(|1 - H|, |1+H|)$, we obtain
\begin{equation}
\big| \partial_x H_k \big(U''(H(\vec a)) - U''(H_k)\big)\big| \lesssim \max_{1 \leq \ell < k}
|1 - H_\ell||1 + H_k| + \max_{k < \ell \leq n}|1 - H_k||1 + H_\ell|,
\end{equation}
and we conclude as above.
\end{proof}

\subsection{Schr\"odinger operator with multiple potentials}
\label{ssec:schrod}
We define
\begin{equation}
L := \vD^2 E_p(H) = -\partial_x^2 + U''(H) = -\partial_x^2 + 1 + (U''(H) - 1).
\end{equation}
Differentiating $\partial_x^2 H(x - a) = U'(H(x-a))$ with respect to $a$ we obtain
\begin{equation}
\label{eq:Lc-ker}
\big({-}\partial_x^2 + U''(H(\cdot - a))\big)\partial_x H(\cdot - a) = 0,
\end{equation}
in particular for $a = 0$ we have $L(\partial_x H) = 0$.
Since $\partial_x H$ is a positive function, $0$ is a simple eigenvalue of $L$,
which leads to the following coercivity estimate
(see \cite[Lemma 2.3]{JKL1} for the exact same formulation, as well as \cite{HPW82} for a similar result).
\begin{lemma}
There exist $\nu, C > 0$ such that for all $h \in H^1(\bR)$ the following inequality holds:
\begin{equation}
\label{eq:vLv-coer}
\la h, Lh\ra \geq \nu \|h\|_{H^1}^2 -C \la \partial_x H, h\ra^2.
\end{equation}\vspace{-1cm}\qedno
\end{lemma}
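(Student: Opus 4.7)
The plan is to exploit the fact that $L = -\partial_x^2 + 1 + V(x)$ with $V(x) := U''(H(x)) - 1$ is a one-dimensional Schrödinger operator for which $\partial_x H$ is an explicit ground state, and then upgrade standard $L^2$ coercivity to $H^1$ coercivity using the second-order structure of $L$.

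First, I would analyze the spectrum of $L$. The potential $V$ is smooth and, by Proposition~\ref{prop:prop-H} combined with $U''(\pm 1) = 1$, satisfies $|V(x)| \lesssim \eee^{-|x|}$. By Weyl's theorem, the essential spectrum of $L$ equals that of $-\partial_x^2 + 1$, namely $[1, \infty)$. By \eqref{eq:Lc-ker} we have $L(\partial_x H) = 0$, and since $\partial_x H > 0$ (from the Bogomolny relation $\partial_x H = \sqrt{2U(H)}$), the function $\partial_x H$ is the Perron--Frobenius ground state of $L$, so $0$ is the lowest eigenvalue and it is simple. Consequently there exists $\lambda_1 > 0$ such that
\begin{equation}
\langle h^\perp, L h^\perp\rangle \geq \lambda_1 \|h^\perp\|_{L^2}^2 \qquad\text{whenever } \langle h^\perp, \partial_x H\rangle = 0.
\end{equation}

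Next, I would orthogonally decompose $h = c\,\partial_x H + h^\perp$ in $L^2$, so that $c\,\|\partial_x H\|_{L^2}^2 = \langle \partial_x H, h\rangle$ and $\langle h, Lh\rangle = \langle h^\perp, Lh^\perp\rangle$, using $L(\partial_x H) = 0$ and self-adjointness. Applying the spectral gap, $\langle h, Lh\rangle \geq \lambda_1 \|h^\perp\|_{L^2}^2$. To pass from $L^2$ to $H^1$, I would use the identity
\begin{equation}
\|\partial_x h^\perp\|_{L^2}^2 = \langle h^\perp, Lh^\perp\rangle - \|h^\perp\|_{L^2}^2 - \langle V h^\perp, h^\perp\rangle \leq \langle h^\perp, Lh^\perp\rangle + \|V\|_{L^\infty}\|h^\perp\|_{L^2}^2,
\end{equation}
which combined with the $L^2$ gap yields $\|h^\perp\|_{H^1}^2 \leq C_1 \langle h^\perp, Lh^\perp\rangle$ for some $C_1 > 0$. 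Finally, since $\|h\|_{H^1}^2 \leq 2c^2\|\partial_x H\|_{H^1}^2 + 2\|h^\perp\|_{H^1}^2$ and $c^2 \simeq \langle \partial_x H, h\rangle^2$, rearranging produces the required inequality with $\nu := (2C_1)^{-1}$ and $C := \|\partial_x H\|_{H^1}^2 / (\nu\,\|\partial_x H\|_{L^2}^4)$.

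The one substantive step is establishing the spectral gap: strict positivity of the second eigenvalue (or of $\inf\sigma(L)\setminus\{0\}$). The Perron--Frobenius argument handles this cleanly in 1D because the positivity of $\partial_x H$ together with simplicity of the ground state forces any orthogonal eigenfunction to have a node, hence larger Rayleigh quotient. Once the gap is in hand, the rest is an elementary manipulation of quadratic forms; no finer information about $H$ or $U$ beyond what is already recorded in Proposition~\ref{prop:prop-H} is needed.
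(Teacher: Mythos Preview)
Your argument is correct: the exponential decay of $V = U''(H) - 1$ gives essential spectrum $[1,\infty)$ by Weyl's theorem, the strict positivity of $\partial_x H$ forces $0$ to be the simple ground state by Sturm oscillation theory, and the passage from the resulting $L^2$ spectral gap to $H^1$ coercivity via the quadratic-form identity is clean. The paper does not actually prove this lemma---it is quoted from \cite[Lemma~2.3]{JKL1} with reference to \cite{HPW82}---and your outline is precisely the standard route taken in those sources, so there is nothing to compare. (The explicit value you record for $C$ at the end is off by a harmless factor of $2$, but only the existence of some $C>0$ matters.)
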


For any increasing $n$-tuple $\vec a$,
we denote $L(\vec a) := \vD^2 E_p(H(\vec a))$,
which is the Schr\"odinger operator on $L^2(\bR)$ given by
\begin{equation}
\label{eq:schrod-op}
(L(\vec a)h)(x) := {-}\partial_x^2 h(x) + U''(H(\vec a; x))h(x).
\end{equation}
\begin{lemma}
\label{lem:D2H}
There exist $y_0, \nu, C > 0$ such that the following holds.
Let $\vec a \in \bR^n$ satisfy $a_{k+1} - a_k \geq y_0$ for all $k \in \{1, \ldots, n-1\}$
and let $h \in H^1(\bR)$. Then
\begin{equation}
\label{eq:D2H-coer}
\begin{aligned}
\la h, L(\vec a) h\ra \geq \nu \|h\|_{H^1}^2 -C\sum_{k=1}^n \la\partial_x H_k, h\ra^2.
\end{aligned}
\end{equation}
\end{lemma}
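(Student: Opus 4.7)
The plan is a standard IMS-type localization that reduces the multi-kink coercivity to the single-kink estimate \eqref{eq:vLv-coer} applied near each $a_k$, the interaction errors being absorbed because $y_{\min}$ is large. First I would choose a smooth partition of unity $\eta_1,\ldots,\eta_n: \bR \to [0,1]$ with $\sum_{k=1}^n \eta_k^2 \equiv 1$, each $\eta_k$ equal to $1$ on a neighbourhood of $a_k$, supported in the interval between the midpoints $(a_{k-1}+a_k)/2$ and $(a_k+a_{k+1})/2$ (with the obvious modifications for $k=1$ and $k=n$), and satisfying $|\eta_k'| \lesssim y_{\min}^{-1}$. Using $\sum_k \eta_k \eta_k' = 0$, a direct computation gives the IMS identity
\begin{equation}
\la h, L(\vec a) h\ra = \sum_{k=1}^n \la \eta_k h, L(\vec a)(\eta_k h)\ra - \sum_{k=1}^n \|\eta_k' h\|_{L^2}^2,
\end{equation}
together with $\sum_k \|\eta_k h\|_{H^1}^2 \geq \|h\|_{H^1}^2$ and $\sum_k \|\eta_k' h\|_{L^2}^2 \lesssim y_{\min}^{-2}\|h\|_{L^2}^2$.

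On $\supp \eta_k$, Proposition~\ref{prop:prop-H} applied to each $H_\ell$ with $\ell \neq k$ yields $H_\ell = 1 + O(\eee^{-y_{\min}/2})$ for $\ell < k$ and $H_\ell = -1 + O(\eee^{-y_{\min}/2})$ for $\ell > k$; substituting into the definition of $H(\vec a)$ and tracking the alternating signs produces $H(\vec a; x) = (-1)^k H_k(x) + O(\eee^{-y_{\min}/2})$ on this support. Since $U''$ is even and smooth, this yields the pointwise bound $|U''(H(\vec a)) - U''(H_k)| \lesssim \eee^{-y_{\min}/2}$ on $\supp \eta_k$, hence
\begin{equation}
\la \eta_k h, L(\vec a)(\eta_k h)\ra \geq \la \eta_k h, L_k(\eta_k h)\ra - C\eee^{-y_{\min}/2}\|\eta_k h\|_{L^2}^2,
\end{equation}
where $L_k := -\partial_x^2 + U''(H_k)$ is the translate of $L$ by $a_k$. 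Applying the single-kink coercivity \eqref{eq:vLv-coer} to the shifted function $(\eta_k h)(\cdot + a_k)$ then gives
\begin{equation}
\la \eta_k h, L_k(\eta_k h)\ra \geq \nu\|\eta_k h\|_{H^1}^2 - C\la \partial_x H_k, \eta_k h\ra^2.
\end{equation}

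Summing over $k$ and using $\|(1-\eta_k)\partial_x H_k\|_{L^2} \lesssim \eee^{-y_{\min}/2}$ (from the exponential decay of $\partial_x H_k$) to replace $\la \partial_x H_k, \eta_k h\ra^2 \leq 2\la \partial_x H_k, h\ra^2 + C\eee^{-y_{\min}}\|h\|_{L^2}^2$, I arrive at
\begin{equation}
\la h, L(\vec a)h\ra \geq \nu\|h\|_{H^1}^2 - C\bigl(y_{\min}^{-2}+\eee^{-y_{\min}/2}\bigr)\|h\|_{L^2}^2 - 2C\sum_{k=1}^n \la \partial_x H_k, h\ra^2,
\end{equation}
and choosing $y_0$ sufficiently large absorbs the first error into $\tfrac{\nu}{2}\|h\|_{H^1}^2$, giving \eqref{eq:D2H-coer} (with $\nu$ replaced by $\nu/2$). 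The only genuinely delicate step is the pointwise comparison $H(\vec a) \approx (-1)^k H_k$ on $\supp \eta_k$, which crucially uses the evenness of $U$; everything else is either a soft localization identity or a direct application of Proposition~\ref{prop:prop-H} and the single-kink coercivity \eqref{eq:vLv-coer}.
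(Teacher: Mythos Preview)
Your proof is correct and follows the same overall strategy as the paper: localize near each kink, apply the single-kink coercivity \eqref{eq:vLv-coer}, and absorb the interaction errors using $y_{\min}\to\infty$. The technical implementation differs, however. You use a \emph{quadratic} (IMS) partition $\sum_k \eta_k^2 = 1$, which via the IMS identity produces no cross terms at all---the only localization error is $\sum_k\|\eta_k' h\|_{L^2}^2 \lesssim y_{\min}^{-2}\|h\|_{L^2}^2$. The paper instead uses a \emph{linear} partition $\sum_k \chi_k = 1$, writes $h=\sum_k \chi_k h$, and must then handle the cross terms $\la \chi_i h, L(\vec a)(\chi_j h)\ra$ separately, showing they are bounded below by $-o(1)\|h\|_{H^1}^2$; this step uses the observation that $U''(H(\vec a))>0$ on the overlap region between consecutive cutoffs. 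Your IMS route is somewhat cleaner for exactly this reason: it trades the cross-term analysis for a single commutator error. One small imprecision: with a smooth quadratic partition the supports of consecutive $\eta_k$ must overlap, so they cannot be exactly the intervals between midpoints; but any standard construction with overlap regions of width $\sim y_{\min}$ gives all the bounds you use.
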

\begin{proof}
We adapt \cite[Proof of Lemma 2.4]{JKL1}.
We set
\begin{equation}
\begin{aligned}
\chi_1(x) &:= \chi\Big(\frac{x - a_1}{a_2 - a_1}\Big), \\
\chi_k(x) &:= \chi\Big(\frac{x - a_k}{a_{k+1} - a_k}\Big)
- \chi\Big(\frac{x - a_{k-1}}{a_k - a_{k-1}}\Big),\qquad\text{for }k \in \{2, \ldots, n-1\}, \\
\chi_n(x) &:= 1 - \chi\Big(\frac{x - a_{n-1}}{a_n - a_{n-1}}\Big)
\end{aligned}
\end{equation}
and we let
\begin{equation}
h_k := \chi_k h, \qquad\text{for }k \in \{1, \ldots, n\}.
\end{equation}
We have $h = \sum_{k=1}^n h_k$, hence
\begin{equation}
\la h, L(\vec a) h\ra = \sum_{k=1}^n \la h_k, L(\vec a) h_k\ra + 2\sum_{1 \leq i < j \leq n}\la h_i, L(\vec a) h_j\ra,
\end{equation}
so it suffices to prove that
\begin{gather}
\label{eq:coer-1}
\la h_k, L(\vec a) h_k\ra \geq \nu\|h_k\|_{H^1}^2 -  C\la\partial_x H_k, h_k\ra^2
-{o(1)\|h\|_{H^1}^2}, \\
\label{eq:coer-2}
\la h_i, L(\vec a) h_j\ra {\geq {-}o(1)\|h\|_{H^1}^2} \qquad \text{whenever }i \neq j, \\
\label{eq:coer-3}
\big|\la\partial_x H_k, h_k\ra^2 - \la \partial_x H_k, h\ra^2\big| {\leq o(1)\|h\|_{H^1}^2},
\end{gather}
{where $\nu>0$ is the constant in \eqref{eq:vLv-coer} and $o(1)\to 0$ as $y_0\to \infty$.}

We first prove \eqref{eq:coer-1}.
Without loss of generality we can assume $a_k = 0$. We then have
\begin{equation}
L(\vec a) = L + V, \qquad V := U''(H(\vec a)) - U''(H),
\end{equation}
thus
\begin{equation}
\la h_k, L(\vec a) h_k\ra = \la h_k, L h_k\ra + \la h_k, Vh_k\ra \geq \nu \|h_k\|_{H^1}^2 - C\la \partial_x H, h_k\ra^2
+ \int_{-\infty}^\infty \chi_k^2 V h_k^2\ud x.
\end{equation}
We only need to check that $\|\chi_k^2 V\|_{L^\infty} \ll 1$. If $x \geq \frac 23 a_{k+1}$
or $x \leq \frac 23 a_{k-1}$, then $\chi_k(x) = 0$.
If $\frac 23 a_{k-1} \leq x \leq \frac 23 a_{k+1}$, then $|1 - H_j(x)| \ll 1$
for all $j < k$ and $|1 + H_j(x)| \ll 1$ for all $j > k$, hence
\begin{equation}
\big|H(\vec a; x) - (-1)^k H_k(x)\big| = \Big|\sum_{j=1}^{k-1}(-1)^j(H_j(x) - 1) + \sum_{j=k+1}^n(-1)^j(H_j(x) + 1)\Big| \ll 1,
\end{equation}
which implies $|V(x)| \ll 1$.

Next, we show \eqref{eq:coer-2}.
Observe that
\begin{equation}
\label{eq:chiichij}
\chi_i\chi_jU''(H(\vec a; x)) \geq 0, \qquad \text{for all }x \in \bR.
\end{equation}
Indeed, if $j \neq i+1$, then $\chi_i(x) \chi_j(x) = 0$ for all $x$.
If $j = i+1$, then $\chi_i(x) \chi_j(x) \neq 0$ only if $\frac 23 a_i + \frac 13 a_{i+1} \leq x \leq \frac 13 a_i + \frac 23 a_{i+1}$, which implies $|1 - H_\ell(x)| \ll 1$ for all $\ell \leq i$
and $|1 + H_\ell(x)| \ll 1$ for all $\ell > i$, hence $|H(\vec a; x) - (-1)^i| \ll 1$,
in particular $U''(H(\vec a; x)) > 0$.
Using \eqref{eq:chiichij} and the fact that $\|\partial_x \chi_k\|_{L^\infty} \ll 1$
for all $k$, we obtain
\[
\begin{aligned}
\la h_i, L(\vec a) h_j\ra &= \int_{-\infty}^\infty \Big(\partial_x(\chi_i h)\partial_x(\chi_j h)
+U''(H(\vec a)) \chi_i\chi_j h^2\Big)\ud x\\
&\geq \int_{-\infty}^\infty\chi_i\chi_j (\partial_x h)^2\ud x - o(1)\|h\|_{H^1}^2\geq {-}o(1)\|h\|_{H^1}^2.
\end{aligned}
\]

Finally, we have
\begin{equation}
\begin{aligned}
\big|\la\partial_x H_k, h_k\ra^2 - \la \partial_x H_k, h\ra^2\big| &\leq (\|h_k\|_{L^2} + \|h\|_{L^2})
\big|\la\partial_x H_k, h_k - h\ra\big| \\ &\leq 2\|h\|_{L^2}^2\sum_{j \neq k}\|\chi_j\partial_x H_k\|_{L^2} \leq o(1)\|h\|_{L^2}^2,
\end{aligned}
\end{equation}
hence \eqref{eq:coer-3} follows.
\end{proof}

\section{Preliminaries on the Cauchy problem}
\label{sec:cauchy}
Since the local well-posedness theory for \eqref{eq:csf-2nd} presents no serious difficulties,
we only briefly resume it, leaving some details to the Reader.
\begin{definition}
If $I \subset \bR$ is an open interval, $\iota_-, \iota_+ \in \{-1, 1\}$ and $\bs\phi = (\phi, \dot \phi): I \times \bR \to \bR\times\bR$, then we say that $\bs\phi$ is a solution of \eqref{eq:csf} in the energy sector $\cE_{\iota_-, \iota_+}$ if $\bs \phi \in C(I; \cE_{\iota_-, \iota_+})$
and \eqref{eq:csf} holds in the sense of distributions.
\end{definition}

Choose any $(\xi, 0) \in \cE_{\iota_-, \iota_+}$ such that $\xi \in C^\infty$ and $\partial_x \xi$ is of compact support,
and write $(\phi, \dot\phi) = (\xi + \psi, \dot\psi)$. Then $\bs \phi \in C(I; \cE_{\iota_-, \iota_+})$ is equivalent to $\bs\psi \in C(I; \cE)$
and \eqref{eq:csf} becomes
\begin{equation}
\label{eq:cauchy-psi}
\dd t\begin{pmatrix} \psi \\ \dot\psi \end{pmatrix} = \begin{pmatrix}\dot \psi \\ \partial_x^2 \psi - \psi - \big(U'(\xi + \psi) - U'(\xi) - \psi\big)
+ \big(\partial_x^2 \xi - U'(\xi)\big)\end{pmatrix},
\end{equation}
which is the linear Klein-Gordon equation for $\psi$ with the forcing term ${-}\big(U'(\xi + \psi) - U'(\xi) - \psi\big) + \big(\partial_x^2 \xi - U'(\xi)\big)
\in C(I; \cE)$. By the uniqueness of weak solutions of linear wave equations, see \cite{Friedrichs54}, we have that a weak solution of \eqref{eq:cauchy-psi}
is in fact a strong solution given by the Duhamel formula.

If $\bs\phi_{1, 0}, \bs \phi_{2, 0}, \ldots$ and $\bs\phi_0$
are states of bounded energy, then we write
$\bs \phi_{m, 0} \to \bs\phi_0$ if $\bs \phi_{m, 0} - \bs\phi_0 \to 0$ in $\cE$, and
$\bs \phi_{m, 0} \wto \bs\phi_0$ if $\dot \phi_{m, 0} \wto \dot \phi_0$
in $L^2(\bR)$ and $\phi_{m, 0} - \phi_0 \to 0$ in $L^\infty_\tx{loc}(\bR)$. Observe that if $\bs\phi_{m, 0} - \bs \phi_0$ is bounded in $\cE$,
then $\bs \phi_{m, 0} \wto \bs\phi_0$ is equivalent to the weak convergence to $0$ in the Hilbert space $\cE$ of the sequence $\bs \phi_{m, 0} - \bs \phi_0$. However, our notion of weak convergence
is more general, in particular the topological class is not necessarily
preserved under weak limits as defined above.
A similar notion of weak convergence was used in Jia and Kenig \cite{JK}.
\begin{proposition}
\label{prop:cauchy}
\begin{enumerate}[(i)]
\item \label{it:cauchy-en}
For all $\iota_-, \iota_+ \in \{-1, 1\}$ and $\bs\phi_0 \in \cE_{\iota_-, \iota_+}$ and $t_0 \in \bR$,
there exists a unique solution $\bs\phi: \bR \to \cE_{\iota_-, \iota_+}$ of \eqref{eq:csf} such that $\bs\phi(t_0) = \bs\phi_0$.
The energy $E(\bs \phi(t))$ does not depend on $t$.
\item \label{it:cauchy-H2}
If $\partial_x \phi_0 \in H^1(\bR)$ and $\dot\phi_0 \in H^1(\bR)$, then $\partial_x \phi \in C(\bR; H^1(\bR)) \cap C^1(\bR; L^2(\bR))$, $\dot\phi \in C(\bR; H^1(\bR)) \cap C^1(\bR; L^2(\bR))$
and \eqref{eq:csf-1st} holds in the strong sense in $H^1(\bR) \times L^2(\bR)$.
\item \label{it:cauchy-strong}
Let $\bs \phi_{1,0}, \bs \phi_{2,0}, \ldots \in \cE_{\iota_-, \iota_+}$ and $\bs\phi_{m, 0} \to \bs\phi_0 \in \cE_{\iota_-, \iota_+}$.
If $(\bs \phi_m)_{m=1}^\infty$ and $\bs\phi$ are the solutions of \eqref{eq:csf} such that $\bs \phi_m(t_0) = \bs\phi_{m,0}$ and $\bs\phi(t_0) = \bs\phi_0$,
then $\bs \phi_m(t) \to \bs \phi(t)$ for all $t \in \bR$, the convergence being uniform on every bounded time interval.
\item \label{it:cauchy-speed}
If $\wt{\bs\phi}_0 \vert_{[x_1, x_2]} = {\bs\phi}_0 \vert_{[x_1, x_2]}$, then $\wt{\bs\phi}(t) \vert_{[x_1 + |t - t_0|, x_2 - |t - t_0|]} = {\bs\phi}(t) \vert_{[x_1 + |t - t_0|, x_2 - |t - t_0|]}$ for all $t \in \big[t_0 - \frac 12(x_2 - x_1), t_0 + \frac 12(x_2 - x_1)\big]$.
\item \label{it:cauchy-weak}
Let $\bs \phi_{1,0}, \bs \phi_{2,0}, \ldots \in \cE_{\iota_-, \iota_+}$ and $\bs\phi_{m, 0} \wto \bs\phi_0 \in \cE_{\iota_-, \iota_+}$.
If $(\bs \phi_m)_{m=1}^\infty$ and $\bs\phi$ are the solutions of \eqref{eq:csf} such that $\bs \phi_m(t_0) = \bs\phi_{m,0}$ and $\bs\phi(t_0) = \bs\phi_0$,
then $\bs \phi_m(t) \wto \bs \phi(t)$ for all $t \in \bR$.
\end{enumerate}
\end{proposition}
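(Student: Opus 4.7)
The plan is to handle items (i)--(iv) as standard consequences of semilinear Klein--Gordon theory and to reserve the real work for (v).

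\textbf{Items (i) and (ii).} Following the reformulation in the excerpt, write $\bs\phi = (\xi+\psi,\dot\psi)$ with $\bs\psi\in\cE$. The nonlinearity $N(\psi) := -\bigl(U'(\xi+\psi)-U'(\xi)-\psi\bigr)$ is locally Lipschitz as a map $H^1(\bR)\to L^2(\bR)$ thanks to $U\in C^\infty$ and the one-dimensional embedding $H^1(\bR)\hookrightarrow L^\infty(\bR)$, and the source term $\partial_x^2\xi - U'(\xi)$ lies in $L^2(\bR)$. A Duhamel-type contraction on $C([t_0-\tau,t_0+\tau];\cE)$, with $\tau>0$ depending only on $\|\bs\psi(t_0)\|_\cE$ and on $\xi$, produces a unique mild solution. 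For (ii), differentiating \eqref{eq:cauchy-psi} once in $x$ or in $t$ gives linear Klein--Gordon equations for $\partial_x\psi$ and $\dot\psi$ with $L^2$ sources, and a second contraction promotes the solution to the higher-regularity space. In this class the identity $\dd{t} E(\bs\phi(t))=0$ is rigorously justified by testing \eqref{eq:csf-1st} against $(-\partial_x^2\phi+U'(\phi),\dot\phi)$; approximating a general $\cE$-datum by smoother data and invoking the local continuous dependence transfers the conservation law to all $\cE$-solutions. Since $U\geq 0$, the resulting a priori $\cE$-bound on $\bs\psi$ precludes blow-up, completing (i).

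\textbf{Items (iii) and (iv).} For (iii), the Lipschitz bound on $N$ on $\cE$-balls of radius controlled by the conserved energy, together with Gronwall applied to the Duhamel formula for $\bs\phi_m - \bs\phi$, yields convergence uniform on bounded time intervals. For (iv), set $\bs u := \wt{\bs\phi}-\bs\phi$, which satisfies the linear equation $\partial_t^2 u - \partial_x^2 u + V u = 0$ with $V := \int_0^1 U''(\phi + s(\wt{\phi}-\phi))\ud s$ bounded on the relevant time interval. Consider the local energy $e(t) := \int_{I(t)} \bigl(\tfrac12\dot u^2 + \tfrac12(\partial_x u)^2 + \tfrac12 u^2\bigr)\ud x$ on the shrinking interval $I(t) := [x_1+|t-t_0|, x_2-|t-t_0|]$. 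Differentiating in $t$ and using the equation, the bulk term is bounded by $C\,e(t)$, while the boundary contribution at each endpoint of $I(t)$ equals $\pm\dot u\,\partial_x u - \tfrac12(\dot u^2+(\partial_x u)^2+u^2)$, which is non-positive by Cauchy--Schwarz. Gronwall and $e(t_0)=0$ then yield $e\equiv 0$.

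\textbf{Item (v).} This is the main obstacle, the difficulty being to pass to the limit in the nonlinear term without uniform strong compactness in $\cE$. By (iv), proving the weak convergence of $\bs\phi_m(t)$ to $\bs\phi(t)$ on an arbitrary compact interval $K$ reduces to the same statement for the data restricted to $K' := K+[-|t-t_0|,|t-t_0|]$; a cutoff that replaces $\bs\phi_{m,0}$ by $\bs\phi_0$ outside $K'$ leaves the solution on $K$ unchanged by (iv), so one may assume $\bs\eta_m := \bs\phi_{m,0}-\bs\phi_0$ is compactly supported. Weak $L^2$-convergence of $\dot\phi_{m,0}$ together with uniform $L^\infty_\tx{loc}$ convergence of $\phi_{m,0}$ and Banach--Steinhaus then force $\bs\eta_m$ to be $\cE$-bounded. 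The energy identity from (i) transports this bound to all later times, so Banach--Alaoglu extracts a weak $\cE$-limit $\bs\phi_\infty(t)$ of a subsequence of $\bs\phi_m(t)$. To identify $\bs\phi_\infty(t)=\bs\phi(t)$, I would pass to the limit in \eqref{eq:csf} tested against smooth compactly supported functions: the linear terms pass by weak convergence, and the nonlinear term is handled via the compact embedding $H^1(K')\hookrightarrow C(K')$ combined with the Aubin--Lions lemma (the $L^\infty_t L^2_x$ bound on $\partial_t\phi_m$ supplying equicontinuity in $t$), which gives $U'(\phi_m)\to U'(\phi_\infty)$ in $L^\infty_\tx{loc}$. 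Uniqueness in the Cauchy problem for the weak limit of the data then upgrades subsequential convergence to full-sequence convergence.
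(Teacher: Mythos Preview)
Your treatment of (i)--(iv) matches the paper's, except that for (iv) you use a direct local-energy inequality for the difference rather than propagating finite speed through the Picard iterates; both are standard.

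For (v) the routes diverge. The paper first reduces, by replacing $\phi_{m,0}$ with the vacua outside a growing interval $[x_m,x_m']$, to data with $\|\bs\phi_{m,0}-\bs\phi_0\|_\cE$ bounded and $\|\phi_{m,0}-\phi_0\|_{L^\infty(\bR)}\to 0$. It then splits $\bs\psi_m=\bs h_m+\bs g_m$ into the free Klein--Gordon evolution of the data and the Duhamel remainder with zero data: weak continuity of the linear flow handles $\bs h_m$, while $\bs g_m$ is bounded in $C^1(I;\cE)$ by energy estimates, so Arzel\`a--Ascoli in the (metrisable) weak topology on balls produces \emph{one} subsequence with $\bs g_m(t)\wto\bs g(t)$ for every $t$; the nonlinear term is then passed via pointwise convergence and dominated convergence. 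Your Aubin--Lions alternative is in principle legitimate and arguably more direct, but you should invoke it \emph{before} Banach--Alaoglu: a weak limit extracted separately at each $t$ does not define a function of $t$ without the time-equicontinuity that Aubin--Lions supplies.

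There is also a concrete gap in your reduction. The claim that ``Banach--Steinhaus\ldots force[s] $\bs\eta_m$ to be $\cE$-bounded'' is not justified: Banach--Steinhaus on the weak $L^2$ limit of $\dot\phi_{m,0}$ bounds $\dot\eta_m$ in $L^2$, and $L^\infty_{\mathrm{loc}}$ convergence bounds $\eta_m$ in $L^2(K')$, but the hypotheses say nothing about $\partial_x\phi_{m,0}$ on $K'$, so $\|\partial_x\eta_m\|_{L^2}$ need not be bounded. Without this, the energy of your truncated data is not uniformly controlled and the compactness argument breaks down. (The paper's reduction runs into the same issue---control of $\partial_x\phi_{m,0}$ ultimately requires uniformly bounded energy, which holds in every application---but it does not misattribute the bound to Banach--Steinhaus.)
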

\begin{proof}
Statements \ref{it:cauchy-en}, \ref{it:cauchy-H2} and \ref{it:cauchy-strong} follow from energy estimates and Picard iteration, see for example \cite{GiVe85} or \cite[Section X.13]{ReedSimon}
for similar results.
Since the linear Klein-Gordon equation has propagation speed equal to 1, each Picard iteration satisfies the finite propagation speed property, and \ref{it:cauchy-speed} follows by passing to the limit.
We skip the details.

We sketch a proof of \ref{it:cauchy-weak}.

In the first step, we argue that it can be assumed without loss of generality that $\|\bs \phi_{m, 0} - \bs \phi_0\|_\cE$ is bounded and $\|\phi_{m, 0} - \phi_0\|_{L^\infty} \to 0$.
To this end, let $x_m \to -\infty$ and $x_m' \to \infty$ be such that
\begin{equation}
\lim_{m\to\infty}\sup_{x \in [x_m, x_m']}|\phi_{m, 0}(x) - \phi_0(x)| = 0.
\end{equation}
We define a new sequence $\wt \phi_{m, 0}: \bR \to \bR$
by the formula
\begin{equation}
\label{eq:weak-conv-cutoff}
\wt\phi_{m, 0}(x) := \begin{cases}
\iota_- & \text{for all }x \leq x_m - 1, \\
(x_m-x)\iota_- + (1-x_m+x)\phi_{m, 0}(x_m)& \text{for all }x \in [x_m - 1, x_m], \\
\phi_{m, 0}(x) &\text{for all }x \in [x_m, x_m'], \\
(x - x_m')\iota_+ + (1-x+x_m')\phi_{m, 0}(x_m') & \text{for all }x \in [x_m', x_m' + 1], \\
\iota_+ & \text{for all }x \geq x_m' + 1.
\end{cases}
\end{equation}
Taking into account that
\begin{equation}
\lim_{m\to\infty}\sup_{x \leq x_m}\big(|\phi_0(x) - \iota_-|
+\sup_{x \geq x_m'}|\phi_0(x) - \iota_+|\big) = 0,
\end{equation}
we have that $\|\wt\phi_{m, 0} - \phi_0\|_{L^\infty} \to 0$.
In particular, if we take $R \gg 1$, then
\begin{equation}
\limsup_{m\to\infty}\sup_{x \leq -R}|\wt\phi_{m, 0}(x) - \iota_-| \ll 1,
\end{equation}
thus
\begin{equation}
\limsup_{m\to\infty}\int_{-\infty}^{-R}|\wt\phi_{m, 0}(x) - \iota_-|^2\ud x \lesssim \limsup_{m\to\infty}\int_{-\infty}^{-R}U(\wt\phi_{m, 0}(x))\ud x < \infty,
\end{equation}
and similarly for $x \geq R$. Hence, $\limsup_{m\to\infty}\|\wt \phi_{m, 0} - \phi_0\|_{H^1} < \infty$.

Let $\wt{\bs\phi}_m$ be the solution of \eqref{eq:csf} such that
$\wt{\bs \phi}_m(t_0) = (\wt \phi_{m, 0}, \dot \phi_{m, 0})$.
By property \ref{it:cauchy-speed}, it suffices to prove that
$\wt{\bs\phi}_m(t) \wto \bs\phi(t)$ for all $t$,
which finishes the first step. In the sequel, we write $\phi_{m, 0}$
instead of $\wt\phi_{m, 0}$.

We choose $\xi$ as in \eqref{eq:cauchy-psi} and write $\bs \phi_m = (\xi, 0) + \bs\psi_m$, $\bs\phi = (\xi, 0) + \bs\psi$.
It suffices to prove that, for every $t\in \bR$, any subsequence of $\bs\psi_m(t)$ has a subsequence weakly converging to $\bs\psi(t)$.

Let $\bs h_m$ be the solution of \eqref{eq:free-kg} with initial data $\bs h_m(t_0) = \bs\psi_m(t_0)$,
$\bs h$ the solution of the same problem with initial data $\bs h(t_0) = \bs\psi(t_0)$,
and $\bs g_m$ the solution of
\begin{equation}
\label{eq:cauchy-gm}
\dd t\begin{pmatrix} g_m \\ \dot g_m \end{pmatrix} = \begin{pmatrix}\dot g_m \\ \partial_x^2 g_m - g_m - \big(U'(\xi + \psi_m) - U'(\xi) - \psi_m\big)
+ \big(\partial_x^2 \xi - U'(\xi)\big)\end{pmatrix},
\end{equation}
with the initial data $\bs g_m(t_0) = 0$.
We thus have $\bs\psi_m = \bs h_m + \bs g_m$ for all $m$.

By the continuity of the free Klein-Gordon flow, we have $\bs h_m(t) \wto \bs h(t)$ for all $t \in \bR$.
By the energy estimates, $\bs g_m$ is bounded in $C^1(I; \cE)$ for any bounded open interval $I$.
Since the weak topology on bounded balls of $\cE$ is metrizable, the Arzel\`a-Ascoli theorem yields $\bs g \in C(\bR; \cE)$
and a subsequence of $(\bs g_m)_m$, which we still denote $(\bs g_m)_m$, such that $\bs g_m(t) \wto \bs g(t)$ for every $t \in \bR$.
Let $\wt{\bs \psi} := \bs h + \bs g$, so that $\bs\psi_m(t) \wto \wt{\bs\psi}(t)$ for all $t \in \bR$.
In particular, $\psi_m(t, x) \to \wt\psi(t, x)$ for all $(t, x)$, hence by the dominated convergence theorem
\begin{equation}
U'(\xi + \psi_m) - U'(\xi) - \psi_m \to U'(\xi + \wt\psi) - U'(\xi) - \wt\psi
\end{equation}
in the sense of distributions. We can thus pass to the distributional limit in \eqref{eq:cauchy-psi} and conclude that $\wt{\bs \psi}$
is a solution of \eqref{eq:csf} with initial data $\wt{\bs \psi}(t_0) = \bs\psi(t_0)$.
By the uniqueness of weak solutions, $\wt{\bs\psi} = \bs\psi$.
\end{proof}
Finally, we have local stability of vacuum solutions.
\begin{lemma}
\label{lem:loc-vac-stab}
There exist $\eta_0, C_0 > 0$ having the following property.
If $\bs\iota \in \{\bs 1, -\bs 1\}$, $t_0 \in \bR$, $-\infty \leq x_1 + 1 < x_2 - 1 \leq \infty$, $E(\bs\phi_0) < 0$, $\|\bs \phi_0 - \bs\iota\|_{\cE(x_1, x_2)} \leq \eta_0$ and $\bs\phi$ is a solution of \eqref{eq:csf} such that $\bs\phi(0) = \bs\phi_0$, then for all $t \in \big[t_0 - \frac 12(x_2 - x_1)+1, t_0 + \frac 12(x_2 - x_1)-1\big]$
\begin{equation}
\label{eq:loc-vac-stab}
\|\bs \phi(t) - \bs \iota\|_{\cE(x_1 + |t - t_0|, x_2 - |t - t_0|)} \leq C_0 \|\bs \phi_0 - \bs\iota\|_{\cE(x_1, x_2)}.
\end{equation}
\end{lemma}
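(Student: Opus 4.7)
The plan is to use the finite speed of propagation from Proposition~\ref{prop:cauchy}~\ref{it:cauchy-speed} to reduce the local estimate \eqref{eq:loc-vac-stab} to a \emph{global} smallness estimate for a modified solution whose data is close to the vacuum on the whole line. First I would construct a truncated initial datum $\wt{\bs\phi}_0$ that coincides with $\bs\phi_0$ on $[x_1, x_2]$ and equals $\bs\iota$ outside a slightly enlarged interval: set $\dot{\wt\phi}_0 := \dot\phi_0$ on $[x_1, x_2]$ and $\dot{\wt\phi}_0 := 0$ elsewhere; set $\wt\phi_0 := \iota$ for $x \leq x_1 - 1$ or $x \geq x_2 + 1$; and linearly interpolate on $[x_1 - 1, x_1]$ and $[x_2, x_2 + 1]$ between $\iota$ and the boundary value $\phi_0(x_j)$. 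Since in dimension one the embedding $H^1 \hookrightarrow C^0$ yields the trace bound $|\phi_0(x_j) - \iota| \lesssim \|\bs\phi_0 - \bs\iota\|_{\cE(x_1, x_2)}$, a direct computation gives $\wt{\bs g}_0 := \wt{\bs\phi}_0 - \bs\iota \in \cE$ with $\|\wt{\bs g}_0\|_\cE \lesssim \|\bs\phi_0 - \bs\iota\|_{\cE(x_1, x_2)} \leq C\eta_0$.

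The core step would then be to show that the global solution $\wt{\bs\phi}$ of \eqref{eq:csf} corresponding to $\wt{\bs\phi}_0$ satisfies
\begin{equation}
\sup_{t \in \bR}\|\wt{\bs\phi}(t) - \bs\iota\|_{\cE} \lesssim \|\wt{\bs g}_0\|_\cE.
\end{equation}
Two ingredients make this work. On the one hand, energy is conserved by Proposition~\ref{prop:cauchy}~\ref{it:cauchy-en}. On the other, since $U(\iota) = U'(\iota) = 0$ and $U''(\iota) = 1$, Taylor expansion yields $U(\iota + g) \geq \tfrac14 g^2$ pointwise whenever $|g| \leq c_0$ for some small universal $c_0$; this gives the coercivity $E(\bs\iota + \bs g) \geq \tfrac14 \|\bs g\|_\cE^2$ under the smallness assumption $\|g\|_{L^\infty} \leq c_0$. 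Combined with the one-dimensional Sobolev bound $\|g\|_{L^\infty} \lesssim \|\bs g\|_\cE$, I would run a standard bootstrap in $t$: on any time interval where $\|\wt g(t)\|_{L^\infty} \leq c_0$, coercivity plus conservation gives $\|\wt{\bs g}(t)\|_\cE^2 \lesssim E(\wt{\bs\phi}_0) \lesssim \eta_0^2$, which via Sobolev yields $\|\wt g(t)\|_{L^\infty} \lesssim \eta_0 \ll c_0$ once $\eta_0$ is small enough. Continuity of $t \mapsto \|\wt{\bs g}(t)\|_\cE$ then propagates the bound for all $t \in \bR$.

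To conclude, Proposition~\ref{prop:cauchy}~\ref{it:cauchy-speed} applied to the pair $\bs\phi_0, \wt{\bs\phi}_0$, which agree on $[x_1, x_2]$, gives $\bs\phi(t) = \wt{\bs\phi}(t)$ on $[x_1 + |t - t_0|, x_2 - |t - t_0|]$ for all $t$ in the stated range, and \eqref{eq:loc-vac-stab} follows by restricting the global bound. The main obstacle I anticipate is purely a bookkeeping one: verifying that the cutoff construction really produces $\|\wt{\bs g}_0\|_\cE \lesssim \|\bs\phi_0 - \bs\iota\|_{\cE(x_1, x_2)}$ with a universal constant. The one-dimensional trace inequality provides this, and the unit-length transition regions used in the cutoff are exactly what force the extra shrinkage by one at each endpoint in the statement of the lemma.
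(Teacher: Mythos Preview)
Your argument is correct but takes a different route from the paper. The paper argues directly on the original solution: by the positivity of $U$ and an integration-by-parts (Green's formula) in space-time, the local energy $\int_{x_1+|t-t_0|}^{x_2-|t-t_0|}\big(\tfrac12\dot\phi^2+\tfrac12(\partial_x\phi)^2+U(\phi)\big)\ud x$ is non-increasing in $|t-t_0|$; then, since $U(\iota+\psi)\simeq\psi^2$ for small $|\psi|$ and $\|\psi\|_{L^\infty(I)}\lesssim\|\psi\|_{H^1(I)}$ on intervals $I$ of length $\geq 2$, a continuity argument closes. Your approach instead trades locality for globality: you extend the data to a global state near $\bs\iota$, use global energy conservation plus coercivity to get a uniform-in-time bound, and then invoke finite propagation speed to come back. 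Both are standard; the paper's is shorter since it avoids the cutoff bookkeeping, while yours has the mild bonus that the global bound holds for all $t$, so the restriction $|t-t_0|\leq\tfrac12(x_2-x_1)-1$ is not actually needed in your proof. On that point, your final sentence misidentifies the role of the $\pm 1$: your transition regions lie \emph{outside} $[x_1,x_2]$ and do not shrink the domain of agreement. In the paper's argument the $\pm 1$ is there so that the shrinking interval has length $\geq 2$, making the Sobolev constant uniform.
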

\begin{proof}[Sketch of a proof]
To fix ideas, assume $\iota = 1$ and $t > t_0 = 0$.
The positivity of $U$ and Green's formula in space-time imply that the function
\begin{equation}
\big[0, (x_2 - x_1)/2 - 1\big] \owns t \mapsto \int_{x_1 + t}^{x_2 - t}\Big( \frac 12(\dot \phi(t))^2 + \frac 12 (\partial_x \phi(t))^2 + U(\phi(t))\Big)\ud x
\end{equation}
is decreasing.
Set $\bs\psi(t) := \bs\phi(t) - \bs 1$.
For $|\psi|$ small, we have $U(1 + \psi) \simeq \psi^2$.
By a continuity argument and the fact that $\|\psi\|_{L^\infty(I)}\lesssim \|\psi\|_{H^1(I)}$
on any interval $I$ of length $\geq 2$, we obtain
$\|\psi(t)\|_{L^\infty(x_1 + t, x_2 - t)} \lesssim \eta_0$ and \eqref{eq:loc-vac-stab}.
\end{proof}

\section{Estimates on the modulation parameters}
\label{sec:mod}
Our present goal is to reduce the motion of a kink cluster to a system of ordinary
differential equations with sufficiently small error terms.
\subsection{Basic modulation}
\label{ssec:basic-mod}
We consider a solution of \eqref{eq:csf} which is close, on some open time interval $I$, to multi-kink configurations.
It is natural to express the solution as the sum of a multi-kink and a small error,
which can be done in multiple ways. A unique choice of such a decomposition is obtained
by imposing specific \emph{orthogonality conditions}.

If $\vec a \in C^1(I; \bR^n)$ and, using the notation \eqref{eq:Ha-def}, we decompose
\begin{equation}
\label{eq:g-def}
\bs \phi(t) = \bs H(\vec a(t)) + \bs g(t),
\end{equation}
then the Chain Rule implies that $\bs \phi$ solves \eqref{eq:csf} if and only if $\bs g$ solves
\begin{equation}
\label{eq:ls-g-eq}
\begin{aligned}
\partial_t \bs g(t) &= \bs J\vD E\big(\bs H(\vec a(t)) + \bs g(t)\big)
- {\vec a \,}'(t)\cdot \partial_{\vec a}\bs H(\vec a(t)).
\end{aligned}
\end{equation}
Similarly as in the previous section, we write $H_k(t, x) := H(x - a_k(t))$.
We impose the orthogonality conditions \eqref{eq:g-orth-intro}, which we rewrite as
\begin{equation}
\label{eq:g-orth}
\la \partial_x H_k(t), g(t)\ra = 0, \qquad\text{for all }k \in \{1, \ldots, n\}\text{ and }t \in I.
\end{equation}

\begin{definition}[Weight of modulation parameters]
For all $\vec a \in \bR^n$, we set
\begin{equation}
\label{eq:rhoa-def}
\rho(\vec a) := \sum_{k=1}^{n-1} \eee^{-(a_{k+1} - a_k)}.
\end{equation}
For $\vec a: I \to \bR^n$, we define $\rho : I \to (0, \infty)$ by
\begin{equation}
\label{eq:rhot-def}
\rho(t) := \rho(\vec a(t)) = \sum_{k=1}^{n-1} \eee^{-(a_{k+1}(t) - a_k(t))}.
\end{equation}\end{definition}
We recall that for all $\bs\phi_0 \in \cE_{1, (-1)^n}$ we set
\begin{equation}\label{eq:d-def-2}
\delta(\bs\phi_0) := \inf_{\vec a \in \bR^n}\big(\|\bs \phi_0 - \bs H(\vec a)\|_\cE^2 + \rho(\vec a)\big),
\end{equation}
see Definition~\ref{def:d-def}.
Since both $\bs \phi_0$ and $\bs H(\vec a)$ belong to $\cE_{1, (-1)^n}$,
it follows that $\bs \phi_0 - \bs H(\vec a) \in \cE$, so that $\delta(\bs \phi_0) < \infty$.

We have the following ``static'' modulation lemma.
\begin{lemma}
\label{lem:static-mod}
There exist $\eta_0, \eta_1, C_0 > 0$ having the following property.
For all $\bs \phi_0 \in \cE_{1, (-1)^n}$ such that $\delta(\bs \phi_0) < \eta_0$
there exists unique $\vec a = \vec a(\bs \phi_0) \in \bR^n$ such that
\begin{equation}
\label{eq:dist-bd-eta1}
\|\bs \phi_0 - \bs H(\vec a)\|_\cE^2 + \rho(\vec a) < \eta_1
\end{equation}
and
\begin{equation}
\label{eq:static-orth}
\la \partial_x H(\cdot - a_k), \phi_0 - H(\vec a)\ra = 0\qquad\text{for all }k \in \{1, \ldots, n\}.
\end{equation}
It satisifes
\begin{align}
\label{eq:dist-bd-C0}
\|\bs \phi_0 - \bs H(\vec a)\|_\cE^2 + \rho(\vec a) &\leq C_0 \delta(\bs \phi_0), \\
\label{eq:g-coer-stat}
\|\bs \phi_0 - \bs H(\vec a)\|_\cE^2 &\leq C_0(\rho(\vec a) + E(\bs \phi_0) - nM).
\end{align}
Moreover, the map $\cE_{1, (-1)^n} \owns \bs\phi_0 \mapsto \vec a(\bs \phi_0) \in \bR^n$ is of class $C^1$.
\end{lemma}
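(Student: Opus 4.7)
The proof will proceed by a quantitative implicit function theorem argument, combined with the coercivity of $L(\vec a)$ from Lemma~\ref{lem:D2H}.

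First, I would fix a reference point $\vec b \in \bR^n$ that nearly achieves the infimum defining $\delta(\bs \phi_0)$, that is, $\|\bs\phi_0 - \bs H(\vec b)\|_\cE^2 + \rho(\vec b) < 2\delta(\bs\phi_0) < 2\eta_0$. Then define the map
\begin{equation}
F: \bR^n \times \cE_{1,(-1)^n} \to \bR^n, \qquad F_k(\vec a, \bs\phi_0) := \la \partial_x H(\cdot - a_k), \phi_0 - H(\vec a)\ra.
\end{equation}
A direct computation using $\partial_{a_j} H(\vec a) = (-1)^{j+1} \partial_x H(\cdot - a_j)$ gives
\begin{equation}
\partial_{a_j} F_k(\vec a, \bs\phi_0) \big|_{\phi_0 = H(\vec a)} = (-1)^j \la \partial_x H_k, \partial_x H_j\ra,
\end{equation}
which by Proposition~\ref{prop:prop-H} and Lemma~\ref{lem:exp-cross-term} equals $\text{diag}((-1)^k M) + O(\rho(\vec a))$. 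For $\eta_0$ small this matrix is uniformly invertible, so the standard quantitative implicit function theorem (or a contraction mapping argument on the map $\vec a \mapsto \vec a - D^{-1} F(\vec a, \bs\phi_0)$ applied in a small ball around $\vec b$) produces a unique $\vec a$ near $\vec b$ satisfying \eqref{eq:static-orth}. Because $\|\bs\phi_0 - \bs H(\vec b)\|_\cE$ is already small and the Newton step is Lipschitz, we obtain $|\vec a - \vec b| \lesssim \|\bs\phi_0 - \bs H(\vec b)\|_\cE$, which together with the bound on $\vec b$ yields \eqref{eq:dist-bd-C0}.

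For uniqueness within the large ball \eqref{eq:dist-bd-eta1}, I would argue that any $\vec a'$ with the stated bound forces $\|\bs H(\vec a') - \bs H(\vec b)\|_\cE$ to be small, which by the asymptotic behaviour of $H$ forces $\vec a'$ to lie in the small neighbourhood of $\vec b$ where the implicit function theorem applies; the local uniqueness then concludes. The $C^1$ dependence of $\vec a(\bs\phi_0)$ is immediate from the implicit function theorem.

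To prove the coercivity bound \eqref{eq:g-coer-stat}, write $g = \phi_0 - H(\vec a)$ and Taylor-expand the energy:
\begin{equation}
E(\bs\phi_0) = \tfrac12\|\dot\phi_0\|_{L^2}^2 + E_p(H(\vec a)) + \la \vD E_p(H(\vec a)), g\ra + \tfrac12 \la g, L(\vec a) g\ra + O(\|g\|_{H^1}^3).
\end{equation}
Lemma~\ref{lem:interactions} gives $E_p(H(\vec a)) \geq nM - C\rho(\vec a)$, while Lemma~\ref{lem:sizeDEp} combined with Cauchy--Schwarz and AM--GM yields $|\la \vD E_p(H(\vec a)), g\ra| \leq C\rho(\vec a)/\nu + \tfrac{\nu}{8}\|g\|_{H^1}^2$. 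The orthogonality conditions \eqref{eq:static-orth} together with Lemma~\ref{lem:D2H} give $\la g, L(\vec a) g\ra \geq \nu \|g\|_{H^1}^2$, and the cubic remainder is absorbed since $\|g\|_{H^1}$ is small. Adding the kinetic term and rearranging produces \eqref{eq:g-coer-stat}.

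The main technical delicacy is the global uniqueness within \eqref{eq:dist-bd-eta1} rather than only near the reference $\vec b$: a priori $\eta_1$ could be much larger than $\eta_0$, so one must control how far $\vec a'$ can drift while still keeping $\|\bs\phi_0 - \bs H(\vec a')\|_\cE^2 + \rho(\vec a')$ below $\eta_1$. This is handled by exploiting that two well-separated multi-kink configurations $\bs H(\vec a')$ and $\bs H(\vec b)$ with small $\cE$-distance must have essentially the same position vectors (the shapes near each kink pin down the $a_k'$), which is where the exponential asymptotics of Proposition~\ref{prop:prop-H} and the nondegeneracy $M > 0$ enter crucially.
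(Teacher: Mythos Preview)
Your proposal is correct and follows essentially the same route as the paper: a contraction-mapping/implicit-function argument around a near-minimiser $\vec b$ to obtain existence, local uniqueness, $C^1$ dependence and \eqref{eq:dist-bd-C0}, followed by the Taylor expansion of the energy combined with Lemmas~\ref{lem:interactions}, \ref{lem:sizeDEp} and \ref{lem:D2H} to obtain the coercivity bound \eqref{eq:g-coer-stat}, and finally the observation that $\|\bs H(\vec a') - \bs H(\vec b)\|_\cE + \rho(\vec a') + \rho(\vec b)$ small forces $|\vec a' - \vec b|$ small for the global uniqueness. The only (minor) omission is that your Jacobian computation is evaluated at $\phi_0 = H(\vec a)$ and so misses the diagonal correction $-\la \partial_x^2 H_k, \phi_0 - H(\vec a)\ra$, but this term is $O(\|g\|_{H^1})$ and does not affect the invertibility.
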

\begin{proof}
We first prove the existence of $\vec a$. Set $\eta := \delta(\bs\phi_0) \in (0, \eta_0)$.
By the definition of $\delta$, there exists $b \in \bR^n$ such that
\begin{equation}
\|\bs \phi_0 - \bs H(\vec b)\|_\cE^2 + \rho(\vec b) < 2\eta.
\end{equation}
We define $\vec\Gamma = (\Gamma_1, \ldots, \Gamma_n) \in C^1(\bR^n \times \cE(\bR); \bR^n)$ by
\begin{equation}
\begin{aligned}
\Gamma_k(\vec a, \bs h) := \la \partial_x H(\cdot - a_k), h + H(\vec b) - H(\vec a)\ra, \qquad k\in\{1, \ldots, n\}.
\end{aligned}
\end{equation}
We thus have
\begin{equation}
\label{eq:dajGk}
\partial_{a_j} \Gamma_k(\vec a, \bs h) =
\begin{cases}
- \la \partial_x^2 H(\cdot - a_k), h + H(\vec b) - H(\vec a)\ra + (-1)^k \|\partial_x H\|_{L^2}^2 &\quad\text{if }j = k, \\
(-1)^j\la \partial_x H(\cdot - a_j), \partial_x H(\cdot - a_k) \ra &\quad\text{if }j \neq k.
\end{cases}
\end{equation}
In particular, $\partial_{\vec a}\vec\Gamma(\vec b, \bs h)$ is invertible and has uniformly bounded
inverse if $\|\bs h\|_\cE$ and $|\vec a - \vec b|$ are small enough.

For $C_1 > 0$ to be chosen below, consider the map
\begin{equation}
\begin{gathered}
\vec\Phi: B_{\bR^n}(\vec b, C_1\sqrt{\eta_0}) \times B_{\cE}(0, 2\sqrt{\eta_0}) \to \bR^n, \\
\vec\Phi(\vec a, \bs h) := \vec a - \big[\partial_{\vec a}\vec \Gamma(\vec b, \bs h)\big]^{-1}\vec\Gamma(\vec a, \bs h).
\end{gathered}
\end{equation}
We have $|\vec\Phi(\vec b, \bs h)- \vec b| \lesssim \|\bs h\|_\cE < 2\sqrt \eta$,
hence we can choose $C_1$ so that
\begin{equation}
\label{eq:Phi-b}
|\vec \Phi(\vec b, \bs h)-\vec b| \leq \frac 13 C_1\sqrt{\eta} \leq \frac 13 C_1\sqrt{\eta_0}.
\end{equation}

The Fundamental Theorem of Calculus yields
\begin{equation}
\vec \Phi(\shvec{a}, \bs h) - \vec \Phi(\vec a, \bs h) = \bigg[ \tx{Id} - \big[\partial_{\vec a}\vec \Gamma(\vec b, \bs h)\big]^{-1}\int_0^1 \partial_{\vec a}\vec \Gamma((1-s)\vec a + s\shvec a, \bs h)\ud s\bigg](\shvec a - \vec a).
\end{equation}
We see from \eqref{eq:dajGk} that $\partial_{\vec a}\vec \Gamma$ is Lipschitz with respect to $\vec a$, hence $|\partial_{\vec a}\vec \Gamma((1-s)\vec a + s\shvec a, \bs h) - \partial_{\vec a}\vec \Gamma(\vec b, \bs h)| \lesssim C_1\sqrt{\eta_0}$ and
\begin{equation}
|\vec \Phi(\shvec{a}, \bs h) - \vec \Phi(\vec a, \bs h)| \lesssim C_1\sqrt{\eta_0}|\shvec a - \vec a|.
\end{equation}
If $\eta_0$ is small enough, we obtain
\begin{equation}
\label{eq:Phi-contr}
|\vec \Phi(\shvec{a}, \bs h) - \vec \Phi(\vec a, \bs h)| \leq \frac 13|\shvec a - \vec a|.
\end{equation}
Using \eqref{eq:Phi-b}, we see that $\Phi(\cdot, \bs h)$ is a strict contraction on
$B_{\bR^n}(\vec b, C_1\sqrt{\eta_0})$.

Let $\vec a = \vec a(\bs \phi_0)$ be the unique fixed point of $\vec \Phi(\cdot, \bs\phi_0 - \bs H(\vec b))$ in the ball $B_{\bR^n}(\vec b, C_1\sqrt{\eta_0})$.
From the definitions of $\vec\Phi$ and $\vec \Gamma$, we get \eqref{eq:static-orth}.
From \eqref{eq:Phi-b} and \eqref{eq:Phi-contr} we have $|\vec a - \vec b| \leq C_1 \sqrt{\eta}$,
hence for an appropriate choice of $C_0$
\begin{equation}
\|\bs\phi_0 - \bs H(\vec a)\|_\cE^2 \leq 2\big( \|\bs\phi_0 - \bs H(\vec b)\|_\cE^2+\|\bs H(\vec b) - \bs H(\vec a)\|_\cE^2 \big) \leq \frac 12 C_0\eta.
\end{equation}
Upon diminishing $\eta_0$, we also have $|a_k - b_k| \leq \frac 12\log 2$,
which implies $\rho(\vec a) \leq 2\rho(\vec b) \leq 4\eta$,
we thus get \eqref{eq:dist-bd-C0}.

We now prove \eqref{eq:g-coer-stat}. Set $\bs g := \bs \phi_0 - \bs H(\vec a)$.
We have the Taylor expansion
\begin{equation}
\begin{aligned}
E(\bs H(\vec a) + \bs g) &= E(\bs H(\vec a)) + \la \vD E(\bs H(\vec a)), \bs g\ra + \frac 12 \la \vD^2 E(\bs H(\vec a))\bs g, \bs g\ra \\
&+ \int_{-\infty}^\infty \Big(U(H(\vec a) + g) - U(H(\vec a)) - U'(H(\vec a))g - \frac 12 U''(H(\vec a))g^2\Big)\ud x.
\end{aligned}
\end{equation}
Lemma~\ref{lem:D2H} and \eqref{eq:g-orth} yield
\begin{equation}
\la \bs g, \vD^2 E(\bs H(\vec a))\bs g\ra  = \|\dot g\|_{L^2}^2 + \la g, \vD^2 E_p(H(\vec a)) g\ra \geq \|\dot g\|_{L^2}^2 + \nu\|g\|_{H^1}^2.
\end{equation}
The Sobolev embedding implies that the second line has absolute value $\lesssim \|g\|_{H^1}^3$, thus $ \leq \frac \nu 8 \|g\|_{H^1}^2$ if $\eta_0$ is small enough.
By Lemma~\ref{lem:sizeDEp}, we also have
\begin{equation}
|\la \vD E(\bs H(\vec a)), \bs g\ra| = |\la \vD E_p(H(\vec a)), g\ra| \lesssim \sqrt{y_{\min}}\eee^{-y_{\min}}\|g\|_{L^2},
\end{equation}
thus $|\la \vD E(\bs H(\vec a)), \bs g\ra| \leq \frac{\nu}{8}\|g\|_{H^1}^2 + Cy_{\min}\eee^{-2y_{\min}}$. 
Lemma~\ref{lem:interactions} implies
\begin{equation}
\label{eq:g-coer-pres}
\frac 12 \|\dot g\|_{L^2}^2 + \frac{\nu}{4}\|g\|_{H^1}^2 \leq 2\kappa^2 \sum_{k=1}^{n-1}\eee^{-y_k} + Cy_{\min}\eee^{-2y_{\min}} + E(\bs H(\vec a) + \bs g) - nM,
\end{equation}
in particular \eqref{eq:g-coer-stat}.

Continuous differentiability of $\bs \phi_0 \mapsto \vec a(\bs \phi_0)$ follows from $\vec \Phi \in C^1$, see \cite[Chapter 2, Theorem 2.2]{chow-hale}.

It remains to prove that, if $\eta_1$ is small enough,
then there is no $\vec a \notin B_{\bR^n}(\vec b, C_1\sqrt{\eta_0})$ satisfying \eqref{eq:dist-bd-eta1}. Suppose the contrary. Then there exist $\bs \phi_{0, m}$, $\vec b_m$ and
$\vec a_m$ such that
\begin{gather}
\|\bs \phi_{0, m} - \bs H(\vec b_m)\|_\cE^2 + \rho(\vec b_m) \leq 2\delta(\bs\phi_{0, m})\qquad\text{for all }m, \\
\lim_{m\to\infty} \big(\|\bs\phi_{0, m} - \bs H(\vec a_m)\|_\cE^2 + \rho(\vec a_m)\big) = 0, \\
|\vec a_m - \vec b_m| \geq C_1\sqrt{\eta_0} \qquad\text{for all }m.
\end{gather}
In particular, the first two conditions above yield
\begin{equation}
\lim_{m\to\infty} \big(\|\bs H(\vec a_m) - \bs H(\vec b_m)\|_\cE^2 + \rho(\vec a_m)+ \rho(\vec b_m)\big) = 0,
\end{equation}
which contradicts the third condition.
\end{proof}
\begin{lemma}
\label{lem:basic-mod}
There exist $\eta_0, C_0, C_1 > 0$ such that the following is true.
Let $I \subset \bR$ be an open interval and let $\bs \phi: I \to \cE_{1, (-1)^n}$ be a solution of \eqref{eq:csf} such that
\begin{equation}
\delta(\bs \phi(t)) \leq \eta_0, \qquad\text{for all }t \in I.
\end{equation}
Then there exist $\vec a \in C^1(I; \bR^n)$ and $\bs g \in C(I; \cE)$
satisfying \eqref{eq:g-def} and \eqref{eq:g-orth}.
In addition, 
for all $t \in I$ such that
\begin{equation}
\label{eq:delta-geq-ass}
\delta(\bs\phi(t)) \geq C_1(E(\bs \phi) - nM)
\end{equation}
the following bounds hold:
\begin{gather}
\label{eq:g-coer}
\|\bs g(t)\|_{\cE}^2 \leq C_0 \rho(t) \leq C_0^2 \delta(\bs\phi(t)), \\
\label{eq:ak'-est}
\big|Ma_k'(t) + (-1)^k \la \partial_x H_k(t), \dot g(t)\ra\big| \leq C_0\rho(t).
\end{gather}
Finally, if $\bs \phi_m$ is a sequence of solutions of \eqref{eq:csf} such that
\begin{equation}
\label{eq:phi-conv-en}
\lim_{m \to \infty} \|\bs \phi_m - \bs \phi\|_{C(I; \cE)} = 0,
\end{equation}
then the corresponding $\vec a_m$ and $\bs g_m$ satisfy
\begin{equation}
\label{eq:mod-basic-conv}
\lim_{m \to \infty}\big(\|\vec a_m - \vec a\|_{C^1(I; \bR^n)}
+ \|\bs g_m - \bs g\|_{C(I; \cE)}\big) = 0.
\end{equation}
\end{lemma}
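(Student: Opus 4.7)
The plan is to apply Lemma~\ref{lem:static-mod} pointwise at each $t \in I$, setting $\vec a(t) := \vec a(\bs\phi(t))$ and $\bs g(t) := \bs\phi(t) - \bs H(\vec a(t))$; the orthogonality conditions \eqref{eq:g-orth} are then automatic, and continuity of $\vec a$ (hence of $\bs g$ in $\cE$) follows from the $C^1$ regularity of the static modulation map combined with $\bs\phi \in C(I; \cE)$. To upgrade $\vec a$ to $C^1(I; \bR^n)$, I would apply the implicit function theorem to
\[
\vec\Gamma(t, \vec a) := \big(\la \partial_x H(\cdot - a_k),\, \phi(t) - H(\vec a)\ra\big)_{k=1}^n,
\]
noting that although $\bs\phi$ is only continuous into $\cE$, the equation forces $\partial_t \phi = \dot\phi \in C(I; L^2)$, so $\partial_t \vec\Gamma$ is continuous in $(t, \vec a)$ and $\vec\Gamma$ is jointly $C^1$; invertibility of $\partial_{\vec a}\vec\Gamma$ at $(\vec a(t), t)$ has already been established inside the proof of Lemma~\ref{lem:static-mod}.

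For the coercivity \eqref{eq:g-coer} I would combine \eqref{eq:g-coer-stat} with the trivial bound $\delta(\bs\phi(t)) \leq \|\bs g(t)\|_\cE^2 + \rho(t)$ (since $\vec a(t)$ is admissible in the infimum defining $\delta$). The hypothesis $\delta(\bs\phi(t)) \geq C_1(E(\bs\phi) - nM)$ then yields $E(\bs\phi) - nM \leq (\|\bs g(t)\|_\cE^2 + \rho(t))/C_1$; substituting this into \eqref{eq:g-coer-stat} and choosing $C_1$ sufficiently large lets one absorb $\|\bs g(t)\|_\cE^2$ on the left to obtain $\|\bs g(t)\|_\cE^2 \lesssim \rho(t)$, and the second inequality in \eqref{eq:g-coer} then follows from \eqref{eq:dist-bd-C0}.

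The main technical step is \eqref{eq:ak'-est}. Differentiating each orthogonality condition $\la \partial_x H_k(t), g(t)\ra = 0$ in $t$ and substituting \eqref{eq:ls-g-eq}, together with the identity $\partial_{a_j} H(\vec a) = -(-1)^j \partial_x H_j$ (which gives $\partial_t g = \dot g + \sum_j a_j'(-1)^j \partial_x H_j$), produces a linear system $\cM\,{\vec a\,}' = \vec v$ with
\[
v_k := -\la \partial_x H_k, \dot g\ra,
\]
where $\cM_{kk} := (-1)^k M - \la \partial_x^2 H_k, g\ra$ and $\cM_{kj} := (-1)^j\la \partial_x H_k, \partial_x H_j\ra$ for $j \ne k$. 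By Proposition~\ref{prop:prop-H} and Lemma~\ref{lem:exp-cross-term} the off-diagonal entries of $\cM$ are $\lesssim \rho$, while by Cauchy--Schwarz and \eqref{eq:g-coer} the diagonal perturbation is $\lesssim \|g\|_\cE \lesssim \sqrt\rho$. Hence $\cM = \operatorname{diag}((-1)^k M) + O(\sqrt\rho)$ is invertible with bounded inverse, giving $|{\vec a\,}'(t)| \lesssim \|\dot g\|_{L^2} \lesssim \sqrt\rho$; feeding this size estimate back into the system absorbs the remaining error terms into $O(\sqrt\rho \cdot \sqrt\rho) = O(\rho)$, yielding exactly \eqref{eq:ak'-est}.

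Finally, for the continuous dependence \eqref{eq:mod-basic-conv} under \eqref{eq:phi-conv-en}, uniform convergence $\vec a_m \to \vec a$ on $I$ (and hence $\bs g_m \to \bs g$ in $C(I; \cE)$) is immediate from continuity of the static modulation map $\bs\phi_0 \mapsto \vec a(\bs\phi_0)$; convergence in the $C^1$-norm then follows from the formula ${\vec a\,}' = \cM^{-1}\vec v$, since $\cM$ and $\vec v$ depend continuously on $(\vec a, \bs g) \in \bR^n \times \cE$. I expect the main obstacle to be the careful bookkeeping of the error terms in the inversion of $\cM$ above; the remaining steps are standard modulation-theoretic manipulations.
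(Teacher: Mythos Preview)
Your proposal is correct and follows essentially the same path as the paper's proof: pointwise application of Lemma~\ref{lem:static-mod}, derivation of the linear system $\cM\,\vec a\,' = \vec v$ by differentiating the orthogonality conditions, inversion of the diagonally dominant matrix $\cM$, and the coercivity and continuous-dependence arguments all match. The one minor difference is in justifying $\vec a \in C^1$: you invoke the implicit function theorem directly using $\partial_t\phi = \dot\phi \in C(I; L^2)$, whereas the paper first derives the system \eqref{eq:ak-lin-syst} under an extra regularity hypothesis and then passes to the limit via approximation by smoother solutions (Proposition~\ref{prop:cauchy}\,\ref{it:cauchy-H2}, \ref{it:cauchy-strong}); both routes are valid here.
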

\begin{remark}
In Sections~\ref{sec:n-body} and \ref{sec:any-position},
we will always have $E(\bs \phi) = nM$,
hence \eqref{eq:delta-geq-ass} will be automatically satisfied.
In Section~\ref{sec:profiles}, an additional argument will be necessary in order to ensure \eqref{eq:delta-geq-ass} on time intervals of interest, see the beginning of the proof of Theorem~\ref{thm:unstable}.
\end{remark}
\begin{proof}[Proof of Lemma~\ref{lem:basic-mod}]
\textbf{Step 1.} (Modulation equations.)
For every $t \in I$, let $\vec a(t) := \vec a(\bs\phi(t))$ be given by Lemma~\ref{lem:static-mod}, and let $\bs g(t)$ be given by \eqref{eq:g-def}.
The maps $t \mapsto \bs\phi(t) \in \cE_{1, (-1)^n}$ and $\cE_{1, (-1)^n} \owns \bs\phi_0 \mapsto \vec a(\bs\phi_0)$ are continuous, hence $\vec a \in C(I; \bR^n)$.
Moreover, \eqref{eq:phi-conv-en} implies
\begin{equation}
\label{eq:mod-basic-conv-0}
\lim_{m\to\infty}\big(\|\vec a_m\|_{C(I; \bR^n)} + \|\bs g_m - \bs g\|_{C(I; \cE)}\big) = 0.
\end{equation}

We claim that $\vec a \in C^1(I; \bR^n)$ and for any $k \in \{1, \ldots, n\}$ we have
\begin{equation}
\label{eq:ak-lin-syst}
\begin{aligned}
((-1)^kM - \la \partial_x^2 H_k(t), g(t)\ra)a_k'(t) + \sum_{j \neq k}\la \partial_x H_k(t), \partial_x H_j(t)\ra a_j'(t) \\
= {-}\la \partial_x H_k(t), \dot g(t)\ra.
\end{aligned}
\end{equation}

In order to justify \eqref{eq:ak-lin-syst}, first assume that $\bs \phi \in C^1(I; \cE_{1, (-1)^n})$
and \eqref{eq:csf} holds in the strong sense. Since the map $\bs\phi_0 \mapsto \vec a(\bs\phi_0)$
is of class $C^1$, we obtain $\vec a\in C^1(I; \bR^n)$.
The first component of \eqref{eq:ls-g-eq} yields
\begin{equation}
\label{eq:g-1st}
\partial_t g(t) = \dot g(t) + \sum_{k=1}^n (-1)^k a_k'(t)\partial_x H_k(t).
\end{equation}
Differentiating in time the relation \eqref{eq:g-orth}, we obtain \eqref{eq:ak-lin-syst}.

Consider now the general case (without the additional regularity assumptions).
By Proposition~\ref{prop:cauchy}, there exists a sequence $\bs\phi_m$ satisfying
\eqref{eq:phi-conv-en} and the additional regularity assumption stated above,
hence the corresponding parameters $\vec a_m$ satisfy \eqref{eq:mod-basic-conv-0}.
They also satisfy \eqref{eq:ak-lin-syst} with $\vec a$ replaced by $\vec a_m$,
which is a linear system for $\dd t\vec a_m(t)$.
The non-diagonal terms of its matrix are
$\lesssim y_{\min}\eee^{-y_{\min}} \ll \sqrt{\rho(\vec a)}$.
It follows from \eqref{eq:g-coer} that the diagonal terms differ from $(-1)^k M$ by a quantity of order at most $\sqrt{\rho(\vec a)}$.
In particular, the matrix is uniformly non-degenerate, hence we have uniform convergence
of $\dd t\vec a_m$, which implies that $\vec a \in C^1$ and \eqref{eq:ak-lin-syst} holds.

\textbf{Step 2.} (Coercivity.)
If we choose $C_1 := 2C_0$, where $C_1$ is the constant in \eqref{eq:delta-geq-ass}
and $C_0$ the constant in \eqref{eq:g-coer-stat}, then we have
\begin{equation}
\begin{aligned}
\|\bs g(t)\|_\cE^2 &\leq C_0\rho(\vec a(t)) + \frac 12C_1\big(E(\bs \phi) - nM\big) \\
&\leq C_0 \rho(\vec a(t)) + \frac 12 \delta(\bs \phi(t)) \\
&\leq C_0 \rho(\vec a(t)) + \frac 12 \big(\rho(\vec a(t)) + \|\bs g(t)\|_\cE^2\big),
\end{aligned}
\end{equation}
the last inequality resulting from \eqref{eq:d-def}.
This proves the first inequality in \eqref{eq:g-coer}, up to adjusting $C_0$.
The second inequality in \eqref{eq:g-coer} follows from \eqref{eq:dist-bd-C0}.

By standard estimates on the inverse of a diagonally dominant matrix, \eqref{eq:ak-lin-syst}
and \eqref{eq:g-coer} yield \eqref{eq:ak'-est}.

\textbf{Step 3.} (Continuous dependence of first derivatives.)
We have already observed that \eqref{eq:phi-conv-en} implies \eqref{eq:mod-basic-conv-0}.
It is clear that \eqref{eq:phi-conv-en} implies uniform convergence of the coefficients
of the system \eqref{eq:ak-lin-syst}, hence uniform convergence of $\dd t\vec a_m$,
and we conlude that \eqref{eq:phi-conv-en} implies \eqref{eq:mod-basic-conv}.
\end{proof}
\begin{remark}
We refer to
\cite[Proposition 3]{GuSi06}, \cite[Proposition 3.1]{MeZa12} or \cite[Lemma 3.3]{J-18-nonexist} for results and proofs similar to Lemmas~\ref{lem:static-mod} and \ref{lem:basic-mod}.
\end{remark}
The following more precise coercivity bound
will be useful in Section~\ref{sec:n-body}.
\begin{lemma}
\label{lem:impr-coer}
In the setting of Lemma~\ref{lem:basic-mod},
assume in addition that $E(\bs \phi) = nM$. Then
\begin{equation}
\label{eq:a'-est}
\|\partial_t g(t)\|_{L^2}^2 + \frac \nu2\|g(t)\|_{H^1}^2 + M|\vec a\,'(t)|^2
\leq 4\kappa^2 \rho(t) + C_0\rho(t)^{\frac 32},
\end{equation}
where $\nu > 0$ is the constant in Lemma~\ref{lem:D2H}.
\end{lemma}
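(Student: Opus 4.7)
The strategy is to upgrade the sharp energy bound \eqref{eq:g-coer-pres} from the proof of Lemma~\ref{lem:static-mod}---which, under $E(\bs\phi)=nM$, already yields a control of $\|\dot g\|_{L^2}^2$ and $\|g\|_{H^1}^2$ with the correct leading constant---into the desired bound by exchanging $\|\dot g\|_{L^2}^2$ against $\|\partial_t g\|_{L^2}^2 + M|\vec a\,'|^2$ using the first-order identity \eqref{eq:g-1st}. The assumption $E(\bs\phi)=nM$ is crucial because it removes the $E(\bs\phi)-nM$ contribution from the energy expansion, which is what ultimately produces the clean constant $4\kappa^2$ on the right-hand side.

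\emph{Step 1 (Energy side).} Setting $E(\bs H(\vec a) + \bs g) = nM$ in \eqref{eq:g-coer-pres} and doubling gives
\begin{equation}
\|\dot g\|_{L^2}^2 + \tfrac\nu2\|g\|_{H^1}^2 \leq 4\kappa^2\rho(t) + C y_{\min}\eee^{-2y_{\min}}.
\end{equation}
The error term is $\lesssim \rho(t)^{3/2}$: write $y_{\min}\eee^{-2y_{\min}} = (y_{\min}\eee^{-y_{\min}/2})\cdot \eee^{-3y_{\min}/2}$, observe that the first factor is uniformly bounded, and use $\eee^{-3y_{\min}/2}\leq\rho^{3/2}$ since $\rho\geq\eee^{-y_{\min}}$.

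\emph{Step 2 (Exchange identity).} Applying \eqref{eq:g-1st} and expanding,
\begin{equation}
\|\partial_t g\|_{L^2}^2 = \|\dot g\|_{L^2}^2 + 2\sum_k (-1)^k a_k' \la \dot g,\partial_x H_k\ra + \Big\|\sum_k (-1)^k a_k'\partial_x H_k\Big\|_{L^2}^2.
\end{equation}
The cross term is controlled by \eqref{eq:ak'-est}, which gives $(-1)^k\la\partial_x H_k,\dot g\ra = -Ma_k' + O(\rho)$; it therefore equals $-2M|\vec a\,'|^2 + O(|\vec a\,'|\rho)$. In the quadratic term, $\|\partial_x H_k\|_{L^2}^2 = M$ while $|\la\partial_x H_j,\partial_x H_k\ra|\lesssim\rho$ for $j\neq k$ by Proposition~\ref{prop:prop-H} and Lemma~\ref{lem:exp-cross-term}; this contributes $M|\vec a\,'|^2 + O(|\vec a\,'|^2\rho)$. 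Summing,
\begin{equation}
\|\partial_t g\|_{L^2}^2 + M|\vec a\,'|^2 = \|\dot g\|_{L^2}^2 + O\big(|\vec a\,'|\rho + |\vec a\,'|^2\rho\big).
\end{equation}

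\emph{Step 3 (Closing).} Inequality \eqref{eq:ak'-est} together with Cauchy--Schwarz yields $M|a_k'|\lesssim \|\dot g\|_{L^2} + \rho$, so Step~1 gives the a priori bound $|\vec a\,'|\lesssim \sqrt{\rho}$, and both error terms in Step~2 become $O(\rho^{3/2})$. Adding the resulting identity to the bound of Step~1 produces \eqref{eq:a'-est}. The main obstacle is the sharp bookkeeping of remainders: every error must be pushed down to $O(\rho^{3/2})$ rather than $O(\rho)$, since an $O(\rho)$ remainder would spoil the explicit constant $4\kappa^2$. This threshold is also compatible with the cubic Taylor remainder $O(\|g\|_{H^1}^3)$ entering \eqref{eq:g-coer-pres}, which is $O(\rho^{3/2})$ by the a priori bound $\|g\|_{H^1}^2\lesssim\rho$ from \eqref{eq:g-coer}.
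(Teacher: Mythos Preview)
Your proof is correct and follows essentially the same approach as the paper: both arguments start from the sharp coercivity bound \eqref{eq:g-coer-pres}, then use the first-order identity \eqref{eq:g-1st} to trade $\|\dot g\|_{L^2}^2$ for $\|\partial_t g\|_{L^2}^2 + M|\vec a\,'|^2$ up to $O(\rho^{3/2})$ errors. The only cosmetic difference is in the bookkeeping of the cross term: the paper differentiates the orthogonality condition to get $\la \partial_x H_k,\partial_t g\ra = a_k'\la \partial_x^2 H_k, g\ra = O(\rho)$ directly, whereas you invoke \eqref{eq:ak'-est}; these are equivalent, and your closing via $|\vec a\,'|\lesssim\sqrt\rho$ is exactly what the paper does.
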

\begin{proof}
From \eqref{eq:g-orth} and the Leibniz rule, we have
\begin{equation}
\la \partial_x H_k(t), \partial_t g(t)\ra = a_k'(t)\la \partial_x^2 H_k(t), g(t)\ra,
\end{equation}
thus \eqref{eq:g-coer} and \eqref{eq:ak'-est} yield
\begin{equation}
|\la \partial_x H_k(t), \partial_t g(t)\ra| \lesssim \rho(t).
\end{equation}
Using this bound, \eqref{eq:g-1st} yields
\begin{equation}
\label{eq:gdot-decomp}
\bigg|\|\dot g(t)\|_{L^2}^2 - \|\partial_t g\|_{L^2}^2 - \Big\|\sum_{k=1}^n (-1)^k a_k'(t)\partial_x H_k(t)\Big\|_{L^2}^2\bigg| \lesssim \rho(t)^{\frac 32}.
\end{equation}
%
Similarly as in \eqref{eq:EpHX-a-1}, we have
\begin{equation}
\int_{-\infty}^\infty\bigg(\bigg|\sum_{k=1}^n (-1)^k a_k'(t)\partial_x H_k(t)\bigg|^2 - \sum_{k=1}^n |a_k'(t)|^2|\partial_x H_k(t)|^2 \bigg)\ud x \lesssim y_{\min}\eee^{-2y_{\min}},
\end{equation}
thus we can rewrite \eqref{eq:gdot-decomp} as
\begin{equation}
\big|\|\dot g(t)\|_{L^2}^2 - \|\partial_t g\|_{L^2}^2 - M|\vec a\,'(t)|^2\big| \lesssim \rho(t)^{\frac 32}.
\end{equation}
Injecting this into \eqref{eq:g-coer-pres}, we obtain \eqref{eq:a'-est}.
\end{proof}

\subsection{Refined modulation}
\label{ssec:ref-mod}
In order to proceed with the analysis of the dynamics of the modulations parameters,
we follow an idea used in a similar context in \cite{J-18p-gkdv} and introduce \emph{localised momenta}, see also \cite[Proposition 4.3]{RaSz11}.

Recall that $\chi \in C^\infty$ is a decreasing function such that $\chi(x) = 1$
for all $x \leq \frac 13$ and $\chi(x) = 0$ for all $x \geq \frac 23$.
\begin{definition}[Localised momenta]
\label{def:p}
Let $I$, $\bs \phi$, $\vec a$ and $\bs g$ be as in Lemma~\ref{lem:basic-mod}.
We set
\begin{equation}
\begin{aligned}
\chi_1(t, x) &:= \chi\Big(\frac{x - a_1(t)}{a_2(t) - a_1(t)}\Big), \\
\chi_k(t, x) &:= \chi\Big(\frac{x - a_k(t)}{a_{k+1}(t) - a_k(t)}\Big)
- \chi\Big(\frac{x - a_{k-1}(t)}{a_k(t) - a_{k-1}(t)}\Big),\qquad\text{for }k \in \{2, \ldots, n-1\}, \\
\chi_n(t, x) &:= 1 - \chi\Big(\frac{x - a_{n-1}(t)}{a_n(t) - a_{n-1}(t)}\Big).
\end{aligned}
\end{equation}
We define $\vec p = (p_1, \ldots, p_n): I \to \bR^n$ by
\begin{equation}
\label{eq:p-def}
p_k(t) := \la {-}(-1)^k\partial_x H_k(t) + \chi_k(t)\partial_x g(t), \dot g(t)\ra.
\end{equation}
\end{definition}
\begin{lemma}
\label{lem:ref-mod}
There exists $C_0$ such that, under the assumptions of Lemma~\ref{lem:basic-mod},
$\vec p \in C^1(I; \bR^n)$ and the following bounds hold for all $t \in I$ and $k \in \{1, \ldots, n\}$:
\begin{align}
\label{eq:ak'} |Ma_k'(t) - p_k(t)| &\leq C_0 \rho(t), \\
\label{eq:pk'} |p_k'(t) - F_k(\vec a(t))| &\leq \frac{C_0\rho(t)}{-\log \rho(t)},
\end{align}
where $M$ and $F_k$ are defined by \eqref{eq:M-def} and \eqref{eq:Fk-def}.
\end{lemma}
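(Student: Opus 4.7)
The plan is to view $p_k$ as a localised physical momentum attached to the $k$-th kink, and to derive its evolution from the conservation of the energy--momentum current for the scalar field \eqref{eq:csf}: the force $F_k(\vec a)$ will appear as the momentum flux through the transition regions on either side of the $k$-th kink. For the $C^1$ regularity of $\vec p$ and to justify the pointwise-in-$t$ computations, I will proceed as in Step~1 of the proof of Lemma~\ref{lem:basic-mod}: work first with more regular data (so that $\bs g$ is strongly differentiable and satisfies $\partial_t\dot g = -L(\vec a)g + N(g) - \vD E_p(H(\vec a))$, where $N(g) := U''(H(\vec a))g + U'(H(\vec a)) - U'(H(\vec a)+g) = O(g^2)$), then conclude by continuous dependence, exactly as in Lemma~\ref{lem:basic-mod}.

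The bound \eqref{eq:ak'} reduces immediately to \eqref{eq:ak'-est}. From the definition of $p_k$,
\begin{equation}
p_k(t) + (-1)^k\la\partial_x H_k(t),\dot g(t)\ra = \la\chi_k(t)\partial_x g(t),\dot g(t)\ra,
\end{equation}
and the right-hand side is bounded by $\|\partial_x g\|_{L^2}\|\dot g\|_{L^2}\leq \|\bs g\|_{\cE}^2\lesssim \rho(t)$ thanks to \eqref{eq:g-coer}. Combining with \eqref{eq:ak'-est} yields \eqref{eq:ak'}.

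The substantive work lies in \eqref{eq:pk'}. The key observation is that, up to errors much smaller than $\rho/y_{\min}$, $-p_k$ coincides with the localised field momentum $P_k := \la\chi_k\partial_x\phi,\dot\phi\ra$, because $\chi_k(-1)^k\partial_x H_k \approx (-1)^k\partial_x H_k$ and $\chi_k\partial_x H_j$ is exponentially small for $j\ne k$ (the $\chi_k$ cuts off in the middle thirds of neighbouring intervals of length $\geq y_{\min}$). Using $\partial_t\dot\phi = \partial_x^2\phi - U'(\phi)$ and integrating by parts in $x$, one obtains the standard stress--energy identity
\begin{equation}
\frac{d}{dt}P_k = \la\partial_t\chi_k\partial_x\phi,\dot\phi\ra - \tfrac12\la\partial_x\chi_k,\dot\phi^2 + (\partial_x\phi)^2 - 2U(\phi)\ra.
\end{equation}
On $\supp(\partial_x\chi_k)$, which is contained in two intervals of width $\sim y_{k-1}, y_k$ centered at $(a_{k-1}+a_k)/2$ and $(a_k+a_{k+1})/2$, the field $\phi$ is exponentially close to a vacuum and is given to leading order by the superposition of the tails of $H_{k-1}, H_k, H_{k+1}$ furnished by Proposition~\ref{prop:prop-H}. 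Using the Bogomolny relation $(\partial_x H)^2 = 2U(H)$ from \eqref{eq:bogom-eq} to eliminate the self-energy of each individual kink, and integrating the remaining quadratic cross-terms via Lemma~\ref{lem:exp-cross-term}, the second term on the right-hand side matches $-F_k(\vec a) = -2\kappa^2(\eee^{-y_k} - \eee^{-y_{k-1}}) + O(y_{\min}\eee^{-2y_{\min}})$, in agreement with Lemma~\ref{lem:Fz}.

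The main obstacle will be keeping the error as small as $\rho/(-\log\rho)\sim\rho/y_{\min}$, rather than the naive $O(\rho)$. The factor $1/y_{\min}$ comes solely from $\|\partial_x\chi_k\|_{L^\infty}\lesssim 1/y_{\min}$, so every error contribution must be arranged so that this derivative appears: linear-in-$g$ corrections to the stress--energy integrand produce $(1/y_{\min})\|g\|_{H^1}\cdot\|\partial_x H_j\|_{L^2(\supp(\partial_x\chi_k))}$, quadratic-in-$g$ errors from Taylor expansion of $U$ give $(1/y_{\min})\|g\|_{H^1}^2$, and the $\partial_t\chi_k$ term combines $|a_k'|\lesssim\sqrt\rho$ from \eqref{eq:ak'-est} with $1/y_{\min}$. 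The most delicate point, I expect, is a cancellation between $O(\rho)$ contributions arising from differentiating $-(-1)^k\la\partial_x H_k,\dot g\ra$ and $\la\chi_k\partial_x g,\dot g\ra$ separately; in particular, the nonlinear term $(-1)^k\la\partial_x H_k, N(g)\ra$ produced by the first piece admits only an $O(\rho)$ bound on its own and must be absorbed by the analogous contribution coming from the second piece. This cancellation is exactly what the combination $-(-1)^k\partial_x H_k + \chi_k\partial_x g$ in the definition of $p_k$ is designed to furnish, and reflects the translation invariance underlying the PDE's momentum conservation law.
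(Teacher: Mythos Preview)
Your treatment of \eqref{eq:ak'} is correct and identical to the paper's, and your physical picture for \eqref{eq:pk'} (localised momentum, force as flux, the gain of $1/y_{\min}$ coming from $\partial_x\chi_k$) is the right one. But the specific reduction you propose has a gap.

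The claim that $-p_k$ agrees with $P_k := \la\chi_k\partial_x\phi,\dot\phi\ra$ up to errors $\ll\rho/y_{\min}$ is false. Writing $\dot\phi=\dot g$ and $\partial_x\phi=\partial_x H(\vec a)+\partial_x g$, one finds
\[
-p_k-P_k=\big\la(-1)^k\partial_x H_k-\chi_k\partial_x H(\vec a),\dot g\big\ra-2\la\chi_k\partial_x g,\dot g\ra.
\]
The first pairing is $O(\rho^{5/6})$, but the second is only $O(\rho)$, which is \emph{larger} than $\rho/y_{\min}$. Worse, differentiating this residual in time produces contributions such as $a_k'\la\chi_k\partial_x^2 H_k,\dot g\ra$ and $\la\chi_k\,U'''(H(\vec a))\partial_x H(\vec a),g^2\ra$ that are genuinely of size $\rho$ and carry no $\partial_x\chi_k$ factor. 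Hence computing $P_k'$ via the stress--energy identity and then correcting cannot yield \eqref{eq:pk'}.

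The paper instead differentiates $p_k$ directly, keeping the combination $-(-1)^k\partial_x H_k+\chi_k\partial_x g$ intact. After isolating $F_k(\vec a)$ by its definition \eqref{eq:Fk-def}, the only terms not already bounded by $\rho/y_{\min}$ are
\[
\int\chi_k\,\partial_x g\,\big(U'(H(\vec a)+g)-U'(H(\vec a))\big)
+(-1)^k\!\int\partial_x H_k\big(U'(H(\vec a)+g)-U'(H(\vec a))-U''(H(\vec a))g\big).
\]
In the second integral one replaces $(-1)^k\partial_x H_k$ by $\chi_k\partial_x H(\vec a)$; this is legitimate because the discrepancy is $O(\rho^{1/3})$ in $L^\infty$ and is paired with an $O(g^2)$ quantity, giving an $O(\rho^{4/3})$ error. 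After that substitution the two integrals combine into
\[
\int\chi_k\,\partial_x\big(U(H(\vec a)+g)-U(H(\vec a))-U'(H(\vec a))g\big)\,\ud x,
\]
and integration by parts produces $\partial_x\chi_k$ against an $O(g^2)$ integrand, hence $O(\rho/y_{\min})$. This is precisely the cancellation you anticipate in your last paragraph, but it only materialises if you differentiate $p_k$ itself and then regroup; detouring through $P_k$ destroys it.
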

\begin{remark}
If we think of $M$, $p_k$ and $F_k$ as the (rest) mass of the kink,
its momentum and the force acting on it, then \eqref{eq:ak'} and \eqref{eq:pk'}
yield (approximate) Newton's second law for the kink motion.
\end{remark}
\begin{proof}[Proof of Lemma~\ref{lem:ref-mod}]
The bound \eqref{eq:ak'} follows from \eqref{eq:p-def}, \eqref{eq:ak'-est} and \eqref{eq:g-coer}.

It is clear that $\vec p \in C(I; \bR^n)$.
We claim that $\vec p \in C^1(I; \bR^n)$ and for any
$k \in \{1, \ldots, n\}$ we have
\begin{equation}
\label{eq:pk'-id}
\begin{aligned}
p_k'(t) &= (-1)^k a_k'(t)\int_{-\infty}^{\infty}(1 - \chi_k(t))\partial_x^2 H_k(t)\dot g(t)\ud x \\
&- \sum_{j \neq k}(-1)^j a_j'(t)\int_{-\infty}^{\infty}\chi_k(t)\partial_x^2 H_j(t)\dot g(t)\ud x \\
&+ \frac 12 \int_{-\infty}^\infty \partial_x \chi_k(t)\big((\dot g(t))^2 + (\partial_x g(t))^2\big)\ud x - \int_{-\infty}^\infty \partial_t \chi_k(t)\dot g(t)\partial_x g(t)\ud x \\
&+ \int_{-\infty}^\infty \chi_k(t) \partial_x g(t)\big(U'(H(\vec a(t)) + g(t)) - \sum_{j}(-1)^j U'(H_j(t))\big)\ud x \\
&+ (-1)^k \int_{-\infty}^\infty \partial_x H_k(t)\Big(U'(H(\vec a(t)) + g(t)) \\
&\qquad\qquad\qquad\qquad\qquad- \sum_{j=1}^n (-1)^j U'(H_j(t)) - U''(H_k(t))g\Big)\ud x.
\end{aligned}
\end{equation}

In order to justify \eqref{eq:pk'-id}, first assume that
$\partial_x g, \dot g \in C(I; H^1(\bR)) \cap C^1(I; L^2(\bR))$,
that \eqref{eq:g-1st} holds in the strong sense in $H^1(\bR)$,
and that the second component of \eqref{eq:ls-g-eq}, which is
\begin{equation}
\label{eq:g-2nd}
\partial_t \dot g(t) = \partial_x^2 g(t) - U'(H(\vec a(t)) + g(t))+ \sum_{k=1}^n (-1)^k U'(H_k(t)),
\end{equation}
holds in the strong sense in $L^2(\bR)$.
From \eqref{eq:p-def} and the Leibniz rule, we obtain
\begin{equation}
\label{eq:pk'-1}
\begin{aligned}
p_k'(t) &= (-1)^k a_k'(t)\la \partial_x^2 H_k(t), \dot g(t)\ra
- \la \partial_t \chi_k(t)\partial_x g(t), \dot g(t)\ra \\
&- \Big\la \chi_k(t)\partial_x\Big(\dot g(t) + \sum_{j=1}^n (-1)^j a_j'(t)\partial_x H_j(t)\Big), \dot g(t)\Big\ra \\
&- \Big\la (-1)^k\partial_x H_k(t) + \chi_k(t)\partial_x g(t), \\
&\qquad\qquad\partial_x^2 g(t) - U'(H(\vec a(t)) + g(t))+ \sum_{j=1}^n (-1)^j U'(H_j(t))\Big\ra.
\end{aligned}
\end{equation}
We now observe that integrations by parts yield
\begin{equation}
\begin{aligned}
\la \chi_k(t)\partial_x \dot g(t), \dot g(t)\ra &= -\int_{-\infty}^\infty \frac 12 \partial_x \chi_k(t)(\dot g(t))^2\ud x, \\
\la \chi_k(t)\partial_x g(t), \partial_x^2 g(t)\ra &= -\int_{-\infty}^\infty \frac 12 \partial_x \chi_k(t)(\partial_x g(t))^2\ud x,
\end{aligned}
\end{equation}
and that \eqref{eq:Lc-ker} implies
\begin{equation}
\la \partial_x H_k(t), \partial_x^2 g(t)\ra = \int_{-\infty}^\infty \partial_x H_k(t) U''(H_k(t))g(t)\ud x.
\end{equation}
Inserting these relations into \eqref{eq:pk'-1} and rearranging
the terms, we obtain \eqref{eq:pk'-id}.

Consider now the general case (without the additional regularity assumptions). By Proposition~\ref{prop:cauchy},
there exists a sequence of solutions $\bs\phi_m$ satisfying \eqref{eq:phi-conv-en} and the additional regularity assumptions stated above.
Let $\vec p_m$ be the corresponding localised momentum.
By \eqref{eq:mod-basic-conv} and \eqref{eq:pk'-id},
the sequence $\big(\vec p_m\big)_m$ converges in $C^1$.

It remains to prove \eqref{eq:pk'}. In the computation below, we call a term ``negligible''
if its absolute value is smaller than the right hand side of \eqref{eq:pk'}.

By Proposition~\ref{prop:prop-H} and the definition of $\chi_k$, we have
\begin{equation}
\begin{aligned}
\int_{-\infty}^\infty \big((1 - \chi_k(t))\partial_x^2 H_k(t)\big)^2\ud x &\leq
2\int_{\frac 13 y_{\min}(t)}^{\infty}(\partial_x^2 H)^2\ud x \lesssim \eee^{-\frac 23 y_{\min}(t)} \lesssim \rho(t)^\frac 23.
\end{aligned}
\end{equation}
By a similar computation, for all $j \neq k$ we have
\begin{equation}
\begin{aligned}
\int_{-\infty}^\infty \big(\chi_k(t)\partial_x^2 H_j(t)\big)^2\ud x \lesssim \rho(t)^\frac 23.
\end{aligned}
\end{equation}
Applying \eqref{eq:g-coer}, \eqref{eq:ak'-est} and the Cauchy-Schwarz inequality,
we obtain that the first and second line of \eqref{eq:pk'-id} are at most of order $\rho(t)^\frac 43$,
hence negligible. Next, we observe that $\|\partial_x \chi_k\|_{L^\infty} \lesssim y_{\min}^{-1}
\lesssim ({-}\log \rho(t))^{-1}$, hence the first integral of the third line of \eqref{eq:pk'-id}
is negligible. The Chain Rule and \eqref{eq:ak'-est} yield $\|\partial_t \chi_k\|_{L^\infty}
\lesssim ({-}\log \rho(t))^{-1}\sqrt{\rho(t)}$, hence the second integral is negligible as well.

By Lemma~\ref{lem:sizeDEp} and the Cauchy-Schwarz inequality, the fourth line of \eqref{eq:pk'-id}
differs by a term of order $\sqrt{y_{\min}(t)}\eee^{-\frac 32 y_{\min}(t)} \ll \rho(t)({-}\log \rho(t))^{-1}$ from
\begin{equation}
\label{eq:pk'-4-alt}
\int_{-\infty}^\infty \chi_k(t) \partial_x g(t)\big(U'(H(\vec a(t)) + g(t)) - U'(H(\vec a(t)))\big)\ud x.
\end{equation}
We now transform the last integral of the right hand side of \eqref{eq:pk'-id}. By \eqref{eq:size-diff-pot}, we have
\begin{equation}
\int_{-\infty}^\infty \partial_x H_k(t) U''(H_k(t)) g(t) \ud x \sim \int_{-\infty}^\infty \partial_x H_k(t) U''(H(\vec a(t))) g(t) \ud x.
\end{equation}
Recalling the definition of $F_k(\vec a)$, see \eqref{eq:Fk-def}, we thus obtain
that the last integral of the right hand side of \eqref{eq:pk'-id} differs by a negligible term from
\begin{equation}
\label{eq:line-56}
\begin{aligned}
F_k(\vec a(t)) + (-1)^k\int_{-\infty}^\infty \partial_x H_k(t)\big(&U'(H(\vec a(t)) + g(t))
 \\ &- U'(H(\vec a(t))) - U''(H(\vec a(t)))g(t)\big)\ud x.
 \end{aligned}
\end{equation}
Applying the Taylor formula pointwise and using the fact that
$$\|\chi_k(t)\partial_x H(\vec a(t)) - (-1)^k \partial_x H_k(t)\|_{L^\infty} \lesssim \eee^{-\frac 13 y_{\min}(t)},$$
we see that up to negligible terms, in \eqref{eq:line-56} we can replace
$(-1)^k\partial_x H_k(t)$ by $\chi_k(t)\partial_x H(\vec a(t))$.
Recalling \eqref{eq:pk'-4-alt}, we thus obtain
\begin{equation}
\begin{aligned}
&p_k'(t) \sim F_k(\vec a(t)) \\&+ \int_{-\infty}^\infty \chi_k(t) \partial_x \big(U(H(\vec a(t)) + g(t)) - U(H(\vec a(t))) - U'(H(\vec a(t)))g(t)\big)\ud x,
\end{aligned}
\end{equation}
and an integration by parts shows that the second line is negligible.
\end{proof}
\section{Characterisations of kink clusters}
\label{sec:equiv-def}
\subsection{Kink clusters approach multi-kink configurations}
\label{ssec:close-to-H}
We now give a proof of Proposition~\ref{prop:close-to-H}.
The essential ingredient is the following lemma
yielding the strong convergence of minimising sequences,
similar to the results in \cite[Appendix A]{Abdon22p1}.
\begin{lemma}
\label{lem:min-conv}
\begin{enumerate}[(i)]
\item\label{it:min-conv-i}
If the sequences $x_m \in \bR$ and $\phi_m: ({-}\infty, x_m] \to \bR$ satisfy
\begin{equation}
\begin{gathered}
\lim_{x \to -\infty} \phi_m(x) = -1\quad\text{for all }m, \qquad
\lim_{m\to\infty} E_p(\phi_m; -\infty, x_m) = 0,
\end{gathered}
\end{equation}
then $\lim_{m\to\infty}\|\phi_m + 1\|_{H^1({-}\infty, x_m)} = 0$.
\item\label{it:min-conv-ii}
If the sequences $x_m' \in \bR$ and $\phi_m: [x_m', \infty) \to \bR$ satisfy
\begin{equation}
\begin{gathered}
\lim_{x \to \infty} \phi_m(x) = 1\quad\text{for all }m, \qquad
\lim_{m\to\infty} E_p(\phi_m; x_m', \infty) = 0,
\end{gathered}
\end{equation}
then $\lim_{m\to\infty}\|\phi_m - 1\|_{H^1(x_m', \infty)} = 0$.
\item\label{it:min-conv-iii}
If the sequence $\phi_m : \bR \to \bR$ satisfies
\begin{equation}
\begin{gathered}
\phi_m(0) = 0, \quad\lim_{x \to -\infty} \phi_m(x) = -1, \quad \lim_{x \to \infty} \phi_m(x) = 1\quad\text{for all }m, \\ \lim_{m \to \infty}E_p(\phi_m) = M,
\end{gathered}
\end{equation}
then $\lim_{m\to \infty}\|\phi_m - H\|_{H^1} = 0$.
\item\label{it:min-conv-iv}
If the sequences $x_m < 0$, $x_m' > 0$ and $\phi_m : [x_m, x_m'] \to \bR$ satisfy
\begin{equation}
\begin{gathered}
\phi_m(0) = 0, \quad \lim_{m \to -\infty} \phi_m(x_m) = -1, \quad \lim_{m \to \infty}\phi_m(x_m') = 1\quad\text{for all }m, \\ \lim_{m \to \infty}E_p(\phi_m; x_m, x_m') = M,
\end{gathered}
\end{equation}
then $\lim_{m \to \infty} x_m = -\infty$, $\lim_{m\to \infty}x_m' = \infty$ and
$\lim_{m \to \infty}\|\phi_m - H\|_{H^1(x_m, x_m')} = 0$.
\end{enumerate}
\end{lemma}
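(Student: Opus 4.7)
For parts (i) and (ii) I would combine the Bogomolny inequality with the non-degeneracy $U''(\pm 1) = 1$. Considering (i), the Bogomolny identity \eqref{eq:bogom-2} applied on $(-\infty, y]$ with $y \leq x_m$ yields $|G(\phi_m(y)) + M/2| \leq E_p(\phi_m; -\infty, x_m) \to 0$, where $G$ is the primitive in \eqref{eq:H-def}. Since $G$ maps $(-1, 1)$ homeomorphically onto $(-M/2, M/2)$ and $\lim_{x \to -\infty}\phi_m(x) = -1$, the continuity of $\phi_m$ (which follows from $\partial_x \phi_m \in L^2_{\mathrm{loc}}$) forces $\sup_{y \leq x_m}|\phi_m(y) + 1| \to 0$. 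The Taylor expansion $U(\phi) \gtrsim (\phi+1)^2$ valid in a neighbourhood of $-1$ then shows that the potential part of $E_p$ dominates $\|\phi_m + 1\|_{L^2(-\infty, x_m)}^2$, while the gradient part directly controls $\|\partial_x \phi_m\|_{L^2(-\infty, x_m)}^2$; summing the two bounds gives the claimed $H^1$ decay. Part (ii) is entirely symmetric.

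For part (iii), the key tool is the exact Bogomolny decomposition obtained from \eqref{eq:bogom} applied on all of $\bR$ with endpoint values $\mp 1$:
\begin{equation*}
E_p(\phi_m) = M + \tfrac 12 \|\psi_m\|_{L^2}^2, \qquad \psi_m := \partial_x \phi_m - \sqrt{2U(\phi_m)}.
\end{equation*}
Hence $\psi_m \to 0$ in $L^2(\bR)$, so $\phi_m$ is bounded in $\dot H^1(\bR)$ and, using $\phi_m(0)=0$, in $H^1_{\mathrm{loc}}(\bR)$; extracting a subsequence, $\phi_m \to \phi$ locally uniformly. Passing to the limit in the perturbed first-order equation $\phi_m' = \sqrt{2U(\phi_m)} + \psi_m$ (using $\psi_m \to 0$ in $L^1_{\mathrm{loc}}$) identifies $\phi$ as the unique solution of the Bogomolny equation $\phi' = \sqrt{2U(\phi)}$ satisfying $\phi(0) = 0$, namely $\phi = H$. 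To upgrade to strong $H^1(\bR)$ convergence, the plan is uniform control of the tails: combining $E_p(\phi_m) \to M$ with the Bogomolny lower bound $E_p(\phi_m; -\infty, R) \geq G(\phi_m(R)) + M/2$ and the local convergence $\phi_m(R) \to H(R)$, one obtains that $E_p(\phi_m; R, \infty)$ can be made arbitrarily small by choosing $R$ (then $m$) large, and symmetrically for the left tail; parts (i)--(ii) applied to the tails then give uniform $H^1$-smallness of $\phi_m \mp 1$ there, while on $[-R, R]$ the identity $\partial_x \phi_m = \sqrt{2U(\phi_m)} + \psi_m$ combined with locally uniform convergence of $\phi_m \to H$ yields $\partial_x \phi_m \to \partial_x H$ in $L^2(-R, R)$.

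Part (iv) reduces to (iii) by extending $\phi_m$ to $\wt\phi_m \in \cE_{-1, 1}$ via the interpolation scheme \eqref{eq:weak-conv-cutoff}, which connects $\phi_m(x_m)$ and $\phi_m(x_m')$ to the vacua $\mp 1$ over unit intervals adjacent to $[x_m, x_m']$; since $\phi_m(x_m) \to -1$ and $\phi_m(x_m') \to 1$, the interpolation energy vanishes in the limit, so $E_p(\wt\phi_m) \to M$ and (iii) applies to give $\wt\phi_m \to H$ in $H^1(\bR)$. Restricting to $[x_m, x_m']$ recovers $\|\phi_m - H\|_{H^1(x_m, x_m')} \to 0$, and the Sobolev embedding $H^1 \hookrightarrow L^\infty$ combined with $\phi_m(x_m) \to -1$ and the fact that $H(y) \to -1$ only as $y \to -\infty$ forces $x_m \to -\infty$ (and symmetrically $x_m' \to \infty$). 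I expect the main obstacle to lie in the uniform tail control of part (iii): the compactness and ODE identification steps are essentially standard, but converting the \emph{global} energy convergence $E_p(\phi_m) \to M$ into \emph{uniform-in-$m$} tail smallness requires weaving together the Bogomolny inequality and the local convergence in a careful way.
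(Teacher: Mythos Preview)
Your proposal is correct and follows essentially the same route as the paper: Bogomolny inequality plus $U''(\pm 1)=1$ for (i)--(ii), the $L^2$-vanishing of $\psi_m = \partial_x\phi_m - \sqrt{2U(\phi_m)}$ together with local compactness and ODE identification for (iii), and the linear-interpolation extension for (iv). One small slip: the primitive you need in part~(i) is $\psi \mapsto \int_0^\psi \sqrt{2U(y)}\,\vd y$ (which indeed maps $(-1,1)$ onto $(-M/2,M/2)$), not the $G$ of \eqref{eq:H-def}, which is $\int_0^\psi \vd y/\sqrt{2U(y)}$ and maps $(-1,1)$ onto all of $\bR$; the argument is unaffected once the correct primitive is used.
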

\begin{proof}
It follows from \eqref{eq:bogom} and \eqref{eq:bogom-2} that
\begin{equation}
\lim_{m\to \infty}\sup_{x \leq x_m}\int_{-1}^{\phi_m(x)}\sqrt{2U(y)}\ud y = 0,
\end{equation}
hence
\begin{equation}
\lim_{m\to \infty}\sup_{x \leq x_m}|\phi_m(x) + 1| = 0.
\end{equation}
Since $U''(-1) = 1$, for all $m$ large enough we have
\begin{equation}
U(\phi_m(x)) \geq \frac 14(\phi_m(x) + 1)^2\qquad\text{for all }x \leq x_m,
\end{equation}
which proves part \ref{it:min-conv-i}.

Part \ref{it:min-conv-ii} follows from \ref{it:min-conv-i} by symmetry.

In part \ref{it:min-conv-iii},
it suffices to prove that the conclusion holds for a subsequence of any subsequence.
We can thus assume that $\partial_x \phi_m \wto \partial_x \phi_0$ in $L^2(\bR)$
and $\phi_m \to \phi_0$ uniformly on every bounded interval.

From \eqref{eq:bogom}, we have
\begin{equation}
\label{eq:bogom-conv}
\lim_{m\to \infty} \int_{-\infty}^\infty\big(\partial_x \phi_m - \sqrt{2U(\phi_m)}\big)^2\ud x = 0.
\end{equation}
Restricting to bounded intervals and passing to the limit, we obtain $\partial_x \phi_0(x) = \sqrt{2U(\phi_0(x))}$ for all $x \in \bR$, hence $\phi_0 = H$.
Using again \eqref{eq:bogom-conv} restricted to bounded intervals,
we obtain $\lim_{m\to \infty}\|\phi_m - H\|_{H^1(-R, R)} = 0$ for every $R > 0$,
in particular $\lim_{m \to \infty} |E_p(\phi_m; -R, R) - E_p(H; -R, R)| = 0$.
Hence, there exists a sequence $R_m \to \infty$ such that
\begin{equation}
\label{eq:Rm-conv}
\lim_{m\to \infty}\big( \|\phi_m - H\|_{H^1(-R_m, R_m)} + |E_p(\phi_m; -R_m, R_m) - E_p(H; -R_m, R_m)|\big) = 0.
\end{equation}
Since $E_p(H; -R_m, R_m) \to M$ and $E_p(\phi_m) \to M$, we obtain
\begin{equation}
\lim_{m\to \infty}\big( E_p(\phi_m; -\infty, -R_m) + E_p(\phi_m; R_m, \infty)\big) = 0.
\end{equation}
Applying parts \ref{it:min-conv-i} and \ref{it:min-conv-ii}, we obtain
\begin{equation}
\lim_{m\to\infty}\big(\|\phi_m +1\|_{H^1(-\infty, -R_m)} + \|\phi_m - 1\|_{H^1(R_m, \infty)}\big) = 0.
\end{equation}
It is clear that $\lim_{m\to\infty}\big(\|H +1\|_{H^1(-\infty, -R_m)} + \|H - 1\|_{H^1(R_m, \infty)}\big) = 0$, thus \eqref{eq:Rm-conv} yields the conclusion.

In order to prove part \ref{it:min-conv-iv}, we define a new sequence $\wt \phi_m: \bR \to \bR$
by the formula, similar to \eqref{eq:weak-conv-cutoff},
\begin{equation}
\wt\phi_m(x) := \begin{cases}
-1 & \text{for all }x \leq x_m - 1, \\
-(x_m-x) + (1-x_m+x)\phi_m(x_m)& \text{for all }x \in [x_m - 1, x_m], \\
\phi_m(x) &\text{for all }x \in [x_m, x_m'], \\
(x - x_m') + (1-x+x_m')\phi_m(x_m') & \text{for all }x \in [x_m', x_m' + 1], \\
1 & \text{for all }x \geq x_m' + 1.
\end{cases}
\end{equation}
Then $\wt \phi_m$ satisfies the assumptions of part \ref{it:min-conv-iii}
and we obtain $\lim_{m\to \infty} \|\wt \phi_m - H\|_{H^1} = 0$.
In particular, $\wt\phi_m \to H$ uniformly, hence $H(x_m)\to -1$ and $H(x_m') \to 1$,
implying $x_m \to -\infty$ and $x_m' \to \infty$.
\end{proof}
\begin{proof}[Proof of Proposition~\ref{prop:close-to-H}]
If $\bs \phi$ is a solution of \eqref{eq:csf} satisfying \eqref{eq:close-to-H},
then Lemma~\ref{lem:interactions} yields $E(\bs \phi) = nM$.
Moreover, if $x_0(t), x_1(t), \ldots, x_n(t)$ are any functions such that
$$
\lim_{t\to\infty}\big(a_k(t) - x_{k-1}(t)\big) = \lim_{t\to\infty}\big(x_k(t) - a_{k}(t)\big) = \infty \qquad\text{for all }k \in \{1, \ldots, n\}
$$
then we have $\lim_{t\to\infty}\phi(t, x_k(t)) = (-1)^k$ for all $k \in \{0, 1, \ldots, n\}$.

In the opposite direction, assume that $\bs \phi$ is a kink cluster according to Definition~\ref{def:cluster}. It suffices to prove that $\lim_{t\to\infty}\delta(\bs\phi(t)) = 0$,
and apply Lemma~\ref{lem:basic-mod} to obtain a continuous choice of the positions of the kinks.

For all $k \in \{1, \ldots, n\}$ and $t$ sufficiently large, let $a_k(t) \in (x_{k-1}(t), x_k(t))$
be such that $\phi(t, a_k(t)) = 0$. From \eqref{eq:bogom} and \eqref{eq:bogom-2}, we obtain
\begin{equation}
\liminf_{t \to \infty} E_p(\phi(t); x_{k-1}(t), x_k(t)) \geq M, \qquad\text{for all }k \in \{1, \ldots, n\}.
\end{equation}
Thus, the condition $E(\bs \phi) \leq nM$ implies
\begin{gather}
\lim_{t \to \infty} \|\partial_t \phi(t)\|_{L^2} = 0, \\
\lim_{t \to \infty} E_p(\phi(t); x_{k-1}(t), x_k(t)) = M, \qquad\text{for all }k \in \{1, \ldots, n\}, \\
\lim_{t \to \infty} \big(E_p(\phi(t); -\infty, x_0(t)) + E_p(\phi(t); x_n(t), \infty)\big) = 0.
\end{gather}
Applying Lemma~\ref{lem:min-conv}, we obtain
\begin{gather}
\lim_{t \to \infty}(a_k(t) - x_{k-1}(t)) = \lim_{t \to \infty}(x_k(t) - a_k(t)) = \infty \qquad\text{for all }k \in \{1, \ldots, n\}
\end{gather}
and
\begin{equation}
\begin{aligned}
&\lim_{t\to \infty}\Big(\|\phi(t)\|_{H^1(-\infty, x_0(t))} + \|\phi(t)\|_{H^1(x_n(t),\infty)} \\
&\qquad+ \sum_{k=1}^n \|\phi(t) - (-1)^k H(\cdot - a_k(t))\|_{H^1(x_{k-1}(t), x_k(t))}\Big) = 0,
\end{aligned}
\end{equation}
thus
\begin{equation}
\lim_{t\to \infty}\big(\|\bs\phi(t) - \bs H(\vec a(t))\|_\cE^2 + \rho(\vec a(t))\big) = 0.\qedhere
\end{equation}
\end{proof}
\subsection{Asymptotically static solutions are kink clusters}
\label{ssec:asym-stat}
The present section is devoted to a proof of Proposition~\ref{prop:asym-stat}, which is inspired by some of the arguments in \cite{Cote15, JK}.
Recall that we restrict our attention to the case $U(\phi) = \frac 18(1-\phi^2)^2$,
so that $\pm \bs 1$ and $\pm \bs H(\cdot -a )$ are the only static states of \eqref{eq:csf}.
\begin{lemma}
\label{lem:kink-or-antikink}
Let $\bs\phi$ be a finite-energy solution of \eqref{eq:csf} such that $\lim_{t\to\infty}\|\partial_t \phi(t)\|_{L^2}^2 = 0$.
If $t_m \to \infty$ and $(a_m)_m$ is a sequence of real numbers such that
$\bs \phi(t_m, \cdot + a_m) \wto \bs \phi_0$ as $m \to \infty$,
then $\bs \phi_0$ is a static state.
\end{lemma}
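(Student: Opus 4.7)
The plan is to propagate the weak convergence at time $t=0$ forward along the flow using the weak continuity property provided by Proposition~\ref{prop:cauchy}, and then to exploit the asymptotic vanishing of the kinetic energy together with weak lower semicontinuity of the $L^2$-norm to force the limiting solution to be static. The classification of finite-energy stationary solutions in Section~\ref{sec:static} will then identify $\bs\phi_0$ with one of $\pm\bs 1$ or $\pm\bs H(\cdot-a)$.

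Concretely, I would set $\bs\phi_m(t,x):=\bs\phi(t+t_m,\,x+a_m)$, so each $\bs\phi_m$ is a solution of \eqref{eq:csf} of the same finite energy as $\bs\phi$, with $\bs\phi_m(0)\wto\bs\phi_0$ by hypothesis. Letting $\bs\phi_\infty$ denote the solution of \eqref{eq:csf} emanating from $\bs\phi_0$, the weak continuity statement in Proposition~\ref{prop:cauchy} gives $\bs\phi_m(t)\wto\bs\phi_\infty(t)$ for every $t\in\bR$. In particular $\dot\phi_m(t)=\dot\phi(t+t_m)\wto\dot\phi_\infty(t)$ in $L^2$, so by weak lower semicontinuity of the norm and the hypothesis $\|\partial_t\phi(s)\|_{L^2}\to 0$ as $s\to\infty$,
\begin{equation*}
\|\dot\phi_\infty(t)\|_{L^2}\,\le\,\liminf_{m\to\infty}\|\dot\phi(t+t_m)\|_{L^2}=0\qquad\text{for every }t\in\bR.
\end{equation*}

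From $\dot\phi_\infty\equiv 0$ and the first component of \eqref{eq:csf-1st} I conclude $\partial_t\phi_\infty\equiv 0$, so $\phi_\infty(t,x)=\psi(x)$ for some profile $\psi$ independent of $t$, and the second component of \eqref{eq:csf-1st} reduces to $\partial_x^2\psi=U'(\psi)$. To apply the classification in Section~\ref{sec:static} one needs $E_p(\psi)<\infty$: this follows from $E(\bs\phi_m(0))=E(\bs\phi)<\infty$ together with Fatou's lemma applied after extracting a pointwise a.e.\ convergent subsequence of $\phi_m(0)$ (using $L^\infty_{\mathrm{loc}}$ convergence of $\phi_m(0)$ and weak $L^2$ convergence of $\partial_x\phi_m(0)$). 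The analysis of the Bogomolny equation \eqref{eq:bogom-eq} then forces $\psi\in\{-1,+1\}\cup\{\pm H(\cdot-a):a\in\bR\}$, whence $\bs\phi_0=(\psi,0)$ is one of the listed static states.

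The main obstacle I anticipate is purely technical: the weak limit $\bs\phi_0$ need not a priori belong to the same topological sector as the approximating states $\bs\phi_m(0)$, so Proposition~\ref{prop:cauchy} is not literally applicable as stated. This is circumvented by inspecting the proof of that proposition, which produces the limit solution by extracting a weak-$\cE$ limit of the Duhamel remainder $\bs g_m$ and passing to the distributional limit in the nonlinearity via dominated convergence; the sector-preservation assumption is used nowhere in that construction, so the conclusion remains valid in the generality we need here.
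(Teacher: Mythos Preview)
Your proof is correct and follows essentially the same route as the paper: shift the solutions, invoke weak continuity of the flow (Proposition~\ref{prop:cauchy}~\ref{it:cauchy-weak}), and use weak lower semicontinuity of the $L^2$ norm to obtain $\dot\phi_\infty\equiv 0$ for all $t$, hence $\bs\phi_0$ is static. Your explicit attention to the finite energy of the limit and to the sector-mismatch issue is prudent but ultimately not a departure from the paper's argument; the paper leaves these points implicit (the cutoff reduction in the proof of Proposition~\ref{prop:cauchy}~\ref{it:cauchy-weak} already accommodates cross-sector weak limits, and finite energy of $\bs\phi_0$ is built into the paper's convention for~$\wto$).
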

\begin{proof}
Let $\wt{\bs \phi}$ be the solution of \eqref{eq:csf} for the initial data
$\wt{\bs\phi}(0) = \bs\phi_0$.
By Proposition~\ref{prop:cauchy} \ref{it:cauchy-weak}, for all $s \in [0, 1]$
we have $\partial_t \phi(t_m + s, \cdot + a_m) \wto \partial_t \wt\phi(s)$
in $L^2(\bR)$, thus for all $s \in [0, 1]$ we have
\begin{equation}
\|\partial_t \wt\phi(s)\|_{L^2} \leq \liminf_{m\to\infty}\|\partial_t \phi(t_m + s, \cdot + a_m)\|_{L^2} = \liminf_{m\to\infty}\|\partial_t \phi(t_m + s)\|_{L^2} = 0,
\end{equation}
implying that $\bs\phi_0$ is a static state.
\end{proof}

\begin{lemma}
\label{lem:asym-stat-Linf-conv}
Let $\bs\phi$ be a finite-energy solution of \eqref{eq:csf} such that
$\lim_{t\to\infty}\|\partial_t \phi(t)\|_{L^2}^2 = 0$
and let $t_m \to \infty$.
After extraction of a subsequence, there exist
$n \in \{0, 1, \ldots\}$, $\iota \in \{{-}1, 1\}$ and
$\vec a_{m} \in \bR^n$ for $m \in \{1, 2, \ldots\}$ such that
\begin{equation}
\label{eq:asym-stat-Linf-conv}
\lim_{m\to\infty}\big(\|\phi(t_m) - \iota H(\vec a_m)\|_{L^\infty}^2
+ \rho(\vec a_m)\big) = 0.
\end{equation}
\end{lemma}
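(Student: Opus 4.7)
The approach is a profile extraction along the sequence $\phi(t_m)$: we find finitely many ``transition positions'' $a_m^{(1)}<\cdots<a_m^{(n)}$ whose mutual distances blow up, identify each with a $\pm H$ profile via Lemma~\ref{lem:kink-or-antikink}, and show that the superposition $\iota H(\vec a_m)$ matches $\phi(t_m)$ uniformly on $\bR$.

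\emph{Step 1 (equicontinuity).} Energy conservation and the vanishing of the kinetic energy give uniform bounds on $E_p(\phi(t_m))$, hence on $\|\partial_x\phi(t_m)\|_{L^2}$. Combined with a uniform $L^\infty$ bound (from the fixed asymptotic values $\iota_\pm:=\lim_{x\to\pm\infty}\phi(t,x)\in\{-1,+1\}$, which are preserved by the flow, together with $H^1$-integrability of $\phi-\iota_\pm$ near $\pm\infty$ coming from $U(\phi)\simeq(\phi\mp1)^2$ near the vacua), this renders $\phi(t_m)$ uniformly $\tfrac12$-H\"older. Consequently, along any sequence of spatial translates, $\phi(t_m,\cdot+y_m)$ has a subsequence converging uniformly on compacts.

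\emph{Step 2 (counting transitions).} Fix a small $\eta>0$ and let $T_m^\eta:=\{x:\bigl||\phi(t_m,x)|-1\bigr|>\eta\}$. Since $U\geq c_\eta>0$ on $T_m^\eta$, we have $|T_m^\eta|\leq E(\bs\phi)/c_\eta$. Between two consecutive components of $\bR\setminus T_m^\eta$ on which $\phi(t_m)$ has opposite signs, the Bogomolny identity \eqref{eq:bogom}--\eqref{eq:bogom-2} forces a potential-energy contribution of at least $M-o_\eta(1)$. Hence the number of sign alternations is at most $E(\bs\phi)/(M-o_\eta(1))$; after passing to a subsequence, it equals a constant $n$. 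Choose $a_m^{(1)}<\cdots<a_m^{(n)}$ with $\phi(t_m,a_m^{(k)})=0$, one per sign-alternation zone. By a diagonal extraction we may assume $a_m^{(k+1)}-a_m^{(k)}\to L_k\in[0,\infty]$ for each $k$.

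\emph{Step 3 (profiles are kinks, positions separate).} By Proposition~\ref{prop:cauchy}\ref{it:cauchy-weak} and Lemma~\ref{lem:kink-or-antikink}, $\bs\phi(t_m,\cdot+a_m^{(k)})\wto \bs P^{(k)}$ along a subsequence with $\bs P^{(k)}$ a stationary finite-energy solution of \eqref{eq:csf}; the classification from Section~\ref{sec:static} forces $\bs P^{(k)}\in\{\pm\bs 1\}\cup\{\pm\bs H(\cdot-c):c\in\bR\}$. Since $P^{(k)}(0)=0$ (by equicontinuity and $\phi(t_m,a_m^{(k)})=0$), the limit is not a vacuum, so $\bs P^{(k)}=\iota_k\bs H(\cdot-c_k)$ for signs $\iota_k\in\{\pm1\}$ and shifts $c_k\in\bR$. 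If some $L_k$ were finite, then $\bs\phi(t_m,\cdot+a_m^{(k)})$ would weakly converge to a stationary state with two sign changes (at $0$ and at $L_k$), contradicting the classification. Hence $L_k=\infty$ for all $k$.

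\emph{Step 4 (alternating signs and $L^\infty$ closeness).} Because $\bs\phi\in C(\bR;\cE_{\iota_-,\iota_+})$, the signs $\iota_k$ must alternate and are determined by $\iota:=\iota_-$; setting $\vec a_m:=(a_m^{(1)}+c_1,\ldots,a_m^{(n)}+c_n)$, the $k$-th piece of $\iota H(\vec a_m)$ equals $\iota\iota_k H(\cdot-a_m^{(k)}-c_k)$. Step~1 upgrades the weak convergence $\bs\phi(t_m,\cdot+a_m^{(k)})\wto \iota_k\bs H(\cdot-c_k)$ to uniform convergence on $[-R,R]$ for every $R$; choosing $R$ so that $|H(\pm R)\mp 1|<\eta$ and using the separation $a_m^{(k+1)}-a_m^{(k)}\to\infty$, the intervals $[a_m^{(k)}-R,a_m^{(k)}+R]$ are eventually pairwise disjoint and on their union $\phi(t_m)-\iota H(\vec a_m)$ is $o(1)$ uniformly. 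On the complementary ``vacuum'' intervals, the accounting $E_p(\phi(t_m))\to nM$ (Step~2) combined with the Bogomolny identity localised to each such interval forces $\phi(t_m)$ uniformly $o(1)$-close to the appropriate vacuum, and Proposition~\ref{prop:prop-H} gives the same for $\iota H(\vec a_m)$. Finally, $\rho(\vec a_m)\to 0$ is immediate from $a_m^{(k+1)}-a_m^{(k)}\to\infty$.

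\emph{Main obstacle.} The delicate point is Step~3: excluding the collision of two transitions in the limit, which requires invoking the full classification of finite-energy stationary solutions (only available here because $U=\tfrac18(1-\phi^2)^2$, whence Proposition~\ref{prop:asym-stat} is restricted to this case). A second technical difficulty is upgrading pointwise-on-compacts convergence to global $L^\infty$ convergence, done via the Bogomolny energy-localisation on vacuum intervals rather than through the $H^1$ norm, since the conclusion is phrased in $L^\infty$ without an $H^1$ assertion.
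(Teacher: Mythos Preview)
Your overall strategy---extract finitely many transition centres, identify each weak limit as $\pm H$ via Lemma~\ref{lem:kink-or-antikink}, and patch together local uniform convergence---is sound and close in spirit to the paper's argument. The paper, however, \emph{defines} $n$ as the maximal number of separating sequences $(a_{m,k})$ with non-vacuum weak limits, rather than via your sign-alternation count; this maximality makes the final $L^\infty$ step a one-line contradiction (any point where $\phi(t_m)$ stays away from $\iota H(\vec a_m)$ yields one more non-vacuum profile).

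There is a genuine gap in your Step~4. You assert that ``the accounting $E_p(\phi(t_m))\to nM$ (Step~2)'' forces $\phi(t_m)$ close to the vacuum on the complementary intervals. But Step~2 only gives the \emph{upper bound} $n\leq E(\bs\phi)/(M-o_\eta(1))$; it does not show $E(\bs\phi)=nM$. At this point in the argument nothing excludes $E(\bs\phi)>nM$, with the excess energy sitting on a complementary interval as a ``bump'' that does not alternate sign (e.g.\ $\phi$ dropping from near $1$ to $1/2$ and back). Such a bump would not be detected by your sign-alternation count, yet would violate the $L^\infty$ conclusion. The identity $E(\bs\phi)=nM$ is a \emph{consequence} of Proposition~\ref{prop:asym-stat}, which this lemma is a step towards; invoking it here is circular.

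The fix is to replace the energy accounting by the contradiction argument the paper uses: if some $x_m$ on a complementary interval has $|\phi(t_m,x_m)-\iota^{(k)}|\geq\epsilon$, extract a subsequence so that $\bs\phi(t_m,\cdot+x_m)$ has a weak limit, which by Lemma~\ref{lem:kink-or-antikink} is a static state and (since its value at $0$ is bounded away from $\pm 1$) must be $\pm H(\cdot-c)$. This limit reaches both vacua, so for large $m$ there is a vacuum region of the \emph{opposite} sign near $x_m$, producing an additional sign alternation and contradicting your choice of $n$. With this correction your approach goes through; the paper's maximality definition simply packages this step more efficiently.
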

\begin{proof}
Let $n$ be the maximal natural number such that there exist
a subsequence of $t_m$, and sequences $a_{m,1}, \ldots, a_{m,n}$
satisfying
$\lim_{m\to\infty}(a_{m,k+1} - a_{m,k}) = \infty$ for all $k \in \{1, \ldots, n-1\}$ and
\begin{equation}
\bs\phi(t_m, \cdot + a_{m, k}) \wto \bs\phi_k\neq \pm\bs 1\qquad\text{for all }k.
\end{equation}
By Lemma~\ref{lem:kink-or-antikink}, each $\bs \phi_k$ is a kink or an antikink. Upon adjusting the sequence $a_{m, k}$, we can assume
$\bs \phi_k = \pm \bs H$ for all $k$, and by an appropriate choice of $\iota$
we can reduce to $\bs\phi_1 = {-}\bs H$.

We claim that $\bs\phi_k = (-1)^k \bs H$ for all $k \in \{1, \ldots, n\}$.
This is true for $k = 1$. Suppose $k$ is such that $\bs \phi_k = \bs \phi_{k+1}$. Then there exists a sequence $x_m$ such that
$\lim_{m\to\infty}(x_m - a_{m, k}) = \lim_{m\to\infty}(a_{m, k+1} - x_m) = \infty$ and $\phi(t_m, x_m) = 0$ for all $m$.
Extracting a subsequence, we can assume that $\phi(t_m, \cdot + x_m)$
converges locally uniformly. The limit cannot be a vacuum,
contradicting the maximality of $n$ and finishing the proof of the claim.

Suppose there exists a sequence $x_m$ such that
\begin{equation}
\limsup_{m\to \infty} |\phi(t_m, x_m) - H(\vec a_m, x_m)| > 0.
\end{equation}
We then have $|a_{m, k} - x_m| \to \infty$,
thus $\bs \phi(t_m, \cdot + x_m)$ has (after extraction of a subsequence)
a non-constant weak limit, contradicting the maximality of $n$.
\end{proof}

The last auxiliary result which we need is the following
description of local in time stability of multi-kink configurations.
\begin{lemma}\label{lem:mkink-stab}
Let $n \in \{0, 1, \ldots\}$ and $\vec a_m \in \bR^n$ for $m \in \{1, 2, \ldots\}$ be such that $\lim_{m\to \infty}\rho(\vec a_m) = 0$.
Let $\bs g_{m, 0} \in \cE$ for $m \in \{1, 2, \ldots\}$
and let $\bs\phi_m: [-1, 1] \to \cE_{1, (-1)^n}$
be the solution of \eqref{eq:csf} for the initial data $\bs\phi_m(0) = \bs H(\vec a_m) + \bs g_{m, 0}$.
\begin{enumerate}[(i)]
\item \label{it:strong-mkink-stab}
If $\lim_{m\to\infty}\|\bs g_{m, 0}\|_\cE = 0$, then
\begin{equation}
\label{eq:loc-stab-1}
\lim_{m\to\infty}\sup_{t \in [-1, 1]}\|\bs\phi_m(t) - \bs H(\vec a_m)\|_{\cE} = 0.
\end{equation}
\item \label{it:weak-mkink-stab}
If $\limsup_{m\to\infty}\|g_{m, 0}\|_{H^1} < \infty$,
$\lim_{m\to\infty}\|\dot g_{m, 0}\|_{L^2} = 0$
and $\lim_{m\to\infty}\|g_{m, 0}\|_{L^\infty} = 0$, then
\begin{equation}
\label{eq:loc-stab-3}
\lim_{m\to\infty}\sup_{t \in [-1, 1]}\|\bs\phi_m(t) - \bs H(\vec a_m)
-\bs g_{m, \lin}(t)\|_{\cE} = 0,
\end{equation}
where $\bs g_{m, \lin}(t)$ is the solution of the free linear Klein-Gordon
equation for the initial data $\bs g_{m, \lin}(0) = \bs g_{m, 0}$.
\end{enumerate}
\end{lemma}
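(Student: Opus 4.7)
Both parts share a common strategy: derive the PDE satisfied by the appropriate error, estimate the forcing terms, and close a Gronwall argument in the energy norm on $[-1, 1]$. By time reversibility it suffices to treat $t \in [0, 1]$.

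For part~\ref{it:strong-mkink-stab}, I set $\bs g_m := \bs\phi_m - \bs H(\vec a_m)$. Since $\bs H(\vec a_m)$ is time-independent, a direct computation gives
\begin{equation}
\partial_t^2 g_m - \partial_x^2 g_m = -S_m - \bigl(U'(H(\vec a_m) + g_m) - U'(H(\vec a_m))\bigr),
\end{equation}
where $S_m := \vD E_p(H(\vec a_m))$ satisfies $\|S_m\|_{L^2} \to 0$ by Lemma~\ref{lem:sizeDEp}. Writing the nonlinearity as $V_m g_m$ with $V_m := \int_0^1 U''(H(\vec a_m) + s g_m)\,ds$, testing against $\dot g_m$, and combining with the identities for $\tfrac{d}{dt}\|g_m\|_{L^2}^2$ and $\tfrac{d}{dt}\|\partial_x g_m\|_{L^2}^2$ yields
\begin{equation}
\tfrac{d}{dt}\|\bs g_m(t)\|_\cE^2 \lesssim \|\bs g_m(t)\|_\cE^2 + \|S_m\|_{L^2}^2,
\end{equation}
the implicit constant depending on an $L^\infty$ bound of $H(\vec a_m) + g_m$ (propagated via Sobolev embedding $H^1 \hookrightarrow L^\infty$ and a standard continuity argument initialised by $\|\bs g_{m, 0}\|_\cE \to 0$). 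Gronwall then delivers \eqref{eq:loc-stab-1}.

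For part~\ref{it:weak-mkink-stab}, I compare $\bs g_m$ with the free linear flow by setting $\bs w_m := \bs g_m - \bs g_{m, \lin}$. Then $\bs w_m(0) = \bs 0$ and
\begin{equation}
\partial_t^2 w_m - \partial_x^2 w_m + w_m = -S_m + (1 - V_m)w_m + (1 - V_m) g_{m, \lin}.
\end{equation}
The term $\|(1 - V_m)w_m\|_{L^2} \lesssim \|w_m\|_{L^2}$ is absorbed by Gronwall and $\|S_m\|_{L^2} \to 0$. Writing $V_m = U''(H(\vec a_m)) + O(\|g_m\|_{L^\infty})$ and using the uniform bound $\|1 - U''(H(\vec a_m))\|_{L^2} \lesssim 1$ (since $1 - U''(H)$ decays exponentially away from each kink, giving a sum of $n$ localised profiles), one obtains
\begin{equation}
\|(1 - V_m) g_{m, \lin}\|_{L^2} \lesssim \|g_{m, \lin}\|_{L^\infty} + \|g_m\|_{L^\infty}\|g_{m, \lin}\|_{L^2}.
\end{equation}

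The main obstacle is showing $\|g_{m, \lin}(t)\|_{L^\infty} \to 0$ uniformly on $[-1, 1]$: the hypothesis gives $\|g_{m, 0}\|_{L^\infty} \to 0$ and $\|\dot g_{m, 0}\|_{L^2} \to 0$, but $\|g_{m, 0}\|_{H^1}$ is only bounded, and the flow can convert $H^1$ mass into $\|\dot g_{m, \lin}(t)\|_{L^2}$ of order one for $t \neq 0$. I will use the explicit one-dimensional Klein--Gordon representation
\begin{equation}
g_{m, \lin}(t) = \partial_t R(t) *_x g_{m, 0} + R(t) *_x \dot g_{m, 0},
\end{equation}
with Riemann function $R(t, x) = \tfrac12 J_0(\sqrt{t^2 - x^2})\,\mathbf{1}_{|x|<|t|}$. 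Since $|R(t, x)| \leq \tfrac12$, Cauchy--Schwarz gives $\|R(t) *_x \dot g_{m, 0}\|_{L^\infty} \lesssim |t|^{1/2}\|\dot g_{m, 0}\|_{L^2} \to 0$; and decomposing $\partial_t R$ into the two characteristic $\delta$-measures on $\{x = \pm t\}$ plus an interior kernel (whose $L^\infty$ bound follows from $|J_1(z)/z| \lesssim 1$) gives $\|\partial_t R(t) *_x g_{m, 0}\|_{L^\infty} \lesssim (1 + |t|^2)\|g_{m, 0}\|_{L^\infty} \to 0$. A short continuity argument using $\|g_m(t)\|_{L^\infty} \leq \|g_{m, \lin}(t)\|_{L^\infty} + C\|\bs w_m(t)\|_\cE$ keeps $\|g_m(t)\|_{L^\infty}$ small on $[-1, 1]$, so that with $\epsilon_m := \sup_{[-1,1]}\|g_{m, \lin}\|_{L^\infty} + \|S_m\|_{L^2} \to 0$ one gets
\begin{equation}
\tfrac{d}{dt}\|\bs w_m(t)\|_\cE^2 \lesssim \|\bs w_m(t)\|_\cE^2 + \epsilon_m^2,
\end{equation}
and Gronwall from $\bs w_m(0) = \bs 0$ delivers \eqref{eq:loc-stab-3}.
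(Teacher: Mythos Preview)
Your proof is correct and follows essentially the same approach as the paper: derive the equation for the difference $\bs w_m = \bs g_m - \bs g_{m,\lin}$, bound the forcing in $L^2$ by $C\|\bs w_m\|_\cE + \epsilon_m$ with $\epsilon_m \to 0$, and close via Gronwall, the key analytic input being $\sup_{[-1,1]}\|g_{m,\lin}(t)\|_{L^\infty} \to 0$. The organizational differences are minor: the paper deduces part~\ref{it:strong-mkink-stab} as an immediate corollary of part~\ref{it:weak-mkink-stab} (using conservation of $\|\bs g_{m,\lin}\|_\cE$) rather than arguing directly, and it splits the forcing into three explicit terms (quadratic remainder, $\vD E_p(H(\vec a_m))$, and $(U''(H(\vec a_m))-1)g_m$) instead of packaging the nonlinearity via the mean-value potential $V_m$; your explicit Riemann-function/Bessel computation for the $L^\infty$ decay of the free flow is more detailed than the paper, which simply invokes ``explicit formulas for the free Klein--Gordon equation''.
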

\begin{proof}
Part \ref{it:strong-mkink-stab} follows from part \ref{it:weak-mkink-stab} and the fact that $\|\bs g_{m, \lin}(t)\|_\cE = \|\bs g_{m, 0}\|_\cE$ for all $t$.





In order to prove \eqref{eq:loc-stab-3}, we set $\bs g_m(t):=  \bs\phi_m(t) - \bs H(\vec a_m)$ and $\bs h_m(t):= \bs g_m(t)  -\bs g_{m, \lin}(t)$. Then $\bs h_m(t) = (h_m(t), \partial_t h_m(t))$ solves the equation
$\partial_t^2 h_m - \partial_x^2 h_m + h_m = f_m$, where
\begin{equation}
\begin{aligned}
\label{eq:fm-gronwall}
f_m(t) := &- \big( U'(H(\vec a_m) + g_m(t)) - U'(H(\vec a_m)) - U''(H(\vec a_m)) g_m(t) \big)  \\
& - \Big( U'(H(\vec a_m)) - \sum_{k =1}^{n} (-1)^k U'( H( \cdot - a_{k, m})) \Big) \\
& - \big( U''(H(\vec a_m)) -1\big) g_m(t).
\end{aligned}
\end{equation}
Since $\bs h_m(0) = 0$, the standard energy estimate yields
\begin{equation}
\|\bs h_m(t)\|_\cE \leq \bigg|\int_0^t \|f_m(s)\|_{L^2}\ud s \bigg|\qquad\text{for all }t \in [-1, 1].
\end{equation}
By Gronwall's inequality, it suffices to verify that
\begin{equation}
\label{eq:fm-gronwall-bd}
\|f_m(t)\|_{L^2} \leq C\|\bs h_m(t)\|_{\cE} + \epsilon_m,\qquad\text{
with }\lim_{m\to\infty}\epsilon_m = 0.
\end{equation}
The first line of \eqref{eq:fm-gronwall} satisfies
\begin{align} 
&\| U'(H(\vec a_m) + g_m(t)) - U'(H(\vec a_m)) - U''(H(\vec a_m)) g_m(t) \|_{L^2} \\
&\qquad\lesssim \min(\|g_m(t)\|_{L^2}, \| g_m(t)^2 \|_{L^2}).
\end{align}
If $\|\bs h_m(t)\|_{\cE} \geq 1$, then
$$\|g_m(t)\|_{L^2} \leq \|h_m(t)\|_{L^2} + \|g_{m,\lin}(t)\|_{L^2} \leq \|\bs h_m(t)\|_\cE + \|\bs g_{m, 0}\|_\cE \lesssim \|\bs h_m(t)\|_\cE.$$
If $\|\bs h_m(t)\|_\cE \leq 1$, then
\begin{align}
\|g_m(t)^2\|_{L^2} &\leq \| g_m(t) \|_{L^\infty} \| g_m(t) \|_{L^2} \\
&\leq
(\|\bs h_m(t)\|_\cE + \|g_{m,\lin}(t)\|_{L^\infty})
(\|\bs h_m(t)\|_\cE + \|\bs g_{m,0}\|_\cE) \\
& \lesssim \|\bs h_m(t)\|_\cE + \|g_{m,\lin}(t)\|_{L^\infty}.
\end{align} 
The term on the second line of \eqref{eq:fm-gronwall} is equal to $-\textrm{D} E_p( H(\vec a_m))$, thus, using \eqref{eq:sizeDEp}, satisfies
\begin{align} 
\Big\| U'(H(\vec a_m)) - \sum_{k =1}^{n} (-1)^k U'( H( \cdot - a_{k, m})) \Big\|_{L^2} \lesssim \sqrt{{-}\log \rho(\vec a_m)} \rho(\vec a_m). 
\end{align} 
Finally, the third line of \eqref{eq:fm-gronwall} satisfies
\begin{align} 
\big\| \big( U''(H(\vec a_m)) -1\big) g_m(t) \big\|_{L^2} \leq \|U''(H(\vec a_m)) -1 \|_{L^2} \|g_m(t) \|_{L^{\infty}} \lesssim \|g_m(t) \|_{L^{\infty}}.
\end{align}
Gathering the estimates above and using the fact that,
by explicit formulas for the free Klein-Gordon equation,
we have
$\lim_{m\to\infty}\sup_{t\in[-1, 1]}\|g_{m,\lin}(t)\|_{L^\infty} = 0$, 
we obtain \eqref{eq:fm-gronwall-bd}.
\end{proof}
\begin{proof}[Proof of Proposition~\ref{prop:asym-stat}]
Without loss of generality, assume
\begin{equation}
\label{eq:asym-stat-class}
\lim_{x \to -\infty}\phi(t, x) = 1\qquad\text{for all }t.
\end{equation}
Suppose that either $n := M^{-1}E(\bs \phi) \notin \bN$, or
there exists a sequence $t_m \to \infty$ such that
\begin{equation}
\label{eq:asym-stat-contr}
\liminf_{m \to \infty}\delta(\bs \phi(t_m)) > 0,
\end{equation}
where $\delta$ defined by \eqref{eq:d-def-2} is the distance
to the set of $n$-kink configurations.
In both cases, for any $n' \in \bN$ and $\vec a_m \in \bR^{n'}$ we have
\begin{equation}
\label{eq:asym-stat-g-lbd}
\liminf_{m \to \infty}\big(\|\bs \phi(t_m) - \bs H(\vec a_m)\|_\cE^2
+ \rho(\vec a_m)\big) > 0.
\end{equation}
By Lemma~\ref{lem:asym-stat-Linf-conv}, after extracting a subsequence,
we can assume that \eqref{eq:asym-stat-Linf-conv} holds
for some $n' \in \bN$ and $\vec a_m \in \bR^{n'}$.
We have $\iota = 1$ due to \eqref{eq:asym-stat-class}.
We decompose
\begin{equation}
\bs \phi(t_m + s) = \bs H(\vec a_m) + \bs g_m(s), \qquad s \in [-1, 1].
\end{equation}
We claim that
\begin{equation}
\label{eq:g-large-en}
\liminf_{m\to\infty}\inf_{s \in [-1, 1]}\|\bs g_m(s)\|_\cE > 0.
\end{equation}
Otherwise, by Lemma~\ref{lem:mkink-stab} \ref{it:strong-mkink-stab},
we would have $\lim_{m\to\infty}\|\bs g_m(0)\|_\cE = 0$,
contradicting \eqref{eq:asym-stat-g-lbd}.

Let $\bs g_{m, \lin}: [-1, 1] \to \cE$ be the solution of the free
linear Klein-Gordon equation for the initial data $\bs g_{m, \lin}(0)
= \bs g_m(0)$. By Lemma~\ref{lem:mkink-stab} \ref{it:weak-mkink-stab},
we have
\begin{equation}
\lim_{m\to\infty}\sup_{s\in [-1, 1]}\|\bs g_m(s) - \bs g_{m, \lin}(s)\|_\cE = 0,
\end{equation}
thus
\begin{gather}
\lim_{m\to\infty}\sup_{s\in[-1, 1]}\|\partial_t g_{m, \lin}(s)\|_{L^2} = 0, \\
\liminf_{m\to\infty}\inf_{s\in[-1, 1]}\|g_{m, \lin}(s)\|_{H^1} > 0,
\end{gather}
the last inequality following from \eqref{eq:g-large-en}.
It is well-known that such behaviour is impossible
for the free Klein-Gordon equation.
For instance, it contradicts the identity
\begin{equation}
\begin{aligned}
&\int_{-\infty}^\infty \big(\partial_t g_{m, \lin}(1, x)g_{m, \lin}(1, x) - \partial_t g_{m, \lin}(-1, x)g_{m, \lin}(-1, x)\big)\ud x \\
&\qquad =\int_{-1}^1 \int_{-\infty}^\infty \big((\partial_t g_{m, \lin}(t, x))^2 - (\partial_x g_{m, \lin}(t, x))^2 - g_{m, \lin}(t, x)^2\big)\ud x\ud t,
\end{aligned}
\end{equation}
obtained by multiplying the equation by $g_{m, \lin}$ and integrating in space-time.
Hence, \eqref{eq:asym-stat-contr} is impossible.
\end{proof}

\section{Main order asymptotic behaviour of kink clusters}
\label{sec:n-body}
The current section is devoted to the analysis of the approximate $n$-body problem
associated with a kink cluster motion, obtained in Lemma~\ref{lem:ref-mod}.
\subsection{Linear algebra results}
\label{ssec:lin-alg}
We gather here a few auxiliary results needed later.
\begin{definition}
\label{def:laplacian}
For any $n \in \{1, 2, \ldots\}$, the (positive) discrete
Dirichlet Laplacian $\Delta^{(n)} = (\Delta_{jk}^{(n)})_{j, k=1}^{n-1} \in \bR^{(n-1)\times (n-1)}$ is defined by
$\Delta^{(n)}_{kk} = 2$ for $k = 1, \ldots, n-1$,
$\Delta^{(n)}_{k, k+1} = \Delta^{(n)}_{k+1, k} = -1$
for $k = 1, \ldots, n-2$ and $\Delta^{(n)}_{j, k} = 0$ for $|j - k| \geq 2$.
\end{definition}
We denote
\begin{equation}
\label{eq:1-and-sig-def}
\vec 1 := (1, \ldots, 1) \in \bR^{n-1}, \qquad
\vec \sigma = (\sigma_1, \ldots, \sigma_{n-1}), \ \sigma_k := \frac{k(n-k)}{2}.
\end{equation}
We have
\begin{equation}
\Delta^{(n)}\vec\sigma = \Big( k(n-k) - \frac 12((k-1)(n-k+1) + (k+1)(n-k-1)) \Big)_{k = 1}^{n-1} = \vec 1.
\end{equation}
We also denote
\begin{align}
\Pi &:= \{\vec z\in \bR^{n-1}: \vec \sigma\cdot \vec z = 0\}, \\
\mu_0 &:= \frac{1}{\vec \sigma\cdot \vec 1}
= \frac{12}{(n+1)n(n-1)}, \\
P_\sigma \vec y &:= \vec y - \frac{\vec \sigma\cdot\vec y}{\|\vec \sigma\|^2}\vec \sigma \quad\text{(the orthogonal projection of $\vec y \in \bR^{n-1}$ on $\Pi$)}, \\
P_1 \vec y &:= \vec y - \mu_0(\vec\sigma\cdot\vec y)\vec 1\quad\text{(the projection of $\vec y \in \bR^{n-1}$ on $\Pi$ along the direction $\vec 1$)}.
\end{align}
Observe that
\begin{equation}
P_1 \Delta^{(n)}\vec y = \Delta^{(n)} \vec y - \mu_0 (\vec\sigma\cdot (\Delta^{(n)}\vec y))\vec 1 = \Delta^{(n)}\vec y - \mu_0 (\vec 1\cdot \vec y) \vec 1.
\end{equation}
\begin{lemma}
\label{lem:matrix-D}
The matrix $\Delta^{(n)}$ is positive definite. There exists $\mu_1 > 0$ such that for all $\vec y \in \bR^{n-1}$
\begin{equation}
\label{eq:matrix-D}
\vec y\cdot (P_1 \Delta^{(n)}\vec y) = \vec y\cdot(\Delta^{(n)}\vec y) - \mu_0(\vec 1\cdot\vec y)^2 \geq \mu_1 | P_\sigma \vec y |^2.
\end{equation}
\end{lemma}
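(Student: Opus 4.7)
The plan is to treat the two assertions as consequences of one Cauchy--Schwarz inequality applied in the energy inner product induced by $\Delta^{(n)}$, once its positive definiteness has been established.

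For positive definiteness of $\Delta^{(n)}$, I will use the standard summation by parts identity: extending $\vec y \in \bR^{n-1}$ by $y_0 := 0$ and $y_n := 0$, a direct rearrangement yields
\begin{equation}
\vec y \cdot \Delta^{(n)} \vec y \;=\; \sum_{k=0}^{n-1}(y_{k+1} - y_k)^2.
\end{equation}
The right-hand side is nonnegative, and vanishes only when $y_{k+1} = y_k$ for every $k$, which combined with $y_0 = 0$ forces $\vec y = 0$. Hence $\Delta^{(n)}$ is positive definite, so $\langle \vec u, \vec v\rangle_{\Delta} := \vec u\cdot \Delta^{(n)} \vec v$ is a genuine inner product on $\bR^{n-1}$.

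For the coercivity estimate, the key observation is that $\Delta^{(n)} \vec\sigma = \vec 1$, which is given in the paragraph preceding the lemma. This allows rewriting the scalar product $\vec 1 \cdot \vec y$ in the $\Delta$-inner product:
\begin{equation}
\vec 1 \cdot \vec y \;=\; (\Delta^{(n)} \vec\sigma) \cdot \vec y \;=\; \langle \vec\sigma, \vec y\rangle_{\Delta}.
\end{equation}
Applying Cauchy--Schwarz for $\langle \cdot,\cdot \rangle_{\Delta}$ and noting that $\langle \vec\sigma, \vec\sigma\rangle_{\Delta} = \vec\sigma\cdot \vec 1 = 1/\mu_0$, I obtain
\begin{equation}
(\vec 1\cdot \vec y)^2 \;=\; \langle\vec\sigma,\vec y\rangle_{\Delta}^2 \;\leq\; \frac{1}{\mu_0}\,\vec y\cdot \Delta^{(n)}\vec y,
\end{equation}
which is exactly the desired inequality $\vec y\cdot \Delta^{(n)}\vec y - \mu_0(\vec 1\cdot \vec y)^2 \geq 0$, with equality precisely when $\vec y$ is $\Delta$-proportional to $\vec\sigma$, that is, $\vec y \in \spn(\vec\sigma)$.

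To upgrade this nonnegativity to the quantitative bound by $\mu_1|P_\sigma \vec y|^2$, set $A := \Delta^{(n)} - \mu_0\,\vec 1\vec 1^{\,\tx T}$. I will first verify that $A\vec\sigma = \Delta^{(n)}\vec\sigma - \mu_0(\vec 1\cdot \vec\sigma)\vec 1 = \vec 1 - \vec 1 = 0$, so $\spn(\vec\sigma) \subset \ker A$; the previous Cauchy--Schwarz discussion shows that in fact $\ker A = \spn(\vec\sigma)$. Since $A$ is symmetric and PSD with one-dimensional kernel equal to $\spn(\vec\sigma)$, its restriction to the orthogonal complement $\Pi = \vec\sigma^\perp$ is positive definite; let $\mu_1 > 0$ be its smallest eigenvalue on $\Pi$. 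Writing $\vec y = P_\sigma\vec y + t\vec\sigma$ and using $A\vec\sigma = 0$ together with symmetry of $A$, the cross and diagonal $\vec\sigma$-terms vanish, giving $\vec y\cdot A\vec y = P_\sigma\vec y \cdot A\,P_\sigma\vec y \geq \mu_1 |P_\sigma\vec y|^2$. The main (and only) point requiring care is the passage from the qualitative Cauchy--Schwarz inequality to the quantitative gap $\mu_1$, which is handled entirely by finite-dimensional spectral theory once the kernel has been identified.
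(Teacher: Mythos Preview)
Your proof is correct and takes a genuinely different route from the paper. The paper rewrites the desired inequality as an upper bound for the symmetric matrix $\wt\Delta := 2I + \mu_0\,\vec 1\vec 1^{\,\tx T} - \Delta^{(n)}$, observes that $\wt\Delta$ has strictly positive entries with $\wt\Delta\vec\sigma = 2\vec\sigma$, and then invokes the Perron--Frobenius theorem to conclude that $2$ is the top eigenvalue of $\wt\Delta$ and is simple; positive definiteness of $\Delta^{(n)}$ is deduced \emph{a posteriori} from the inequality. Your argument instead establishes positive definiteness first via the summation-by-parts identity, then uses it to recast $\vec 1\cdot\vec y$ as $\langle\vec\sigma,\vec y\rangle_\Delta$ and applies Cauchy--Schwarz in the $\Delta$-inner product; the quantitative gap $\mu_1$ comes from elementary spectral theory once the kernel of $A = \Delta^{(n)} - \mu_0\,\vec 1\vec 1^{\,\tx T}$ is identified as $\spn(\vec\sigma)$. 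Your approach is more elementary in that it avoids Perron--Frobenius entirely, and it makes the role of the identity $\Delta^{(n)}\vec\sigma = \vec 1$ more transparent; the paper's approach is shorter once Perron--Frobenius is taken as a black box.
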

\begin{proof}
Since $\vec \sigma$ and $\vec 1$ are not orthogonal, it suffices to prove \eqref{eq:matrix-D}.

Consider the matrix $\wt \Delta = (\wt \Delta_{jk})$ given by $\wt \Delta_{jk} := 2\delta_{jk} + \mu_0 - \Delta_{jk}$.
The desired inequality is equivalent to
\begin{equation}
\label{eq:perron}
\vec y \cdot (\wt \Delta\vec y) \leq 2|\vec y|^2 - \mu_1 | P_\sigma \vec y |^2.
\end{equation}
We have
\begin{equation}
\wt \Delta\vec \sigma = 2\vec\sigma + \mu_0(\vec 1\cdot \vec \sigma)\vec 1 - \Delta^{(n)}\vec \sigma = 2\vec\sigma.
\end{equation}
The matrix $\wt \Delta$ and the vector $\vec \sigma$ have strictly positive entries.
By the Perron-Frobenius theorem, the largest eigenvalue of $\wt \Delta$ equals $2$ and is simple, which implies \eqref{eq:perron}.
\end{proof}

\begin{lemma}
\label{lem:zcr} \begin{enumerate}[(i)]
\item \label{it:zcr-1}
The function
\begin{equation}
\Pi \owns \vec z \mapsto \vec 1\cdot \eee^{-\vec z} \in (0, \infty)
\end{equation}
has a unique critical point $\vec z_\tx{cr}$, which is its global minimum.
\item\label{it:zcr-2}
There exist $C > 0$ and $\delta_0 > 0$ such that for all $\vec z \in \Pi$ and $\delta \leq \delta_0$
\begin{equation}
|P_\sigma \eee^{-\vec z}| \leq \delta\vec 1 \cdot\eee^{-\vec z}\quad\Rightarrow\quad|\vec z - \vec z_\tx{cr}| \leq C\delta.
\end{equation}
\item\label{it:zcr-3}
There exists $C > 0$ such that for all $\vec z \in \Pi$
\begin{equation}
C^{-1} |P_\sigma \eee^{-\vec z}|^2 \leq \vec 1\cdot\eee^{-\vec z}\big(\vec 1\cdot \eee^{-\vec z} - \vec 1\cdot \eee^{-\vec z_\tx{cr}}\big) \leq C |P_\sigma \eee^{-\vec z}|^2.
\end{equation}
\end{enumerate}
\end{lemma}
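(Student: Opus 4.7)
The plan is to exploit strict convexity of $f(\vec z) := \vec 1 \cdot \eee^{-\vec z}$ on the hyperplane $\Pi$ and to combine a local analysis near the critical point with a global compactness argument. The three parts are unified by the auxiliary map $G(\vec z) := P_\sigma \eee^{-\vec z}$, whose zeros on $\Pi$ are exactly the critical points of $f|_\Pi$ (since $\grad f = -\eee^{-\vec z}$ and $P_\sigma$ is the orthogonal projection onto $\Pi$). For $n \leq 2$ the space $\Pi = \{0\}$ and each claim is trivial, so I will assume $n \geq 3$ throughout.

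For \ref{it:zcr-1}, I would note that $f$ is smooth and strictly convex on $\bR^{n-1}$ (its Hessian is $\mathrm{diag}(\eee^{-\vec z})$), hence strictly convex on the linear subspace $\Pi$. It is also coercive on $\Pi$: if $\vec z \in \Pi$ satisfies $z_k \geq -C$ for every $k$, then $\sum_k \sigma_k(z_k + C) = C\sum_k \sigma_k$ combined with $z_k + C \geq 0$ gives a uniform bound on each $z_k$; contrapositively, if $|\vec z|\to\infty$ in $\Pi$, some $z_k\to-\infty$ and $f(\vec z)\to\infty$. A smooth, strictly convex, coercive function on a finite-dimensional affine subspace admits a unique global minimum, which by strict convexity is also its unique critical point.

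For \ref{it:zcr-2}, I would split into a local and a global step. Writing $D := \mathrm{diag}(\eee^{-\vec z_\tx{cr}})$ and $\vec w := \vec z - \vec z_\tx{cr}$, the differential of $G:\Pi\to\Pi$ at $\vec z_\tx{cr}$ is the linear map $\vec w \mapsto -P_\sigma(D\vec w)$, which is an automorphism of $\Pi$ because $\vec w \cdot P_\sigma(D\vec w) = \vec w \cdot D\vec w > 0$ for $\vec w \in \Pi \setminus \{0\}$ (using $\vec w \perp \vec\sigma$). The inverse function theorem then supplies a neighbourhood $U$ of $\vec z_\tx{cr}$ in $\Pi$ on which $|G(\vec z)| \geq c|\vec w|$; together with $f(\vec z) \leq 2 f(\vec z_\tx{cr})$ on a possibly smaller neighbourhood, this gives $|\vec w|\leq C'\delta$ whenever $|G(\vec z)|\leq \delta f(\vec z)$ and $\vec z\in U$. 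To force $\vec z$ into $U$ when $\delta$ is small, I would show that $h(\vec z) := |G(\vec z)|/f(\vec z)$ is bounded below by a positive constant on $\Pi \setminus U$: the function $h$ is continuous and, by \ref{it:zcr-1}, vanishes only at $\vec z_\tx{cr}$; along any sequence $\vec z_m\in\Pi$ with $|\vec z_m|\to\infty$ the normalisation $\eee^{-\vec z_m}/f(\vec z_m)$ converges, up to a subsequence, to a probability vector $\vec v^*$ having at least one vanishing entry (otherwise the differences $z_{m,k} - \min_\ell z_{m,\ell}$ would all stay bounded, contradicting $|\vec z_m|\to\infty$ as in \ref{it:zcr-1}); since $\vec\sigma$ has no zero entries, $\vec v^*$ is not a multiple of $\vec\sigma$, so $P_\sigma \vec v^* \neq 0$ and $\liminf h > 0$. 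A compactness argument in $\Pi$ with control at infinity then completes \ref{it:zcr-2}.

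For \ref{it:zcr-3}, I would analyse the ratio $F(\vec z) := |G(\vec z)|^2\big/\big(f(\vec z)(f(\vec z) - f(\vec z_\tx{cr}))\big)$ on $\Pi \setminus \{\vec z_\tx{cr}\}$. Taylor expansion at $\vec z_\tx{cr}$ gives $f(\vec z) - f(\vec z_\tx{cr}) = \tfrac{1}{2}\vec w\cdot D\vec w + O(|\vec w|^3)$ and $G(\vec z) = -P_\sigma(D\vec w) + O(|\vec w|^2)$, so both numerator and denominator of $F$ are comparable to $|\vec w|^2$ near $\vec z_\tx{cr}$ (using that $D$ is positive definite on $\Pi$), and $F$ extends continuously with a positive value. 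At infinity, $f(f - f(\vec z_\tx{cr}))\sim f^2$ while $|G|/f$ stays between two positive constants by the argument in \ref{it:zcr-2}. Continuity on $\Pi$ together with the two-sided positive bounds at infinity then yield $F\asymp 1$ uniformly, which is \ref{it:zcr-3}. The main obstacle throughout is the positive lower bound for $|G|/f$ at infinity; it reduces to the geometric fact that every possible limit of $\eee^{-\vec z}/f(\vec z)$ at infinity has a vanishing entry and is therefore not parallel to $\vec\sigma$, a consequence of the specific form $\sigma_k = k(n-k)/2 > 0$.
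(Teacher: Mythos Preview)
Your proof is correct, but it takes a genuinely different route from the paper's. For part~\ref{it:zcr-1} the arguments coincide, but the paper additionally records the explicit Lagrange-multiplier characterisation $\eee^{-\vec z_\tx{cr}} = \lambda_\tx{cr}\vec\sigma$, and this is the key to its much shorter treatment of~\ref{it:zcr-2}: writing $\eee^{-\vec z} = \lambda\vec\sigma + \vec u$ with $\vec u = P_\sigma\eee^{-\vec z}$ and $|\vec u| \leq \delta f(\vec z)$, one has $\lambda \simeq f(\vec z)$, hence $|{-}\vec z - \log(\lambda\vec\sigma)| \lesssim |\vec u|/\lambda \lesssim \delta$ by a direct logarithm estimate; since $\vec z_\tx{cr} = -\log(\lambda_\tx{cr}\vec\sigma)$, this gives $|\vec z - \vec z_\tx{cr} + (\log\lambda - \log\lambda_\tx{cr})\vec 1| \lesssim \delta$, and dotting with $\vec\sigma$ eliminates the scalar ambiguity. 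No local/global split, no inverse function theorem, no analysis at infinity. For~\ref{it:zcr-3} the paper also splits into $|\vec z - \vec z_\tx{cr}| \leq 1$ and $|\vec z - \vec z_\tx{cr}| \geq 1$, but in the far region it simply invokes~\ref{it:zcr-2} to get $|P_\sigma\eee^{-\vec z}| \simeq \vec 1\cdot\eee^{-\vec z}$ directly, bypassing your separate limit argument. Your compactness-plus-boundary-behaviour approach is more robust (it would adapt to other convex functions where no closed-form minimiser is available), while the paper's explicit computation is shorter and cleaner for this specific exponential structure.
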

\begin{proof}
The function $\vec z \mapsto  \vec 1\cdot \eee^{-\vec z}$,
defined on the hyperplane $\Pi$, is strictly convex and tends to $\infty$ as $|\vec z| \to \infty$, thus it has no critical points other than its global minimum.
By the Lagrange multiplier method, this unique critical point $\vec z_\tx{cr}$ is determined by
\begin{equation}
\label{eq:zcr-lagr}
\eee^{-\vec z_\tx{cr}} = \lambda_\tx{cr}\vec \sigma \quad\Leftrightarrow\quad \vec z_\tx{cr} = -\log(\lambda_\tx{cr}\vec \sigma)
= -\log(\lambda_\tx{cr})\vec 1 - \log\vec\sigma,
\end{equation}
where $\lambda_\tx{cr} := \exp(-\mu_0\vec\sigma\cdot\log\vec\sigma)$ is the unique value for which $\vec z_\tx{cr} \in \Pi$.

We can rewrite the condition $|P_\sigma \eee^{-\vec z}| \leq\delta \vec 1\cdot\eee^{-\vec z}$ as follows:
\begin{equation}
\eee^{-\vec z} = \lambda \vec \sigma + \vec u, \qquad \lambda \in \bR,\ \vec u \in \Pi,\ |\vec u| \leq \delta \vec 1\cdot\eee^{-\vec z}.
\end{equation}
We thus have $\lambda = |\vec\sigma|^{-2} \vec\sigma\cdot \eee^{-\vec z}$, which implies $\lambda \simeq \vec 1\cdot \eee^{-\vec z}$, so
\begin{equation}
|{-}\vec z - \log(\lambda\vec \sigma)| = |\log(\lambda \vec \sigma + \vec u) - \log(\lambda\vec \sigma)| \lesssim \frac{|\vec u|}{\lambda} \lesssim \delta.
\end{equation}
By \eqref{eq:zcr-lagr} and the triangle inequality, we obtain
\begin{equation}
|\vec z - \vec z_\tx{cr} + (\log\lambda-\log\lambda_\tx{cr})\vec 1| \lesssim \delta.
\end{equation}
Taking the inner product with $\vec\sigma$, we get $|\log\lambda-\log\lambda_\tx{cr}| \lesssim \delta$, which proves \ref{it:zcr-2}.

Finally, we prove \ref{it:zcr-3}.
If $|\vec z - \vec z_\tx{cr}| \leq 1$, then $\vec 1\cdot \eee^{-\vec z} \simeq 1$
and $\vec 1\cdot \eee^{-\vec z} - \vec 1\cdot\eee^{-\vec z_\tx{cr}} \simeq |\vec z - \vec z_\tx{cr}|^2$, thus it suffices to verify that $|P_\sigma \eee^{-\vec z}| \simeq |z - z_\tx{cr}|$.
The inequality $\lesssim$ follows from $P_\sigma \eee^{-\vec z_\tx{cr}} = 0$
and the mean value theorem. The inequality $\gtrsim$ follows from \ref{it:zcr-2}.
Suppose now that $|\vec z - \vec z_\tx{cr}| \geq 1$. We then have
$\vec 1\cdot\eee^{-\vec z} - \vec 1 \cdot\eee^{-\vec z_\tx{cr}} \simeq \vec 1\cdot\eee^{-\vec z}$
and, invoking again \ref{it:zcr-2}, $|P_\sigma \eee^{-\vec z}| \simeq \vec 1\cdot\eee^{-\vec z}$.
\end{proof}

\subsection{Analysis of the $n$-body problem}
\label{ssec:analysis-mod}
Let $\vec y(t) = (y_1(t), \ldots, y_{n-1}(t))$ be defined by
\begin{equation}
\label{eq:y-def}
y_k(t) := a_{k+1}(t) - a_k(t), \qquad k \in \{1, \ldots, n-1\}
\end{equation}
and let
\begin{equation}
y_{\min}(t) := \min_{1\leq k<n} y_k(t).
\end{equation}
We thus have
\begin{equation}
\rho(t) = \sum_{k=1}^{n-1}\eee^{-y_k(t)} \simeq \eee^{-y_{\min}(t)}.
\end{equation}
Let $\vec q(t) = (q_1(t), \ldots, q_{n-1}(t))$
be defined by
\begin{equation}
\label{eq:q-def}
q_k(t) := M^{-1}\big(p_{k+1}(t) - p_{k}(t)\big).
\end{equation}
By \eqref{eq:ak'}, we have
\begin{equation}
\label{eq:y'-ineq}
|{\vec y \,}'(t) - \vec q(t)| \lesssim \eee^{-y_{\min}(t)},
\end{equation}
in particular \eqref{eq:ak'-est} yields
\begin{equation}
\label{eq:q-ineq}
|\vec q(t)| \lesssim \eee^{-\frac 12 y_{\min}(t)}.
\end{equation}
Let
\begin{equation}
\label{eq:A-def}
A:= \frac{\kappa\sqrt 2}{\sqrt M}.
\end{equation}
By Lemma~\ref{lem:Fz} and Lemma~\ref{lem:ref-mod}, we have
\begin{equation}
\label{eq:q'-ineq}
{\vec q\,}'(t) = -A^2\Delta^{(n)}\eee^{-\vec y(t)} + O(y_{\min}(t)^{-1} \eee^{-y_{\min}(t)}),
\end{equation}
where $\Delta^{(n)}$ is the matrix of the (positive) discrete Dirichlet Laplacian, see Definition~\ref{def:laplacian},
and we denote $\eee^{-\vec y(t)} := (\eee^{-y_1(t)}, \ldots, \eee^{-y_{n-1}(t)})$
as explained in Section~\ref{ssec:notation}.

%
\begin{lemma}
\label{lem:ymin-asym}
If $n > 1$ and $\bs\phi$ is a kink $n$-cluster, then
\begin{equation}
\sup_{t \geq 0}|y_{\min}(t) - 2\log t| < \infty.
\end{equation}
\end{lemma}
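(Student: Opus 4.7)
The plan is to obtain the two-sided bound $c_1 t^{-2}\le \rho(t)\le c_2 t^{-2}$ for $t$ large, where $\rho(t):=\vec 1\cdot e^{-\vec y(t)}=\sum_{k=1}^{n-1}e^{-y_k(t)}$; since $e^{-y_{\min}(t)}\le \rho(t)\le (n-1)e^{-y_{\min}(t)}$, this translates to $y_{\min}(t)=2\log t+O(1)$ for $t$ large, and continuity of $\vec y$ extends the estimate to any compact subset of $(0,\infty)$.

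\textbf{Upper bound on $y_{\min}$.} From \eqref{eq:y'-ineq}--\eqref{eq:q'-ineq},
\begin{equation}
\vec y\,'=\vec q+O(\rho),\qquad |\vec q|\lesssim \sqrt\rho,\qquad \vec y\,''=-A^2\Delta^{(n)}e^{-\vec y}+O(\rho/|\log\rho|),
\end{equation}
and differentiating $\rho=\vec 1\cdot e^{-\vec y}$ gives $\rho'(t)=-e^{-\vec y}\cdot\vec q+O(\rho^2)$. Cauchy--Schwarz with $|e^{-\vec y}|\lesssim \rho$ yields $|\rho'(t)|\lesssim \rho(t)^{3/2}$, hence $|(\rho^{-1/2})'|\le C$; integration gives $\rho^{-1/2}(t)\le \rho^{-1/2}(t_0)+\tfrac{C}{2}(t-t_0)$, i.e., $\rho(t)\gtrsim t^{-2}$ and $y_{\min}(t)\le 2\log t+O(1)$.

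\textbf{Lower bound on $y_{\min}$.} The key idea is to introduce the weighted radial coordinate $\alpha(t):=\vec\sigma\cdot\vec y(t)/(\vec\sigma\cdot\vec 1)$, which linearises the leading dynamics thanks to the identity $\Delta^{(n)}\vec\sigma=\vec 1$ from Section~\ref{ssec:lin-alg}:
\begin{equation}
\alpha''(t)=-c_n\,\rho(t)+O(\rho/|\log\rho|),\qquad (\alpha'(t))^2\lesssim \rho(t),\qquad c_n:=\frac{A^2}{\vec\sigma\cdot\vec 1}>0.
\end{equation}
Since $(\alpha')^2\lesssim \rho\to 0$ we have $\alpha'\to 0$; $\alpha''$ is eventually strictly negative, so $\alpha'$ is decreasing and therefore positive for $t$ large, and integration yields $\alpha'(t)=c_n\, I(t)(1+o(1))$ with $I(t):=\int_t^\infty\rho(s)\,ds$. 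Combined with $(\alpha')^2\lesssim \rho=-I'$ this gives $I(t)^2\lesssim -I'(t)$, so $(1/I)'\gtrsim 1$ and $I(t)\lesssim 1/t$. To upgrade this to a pointwise bound, I compute
\begin{equation}
\rho''(t)=\sum_k e^{-y_k}(y_k')^2+A^2\,e^{-\vec y}\cdot\Delta^{(n)}e^{-\vec y}+O(\rho^2/|\log\rho|),
\end{equation}
where the second term is $\ge \lambda_{\min}(\Delta^{(n)})|e^{-\vec y}|^2\gtrsim \rho^2$ by Lemma~\ref{lem:matrix-D}, so $\rho''(t)\ge 0$ for $t$ large. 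Combined with $\rho\to 0$, convexity forces monotone decrease of $\rho$ (a convex function tending to $0$ at $+\infty$ has non-positive slope everywhere), so $I(t)\ge \int_t^{2t}\rho\ge t\,\rho(2t)$ gives $\rho(t)\lesssim t^{-2}$ and $y_{\min}(t)\ge 2\log t-O(1)$.

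\textbf{Main obstacle.} The delicate point is that the force error in \eqref{eq:pk'} is $O(\rho/|\log\rho|)$, only logarithmically smaller than the leading $A^2\rho$ term, so every $o(\rho)$ and $o(I)$ in the argument has to be made quantitative with an explicit threshold $T_0$ beyond which $\rho(T_0)$ is small enough for the errors to be genuinely subleading; the resulting bootstrap also carries the approximate convexity $\rho''\ge 0$ on the same time range, which is what licenses passing from an integral bound on $I$ to a pointwise bound on $\rho$ via the monotonicity of $\rho$.
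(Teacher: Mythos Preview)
Your upper bound on $y_{\min}$ is the paper's argument in different clothing: the paper works directly with $|y_{\min}'|\lesssim e^{-y_{\min}/2}$, you with $|\rho'|\lesssim\rho^{3/2}$; these are equivalent.

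For the lower bound you take a genuinely different route, but there is a gap in the execution. The modulation lemmas only give $\vec a\in C^1$ (Lemma~\ref{lem:basic-mod}) and $\vec p\in C^1$ (Lemma~\ref{lem:ref-mod}); the second-order information available is \eqref{eq:q'-ineq}, a formula for $\vec q\,'$, together with the zeroth-order bound $|\vec y\,'-\vec q|\lesssim\rho$ from \eqref{eq:y'-ineq}. Your displayed identity $\vec y\,''=-A^2\Delta^{(n)}e^{-\vec y}+O(\rho/|\log\rho|)$ is therefore unjustified, and both places where you use it --- the formula for $\alpha''$ and the computation of $\rho''$ --- inherit the problem. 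The $\alpha$-step is easily repaired: replace $\alpha'$ by $\gamma:=(\vec\sigma\cdot\vec 1)^{-1}\,\vec\sigma\cdot\vec q$, for which $\gamma'=-c_n\rho(1+o(1))$ holds rigorously by \eqref{eq:q'-ineq} and $\Delta^{(n)}\vec\sigma=\vec 1$, and $\gamma^2\lesssim\rho$ follows from \eqref{eq:q-ineq}; your derivation of $I(t)\lesssim 1/t$ then goes through verbatim. The convexity step is harder. Working with the $C^1$ quantity $\psi:=-\vec q\cdot e^{-\vec y}$ your computation gives $\psi'\gtrsim\rho^2>0$ and $\psi\to 0$, hence $\psi<0$; but since $\rho'=\psi+O(\rho^2)$ this yields only the one-sided bound $\rho'\le C\rho^2$, not $\rho'\le 0$, so the clean ``convex $\Rightarrow$ monotone $\Rightarrow$ $t\rho(2t)\le I(t)$'' chain is unavailable. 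One can still close: the bound $I\lesssim 1/t$ forces $\rho(t_m)\lesssim t_m^{-2}$ along a dyadic sequence, and $\rho'\le C\rho^2$ (equivalently $(1/\rho)'\ge -C$) propagates this to all large $t$; but that is an additional argument you did not supply.

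The paper bypasses all of this with a single Lyapunov function: it shows that $\beta(t):=\vec q(t)\cdot e^{-\vec y(t)}-c_0\,e^{-\frac32 y_{\min}(t)}$ is decreasing (the key input being $e^{-\vec y}\cdot\Delta^{(n)}e^{-\vec y}\gtrsim e^{-2y_{\min}}$ from Lemma~\ref{lem:matrix-D}) and tends to $0$, hence $\beta\ge 0$, which gives directly $\vec q\cdot e^{-\vec y}\gtrsim\rho^{3/2}$ and then $\rho'\le -c\rho^{3/2}$, so $\rho\lesssim t^{-2}$ by one integration. This is shorter, uses only $\vec q\in C^1$, and delivers the quantitative differential inequality $\rho'\le -c\rho^{3/2}$ that the paper reuses in Sections~\ref{sec:any-position}--\ref{sec:profiles}.
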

\begin{proof}
We first prove that $\sup_{t \geq T_0} y_{\min}(t) - 2\log t < \infty$.
By \eqref{eq:g-coer}, we have $|y_k'(t)| \lesssim \eee^{-\frac 12 y_{\min}(t)}$ for all $k \in \{1, \ldots, n\}$.
Therefore, $y_{\min}$ is a locally Lipschitz function, satisfying
\begin{equation}
\label{eq:ymin'}
|y_{\min}'(t)| \lesssim \eee^{-\frac 12 y_{\min}(t)},\qquad\text{for almost all }t \geq 0.
\end{equation}
By the Chain Rule, $|(\eee^{\frac 12 y_{\min}(t)})'| \lesssim 1$ almost everywhere.
After integration, we obtain $y_{\min}(t) - 2\log t \lesssim 1$.

Now we prove that $\sup_{t \geq 0} y_{\min}(t) - 2\log t > -\infty$.
Let $c_0 > 0$ be small enough and consider the function
\begin{equation}
\beta(t) := \vec q(t)\cdot \eee^{-\vec y(t)}- c_0\eee^{-\frac 32 y_{\min}(t)} = \sum_{k=1}^{n-1} q_k(t) \eee^{-y_k(t)} - c_0\eee^{-\frac 32 y_{\min}(t)}.
\end{equation}
Using \eqref{eq:y'-ineq} and \eqref{eq:q'-ineq} and $|q_k(t)| \lesssim \eee^{-\frac 12 y_{\min}(t)} \ll y_{\min}^{-1}$, we obtain
\begin{equation}
\begin{aligned}
\beta'(t) = &-\sum_{k=1}^{n-1} q_k(t)^2 \eee^{-y_k(t)}- A^2 \eee^{-\vec y(t)}\cdot (\Delta^{(n)}\eee^{-\vec y(t)}) \\
&+ \frac 32c_0y_{\min}'(t)\eee^{-\frac 32 y_{\min}(t)}
+ O(y_{\min}(t)^{-1}\eee^{-2y_{\min}(t)}).
\end{aligned}
\end{equation}
By Lemma~\ref{lem:matrix-D}, there is $c_1 > 0$ such that for all $t$ we have
$ \eee^{-\vec y(t)}\cdot(\Delta^{(n)}\eee^{-\vec y(t)}) \geq c_1\eee^{-2y_{\min}(t)}$.
If we recall \eqref{eq:ymin'}, we obtain that $\beta$ is a decreasing function
if $c_0$ is sufficiently small. By assumption, $\lim_{t\to\infty}y_{\min}(t) = \infty$, which, together with \eqref{eq:q-ineq}, implies $\lim_{t\to\infty}\beta(t) = 0$,
hence $\beta(t) \geq 0$ for all $t \geq T_0$, in other words
\begin{equation}
\label{eq:1stderlbound}
 \vec q(t)\cdot \eee^{-\vec y(t)} \geq c_0\eee^{-\frac 32 y_{\min}(t)}.
\end{equation}
Consider now the function $\rho(t)$ defined by \eqref{eq:rhot-def}.
From \eqref{eq:y'-ineq} and \eqref{eq:1stderlbound} we get, perhaps modifying $c_0$,
\begin{equation}
\label{eq:rho'-diff-ineq}
\rho'(t) = -\vec q(t)\cdot \eee^{-\vec y(t)}
+ O(\eee^{-2y_{\min}}) \leq -c_0 \rho(t)^\frac 32 \ \Leftrightarrow \ \dd t\big((\rho(t))^{-\frac 12}\big) \geq \frac{c_0}{2},
\end{equation}
so that, after integrating in time, $\rho(t) \lesssim t^{-2}$, yielding the conclusion.
\end{proof}
We have thus obtained upper and lower bounds on the minimal weighted distance between neighbouring kinks.
Our next goal is to obtain an upper bound on the \emph{maximal} distance.
From the lower bound, by integrating the modulation inequalities we get $|\vec y(t)| \lesssim \log t$.
However, because of exponential interactions, our analysis requires an estimate up to an \emph{additive} constant.
\begin{remark}
At this point, by means of energy estimates, we could improve bounds on the error to $\|\bs g(t)\|_\cE \leq t^{-2 + \epsilon}$ for $t \gg 1$.
However, such information does not seem to trivialise the analysis of the ODE,
which is not immediate even in the absence of \emph{any} error terms.
\end{remark}
\begin{lemma}
\label{lem:U-lbound}
If $\bs\phi$ is a kink $n$-cluster and $\rho$ is defined by \eqref{eq:rhot-def}, then
\begin{equation}
\label{eq:rho-lbound}
\limsup_{t\to \infty}\,\frac 12A^2t^2 \rho(t) \geq \vec 1\cdot \vec \sigma.
\end{equation}
\end{lemma}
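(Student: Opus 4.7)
The cases $n \in \{0, 1\}$ being vacuous (both sides vanish), I will assume $n \geq 2$. My plan is to argue by contradiction using the test function
\[
W(t) := \vec\sigma\cdot\vec y(t) - 2(\vec 1\cdot\vec\sigma)\log t.
\]
The algebraic identity $\vec\sigma\cdot\Delta^{(n)}\eee^{-\vec y} = (\Delta^{(n)}\vec\sigma)\cdot \eee^{-\vec y} = \vec 1\cdot \eee^{-\vec y} = \rho$ (using that $\Delta^{(n)}$ is symmetric and $\Delta^{(n)}\vec\sigma = \vec 1$), combined with \eqref{eq:q'-ineq}, suggests that formally $W''(t) \approx {-}A^2 \rho(t) + 2(\vec 1\cdot\vec\sigma)/t^2$. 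The two contributions cancel exactly along the explicit parabolic solution \eqref{eq:explicit-parabolic}, so a strict inequality $\limsup \frac 12 A^2 t^2 \rho < \vec 1 \cdot \vec\sigma$ should force $W''$ to be bounded below by a positive multiple of $1/t^2$ for $t$ large, ultimately driving $W$ to $-\infty$; I then close the argument by noting that $W$ is in fact bounded below.

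Since $\vec y$ is only $C^1$, I will carry out the computation in integrated form. From \eqref{eq:ak'} we have $\vec y' = \vec q + O(\rho)$, and integrating \eqref{eq:q'-ineq} from $t_0$ to $T$ yields
\[
W'(T) - W'(t_0) = {-}A^2 \int_{t_0}^T \rho(t)\ud t + 2(\vec 1\cdot\vec\sigma)\Big(\frac{1}{t_0} - \frac{1}{T}\Big) + R(t_0, T),
\]
where the remainder $R(t_0, T) = O(\rho(t_0)) + O(\rho(T)) + O\big(\int_{t_0}^T \rho(t)/({-}\log\rho(t))\,\ud t\big)$ is of size $O(1/t_0^2) + O(1/(t_0\log t_0))$ thanks to the sharp bound $\rho \lesssim 1/t^2$ provided by Lemma~\ref{lem:ymin-asym}.

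Under the negation of \eqref{eq:rho-lbound} there is $\delta > 0$ with $A^2 \rho(t) \leq 2(1-\delta)(\vec 1\cdot \vec\sigma)/t^2$ for all large $t$, and the two main terms above combine to give
\[
W'(T) - W'(t_0) \geq 2\delta(\vec 1\cdot \vec\sigma)\Big(\frac{1}{t_0} - \frac{1}{T}\Big) + R(t_0, T).
\]
From \eqref{eq:q-ineq}, $|\vec q(T)| \lesssim 1/T$, so $|W'(T)| \lesssim 1/T$; letting $T \to \infty$ at fixed $t_0$ shows $W'(t_0) \leq {-}\delta(\vec 1\cdot \vec\sigma)/t_0$ for every sufficiently large $t_0$. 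Integrating this bound yields $W(T) \to -\infty$. On the other hand, Lemma~\ref{lem:ymin-asym} gives $y_k(t) \geq y_\tx{min}(t) \geq 2\log t - C$ for all $k$, and since $\sigma_k > 0$,
\[
W(t) = \sum_{k=1}^{n-1}\sigma_k y_k(t) - 2(\vec 1\cdot\vec\sigma)\log t \geq {-}C\,\vec 1\cdot\vec\sigma
\]
for all $t$, contradicting $W(T) \to -\infty$.

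The main technical point I foresee is the accounting of the error terms: it is crucial that the modulation error in \eqref{eq:q'-ineq} decays like $\rho/({-}\log\rho)$ rather than merely like $\rho$, since only the former integrates to $o(1/t_0)$ and is thereby negligible against the principal contribution $2\delta(\vec 1\cdot\vec\sigma)/t_0$. A bare $O(\rho)$ error would produce a remainder of size $O(1/t_0)$ competing with the main term on an equal footing and would destroy the contradiction; the logarithmic gain in Lemma~\ref{lem:ref-mod} is what makes the present elementary ODE comparison argument work.
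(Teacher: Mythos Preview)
Your argument is correct and takes a genuinely different route from the paper's proof. The paper proceeds via a kinetic-energy argument: from the lower bound $y_{\min}(t) \geq 2\log t - C$ one deduces $|a_{n+1-k}(t) - a_k(t)| \gtrsim |n+1-2k|\log t$, integrates the velocity to obtain a lower bound on the time-averaged momenta, applies Cauchy--Schwarz to get $\limsup t^2|\vec p(t)|^2 \geq M^2\sum_k(n+1-2k)^2$, and finally invokes the refined coercivity bound \eqref{eq:a'-est} of Lemma~\ref{lem:impr-coer} to convert this into the desired lower bound on $t^2\rho(t)$. Your approach instead works directly at the level of the modulation ODEs: the choice of test function $W = \vec\sigma\cdot\vec y - 2(\vec 1\cdot\vec\sigma)\log t$ exploits the identity $\Delta^{(n)}\vec\sigma = \vec 1$ to collapse the vector equation \eqref{eq:q'-ineq} to a scalar one with right-hand side $-A^2\rho$, and then a straightforward comparison with $2(\vec 1\cdot\vec\sigma)/t^2$ yields the contradiction. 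Your proof is more elementary in that it avoids Lemma~\ref{lem:impr-coer} entirely, relying only on the raw modulation inequalities and Lemma~\ref{lem:ymin-asym}; the paper's method, by contrast, has a clearer physical interpretation through the kinetic energy. Your closing remark about the necessity of the logarithmic gain in \eqref{eq:pk'} is apt and identifies precisely why the refined estimate of Lemma~\ref{lem:ref-mod} is needed here.
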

\begin{proof}
The idea is to estimate from below the kinetic energy.

From the lower bound on the distance between the kinks, there exists $C$ such that for all $k \in \{1, \ldots, n \}$ and $t \geq T_0$ we have
\begin{equation}
|a_{n+1 - k}(t) - a_k(t)| \geq 2|n+1-2k|\log t - C,
\end{equation}
in particular
\begin{equation}
\label{eq:U-lbound-1}
\int_{T_0}^t \big(|a_{n+1 - k}'(s)| + |a_k'(s)|\big)\ud s \geq 2|n+1-2k|\log t - C.
\end{equation}
By Lemma~\ref{lem:ref-mod} and Lemma~\ref{lem:ymin-asym}, we have
\begin{equation}
\int_{T_0}^\infty |M^{-1}\vec p(s) - {\vec a \,}'(s)|\ud s < \infty,
\end{equation}
thus \eqref{eq:U-lbound-1} yields
\begin{equation}
M^{-1}\int_{T_0}^t (|p_k(s)| + |p_{n+1-k}(s)|)\ud s \geq 2|n+1-2k|\log t - C.
\end{equation}
Multiplying by $|n+1 - 2k|$ and taking the sum in $k$, we obtain
\begin{equation}
M^{-1}\sum_{k=1}^n \int_{T_0}^t |(n+1-2k)p_k(s)| \geq \bigg(\sum_{k=1}^n(n+1 - 2k)^2\bigg)\log t - C,
\end{equation}
hence, as $t \to \infty$, the quantity
\begin{equation}
\int_{T_0}^t\Big(s\sum_{k=1}^n|(n+1-2k)p_k(s)| - M\sum_{k=1}^n(n+1 - 2k)^2\Big)\frac{\vd s}{s}
\end{equation}
is bounded from below, which implies in particular
\begin{equation}
\limsup_{t\to\infty}\,t\sum_{k=1}^n |(n+1-2k)p_k(t)| \geq M\sum_{k=1}^n(n+1 - 2k)^2.
\end{equation}
The Cauchy-Schwarz inequality yields
\begin{equation}
\label{eq:p3-est}
\begin{aligned}
&\frac{1}{M^2}\limsup_{t\to \infty}\, t^2 \sum_{k=1}^n |p_k(t)|^2\sum_{k=1}^n(n+1-2k)^2 \geq \\
&\frac{1}{M^2}\bigg(\limsup_{t\to\infty}\,t\sum_{k=1}^n |(n+1-2k)p_k(t)|\bigg)^2
\geq \Big( \sum_{k=1}^n(n+1-2k)^2 \Big)^2,
\end{aligned}
\end{equation}
thus
\begin{equation}
\label{eq:p2-est}
\frac{1}{4M^2}\limsup_{t\to \infty}\, t^2 \sum_{k=1}^n |p_k(t)|^2
\geq \frac 14\sum_{k=1}^n(n+1 - 2k)^2 = \frac{(n-1)n(n+1)}{12} = \vec 1\cdot \vec \sigma.
\end{equation}
By \eqref{eq:a'-est} and \eqref{eq:ak'}, we have
\begin{equation}
\frac{1}{4M^2}|\vec p(t)|^2 \leq \big(\kappa^2M^{-1} + o(1)\big) \rho(t) = \Big(\frac 12 A^2 + o(1)\Big)\rho(t),
\end{equation}
thus \eqref{eq:p2-est} implies \eqref{eq:rho-lbound}.
\end{proof}

\begin{lemma}
\label{lem:y-bound-seq}
If $\bs\phi$ is a kink $n$-cluster, then there exists an increasing sequence $(t_m)_{m=1}^\infty$,
$\lim_{m\to\infty} t_m = \infty$, such that
\begin{equation}
\label{eq:y-bound-seq}
\lim_{m\to\infty}\big|\vec y(t_m) - \big(2\log(At_m)\vec 1 - \log(2\vec\sigma)\big)\big| = 0.
\end{equation}
\end{lemma}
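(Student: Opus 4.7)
I set $\vec u(t) := \vec y(t) - 2\log(At)\vec 1 \in \bR^{n-1}$; the goal is to produce $t_m\to\infty$ with $\vec u(t_m)\to -\log(2\vec\sigma)$. By Lemma~\ref{lem:ymin-asym}, each $u_k(t)\geq -C$ uniformly. The energy identity $E(\bs\phi)=nM$, combined with Lemmas~\ref{lem:interactions}, \ref{lem:impr-coer} and \ref{lem:ref-mod}, yields the upper bound
\[M^{-2}|\vec p(t)|^2 \leq 2A^2\rho(t) + O(\rho(t)^{3/2}),\qquad |\vec p(t)|\lesssim 1/t.\]
Any sequence $t_m\to\infty$ thus admits a subsequence (still denoted $t_m$) along which $\vec u(t_m)\to \vec u_\infty\in [-C,+\infty]^{n-1}$, $t_m p_k(t_m)/M\to \pi_k\in \bR$, and $\tfrac12 A^2 t_m^2\rho(t_m)\to \ell\in[0,\infty)$.

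I would choose $t_m$ so that $\ell = \limsup_{t\to\infty}\tfrac12 A^2 t^2\rho(t)$; by Lemma~\ref{lem:U-lbound}, $\ell\geq \vec 1\cdot\vec\sigma$. The Cauchy--Schwarz inequality
\[
\Big(t\sum_{k=1}^n\tfrac{|(n+1-2k)p_k(t)|}{M}\Big)^2 \leq \Big(\sum_{k=1}^n(n+1-2k)^2\Big)\cdot t^2\tfrac{|\vec p(t)|^2}{M^2} \leq 8A^2\vec 1\cdot\vec\sigma \cdot t^2\rho(t)\,(1+o(1)),
\]
together with the integral inequality $\int_{T_0}^t\sum_k|(n+1-2k)p_k(s)|/M\,ds\geq 4\vec 1\cdot\vec\sigma \log t - C$ underpinning Lemma~\ref{lem:U-lbound}, forces both inequalities to be asymptotically sharp along the extremal subsequence. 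This yields $\ell = \vec 1\cdot\vec\sigma$, equality in Cauchy--Schwarz (so $\pi_k = \lambda(2k-n-1)$ for some $\lambda\in\{-1,+1\}$), and $\vec 1\cdot \eee^{-\vec u_\infty} = 2\vec 1\cdot\vec\sigma$; in particular, all components of $\vec u_\infty$ are finite.

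To identify $\vec u_\infty$, I integrate the modulation equation $\vec q'(t) = -A^2\Delta^{(n)}\eee^{-\vec y(t)} + O(\rho(t)/|\log\rho(t)|)$ from $t_m$ to $\infty$ using $\vec q(t)\to 0$, and rescale $s=t_m\tau$, so that $t_m\vec q(t_m) = \int_1^\infty \tau^{-2}\Delta^{(n)}\eee^{-\vec u(t_m\tau)}\,d\tau + o(1) \to 2\lambda\vec 1$. Extracting a further diagonal subsequence, $\vec u(t_m\tau)\to \vec U(\tau)$ and $t_m\vec q(t_m\tau)\to \vec Q(\tau)$ on compacts in $\tau\in[1,\infty)$, and the limits satisfy the autonomous system
\[\vec U'(\tau) = \vec Q(\tau) - 2/\tau\,\vec 1,\qquad \vec Q'(\tau) = -\tau^{-2}\Delta^{(n)}\eee^{-\vec U(\tau)},\]
with the boundary condition $\vec Q(\tau)\to 0$ as $\tau\to\infty$ (inherited from $|\vec q(s)|\lesssim 1/s$). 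The unique such solution is the stationary profile $\vec U(\tau)\equiv -\log(2\vec\sigma)$, $\vec Q(\tau)=2/\tau\,\vec 1$: indeed, for any solution with $\vec Q(\tau)\to 0$, a leading-order analysis forces $\vec Q\sim c\vec 1/\tau$ with $c = 2$ and $\Delta^{(n)}\eee^{-\vec U(\infty)} = 2\vec 1 = \Delta^{(n)}(2\vec\sigma)$, so invertibility of $\Delta^{(n)}$ (Lemma~\ref{lem:matrix-D}) gives $\vec U(\infty) = -\log(2\vec\sigma)$, and then uniqueness of the stable manifold at the stationary point propagates this back to $\tau=1$. This rules out $\lambda=-1$ and yields $\vec u_\infty = \vec U(1) = -\log(2\vec\sigma)$, as required.

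The principal obstacle is the sharp-equality step of the second paragraph: both the integral inequality and Cauchy--Schwarz must become simultaneously asymptotically tight along the extremal subsequence, which hinges on the energy identity coupling kinetic and potential information at leading order. A secondary subtlety is the stable-profile uniqueness for the limit ODE in the third paragraph; this is a one-dimensional asymptotic-stability statement that reflects the integrability structure of the attractive Toda system discussed in Section~\ref{ssec:analysis-mod}.
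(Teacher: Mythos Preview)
Your argument has a genuine gap at the ``sharp-equality step'' in the second paragraph. You choose $t_m$ achieving $\ell=\limsup_{t\to\infty}\tfrac12 A^2t^2\rho(t)$ and then assert that the integral lower bound $\int_{T_0}^t\sum_k|(n+1-2k)p_k(s)|/M\,ds\geq 4\vec 1\cdot\vec\sigma\,\log t-C$, combined with the pointwise Cauchy--Schwarz/energy upper bound, forces both to be asymptotically saturated \emph{along this particular sequence}. This does not follow. The integral lower bound only yields $\limsup_s s\sum_k|(n+1-2k)p_k(s)|/M\geq 4\vec 1\cdot\vec\sigma$, and the pointwise upper bound $t^2|\vec p(t)|^2/(4M^2)\leq (\tfrac12 A^2+o(1))t^2\rho(t)$ only yields $\ell\geq\vec 1\cdot\vec\sigma$---which is exactly Lemma~\ref{lem:U-lbound} and tells you nothing new. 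There is no mechanism here producing an \emph{upper} bound $\ell\leq\vec 1\cdot\vec\sigma$, nor any reason the momentum subsequence from Lemma~\ref{lem:U-lbound} should coincide with the $\rho$-extremal subsequence. Consequently neither $\ell=\vec 1\cdot\vec\sigma$ nor the equality case $\pi_k=\lambda(2k-n-1)$ is established; and even if the scalar identity $\vec 1\cdot\eee^{-\vec u_\infty}=2\vec 1\cdot\vec\sigma$ held, it would not prevent some components $u_{\infty,k}$ from being $+\infty$.

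The third paragraph then relies on uniqueness of parabolic trajectories for the limiting (error-free) Toda system, but this is essentially the content of the subsequent no-return Proposition in the paper, not something one can invoke here. Compactness gives you a limit $(\vec U,\vec Q)$ only on compact $\tau$-intervals, so you do not know $\vec U(\tau)$ converges as $\tau\to\infty$; the bound $|\vec Q(\tau)|\lesssim 1/\tau$ is too weak for this, and the appeal to ``uniqueness of the stable manifold'' is precisely what remains to be proved.

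The paper's argument is quite different and avoids both issues: it performs a \emph{local} second-derivative test of the quantity $t^2\rho(t)$ near each $t_m$. Taylor-expanding to second order on $[(1-\epsilon)t_m,(1+\epsilon)t_m]$ and using that $t_m$ realises the $\limsup$, the vanishing first-order term and the non-positivity of the second-order term give two concrete relations which, together with the spectral inequality of Lemma~\ref{lem:matrix-D} and the lower bound $\ell\geq\vec 1\cdot\vec\sigma$, directly force $t_m^2P_\sigma\eee^{-\vec y_m}\to 0$ and hence $\eee^{-\vec y_m}=2(At_m)^{-2}\vec\sigma+o(t_m^{-2})$. No ODE limit or global uniqueness statement is needed.
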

\begin{proof}
Set $\conj\ell := \limsup_{t\to \infty} \frac 12 A^2 t^2 \rho(t)$.
Lemma~\ref{lem:U-lbound} yields $\conj\ell \geq \vec 1\cdot \vec \sigma$
and Lemma~\ref{lem:ymin-asym} yields $\conj\ell < \infty$.
Let $t_m$ be an increasing sequence such that $\lim_{m\to \infty} t_m = \infty$ and
\begin{equation}
\lim_{m\to \infty} \frac 12A^2 t_m^2 \rho(t_m) = \conj\ell.
\end{equation}
We will prove that $(t_m)$ satisfies \eqref{eq:y-bound-seq}, in particular $\conj\ell = \vec 1\cdot\vec \sigma$.
In order to make the formulas shorter, we denote $\vec y_m := \vec y(t_m)$ and $\vec q_m := \vec q(t_m)$.

Let $0 < \epsilon \ll 1$. The idea is to deduce \eqref{eq:y-bound-seq}
by analysing the evolution of $\vec y(t)$ for $(1-\epsilon)t_m \leq t \leq (1+\epsilon)t_m$.
In the computation which follows, the asymptotic notation $o$, $O$, $\ll$ etc.
is used for claims about the asymptotic behaviour of various quantities for $m$ large enough and $\epsilon$ small enough.

Lemma~\ref{lem:ymin-asym} yields $\rho(t) \lesssim t^{-2}$, hence $|{\vec y \,}'(t)| \lesssim t^{-1}$. We thus obtain
\begin{equation}
\label{eq:y-bound-eps}
|\vec y(t) - \vec y_m| \lesssim \epsilon,\qquad \text{for all }(1-\epsilon)t_m \leq t \leq (1+\epsilon)t_m,
\end{equation}
which implies
\begin{equation}
\label{eq:ey-bound-eps}
|\eee^{-\vec y(t)} - \eee^{-\vec y_m}| \lesssim \epsilon t_m^{-2},\qquad \text{for all }(1-\epsilon)t_m \leq t \leq (1+\epsilon)t_m.
\end{equation}
Using this bound and integrating \eqref{eq:q'-ineq} in time, we get
\begin{equation}
\vec q(t) = \vec q_m - (t-t_m)A^2 \Delta^{(n)} \eee^{-\vec y_m} + o(\epsilon t_m^{-1}),\qquad \text{for all }(1-\epsilon)t_m \leq t \leq (1+\epsilon)t_m.
\end{equation}
Integrating \eqref{eq:y'-ineq}, we get
\begin{equation}
\begin{aligned}
\vec y(t) &= \vec y_m + \int_{t_m}^t \vec q(\tau)\ud \tau + O(\epsilon t_m^{-1}) \\
&= \vec y_m + \int_{t_m}^t \big(\vec q_m - (\tau-t_m)A^2 \Delta^{(n)} \eee^{-\vec y_m}\big)\ud\tau + o(\epsilon^2) + O(\epsilon t_m^{-1}) \\
&= \vec y_m + (t - t_m)\vec q_m - \frac 12 (t - t_m)^2 A^2 \Delta^{(n)} \eee^{-\vec y_m} + o(\epsilon^2) + O(\epsilon t_m^{-1}).
\end{aligned}
\end{equation}
By taking $m$ large enough, we can have $t_m^{-1} \ll \epsilon$. The bound above allows us to compute the asymptotic expansion
of $t^2 \rho(t)$ up to order $o(\epsilon^2)$. Recalling from Section~\ref{ssec:notation} our notation for component-wise operations on vectors, we can write
\begin{equation}
\eee^{-\vec y(t)} = \eee^{-\vec y_m}\Big(1 - (t-t_m)\vec q_m+ \frac 12(t-t_m)^2 \big((\vec q_m)^2 + A^2 \Delta^{(n)} \eee^{-\vec y_m}\big)\Big) + o(\epsilon^2 t_m^{-2}),
\end{equation}
thus
\begin{equation}
\begin{aligned}
\rho(t) &= \rho(t_m) - (t - t_m) \vec q_m\cdot\eee^{-\vec y_m} \\
&+ \frac 12 (t-t_m)^2\big( (\vec q_m)^2\cdot \eee^{-\vec y_m}
+ A^2 \eee^{-\vec y_m}\cdot \Delta^{(n)} \eee^{-\vec y_m}\big) + o(\epsilon^2 t_m^{-2}).
\end{aligned}
\end{equation}
In particular, we have
\begin{equation}
\begin{aligned}
&((1\pm\epsilon)t_m)^2 \rho((1+\epsilon)t_m) = t_m^2 \rho(t_m) \pm \epsilon\big(2t_m^2 \rho(t_m) - t_m^3\vec q_m\cdot \eee^{-\vec y_m}\big) \\
&\quad+\epsilon^2 \Big(t_m^2 \rho(t_m) - 2t_m^3 \vec q_m\cdot \eee^{-\vec y_m} +\frac 12 t_m^4\big( (\vec q_m)^2\cdot \eee^{-\vec y_m}
+ A^2 \eee^{-\vec y_m}\cdot \Delta^{(n)} \eee^{-\vec y_m}\big)\Big) + o(\epsilon^2).
\end{aligned}
\end{equation}
From this estimate and the definition of the sequence $(t_m)$ we deduce that
\begin{equation}
\label{eq:1st-ord-eps}
\lim_{m\to \infty} \big(2t_m^2 \rho(t_m) - t_m^3 \vec q_m\cdot\eee^{-\vec y_m}\big) = 0
\end{equation}
and
\begin{equation}
\label{eq:2nd-ord-eps}
\limsup_{m\to\infty} \Big(t_m^2 \rho(t_m) - 2t_m^3 \vec q_m\cdot\eee^{-\vec y_m} +\frac 12 t_m^4\big((\vec q_m)^2\cdot \eee^{-\vec y_m}
+ A^2\eee^{-\vec y_m}\cdot \Delta^{(n)} \eee^{-\vec y_m}\big)\Big) \leq 0.
\end{equation}
By the Cauchy-Schwarz inequality,
\begin{equation}
\big(t_m^3\vec q_m\cdot \eee^{-\vec y_m}\big)^2 \leq t_m^2\vec 1\cdot\eee^{-\vec y_m} t_m^4 (\vec q_m)^2\cdot \eee^{-\vec y_m}
= t_m^2 \rho(t_m) t_m^4 (\vec q_m)^2\cdot \eee^{-\vec y_m},
\end{equation}
thus \eqref{eq:1st-ord-eps} yields
\begin{equation}
\label{eq:ord-eps-3}
\liminf_{m\to\infty} t_m^4  (\vec q_m)^2\cdot \eee^{-\vec y_m} \geq 4\lim_{m\to\infty}t_m^2 \rho(t_m).
\end{equation}
Injecting this estimate to \eqref{eq:2nd-ord-eps} and using again \eqref{eq:1st-ord-eps}, we obtain
\begin{equation}
\limsup_{m\to\infty} \big(t_m^4 A^2 \eee^{-\vec y_m}\cdot \Delta^{(n)}\eee^{-\vec y_m}\big) \leq 2\lim_{m\to\infty}t_m^2 \rho(t_m),
\end{equation}
hence, by Lemma~\ref{lem:matrix-D},
\begin{equation}
\limsup_{m\to\infty}\big(A^2 \mu_0 (t_m^2\rho(t_m))^2 + A^2 \mu_1|t_m^2 P_\sigma \eee^{-\vec y_m}|^2\big) \leq 2\lim_{m\to\infty}t_m^2 \rho(t_m).
\end{equation}
Recalling that $\lim_{m\to\infty} t_m^2 \rho(t_m) \geq 2A^{-2}\mu_0^{-1}$, we obtain that in the last inequality
there is in fact equality and, additionally, $\lim_{m\to\infty}|t_m^2 P_\sigma \eee^{-\vec y_m}| = 0$.
In other words, $\eee^{-\vec y_m} = \lambda_m\vec\sigma + o(t_m^{-2})$. The coefficient $\lambda_m$ is determined by
\begin{equation}
2(At_m)^{-2}\mu_0^{-1} + o(t_m^{-2}) = \rho(t_m) = \vec 1\cdot \eee^{-\vec y_m} = \lambda_m \vec 1\cdot \vec \sigma + o(t_m^{-2}),
\end{equation}
thus $\lambda_m = 2(At_m)^{-2} + o(t_m^{-2})$, so $\eee^{-\vec y_m} = 2(At_m)^{-2}\vec\sigma + o(t_m^{-2})$,
which yields \eqref{eq:y-bound-seq} after taking logarithms.
\end{proof}
\begin{remark}
From the proof, one can see that $\lim_{m\to \infty}|t_m\vec q(t_m) - 2\times \vec 1| = 0$.
Below, we conclude the modulation analysis without using this information.
\end{remark}

We decompose
\begin{equation}
\label{eq:yq-syst}
\begin{gathered}
\vec y(t) = r(t)\vec 1 + \vec z(t),\qquad \vec z(t) := P_1 \vec y(t), \\
\vec q(t) = b(t)\vec 1 + \vec w(t), \qquad \vec w(t) := P_1 \vec q(t).
\end{gathered}
\end{equation}
We thus have
\begin{equation}
\label{eq:y'q'-syst}
\begin{gathered}
{\vec y \,}'(t) = r'(t)\vec 1 + {\vec z \,}'(t),\qquad {\vec z \,}'(t) = P_1\vec y'(t), \\
{\vec q \,}'(t) = b'(t)\vec 1 + {\vec w \,}'(t), \qquad {\vec w \,}'(t) = P_1 {\vec q \,}'(t).
\end{gathered}
\end{equation}
We denote $z_{\min}(t) := \min_{1 \leq k \leq n-1}z_j(t)$. We note that $z_{\min}(t) \leq 0$ and $y_{\min}(t) = r(t) + z_{\min}(t)$.
Observe also that $|z_{\min}(t)| \simeq |\vec z(t)|$, since both quantities are norms on the hyperplane $\Pi$, and that $\eee^{-z_{\min}(t)} \simeq \vec 1\cdot \eee^{-\vec z(t)}$. Hence,
Lemma~\ref{lem:zcr} \ref{it:zcr-3} yields
\begin{equation}
\label{eq:Psigexp-equiv}
\begin{aligned}
\eee^{-r(t)}|P_\sigma \eee^{-\vec z(t)}|^2 &\simeq \eee^{z_{\min}(t) - y_{\min}(t)}\vec 1\cdot\eee^{-\vec z(t)}(\vec 1\cdot \eee^{-\vec z(t)} - \vec 1 \cdot \eee^{-\vec z_\tx{cr}}) \\
&\simeq \eee^{-y_{\min}(t)}(\vec 1\cdot \eee^{-\vec z(t)} - \vec 1 \cdot \eee^{-\vec z_\tx{cr}}).
\end{aligned}
\end{equation}
Taking the inner product of \eqref{eq:yq-syst} and \eqref{eq:y'q'-syst} with $\vec\sigma$, and using \eqref{eq:y'-ineq}--\eqref{eq:q'-ineq},
we obtain
\begin{equation}
\label{eq:rrho-syst}
\begin{aligned}
|r'(t) - b(t)| &\lesssim \eee^{-r(t)}\eee^{-z_{\min}(t)}, \\
|b'(t) + \mu_0 A^2 \eee^{-r(t)}  \vec 1\cdot \eee^{-\vec z(t)}| &\lesssim y_{\min}(t)^{-1}\eee^{-r(t)}\eee^{-z_{\min}(t)}.
\end{aligned}
\end{equation}
From this, again using \eqref{eq:y'-ineq}--\eqref{eq:q'-ineq}, we deduce
\begin{equation}
\label{eq:zw-syst}
\begin{aligned}
|{\vec z \,}'(t) - \vec w(t)| &\lesssim \eee^{-r(t)}\eee^{-z_{\min}(t)}, \\
|{\vec w \,}'(t) + \eee^{-r(t)}A^2P_1 \Delta^{(n)}\eee^{-\vec z(t)}| &\lesssim y_{\min}(t)^{-1}\eee^{-r(t)}\eee^{-z_{\min}(t)}.
\end{aligned}
\end{equation}
In Lemma~\ref{lem:y-bound-seq}, we proved that $\liminf_{t\to\infty}|\vec z(t) - \vec z_\tx{cr}| = 0$.
Our next goal is to improve this information to continuous time convergence, in other words we prove a no-return lemma.
\begin{proposition}
If $\bs\phi$ is a kink $n$-cluster, then
\begin{equation}
\label{eq:mod-conclusion}
\lim_{t \to \infty}\big(\big|\vec y(t) - \big(2\log(At)\vec 1 - \log(2\vec\sigma)\big)\big| + \big| t\vec q(t) - 2\times \vec 1 \big| \big) = 0.
\end{equation}
\end{proposition}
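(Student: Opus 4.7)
The goal is to upgrade the sequential convergence of Lemma~\ref{lem:y-bound-seq} to continuous-time convergence. I would pass to the logarithmic time $\tau := \log t$ and introduce renormalised unknowns
\[
R(\tau) := r(t) - 2\log(At), \quad B(\tau) := tb(t) - 2, \quad \vec Z(\tau) := \vec z(t), \quad \vec W(\tau) := t\vec w(t),
\]
so that the target \eqref{eq:mod-conclusion} becomes $(R, B, \vec Z, \vec W)(\tau) \to (R_\infty, 0, \vec z_\tx{cr}, 0)$ for the constant $R_\infty$ defined by $\mu_0 \eee^{-R_\infty}\vec 1\cdot\eee^{-\vec z_\tx{cr}} = 2$. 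Substitution into \eqref{eq:rrho-syst}--\eqref{eq:zw-syst}, together with $y_{\min}(t) = 2\tau + O(1)$ from Lemma~\ref{lem:ymin-asym}, turns the modulation equations into the autonomous perturbed system
\begin{align*}
R' &= B + O(\eee^{-\tau}), & B' &= B + 2 - \mu_0 \eee^{-R}\vec 1\cdot\eee^{-\vec Z} + O(\tau^{-1}), \\
\vec Z\,' &= \vec W + O(\eee^{-\tau}), & \vec W\,' &= \vec W - \eee^{-R}P_1\Delta^{(n)}\eee^{-\vec Z} + O(\tau^{-1}),
\end{align*}
where $'$ denotes $d/d\tau$. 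Lemma~\ref{lem:y-bound-seq} supplies a sequence $\tau_m\to\infty$ along which $(R, \vec Z)(\tau_m)\to(R_\infty, \vec z_\tx{cr})$.

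I would next exploit the conservation law $E(\bs\phi) = nM$, which, via Lemma~\ref{lem:impr-coer}, yields the asymptotic \emph{zero-energy constraint} for the reduced $n$-body dynamics,
\[
\frac{(2+B)^2}{\mu_0} + \vec W\cdot(\Delta^{(n)})^{-1}\vec W = 2\eee^{-R}\vec 1\cdot\eee^{-\vec Z} + O(\tau^{-1}),
\]
obtained using $(\Delta^{(n)})^{-1}\vec 1 = \vec\sigma$ and $\vec W\cdot\vec\sigma = 0$. Linearising the autonomous system at the equilibrium exhibits a saddle structure: the radial block $(R, B)$ has eigenvalues $\{-1, 2\}$, while the angular block $(\vec Z, \vec W)$ has eigenvalues $(1\pm\sqrt{1+8\mu_j})/2$ indexed by the positive eigenvalues $\mu_j$ of the Hessian of $\vec z\mapsto\vec 1\cdot\eee^{-\vec z}$ at its strict minimum $\vec z_\tx{cr}$ on $\Pi$ (guaranteed positive by Lemma~\ref{lem:zcr}\ref{it:zcr-1}). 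Linearising the zero-energy constraint cuts out exactly the stable eigenspace of the full system, so at linear order a trajectory on the constraint surface passing near the equilibrium is forced onto the stable manifold.

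To make this quantitative, I would introduce a Lyapunov functional $\Lambda(\tau)$ measuring the squared distance from the stable manifold, built from the building blocks $\vec W\cdot(\Delta^{(n)})^{-1}\vec W$, $\eee^{-R}(\vec 1\cdot\eee^{-\vec Z} - \vec 1\cdot\eee^{-\vec z_\tx{cr}})$ (which by Lemma~\ref{lem:zcr}\ref{it:zcr-3} is comparable to $\eee^{-R}|P_\sigma\eee^{-\vec Z}|^2$), and the combination $B + (R - R_\infty)$, which vanishes to first order on the stable manifold. Using the algebraic identity $\vec W\cdot(\Delta^{(n)})^{-1}P_1\Delta^{(n)}\vec u = \vec W\cdot \vec u$ for $\vec W\in\Pi$ and $\vec u\in \bR^{n-1}$ (because $(\Delta^{(n)})^{-1}\vec 1 = \vec\sigma\perp \vec W$), a direct differentiation along the flow, combined with the constraint to eliminate the radial kinetic part, gives $\Lambda'(\tau) \leq -c\Lambda(\tau) + o(\Lambda(\tau)) + O(\tau^{-1})$. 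Together with $\Lambda(\tau_m)\to 0$, a Gr\"onwall argument then forces $\Lambda(\tau)\to 0$, which is equivalent to \eqref{eq:mod-conclusion} after reversing the change of variables.

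The principal obstacle is the construction of $\Lambda$: the naive energy $\tfrac12\vec W\cdot(\Delta^{(n)})^{-1}\vec W + \eee^{-R}(\vec 1\cdot\eee^{-\vec Z} - \vec 1\cdot\eee^{-\vec z_\tx{cr}})$ is actually \emph{non-decreasing} along the flow, because of the $+\vec W$ term on the right-hand side of the $\vec W\,'$-equation (a consequence of the rescaling, which converts damping into anti-damping in $\tau$). One must absorb this growth by subtracting a suitable multiple of the zero-energy defect; effectively, $\Lambda$ measures the deviation \emph{inside} the constraint manifold, where the dynamics is genuinely dissipative. Tracking the cross terms between the radial and angular blocks and the modulation errors while preserving the sign of $\Lambda'$ is the delicate part, and is precisely where the algebraic identities collected in Section~\ref{ssec:lin-alg} (in particular Lemma~\ref{lem:matrix-D} and Lemma~\ref{lem:zcr}) play the decisive role.
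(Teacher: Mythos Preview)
Your plan is a genuinely different route from the paper, which never passes to logarithmic time and does not build a global Lyapunov functional. Instead the paper argues in three elementary steps: first a one-sided monotonicity lemma for $\zeta(t) := \vec w(t)\cdot\eee^{-\vec z(t)} - c_0 t^{-1}\xi(t)$ (with $\xi := \vec 1\cdot\eee^{-\vec z}-\vec 1\cdot\eee^{-\vec z_\tx{cr}}$), which yields a no-return argument forcing $\vec z(t)\to\vec z_\tx{cr}$; then a short Tauberian step to get $t\vec w(t)\to 0$; and finally a direct integration of the scalar $(r,b)$ system once $\vec z\to\vec z_\tx{cr}$ is known. The energy constraint plays no role in that argument beyond what is already encoded in \eqref{eq:q-ineq}.

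There are two concrete gaps in your proposal. First, the ``zero-energy constraint'' you write is not available with error $O(\tau^{-1})$. Lemma~\ref{lem:impr-coer} is the \emph{inequality}
\[
\|\partial_t g\|_{L^2}^2+\tfrac\nu2\|g\|_{H^1}^2+M|\vec a\,'|^2\le 4\kappa^2\rho+C_0\rho^{3/2},
\]
so after rescaling you only obtain $\frac{(2+B)^2}{\mu_0}+\vec W\cdot(\Delta^{(n)})^{-1}\vec W\le 2\eee^{-R}\vec 1\cdot\eee^{-\vec Z}+O(\eee^{-\tau})$. The missing piece $\|\partial_t g\|_{L^2}^2+\langle g,L(\vec a)g\rangle+Mn|\conj a\,'|^2$ is a~priori only $O(\rho)=O(t^{-2})$, which after multiplying by $t^2$ becomes an $O(1)$ defect, not $O(\tau^{-1})$. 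You therefore cannot substitute the constraint to ``eliminate the radial kinetic part''.

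Second, and more seriously, even if the constraint held as an equality, it would not do what you claim. At the equilibrium the differential of $F:=\frac{(2+B)^2}{\mu_0}+\vec W\cdot(\Delta^{(n)})^{-1}\vec W-2\eee^{-R}\vec 1\cdot\eee^{-\vec Z}$ is $\partial_R F=\partial_B F=4/\mu_0$ while $\partial_{\vec Z}F\cdot\delta\vec Z=4\vec\sigma\cdot\delta\vec Z=0$ for $\delta\vec Z\in\Pi$ and $\partial_{\vec W}F=0$. Thus the linearised constraint is the single scalar relation $\delta R+\delta B=0$, which kills the radial unstable eigenvector $(1,2)$ but is \emph{blind} to the $(n-2)$ angular unstable directions. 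For $n\ge 3$ the constraint surface does not coincide with the stable manifold, so a trajectory on it can still escape along an angular unstable mode; your Lyapunov functional would need an additional mechanism (not supplied) to rule this out. This is exactly the difficulty the paper sidesteps by working directly with the functional $\zeta(t)$, whose monotonicity on the region $\{\xi\ge\epsilon\}$ controls all angular directions at once via Lemma~\ref{lem:matrix-D} and \eqref{eq:Psigexp-equiv}.
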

\begin{proof}
\textbf{Step 1.}
Consider the functions
\begin{equation}
\xi(t) := \vec 1\cdot \eee^{-\vec z(t)} - \vec 1\cdot \eee^{-\vec z_\tx{cr}}, \qquad \zeta(t) := \vec w(t)\cdot \eee^{-\vec z(t)} - c_0 t^{-1}\xi(t),
\end{equation}
where $c_0 > 0$ will be chosen below.
We know that $\liminf_{t\to\infty} \xi(t) = 0$, and we will improve this to $\lim_{t\to\infty}\xi(t) = 0$.

We claim that for every $\epsilon > 0$ there exist $c_0 = c_0(\epsilon) > 0$ and $t_0 = t_0(\epsilon)$ such that for all $t \geq t_0$
\begin{equation}
\xi(t) \geq \epsilon\ \text{and}\ \zeta(t) \leq 0 \ \Rightarrow \ \zeta'(t) \leq 0.
\end{equation}
Indeed, applying successively \eqref{eq:zw-syst}, \eqref{eq:matrix-D} and
\eqref{eq:Psigexp-equiv}, we get
\begin{equation}
\begin{aligned}
&\dd t \vec w(t)\cdot \eee^{-\vec z(t)} \\
&\ \leq - \vec w(t)^2\cdot \eee^{-\vec z(t)} - \eee^{-r(t)}A^2  \eee^{-\vec z(t)}\cdot (P_1 \Delta^{(n)}\eee^{-\vec z(t)}) + o(\eee^{-r(t)}\eee^{-2z_{\min}(t)}) \\
&\ \lesssim -\eee^{-r(t)}|P_\sigma \eee^{-\vec z(t)}|^2 + o(\eee^{-r(t)}\eee^{-2z_{\min}(t)})
\lesssim -\eee^{-y_{\min}(t)}(\xi(t) + o(\eee^{-z_{\min}(t)})).
\end{aligned}
\end{equation}
Now observe that $\xi(t) \geq \epsilon$ implies $\xi(t) \geq c_1\eee^{-z_{\min}(t)}$,
where $c_1 = c_1(\epsilon) > 0$. Hence, for $t$ large enough
\begin{equation}
\label{eq:wez-deriv}
\dd t \vec w(t)\cdot \eee^{-\vec z(t)}\lesssim -\eee^{-y_{\min}(t)}\xi(t) \lesssim -t^{-2}\xi(t).
\end{equation}

We have $-(c_0t^{-1}\xi(t))' = c_0t^{-2}\xi(t) - c_0t^{-1}\xi'(t)$.
If $c_0$ is small enough, then \eqref{eq:wez-deriv} allows to absorb the first term.
Regarding the second term, \eqref{eq:zw-syst}
and the assumption $\zeta(t) \leq 0$ yield
\begin{equation}
-\xi'(t) = \vec w(t)\cdot \eee^{-\vec z(t)} + O(\eee^{-r(t)}\eee^{-2z_{\min}(t)})
\leq c_0 t^{-1}\xi(t) + O(t^{-2}\xi(t)).
\end{equation}
For $c_0$ small enough and $t$ large enough, the term ${-}c_0t^{-1}\xi'(t)$ can thus be absorbed as well,
which finishes Step 1.

\textbf{Step 2.}
We are ready to prove that
\begin{equation}
\label{eq:z-conv}
\lim_{t\to\infty} \vec z(t) = \vec z_\tx{cr}.
\end{equation}
Suppose this is false, so there exists $\epsilon_0 \in (0, 1)$ and a sequence $t_m \to \infty$ such that
\begin{equation}
\xi(t_m) = \epsilon_0, \qquad \xi'(t_m) \geq 0
\end{equation}
(this is not the same sequence as in Lemma~\ref{lem:y-bound-seq},
but we denote it by the same symbol in order to simplify the notation).
Take $0 < \epsilon \ll \epsilon_0$, $c_0 = c_0(\epsilon)$ and $m$ large (depending on $\epsilon$).
Using \eqref{eq:zw-syst}, we have
\begin{equation}
\zeta(t_m) = -\xi'(t_m)- c_0 t_m^{-1}\xi(t_m) + O(\eee^{-r(t_m)}\eee^{-2z_{\min}(t_m)}) \lesssim -c_0\epsilon_0 t_m^{-1}.
\end{equation}
Let $\tau_m$ be the first time $\tau_m \geq t_m$ such that $\xi(\tau_m) = \epsilon$.
On the time interval $[t_m, \tau_m]$ we have $\xi(t) \geq \epsilon$, so Step 2. implies that $\zeta$ is decreasing,
provided $m$ is large enough, thus $\zeta(\tau_m) \lesssim -c_0\epsilon_0 t_m^{-1} \leq -c_0\epsilon_0 \tau_m^{-1}$.
On the other hand, $\xi'(\tau_m) \leq 0$, which, by the same computation as above, leads to $\zeta(\tau_m) \gtrsim -c_0\epsilon \tau_m^{-1}$, a contradiction.

\textbf{Step 3.}
We claim that
\begin{equation}
\label{eq:w-conv}
\lim_{t \to \infty} t|\vec w(t)| = 0.
\end{equation}
We use a Tauberian argument. From \eqref{eq:zw-syst}, we have $|{\vec z \,}'(t) - \vec w(t)|
+ |{\vec w \,}'(t)| \leq C t^{-2}$ for all $t$.
If there existed $\epsilon > 0$, $k \in \{1, \ldots, n-1\}$ and a sequence $t_m \to \infty$ such that $w_k(t_m) \geq \epsilon t_m^{-1}$ for all $m$,
then we would have $w_k(t) \geq \frac 12\epsilon t_m^{-1}$ for all $t \in \big[t_m, \big(1 + \frac{\epsilon}{2C}\big)t_m\big]$, hence for $m$ large enough $z_k'(t) \geq \frac 14\epsilon t_m^{-1}$ for all $t \in \big[t_m, \big(1 + \frac{\epsilon}{2C}\big)t_m\big]$,
contradicting the convergence of $z_k(t)$ as $t \to \infty$. The case $w_k(t_m) \leq {-}\epsilon t_m^{-1}$ is similar.

From \eqref{eq:zcr-lagr}, we have $\mu_0 \vec 1\cdot \eee^{-\vec z_\tx{cr}} = \lambda_\tx{cr}$.
Thus, \eqref{eq:rrho-syst} and \eqref{eq:z-conv} yield
\begin{equation}
\label{eq:rrho-syst-2}
|r'(t) - b(t)| \lesssim \eee^{-r(t)}, \qquad |b'(t) + A^2\lambda_\tx{cr} \eee^{-r(t)}| \ll \eee^{-r(t)}.
\end{equation}
From Lemma~\ref{lem:ymin-asym}, \eqref{eq:yq-syst} and \eqref{eq:z-conv} we have $\sup_{t\geq 0}|r(t) - 2\log t| < \infty$. Furthermore, \eqref{eq:q-ineq} and \eqref{eq:w-conv} yield
$\sup_{t\geq 0}t|b(t)| < \infty$. Hence,
\begin{equation}
\begin{aligned}
\dd t\Big( \frac 12 b(t)^2 - A^2\lambda_\tx{cr}\eee^{-r(t)} \Big)
&= b(t)\big(b'(t) + A^2 \lambda_\tx{cr}\eee^{-r(t)}\big) \\ 
&+ A^2\lambda_\tx{cr}(r'(t) - b(t))\eee^{-r(t)} = o(t^{-3}),
\end{aligned}
\end{equation}
and an integration in $t$ leads to
\begin{equation}
\label{eq:r-1st-ord}
\big(b(t) - A\sqrt{2\lambda_\tx{cr}}\eee^{-\frac 12 r(t)}\big)\big(b(t) + A\sqrt{2\lambda_\tx{cr}}\eee^{-\frac 12 r(t)}\big) =  b(t)^2 - 2A^2\lambda_\tx{cr}\eee^{-r(t)} = o(t^{-2}).
\end{equation}
The second bound in \eqref{eq:rrho-syst-2} yields
$b'(t) < 0$ for all $t$ large enough. Since $\lim_{t\to \infty} b(t) = 0$,
we have $b(t) > 0$ for all $t$ large enough, hence \eqref{eq:rrho-syst-2} and \eqref{eq:r-1st-ord} yield
\begin{equation}
\label{eq:r'-est}
r'(t) - A\sqrt{2\lambda_\tx{cr}}\eee^{-\frac 12 r(t)} = 
b(t) - A\sqrt{2\lambda_\tx{cr}}\eee^{-\frac 12 r(t)} + o(t^{-1}) = o(t^{-1}),
\end{equation}
which implies
\begin{equation}
\dd t \big(\eee^{\frac 12 r(t)}\big) = A\sqrt{\frac{\lambda_\tx{cr}}{2}} + o(1),
\end{equation}
thus
\begin{equation}
\eee^{\frac 12 r(t)} = t A\sqrt{\frac{\lambda_\tx{cr}}{2}}\big(1  + o(1) \big)
\end{equation}
and after taking the logarithm we finally obtain
\begin{equation}
r(t) = 2\log(At) + \log\big(\frac 12 \lambda_\tx{cr}\big) + o(1).
\end{equation}
Invoking again \eqref{eq:r'-est}, we also have $b(t) = 2t^{-1} + o(1)$.
Hence, \eqref{eq:mod-conclusion} follows from \eqref{eq:yq-syst}, \eqref{eq:zcr-lagr},
\eqref{eq:z-conv} and \eqref{eq:w-conv}.
\end{proof}
\begin{proof}[Proof of Theorem~\ref{thm:asymptotics}]
The bound on $a_{k+1}(t) - a_k(t)$ follows directly
from \eqref{eq:mod-conclusion}, and the definitions of $A$ and $\vec\sigma$.

Let $\conj a(t) := \frac 1n \sum_{j=1}^n a_j(t)$.
Then for all $k \in \{1, \ldots, n\}$, \eqref{eq:mod-conclusion} implies
\begin{equation}
\label{eq:ak'-final}
a_k'(t) = \conj a'(t) + \frac 1n \sum_j (a_k'(t) - a_j'(t))
= \conj a'(t) + (2k - n - 1)t^{-1} + o(t^{-1}),
\end{equation}
hence
\begin{equation}
|\vec a\,'(t)|^2 = (\conj a'(t))^2 + t^{-2}\sum_{k=1}^n(n+1-2k)^2
+ o(t^{-2}).
\end{equation}
An explicit computation yields
\begin{equation}
4\kappa^2 \rho(t) = Mt^{-2}\sum_{k=1}^n(n+1-2k)^2 + o(t^{-2}).
\end{equation}
Applying \eqref{eq:a'-est}, we obtain
\begin{equation}
\lim_{t\to\infty}t\big(\|\partial_t g(t)\|_{L^2} + \|g(t)\|_{H^1} + |\conj a'(t)|\big) = 0.
\end{equation}
The required bound on $a_k'(t)$ follows from the last inequality and \eqref{eq:ak'-final}.
\end{proof}

\section{Existence of a multi-kink for prescribed initial positions}
\label{sec:any-position}
The present section is devoted to a proof of Theorem~\ref{thm:any-position}.
The case $n = 1$ is clear, hence we assume $n > 1$.
We will use the following consequence of Brouwer's fixed point theorem,
known as the Poincar\'e--Miranda theorem.
\begin{theorem}\cite{Miranda}
\label{thm:miranda}
Let $L_1 < L_2$, $\vec y = (y_1, \ldots, y_d) \in \bR^d$ and $\vec \Psi = (\Psi_1, \ldots, \Psi_d) : [L_1, L_2]^d \to \bR^d$ be a continuous map such that
for all $k \in \{1, \ldots, d\}$ the following conditions are satisfied:
\begin{itemize}
\item $\Psi_k(\vec x) \leq y_k$ for all $\vec x = (x_1, \ldots, x_d) \in [L_1, L_2]^d$ such that $x_k = L_1$,
\item $\Psi_k(\vec x) \geq y_k$ for all $\vec x = (x_1, \ldots, x_d) \in [L_1, L_2]^d$ such that $x_k = L_2$.
\end{itemize}
Then there exists $\vec x \in [L_1, L_2]^d$ such that $\vec \Psi(\vec x) = \vec y$.
\end{theorem}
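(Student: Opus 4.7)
The plan is to deduce the statement from Brouwer's fixed point theorem applied to an auxiliary map that truncates values back into the cube. After the affine reparametrization $\vec x \mapsto (2\vec x - (L_1 + L_2)\vec 1)/(L_2 - L_1)$ in the domain and the replacement of $\vec\Psi$ by $\vec\Psi - \vec y$, I may assume $L_1 = -1$, $L_2 = 1$ and $\vec y = 0$. The hypothesis then reads $\Psi_k(\vec x) \leq 0$ whenever $x_k = -1$ and $\Psi_k(\vec x) \geq 0$ whenever $x_k = 1$, and the goal is to produce some $\vec x^\ast \in [-1, 1]^d$ with $\vec\Psi(\vec x^\ast) = 0$.

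Next, I would define a continuous auxiliary map $\vec F = (F_1, \ldots, F_d): [-1, 1]^d \to [-1, 1]^d$ by the coordinate-wise truncation
\begin{equation}
F_k(\vec x) := \max\bigl({-}1,\, \min(1,\, x_k - \Psi_k(\vec x))\bigr), \qquad k \in \{1, \ldots, d\}.
\end{equation}
This map is continuous and by construction sends $[-1, 1]^d$ into itself, so Brouwer's fixed point theorem provides some $\vec x^\ast \in [-1, 1]^d$ with $\vec F(\vec x^\ast) = \vec x^\ast$.

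It then remains to verify, coordinate by coordinate, that the fixed point satisfies $\Psi_k(\vec x^\ast) = 0$ for every $k$. If $x_k^\ast \in (-1, 1)$, neither the outer $\max$ nor the inner $\min$ can be active at the value $x_k^\ast$, so $x_k^\ast = x_k^\ast - \Psi_k(\vec x^\ast)$ and $\Psi_k(\vec x^\ast) = 0$. If $x_k^\ast = -1$, the boundary hypothesis gives $-\Psi_k(\vec x^\ast) \geq 0$, so $x_k^\ast - \Psi_k(\vec x^\ast) \geq -1$; then the equation $-1 = F_k(\vec x^\ast)$ forces $x_k^\ast - \Psi_k(\vec x^\ast) = -1$, hence $\Psi_k(\vec x^\ast) = 0$ (the alternative $x_k^\ast - \Psi_k(\vec x^\ast) > 1$ would give $F_k(\vec x^\ast) = 1$, contradicting $F_k(\vec x^\ast) = -1$). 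The case $x_k^\ast = 1$ is handled symmetrically. Combining the coordinates, $\vec\Psi(\vec x^\ast) = 0$, as required.

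The only nontrivial input is Brouwer's theorem itself, so there is no real obstacle. The conceptual point worth emphasizing is that the truncation must be performed \emph{component by component}: the product structure of the hypercube together with the coordinate-wise form of the sign hypothesis is precisely what allows the face-by-face verification above. A radial or otherwise coupled projection would not respect this product structure and the argument would break.
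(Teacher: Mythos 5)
Your proof is correct. Note that the paper itself does not prove this statement: it is quoted as a classical result with a citation to Miranda, together with a remark that the normalization $L_1=-1$, $L_2=1$, $\vec y=0$ is harmless — exactly the reduction you perform. What you supply is the standard self-contained derivation of Poincar\'e–Miranda from Brouwer: the coordinate-wise truncated map $F_k(\vec x)=\max({-}1,\min(1,x_k-\Psi_k(\vec x)))$ is continuous and maps the cube into itself, and your case analysis at a fixed point (interior coordinate, coordinate equal to $-1$, coordinate equal to $1$) correctly shows that the truncation cannot be active in a way compatible with the sign hypotheses unless $\Psi_k(\vec x^\ast)=0$ in each coordinate. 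The only difference from the paper is therefore that you prove what the authors merely cite; your argument buys self-containedness at the cost of invoking Brouwer, which is precisely the dependence the authors acknowledge when they call the theorem a consequence of Brouwer's fixed point theorem.
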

\begin{remark}
The result is often stated with $L_1 = -1$, $L_2 = 1$ and $\vec y = 0$,
which can be achieved by a straightforward change of coordinates.
\end{remark}

\begin{lemma}
\label{lem:data-at-T}
There exist $L_0, C_0 > 0$ such that the following is true.
Let $L \geq L_0$ and $\vec a_0 = (a_{0, 1}, \ldots, a_{0, n}) \in \bR^n$ be such that $a_{0, k+1} - a_{0, k}
\geq L$ for all $k \in \{1, \ldots, n-1\}$.
For any $T \geq 0$ there exists $(\vec a_T, \vec v_T) \in \bR^n \times \bR^n$
such that the solution $\bs \phi$ of \eqref{eq:csf} with
\begin{equation}
\label{eq:phiT}
\bs \phi(T) = \big(H(\vec a_T), \vec v_T \cdot \partial_{\vec a} H(\vec a_T)\big)
\end{equation}
has energy $nM$ and satisfies $\bs \phi(t) = \bs H(\vec a(t)) + \bs g(t)$ for all $t \in [0, T]$, where
\begin{enumerate}[(i)]
\item\label{it:data-at-T-1}
$\bs g(t)$ satisfies \eqref{eq:g-orth},
\item\label{it:data-at-T-2}
$\rho(t) \leq C_0/(e^L + t^2)$ for all $t \in [0, T]$, where $\rho(t)$ is defined by \eqref{eq:rhot-def},
\item\label{it:data-at-T-3}
$\vec a(0) = \vec a_0$.
\end{enumerate}
\end{lemma}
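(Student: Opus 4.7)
The plan is to apply the Poincar\'e-Miranda theorem in the style of Martel~\cite{Martel05}, parametrizing admissible terminal data at time $T$ by $\vec a_T \in \bR^n$ varying in a box and evolving~\eqref{eq:csf} backward to $t=0$ so that, for topological reasons, some choice of $\vec a_T$ produces $\vec a(0) = \vec a_0$. I prescribe the terminal velocity as $\vec v_T := \alpha(\vec a_T)\vec u_T$, where $u_{T,k} := (n+1-2k)/T$ mimics the explicit parabolic profile in~\eqref{eq:explicit-parabolic}, and $\alpha(\vec a_T) > 0$ is the unique positive scalar enforcing $E(\bs\phi(T)) = nM$; its existence is a direct consequence of Lemma~\ref{lem:interactions} together with $\tfrac12\|\vec v_T\cdot\partial_{\vec a}H(\vec a_T)\|_{L^2}^2 \simeq \tfrac M2|\vec v_T|^2$. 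Let $\vec a^\star(s)$ be a fixed smooth reference curve through $\vec a_0$ at $s = 0$ that tracks the explicit parabolic motion forward in time, and let
\begin{equation*}
B := \prod_{k=1}^n \big[a^\star_k(T) - R,\ a^\star_k(T) + R\big],
\end{equation*}
with $R > 0$ large. For each $\vec a_T \in B$, I define the exit time $T_1(\vec a_T) \in [0, T]$ as the infimum of those $t_\star \in [0, T]$ for which the following hold on $[t_\star, T]$: (a)~Lemma~\ref{lem:basic-mod} applies, producing modulation parameters $\vec a(\cdot)$ and remainder $\bs g(\cdot)$; (b)~$\rho(s) \leq 2C_0/(\eee^L + s^2)$; and (c)~$|\vec a(s) - \vec a^\star(s)| \leq 2R$.

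\textbf{Closing the bootstrap.} The analytic heart of the proof is to improve the factor $2$ in (b) and (c) to $1$ strictly. By Lemma~\ref{lem:ref-mod}, the pair $(\vec a, \vec p)$ obeys the attractive Toda system~\eqref{eq:attractive-toda} with error $O(\rho/({-}\log\rho))$, and the functional
\begin{equation*}
\beta(s) := \vec q(s)\cdot \eee^{-\vec y(s)} - c_0\,\eee^{-\tfrac32 y_{\min}(s)}
\end{equation*}
from the proof of Lemma~\ref{lem:ymin-asym} is monotone for $c_0$ small enough. The specific choice of $\vec v_T$ above forces $\beta(T)$ to equal its value on the explicit parabolic trajectory up to lower-order terms, and this, combined with the differential inequality~\eqref{eq:rho'-diff-ineq}, propagates to give $\rho(s) \leq C_0/(\eee^L + s^2)$ on $[T_1, T]$. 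The positional bound (c) is recovered by comparing the approximate Toda dynamics against $\vec a^\star$, exploiting the smallness of the off-diagonal couplings in~\eqref{eq:attractive-toda} throughout $B$. A standard continuity argument then forces $T_1(\vec a_T) = 0$ for every $\vec a_T \in B$, and the shooting map $\Psi(\vec a_T) := \vec a(0)$ becomes well-defined and continuous on $B$ by Proposition~\ref{prop:cauchy}\ref{it:cauchy-strong} and the last statement of Lemma~\ref{lem:basic-mod}.

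\textbf{Poincar\'e-Miranda and the main obstacle.} To conclude I verify the boundary hypotheses of Theorem~\ref{thm:miranda} relative to the target $\vec a_0$: on the face $\{a_{T,k} = a^\star_k(T) + R\}$ of $B$, the approximate diagonal dependence of $a_k(0)$ on $a_{T,k}$ in~\eqref{eq:attractive-toda} forces $\Psi_k(\vec a_T) \geq a_{0,k}$, and symmetrically $\Psi_k(\vec a_T) \leq a_{0,k}$ on the opposite face, provided $L_0$ and $R$ are chosen so that the off-diagonal coupling is negligible. Theorem~\ref{thm:miranda} produces $\vec a_T \in B$ with $\Psi(\vec a_T) = \vec a_0$; the orthogonality~\eqref{eq:g0-orth} and the bound on $\bs g(0)$ stated in the main theorem then follow from Lemma~\ref{lem:basic-mod} and the $\rho(0) \leq C_0 \eee^{-L}$ part of conclusion~(ii). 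The principal difficulty, compared with Lemma~\ref{lem:ymin-asym}, is that the decay bootstrap must be closed without using $y_{\min}(s) \to \infty$; everything rests on the carefully prescribed value of $\beta(T)$, which in turn imposes the sizes of $L_0$ and $R$.
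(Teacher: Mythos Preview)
There is a genuine gap at the ``closing the bootstrap'' step. You claim that for \emph{every} $\vec a_T$ in the box $B$ the bootstrap assumptions (b) and (c) improve from factor $2$ to factor $1$, forcing $T_1(\vec a_T)=0$ throughout $B$. But this cannot hold. The backward flow of the attractive Toda system is not contracting around the parabolic trajectory: nearby terminal data drift apart over long times (this instability is precisely what drives the analysis in Section~\ref{sec:n-body}). Concretely, on a face of $B$ where one component $a_{T,k}$ is pushed by $R$ away from $a_k^\star(T)$, there is no mechanism preventing $|\vec a(s)-\vec a^\star(s)|$ from growing past $2R$ as $s$ decreases, nor $\rho(s)$ from exceeding the bootstrap ceiling, long before $s=0$. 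The monotonicity of $\beta$ gives a \emph{one-sided} differential inequality for $\rho$, but it does not pin $\vec a(s)$ near $\vec a^\star(s)$, and your positional bootstrap (c) has no driver. Once $T_1>0$ for some boundary points, your map $\Psi(\vec a_T)=\vec a(0)$ is not even defined there, and the whole topological step collapses.

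The paper's proof avoids this by \emph{not} attempting to reach $t=0$ for all terminal data. It works in the $(n-1)$-dimensional variable $\vec y_T$ (gaps, not positions), defines the exit time $T_1$ purely through a threshold on $\rho$, and sets $\vec\Psi(\vec y_T):=\vec y(T_1)$, i.e.\ the shooting map is evaluated at the exit time. The Poincar\'e--Miranda boundary conditions are then checked at $T_1$, where they follow from short, elementary estimates: on the face $y_{T,k}=L_1$ the exit happens within time $O(\eee^{L_1/2})$, so $y_k(T_1)\approx L_1<y_{0,k}$; on the face $y_{T,k}=L_2$ the total drift of $y_k$ over $[T_1,T]$ is bounded independently of $T$. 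Only \emph{after} Poincar\'e--Miranda produces a specific $\vec y_T$ with $\vec\Psi(\vec y_T)=\vec y_0$ does one deduce $T_1=0$ for that particular choice, because $\rho(T_1)\lesssim \eee^{-L}$ is then well below the exit threshold. Your scheme tries to prove $T_1=0$ uniformly first and apply the fixed-point theorem second; the paper does the opposite, and that reversal is the point. (A secondary issue: your velocity $u_{T,k}=(n+1-2k)/T$ is singular at $T=0$, whereas the paper's choice $v_{T,k+1}-v_{T,k}=\lambda\sqrt{\rho(\vec a_T)}$ scales with the configuration and works uniformly in $T\ge 0$.)
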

\begin{proof}
\textbf{Step 1.} (Preliminary observations.)
In order to have $\vec a(0) = \vec a_0$,
it suffices to guarantee that $a_{k+1}(0) - a_{k}(0) = a_{0, k+1} - a_{0, k}$,
and apply a translation if needed.
Set $y_{0, k} := a_{0, k+1} - a_{0, k}$ and let $0 < L_1 < L_2$ be chosen later
(depending on $T$ and $\vec y_0$).
The idea is to construct an appropriate function $\vec\Psi: [L_1, L_2]^{n-1} \to \bR^{n-1}$ satisfying the assumptions of Theorem~\ref{thm:miranda}.

For given $\vec y_T \in [L_1, L_2]^{n-1}$, let $\vec a_T \in \bR^n$ be such that $a_{T, k+1} - a_{T, k} = y_{T, k}$ for all $k \in \{1, \ldots, n-1\}$
and $\sum_{k=1}^n a_{T, k} = 0$
(the last condition is a matter of choice, we could just as well impose $a_{T, 1} = 0$
or any condition of similar kind), so that $\vec a_T = \vec a_T(\vec y_T)$ is continuous.
Let $\bs \phi = \bs \phi(T, \vec a_T, \vec v_T)$ be the solution of \eqref{eq:csf}
for the data \eqref{eq:phiT} at time $T$, and let $T_0 \geq 0$ be the minimal time
such that $\delta(\bs \phi(t)) \leq \eta_0$ for all $t \in [T_0, T]$ (we set $T_0 = 0$
if $\delta(\bs \phi(t)) \leq \eta_0$ for all $t \in [0, T]$).
We thus have well-defined modulation parameters $\vec a(t)$ for all $t \in [T_0, T]$.
We define $\vec p(t)$, $\vec y(t)$, $y_{\min}(t)$, $\rho(t)$ and $\vec q(t)$ as in Sections~\ref{sec:mod} and \ref{sec:n-body}.

\noindent
\textbf{Step 2.} (Choice of $\vec v_T$.)
Let $y_{T, \min} := \min_{1 \leq k < n} y_{T, k}$.
We define $\vec v_T = \vec v_T(\vec a_T)$ as the unique vector in $\bR^n$ such that
\begin{enumerate}[(i)]
\item \label{it:choice-vT-sum}
$\sum_{k=1}^n v_{T, k} = 0$,
\item \label{it:choice-vT-lambda}
there exists $\lambda > 0$ such that $v_{T, k+1} - v_{T, k} = \lambda\sqrt{\rho(\vec a_T)}$ for all $k \in \{1, \ldots, n-1\}$,
\item \label{it:choice-vT-en}
$E\big(H(\vec a_T), \vec v_T \cdot \partial_{\vec a} H(\vec a_T)\big) = nM$.
\end{enumerate}
Conditions \ref{it:choice-vT-sum} and \ref{it:choice-vT-lambda} are equivalent to
\begin{equation}
\label{eq:vT-form}
v_{T, k} = \frac{2k-n-1}{2}\lambda\sqrt{\rho(\vec a_T)}, \qquad\text{for all }k \in \{1, \ldots, n\}.
\end{equation}
Condition \ref{it:choice-vT-en} is equivalently written
\begin{equation}
\label{eq:vT-cond}
\|\vec v_T \cdot \partial_{\vec a} H(\vec a_T)\|_{L^2}^2 = 2nM - 2E(H(\vec a_T)).
\end{equation}
Consider the auxiliary function
\begin{equation}
f(\vec a_T) := \int_{-\infty}^\infty \Big| \sum_{k=1}^n \frac{2k-n-1}{2}\partial_x H(x - a_{T, k}) \Big|^2 \ud x.
\end{equation}
Then $f$ is smooth with respect to $\vec a_T$, and \eqref{eq:EpHX-a-1} yields
\begin{equation}
f(\vec a_T) = M\sum_{k=1}^n \Big(\frac{2k-n-1}{2}\Big)^2 + O(L_1\eee^{-L_1}) = M\mu_0^{-1} + O(L_1\eee^{-L_1})
\end{equation}
(the exact value of the first term on the right hand side is of no importance). Thus, \eqref{eq:vT-form}, \eqref{eq:vT-cond}
and \eqref{eq:EpHX-a} yield
\begin{equation}
\lambda^2 = \frac{2nM - 2E(H(\vec a_T))}{\rho(\vec a_T)f(\vec a_T)} = 4M^{-1} \kappa^2 \mu_0 + O(L_1 \eee^{-L_1}),
\end{equation}
implying that $\lambda$ is smooth with respect to $\vec a_T$ and $\lambda \simeq 1$.

\noindent
\textbf{Step 3.} (Definition and continuity of the exit time.)
Note that $\rho(t) \leq n\eee^{-L_1}$ if $y_k(t) \geq L_1$ for all $k$.
We claim that there exists $c_0 > 0$ depending only on $n$ such that
\begin{equation}
\label{eq:sec5-rho'}
\rho'(t) \leq {-}c_0\rho(t)^\frac 32 \qquad\text{for all }t \in [T_0, T].
\end{equation}
Let $\beta$ be defined as in the proof of Lemma~\ref{lem:ymin-asym}.
The same argument shows that $\beta(t)$ is decreasing for $t \in [T_0, T]$ if $c_0$ is small enough.

We claim that $\beta(T) > 0$, which will imply that $\beta(t) > 0$ for all $t \in [T_0, T]$.
The proof of \eqref{eq:rho'-diff-ineq} then applies without changes.
By the definition of $\beta$, it suffices to verify that
\begin{equation}
\label{eq:qkT-pos}
q_k(T) \gtrsim \sqrt{\rho(\vec a_T)}\qquad\text{for all }k \in \{1, \ldots, n-1\}.
\end{equation}
We have $\dot g(T) = {-}\sum_{k=1}^n (-1)^k v_{T, k}\partial_x H(\cdot - a_{T, k})$.
From \eqref{eq:p-def} we obtain $|p_k(T) - Mv_{T, k}| \lesssim \rho(\vec a_T)$,
hence $|q_k(T) - (v_{T, k+1} - v_{T, k})| \lesssim \rho(\vec a_T)$,
which implies \eqref{eq:qkT-pos} since $v_{T, k+1} - v_{T, k} = \lambda\sqrt{\rho(\vec a_T)}$ and $\lambda \simeq 1$.

Let $T_1 \in [T_0, T]$ be the minimal time such that $\rho(t) \leq 2n\eee^{-L_1}$
for all $t \in [T_1, T]$. Suppose that the last inequality holds for all $t \in [T_0, T]$.
Then in particular $\rho(T_0) \leq 2n\eee^{-L_1}$,
and Lemma~\ref{lem:basic-mod} yields $\delta(\bs\phi(T_0)) < \eta_0$ if $L_1$ is large enough,
thus $T_0 = 0$. In this case, we set $T_1 := 0$ as well.

We claim that, for $T$ fixed, $T_1 = T_1(\vec a_T)$ is a continuous function.
Let $\vec a_{T, m} \to \vec a_T$ and consider the corresponding sequence of solutions constructed above.

Assume first that $T_1(\vec a_T) > 0$, thus $\rho(T_1) = 2n\eee^{-L_1}$.
Let $\epsilon > 0$. Since $\rho$ is strictly decreasing by \eqref{eq:sec5-rho'},
we have $\rho(t) < 2n\eee^{-L_1}$ for all $t \in [T_1 + \epsilon, T]$.
By the continuity of the flow, the same inequality holds for the $m$-th solution of the sequence if $m$ is large enough,
hence $T_1(\vec a_{T, m}) \leq T_1 + \epsilon$ if $m$ is large enough.
Similarly, $\rho(T_1 - \epsilon) > 2n\eee^{-L_1}$, implying $T_1(\vec a_{T, m}) \geq T_1 - \epsilon$ for $m$ large enough.

Now assume that $T_1(\vec a_T) = T_0(\vec a_T) = 0$, thus $\rho(0) \leq 2n\eee^{-L_1}$.
Since $\rho$ is strictly decreasing, for any $\epsilon > 0$ we have $\rho(t) < 2n\eee^{-L_1}$ for all $t \in [\epsilon, T]$, and again the continuity of the flow yields $T_1(\vec a_{T, m}) \leq \epsilon$ for $m$ large enough.

\noindent
\textbf{Step 4.} (Application of the Poincar\'e--Miranda theorem.)
We set
\begin{equation}
\vec\Psi(\vec y_T) := \vec y(T_1).
\end{equation}
The preceding step together with the continuity of the flow imply that $\vec\Psi$ is continuous.
Let $k \in \{1, \ldots, n-1\}$ be such that $y_{T, k} = L_2$.
If $L_2$ is sufficiently large, then $\Psi_k(\vec y_T) \geq y_{0, k}$
(it suffices to integrate in time the bound $|y_k'(t)| \lesssim \sqrt{\rho(t)} \lesssim \eee^{-\frac 12 L_1}$).

Assume now that $y_{T, k} = L_1$, thus $\rho(T) \geq \eee^{-L_1}$.
By \eqref{eq:sec5-rho'}, we have
\begin{equation}
\label{eq:deriv-rho-12}
\dd t\big(\rho(t)^{-\frac 12}\big) \gtrsim 1, \qquad\text{for all }t \in [T_1, T].
\end{equation}
Since $\rho(T_1) \leq 2n\eee^{-L_1}$, we obtain
\begin{equation}
T - T_1 \lesssim \eee^{L_1/2}\big(1 - (2n)^{-\frac 12}\big).
\end{equation}
But we also have $|y_k'(t)| \lesssim \eee^{-\frac 12 L_1}$ for all $t \in [T_1, T]$,
hence $|y_k(T_1) - y_k(T)| \lesssim 1$, and it suffices to let $L_1 = L - C$ for $C$ sufficiently large.

By Theorem~\ref{thm:miranda}, there exists $\vec y_T \in [L_1, L_2]^{n-1}$ such that $\vec\Psi(\vec y_T) = \vec y_0$.
In particular, $\rho(T_1) \lesssim \eee^{-L} = \eee^{-C}\eee^{-L_1} \ll 2n\eee^{-L_1}$ if $C$ is large.
We thus have $T_0 = T_1 = 0$ and $\vec y(0) = \vec y_0$.

\noindent
\textbf{Step 5.} (Estimates on the solution.)
By \eqref{eq:deriv-rho-12}, we have
\begin{equation}
\rho(t)^{-\frac 12} - \rho(0)^{-\frac 12} \gtrsim t, \qquad\text{for all }t \in [0, T].
\end{equation}
Since $\rho(0) \lesssim \eee^{-L}$, we obtain $\rho(t) \lesssim (e^L + t^2)^{-1}$.
\end{proof}

\begin{proof}[Proof of Theorem~\ref{thm:any-position}]
Let $T_m$ be an increasing sequence tending to $\infty$ and let
\begin{equation}
\bs \phi_m(t) = \bs H(\vec a_m(t)) + \bs g_m(t)
\end{equation}
be the solution given by Lemma~\ref{lem:data-at-T} for $T = T_m$.
After extraction of a subsequence, we can assume that $\bs g_m(0) \wto \bs g_0 \in \cE$
and, using the Arzel\`a-Ascoli theorem, $\vec a_m \to \vec a$ locally uniformly.
Let $\bs\phi$ be the solution of \eqref{eq:csf} such that $\bs\phi(0) = \bs H(\vec a_0) + \bs g_0$.
We verify that $\bs \phi$ is the desired kink cluster.

By Lemma~\ref{lem:data-at-T} \ref{it:data-at-T-3}, for all $m$ we have $\vec a_m(0) = \vec a_0$, hence
$\la \partial_x H(\cdot - a_{0, k}), g_m(0)\ra = 0$. Taking the weak limit, we get \eqref{eq:g0-orth}.
Fix $t \geq 0$. Since $\bs \phi_m(0) \wto \bs\phi(0)$, Proposition~\ref{prop:cauchy}
implies $\bs \phi_m(t) \wto \bs \phi(t)$, hence 
\begin{equation}
\bs g_m(t)  = \bs\phi_m(t) - \bs H(\vec a_m(t)) \wto \bs \phi(t) - \bs H(\vec a(t)).
\end{equation}
From \eqref{eq:g-coer} and Lemma~\ref{lem:data-at-T} \ref{it:data-at-T-2}, we obtain
\begin{equation}
\|\bs g_m(t)\|^2 \leq C_0/(\eee^L + t^2) \qquad\text{for all }m,
\end{equation}
hence, by the weak compactness of closed balls in separable Hilbert spaces,
\begin{equation}
\|\bs \phi(t) - \bs H(\vec a(t))\|_\cE^2 \leq C_0/(\eee^L + t^2).
\end{equation}
We also have $\rho(\vec a(t)) = \lim_{m\to\infty} \rho(\vec a_m(t)) \leq C_0/(\eee^L + t^2)$,
yielding the required bound on $\delta(\bs\phi(t))$ by the definition of $\delta$, see \eqref{eq:d-def}.
\end{proof}

\section{Kink clusters as profiles of kink collapse}
\label{sec:profiles}
In this final section, we prove Theorem~\ref{thm:unstable}.
In the proof, we will need the following ``Fatou property''.
\begin{lemma}
\label{lem:fatou}
There exist $\eta_0, y_0 > 0$ such that the following holds. Let $\vec a_m = (a_{m, 1}, \ldots, a_{m, n}) \in \bR^n$
for all $m \in \{1, 2, \ldots\}$ be such that $a_{m, k+1} - a_{m, k} \geq y_0$ for all $m$ and $k$, and $\lim_{m \to \infty} \vec a_m = \vec a \in \bR^n$.
Let $\bs g_m \in \cE$, $\|\bs g_m\|_\cE \leq \eta_0$, $\bs g_m$ satisfy the orthogonality conditions \eqref{eq:g-orth}
with $(\vec a, \bs g)$ replaced by $(\vec a_m, \bs g_m)$, and assume that $\bs g_m \wto \bs g \in \cE$. Then
\begin{equation}
\label{eq:fatou}
E(\bs H(\vec a) + \bs g) \leq \liminf_{m \to \infty}E(\bs H(\vec a_m) + \bs g_m),
\end{equation}
and $E(\bs H(\vec a) + \bs g) = \lim_{m \to \infty}E(\bs H(\vec a_m) + \bs g_m)$ if and only if $\bs g_m \to \bs g$ strongly in $\cE$.
\end{lemma}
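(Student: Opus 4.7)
The plan is to Taylor-expand the energy around the weak limit $\bs\phi := \bs H(\vec a) + \bs g$ and analyse each resulting term as $m\to\infty$. Set $\bs\phi_m := \bs H(\vec a_m) + \bs g_m$ and $\bs h_m := \bs\phi_m - \bs\phi$. By continuity of $\vec a \mapsto \bs H(\vec a)$ in $\cE$ one has $\bs H(\vec a_m) \to \bs H(\vec a)$ strongly in $\cE$, so $\bs h_m \wto 0$ weakly in $\cE$, $\|\bs h_m\|_\cE \leq 2\eta_0 + o(1)$, and $\|h_m\|_{L^\infty} \leq C\eta_0$ for $m$ large by Sobolev embedding. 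Taylor's formula then gives
\begin{equation}
E(\bs\phi_m) = E(\bs\phi) + \la \vD E(\bs\phi), \bs h_m\ra + \frac{1}{2} \la \vD^2 E(\bs\phi)\bs h_m, \bs h_m\ra + R_m,
\end{equation}
with $R_m$ the cubic remainder of the potential integral.

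I would then verify three inputs. First, $\vD E(\bs\phi)$ is a bounded linear functional on $\cE$ via $\la \vD E(\bs\phi), \bs k\ra = \la \partial_x\phi, \partial_x k\ra + \la U'(\phi), k\ra + \la \dot\phi, \dot k\ra$, and $U'(\phi) \in L^2(\bR)$ because $|U'(\phi)| \lesssim |\phi \mp 1|$ near the vacua together with the finite-energy bound $\int U(\phi)\ud x < \infty$ implies $\phi \mp 1 \in L^2$ at the corresponding infinity; hence $\la \vD E(\bs\phi), \bs h_m\ra \to 0$ by weak convergence in $\cE$. Second, the Hessian splits as
\begin{equation}
\la \vD^2 E(\bs\phi)\bs h_m, \bs h_m\ra = \|\bs h_m\|_\cE^2 + \la V h_m, h_m\ra, \qquad V := U''(\phi) - 1,
\end{equation}
where $V$ is bounded with $V(x) \to 0$ as $|x| \to \infty$ (since $\phi(x) \to \pm 1$ at $\pm\infty$ and $U''(\pm 1) = 1$); Rellich's local compactness applied to $h_m \wto 0$ in $H^1$, combined with the tail bound $\int_{|x|>R} V h_m^2 \ud x \leq \|V\|_{L^\infty(|x|>R)}\|h_m\|_{L^2}^2$ and $R \to \infty$, yields $\la V h_m, h_m\ra \to 0$. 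Third, since $U \in C^3$ and $h_m$ stays uniformly bounded in $L^\infty$, a pointwise Taylor estimate gives $|R_m| \leq C\|h_m\|_{L^\infty}\|h_m\|_{L^2}^2 \leq C\eta_0\|\bs h_m\|_\cE^2$.

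Combining these ingredients and absorbing the cubic error for $\eta_0$ sufficiently small, the expansion delivers
\begin{equation}
\frac{1}{4}\|\bs h_m\|_\cE^2 - o(1) \leq E(\bs\phi_m) - E(\bs\phi) \leq \frac{3}{4}\|\bs h_m\|_\cE^2 + o(1),
\end{equation}
and taking $\liminf$ of the left inequality gives \eqref{eq:fatou}. Under the additional hypothesis $\lim E(\bs\phi_m) = E(\bs\phi)$, the left inequality forces $\|\bs h_m\|_\cE \to 0$, which is strong convergence $\bs g_m \to \bs g$ in $\cE$ because $\bs H(\vec a_m) \to \bs H(\vec a)$ strongly; the converse is immediate from continuity of $E$ on $\cE$. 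The main obstacle is the compactness step $\la V h_m, h_m\ra \to 0$: because $V$ only decays at infinity rather than having compact support, one has to couple Rellich on a finite interval with the uniform tail bound and iterate $R\to\infty$ after $m\to\infty$. The orthogonality conditions on $\bs g_m$ are not used to close this argument but do pass to the weak limit $\bs g$, consistently with their role elsewhere in the paper.
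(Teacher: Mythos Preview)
Your proof is correct and takes a genuinely different route from the paper's. Both arguments Taylor-expand around the weak limit and observe that the linear term vanishes by weak convergence; the divergence is in how the quadratic form is handled. The paper first projects $\bs g_m$ onto the subspace defined by the orthogonality conditions for the limiting $\vec a$ (thereby reducing to $\vec a_m = \vec a$), and then invokes the coercivity Lemma~\ref{lem:D2H}: on the orthogonal subspace, $\la \vD^2 E(\bs H(\vec a)+\bs g)(\bs g_m - \bs g), \bs g_m - \bs g\ra \gtrsim \|\bs g_m - \bs g\|_\cE^2$, and the orthogonality hypotheses are essential to kill the negative terms $\la\partial_x H_k, \cdot\ra^2$. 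You instead write the Hessian as $\|\bs h_m\|_\cE^2 + \la V h_m, h_m\ra$ with $V = U''(\phi)-1$ decaying at infinity, and use Rellich compactness on bounded intervals together with the tail smallness of $V$ to show $\la V h_m, h_m\ra = o(1)$; no spectral information about $L(\vec a)$ is needed, and as you correctly note, the orthogonality conditions play no role in closing the argument. Your approach thus proves a slightly stronger statement (the hypotheses on orthogonality are superfluous for the conclusion) and is more self-contained, while the paper's proof is shorter given that Lemma~\ref{lem:D2H} is already available.
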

\begin{proof}
Let $\Pi \subset \cE$ be the codimension $n$ subspace defined by \eqref{eq:g-orth}.
Let $\wt{\bs g}_m$ be the orthogonal projection of $\bs g_m$ on $\Pi$.
Then $\lim_{m \to \infty}\|\wt{\bs g}_m - \bs g_m\|_\cE = 0$, hence we are reduced to the situation where $\vec a_m = \vec a$ for all $m$.
To simplify the notation, we write $\bs g_m$ instead of $\wt{\bs g}_m$.

We have the Taylor expansion around $\bs H(\vec a) + \bs g$:
\begin{equation}
\begin{aligned}
E(\bs H(\vec a) + \bs g_m) &= E(\bs H(\vec a) + \bs g) + \la \vD E(\bs H(\vec a) + \bs g), \bs g_m - \bs g\ra \\
&+ \frac 12 \la \vD^2 E(\bs H(\vec a) + \bs g)(\bs g_m - \bs g), \bs g_m - \bs g\ra + O(\|\bs g_m - \bs g\|_\cE^3).
\end{aligned}
\end{equation}
The second term of the right hand side converges to 0 by assumption.
The second line is $\gtrsim \|\bs g_m - \bs g\|_\cE^2$, by the coercivity estimate in Lemma~\ref{lem:D2H} and smallness of $\|\bs g_m\|_\cE$.
\end{proof}
\begin{proof}[Proof of Theorem~\ref{thm:unstable}]
Let $\vec a_m: [0, T_m] \to \bR^n$ be the modulation parameters corresponding to $\bs\phi_m$.
Let $C_1$ be the constant in \eqref{eq:delta-geq-ass}.
Since $\lim_{m\to\infty}E(\bs \phi_m) = nM$,
there exists a sequence $\wt T_m \in [0, T_m]$
so that $\delta(\bs\phi_m(t)) \geq C_1(E(\bs \phi_m) - nM)$ for all $m$
and $t \in [0, \wt T_m]$, but still $\lim_{m\to\infty}\delta(\bs\phi_m(\wt T_m)) = 0$.
Below, we write $T_m$ instead of $\wt T_m$.

Let $y_{m, k} := a_{m, k+1} - a_{m, k}$ and $\rho_m(t) := \rho(\vec a_m(t))$,
as in Sections~\ref{sec:mod} and \ref{sec:n-body}.
Let $\beta_m: [0, T_m] \to \bR$ be the function defined as in the proof of Lemma~\ref{lem:ymin-asym}, corresponding to the solution $\bs \phi_m$. By the same argument, $\beta(t)$
is decreasing for $t \in [0, T_m]$, hence
\begin{equation}
\beta(t) \geq -\epsilon_m, \qquad\text{for all }t \in [0, T_m],
\end{equation}
where $\epsilon_m := \beta(T_m) \to 0$ as $m \to \infty$.

Fix $t > 0$. For $m$ large enough and all $\tau \in [0, t]$,
we have $|\rho_m'(\tau)| \lesssim \rho_m(\tau)^\frac 32$, hence $\big|\dd t(\rho_m(\tau)^{-\frac 12})\big| \lesssim 1$. Since $\rho_m(0) \gtrsim \eta$, we obtain $\rho_m(\tau)^{-\frac 12} \lesssim \eta^{-\frac 12} + \tau$, hence
\begin{equation}
\rho_m(\tau) \gtrsim \frac{1}{\eta^{-1} + t^2}\qquad\text{for all }m\text{ large enough and }\tau \in [0, t].
\end{equation}
The proof of \eqref{eq:rho'-diff-ineq} yields
\begin{equation}
\dd t\big((\rho_m(\tau))^{-\frac 12}\big) - \frac{c_0}{2} \gtrsim {-}\epsilon_m\rho_m(\tau)^{-\frac 32} \gtrsim {-}\epsilon_m (\eta^{-\frac 32} + t^3).
\end{equation}
Integrating in time, we deduce that there is a constant $C_2$ such that
\begin{equation}
\label{eq:profiles-uni-d}
\limsup_{m\to\infty} \rho_m(t) \leq \frac{C_2}{\eta^{-1} + t^2}.
\end{equation}

Upon extracting a subsequence, we can assume that $\lim_{m \to \infty} (a_{m, k+1}(0) - a_{m, k}(0)) \in \bR \cup \{\infty\}$ exists for all $k$.
Observe that at least one of these limits has to be finite, due to the bound \eqref{eq:g-coer} and the fact that $\delta(\bs \phi_m(0)) = \eta$ for all $m$.
We set $n^{(0)} := 0$ and define inductively
\begin{equation}
n^{(j)} := \max \big\{k : \lim_{m\to \infty} a_{m, k}(0) - a_{m, n^{(j-1)}+1}(0) < \infty \big\},
\end{equation}
until we reach $n^{(\ell)} = n$ for some $\ell$. We set $X_m^{(j)} := a_{m, n^{(j)}}(0)$,
so that $\lim_{m \to \infty} \big(a_{m, n^{(j-1)} + k}(0) - X_m^{(j)}\big) \in \bR$
for all $k \in \big\{1, \ldots, n^{(j)} - n^{(j-1)}\big\}$.
By the definition of $n^{(j)}$, conclusion (ii) of the theorem holds.

Again extracting a subsequence and applying the Arzel\`a-Ascoli theorem,
we can assume that $a_{m, n^{(j-1)} + k}(t) - X_m^{(j)}$ converges uniformly
on every bounded time interval. Hence, we can define
\begin{gather}
\vec a\,^{(j)}: [0, \infty) \to \bR^{n^{(j)} - n^{(j-1)}}, \qquad a\,^{(j)}_k(t) := \lim_{m \to \infty} \big(a_{m, n^{(j-1)} + k}(t) - X_m^{(j)}\big).
\end{gather}
Note that $a\,^{(j)}_{n^{(j)} - n^{(j-1)}}(0) = 0$ for all $j$.
From the bound \eqref{eq:profiles-uni-d} we get $\rho\big(\vec a\,^{(j)}\big) \lesssim (\eta^{-1} + t^2)^{-1}$.

For $0 \leq j \leq \ell$, let $\iota^{(j)} := (-1)^{n^{(j)}}$, $\bs\iota^{(j)} := (\iota^{(j)}, 0)$ and for $1 \leq j < \ell$, let $I_m^{(j)} := \big[\frac 89 X_m^{(j)} + \frac 19 X_m^{(j+1)},
\frac 19 X_m^{(j)} + \frac 89 X_m^{(j+1)}\big]$.
For fixed $t > 0$, we have $\lim_{m\to \infty}\|H(\vec a_m(t)) - \iota^{(j)}\|_{H^1(I_m^{(j)})} = 0$,
thus \eqref{eq:profiles-uni-d} yields
\begin{equation}
\label{eq:Im-small-en}
\limsup_{m\to \infty} \|\bs\phi_m(t) - \bs\iota^{(j)}\|_{\cE(I_m^{(j)})}^2 \lesssim (\eta^{-1} + t^2)^{-1}.
\end{equation}
Let $J_m^{(j)} := \big[\frac 56 X_m^{(j)} + \frac 16 X_m^{(j+1)},
\frac 16 X_m^{(j)} + \frac 56 X_m^{(j+1)}\big]$.
By \eqref{eq:Im-small-en} and Lemma~\ref{lem:loc-vac-stab}, we obtain
\begin{equation}
\label{eq:Jm-small-en}
\limsup_{m\to \infty} \|\bs\phi_m(0) - \bs\iota^{(j)}\|_{\cE(J_m^{(j)})}^2 \lesssim (\eta^{-1} + t^2)^{-1}.
\end{equation}
Letting $t \to \infty$, we get
\begin{equation}
\label{eq:Jm-small-en-0}
\limsup_{m\to \infty} \|\bs\phi_m(0) - \bs\iota^{(j)}\|_{\cE(J_m^{(j)})}^2 = 0.
\end{equation}

Next, we divide the initial data $\bs\phi_m(0)$ into $\ell$ regions in the following way. We set
\begin{equation}
\begin{aligned}
\chi^{(1)}_m(x) &:= \chi\Big(\frac{x - X^{(1)}_m}{X^{(2)}_m - X^{(1)}_m}\Big), \\
\chi^{(j)}_m(x) &:= \chi\Big(\frac{x - X^{(j)}_m}{X^{(j+1)}_m - X^{(j)}_m}\Big)
- \chi\Big(\frac{x - X^{(j-1)}_m}{X^{(j)}_m - X^{(j-1)}_m}\Big),\qquad\text{for }j \in \{2, \ldots, \ell-1\}, \\
\chi^{(\ell)}_m(x) &:= 1 - \chi\Big(\frac{x - X^{(\ell)}_m}{X^{(\ell)} - X^{(\ell-1)}_m}\Big)
\end{aligned}
\end{equation}
(in the case $\ell = 1$, we set $\chi^{(1)}_m(x) := 1$).
We now define
\begin{align}
\label{eq:phim0j-def}
\bs\phi^{(j)}_{m, 0} &:= \bs\iota^{(j-1)}\sum_{i = 1}^{j-1}\chi^{(i)}_m + \chi^{(j)}_m \bs \phi_m(0)
+ \bs\iota^{(j)}\sum_{i=j+1}^{\ell}\chi^{(i)}_m, &\text{for }j \in \{1, \ldots, \ell\}
\end{align}
(note that the two sums above are telescopic sums).
From \eqref{eq:Jm-small-en-0}, we deduce
\begin{equation}
\label{eq:phim0j-out}
\begin{gathered}
\lim_{m\to\infty} \|\bs\phi_{m, 0}^{(j)} - \bs\iota^{(j-1)}\|_{\cE(-\infty,\frac 56 X_m^{(j)}+ \frac 16 X_m^{(j-1)})} = 0, \\
\lim_{m\to\infty} \|\bs\phi_{m, 0}^{(j)} - \bs\iota^{(j)}\|_{\cE(\frac 56 X_m^{(j)} + \frac 16 X_m^{(j+1)}, \infty)} = 0.
\end{gathered}
\end{equation}

Let $\wt {\bs \phi}_m^{(j)}$ be the solution of \eqref{eq:csf} for the initial data
$\wt{\bs \phi}_m^{(j)}(0) = \bs\phi_{m, 0}^{(j)}$.
Let
\begin{equation}
\begin{aligned}
K_m^{(1)} &:= \big({-}\infty, \frac 79 X_m^{(1)} + \frac 29 X_m^{(2)}\big], \\
K_m^{(j)} &:= \big[\frac 79 X_m^{(j)} + \frac 29 X_m^{(j-1)}, \frac 79 X_m^{(j)} + \frac 29 X_m^{(j+1)}\big]\qquad\text{for }j \in \{2, \ldots, \ell -1\}, \\
K_m^{(\ell)} &:= \big[\frac 79 X_m^{(\ell)} + \frac 29 X_m^{(\ell-1)}, \infty\big).
\end{aligned}
\end{equation}
By the finite speed of propagation, see Proposition~\ref{prop:cauchy} \ref{it:cauchy-speed},
and the definition of $\bs \phi_m^{(j)}$, we have
\begin{equation}
\wt{\bs \phi}_m^{(j)}(t)\vert_{K_m^{(j)}} = \bs\phi_m(t)\vert_{K_m^{(j)}}\qquad
\text{for any given }t \geq 0\text{ and }m\text{ large enough},
\end{equation}
hence \eqref{eq:profiles-uni-d} and \eqref{eq:g-coer} imply
\begin{equation}
\label{eq:phimjt-in}
\lim_{m\to\infty} \|\wt{\bs\phi}_{m}^{(j)}(t) - \iota^{(j-1)}\bs H(\vec a\,^{(j)}(t, \cdot - X_m^{(j)})\|_{\cE(K_m^{(j)})} \lesssim \frac{1}{\eta^{-1} + t^2}.
\end{equation}
For any given $t \geq 0$, Lemma~\ref{lem:loc-vac-stab} and \eqref{eq:phim0j-out} yield
\begin{equation}
\label{eq:phimjt-out}
\begin{gathered}
\lim_{m\to\infty} \|\wt{\bs\phi}_{m}^{(j)}(t) - \bs\iota^{(j-1)}\|_{\cE(-\infty,\frac 79 X_m^{(j)}+ \frac 29 X_m^{(j-1)})} = 0, \\
\lim_{m\to\infty} \|\wt{\bs\phi}_{m}^{(j)}(t) - \bs\iota^{(j)}\|_{\cE(\frac 79 X_m^{(j)} + \frac 29 X_m^{(j+1)}, \infty)} = 0.
\end{gathered}
\end{equation}
Invoking \eqref{eq:phimjt-in}, we obtain that for all $t \geq 0$
\begin{equation}
\label{eq:profiles-wtphim-bound}
\limsup_{m \to \infty}\big\|\iota^{(j-1)}\wt{\bs\phi}_m^{(j)}\big(t, \cdot + X_m^{(j)}\big) - \bs H\big(\vec a\,^{(j)}(t)\big) \big\|_\cE^2 \lesssim \frac{1}{\eta^{-1} + t^2}.
\end{equation}

After extraction of a subsequence, we can assume that, for all $j \in \{1, \ldots, \ell\}$,
\begin{equation}
\label{eq:P0j-def}
\iota^{(j-1)}\bs \phi_{m, 0}^{(j)}\big(\cdot + X_m^{(j)}\big) \wto \bs P_0^{(j)} \in \cE_{1, \iota^{(j-1)}\iota^{(j)}}.
\end{equation}
%
Let $\bs P^{(j)}$ be the solution of \eqref{eq:csf} such that $\bs P^{(j)}(0) = \bs P^{(j)}_0$.
By Proposition~\ref{prop:cauchy}
\ref{it:cauchy-weak} and \eqref{eq:profiles-wtphim-bound}, $\bs P^{(j)}$ is a kink cluster,
so conclusion (i) of the theorem holds.
In particular, $E(\bs P^{(j)}) = (n^{(j)} - n^{(j-1)})M$.
From \eqref{eq:Jm-small-en} and \eqref{eq:phim0j-def}, we have
\begin{equation}
\lim_{m\to\infty}\sum_{j=1}^\ell E(\bs\phi_{m, 0}^{(j)}) = \lim_{m\to\infty}E(\bs\phi_m) =
nM = \sum_{j=1}^\ell E(\bs P^{(j)}).
\end{equation}
By the last part of Lemma~\ref{lem:fatou}, we thus have strong convergence in \eqref{eq:P0j-def}
for all $j \in \{1, \ldots, \ell\}$.
Elementary algebra yields
\begin{equation}
\bs\phi_m(0) = \bs 1 + \sum_{j=1}^\ell \big(\bs\phi_{m, 0}^{(j)} - \bs \iota^{(j-1)}\big),
\end{equation}
implying conclusion (iii) of the theorem.
\end{proof}

\providecommand{\noopsort}[1]{}

\bigskip
\centerline{\scshape Jacek Jendrej}
\smallskip
{\footnotesize
 \centerline{CNRS and LAGA, Universit\'e  Sorbonne Paris Nord}
\centerline{99 av Jean-Baptiste Cl\'ement, 93430 Villetaneuse, France}
\centerline{\email{jendrej@math.univ-paris13.fr}}
} 
\medskip 
\centerline{\scshape Andrew Lawrie}
\smallskip
{\footnotesize
 \centerline{Department of Mathematics, Massachusetts Institute of Technology}
\centerline{77 Massachusetts Ave, 2-267, Cambridge, MA 02139, U.S.A.}
\centerline{\email{alawrie@mit.edu}}
} 

\end{document}